\newtheorem{theorem}{Theorem}[section]
\newtheorem{lemma}[theorem]{Lemma}
\newtheorem{proposition}[theorem]{Proposition}
\newtheorem{corollary}[theorem]{Corollary}
\theoremstyle{definition}
\newtheorem{definition}[theorem]{Definition}
\newtheorem{remark}[theorem]{Remark}
\newtheorem{ipotesi}[theorem]{Assumption}
\numberwithin{equation}{section}
\newcommand\supp{{\rm spt}}
\newcommand\res{\mathop{\hbox{\vrule height 7pt width .3pt depth 0pt
\vrule height .3pt width 5pt depth 0pt}}\nolimits}
\newcommand{\im}{{\rm Im}}
\newcommand{\gr}{{\rm Gr}}
\newcommand{\bT}{\mathbf{T}}
\newcommand{\bG}{\mathbf{G}}
\newcommand{\cH}{{\mathcal{H}}}
\newcommand{\p}{{\mathbf{p}}}
\newcommand{\q}{{\mathbf{q}}}
\newcommand{\sW}{{\mathscr{W}}}
\newcommand{\sC}{{\mathscr{C}}}
\newcommand\sS{{\mathscr S}}
\newcommand\sP{{\mathscr P}}
\newcommand{\bef}{\mathbf{f}}
\newcommand{\beg}{\mathbf{g}}
\newcommand{\bGam}{{\bm \Gamma}}
\newcommand\bmo{{\bm m}_0}
\newcommand{\cM}{{\mathcal{M}}}
\newcommand{\bU}{{\mathbf{U}}}
\newcommand{\bL}{{\mathbf{L}}}
\newcommand{\phii}{{\bm{\varphi}}}
\newcommand{\Phii}{{\bm{\Phi}}}
\newcommand{\cV}{{\mathcal{V}}}
\newcommand{\cJ}{{\mathcal{J}}}
\newcommand{\cL}{{\mathcal{L}}}
\newcommand{\cK}{{\mathcal{K}}}
\newcommand{\bS}{{\mathbf{S}}}
\newcommand{\bSig}{{\mathbf{\Sigma}}}
\newcommand{\bE}{{\mathbf{E}}}
\newcommand{\bOmega}{{\mathbf{\Omega}}}
\newcommand{\bB}{{\mathbf{B}}}
\newcommand{\bC}{{\mathbf{C}}}
\newcommand\Z{{\mathbb Z}}
\newcommand\N{{\mathbb N}}
\newcommand\C{{\mathbb C}}
\newcommand\R{{\mathbb R}}
\newcommand{\eps}{{\varepsilon}}
\newcommand{\bA}{\mathbf{A}}
\def\Xint#1{\mathchoice
{\XXint\displaystyle\textstyle{#1}}%
{\XXint\textstyle\scriptstyle{#1}}%
{\XXint\scriptstyle\scriptscriptstyle{#1}}%
{\XXint\scriptscriptstyle\scriptscriptstyle{#1}}%
\!\int}
\def\XXint#1#2#3{{\setbox0=\hbox{$#1{#2#3}{\int}$ }
\vcenter{\hbox{$#2#3$ }}\kern-.6\wd0}}
\def\mint{\Xint-}
\newcommand{\Lip}{{\rm {Lip}}}
\newcommand{\dist}{{\rm {dist}}}
\newcommand\weak{{\rightharpoonup}\,}
\newcommand{\cB}{{\mathcal{B}}}
\newcommand{\cG}{{\mathcal{G}}}
\newcommand{\cT}{{\mathcal{T}}}
\newcommand{\mass}{{\mathbf{M}}}
\newcommand{\Iqs}{{\mathcal{A}}_Q(\R^{n})}
\newcommand{\Iq}{{\mathcal{A}}_Q}
\def\a#1{\left\llbracket{#1}\right\rrbracket}
\newcommand{\D}{\textup{Dir}}
\newcommand{\de}{\partial}
\newcommand{\etaa}{{\bm{\eta}}}
\newcommand{\ph}{\varphi}
\newcommand{\osc}{{\textup{osc}}}
\newcommand\B{{\mathbf{B}}}
\newcommand{\bh}{\mathbf{h}}
\title[Center manifold]{Regularity of area minimizing currents II:\\ center manifold}
\author{Camillo De Lellis}
\address{Mathematik Institut der Universit\"at Z\"urich}
\email{delellis@math.uzh.ch}
\author{Emanuele Spadaro}
\address{Max-Planck-Institut f\"ur Mathematik in den Naturwissenschaften, Leipzig}
\email{spadaro@mis.mpg.de}
\subjclass[2010]{49Q15, 49N60, 49Q05}
\keywords{Integer rectifiable currents, Regularity, Area minimizing}
\begin{document}

\begin{abstract}
This is the second paper of a series of three on the regularity of higher codimension
area minimizing integral currents.
Here we perform the second main step in the analysis of the singularities,
namely the construction of a \textit{center manifold}, i.e.~an approximate
average of the sheets of an almost flat area minimizing current. Such a center manifold
is accompanied by a Lipschitz multivalued map on its normal bundle, which
approximates the current with a high degree of accuracy.
In the third and final paper these objects are used to conclude the proof
of Almgren's celebrated dimension bound on the singular set.
\end{abstract}

\maketitle

\section{Introduction}

In this second paper on the regularity of area minimizing integer rectifiable
currents (we refer to the Foreword of \cite{DS3} for the precise statement of the final
theorem and on overview of its proof) we address one of the main steps in the analysis of the singularities,
namely the construction of what Almgren calls the \textit{center manifold}.
Unlike the case of hypersurfaces, singularities in higher codimension currents
can appear as ``higher order'' perturbation of smooth minimal submanifolds.
In order to illustrate this phenomenon, we can
consider the examples of area minimizing currents induced by
complex varieties of $\C^n$, as explained in the Foreword of \cite{DS3}.
Take, for instance, the complex curve:
\[
\cV := \big\{(z,w): (z-w^2)^2=w^5\big\}\subset\C^2.
\]
The point $0\in\cV$ is clearly a singular point.
Nevertheless, in every sufficiently small neighborhood of the origin,
$\cV$ looks like a small perturbation of the smooth minimal surface $\{z=w^2\}$:
roughly speaking, $\cV = \{z=w^2\pm w^{\sfrac{5}{2}}\}$.
One of the main issues of the regularity of area minimizing currents is to understand this phenomenon of ``higher order singularities''.
Following the pioneering work of Almgren \cite{Alm}, a way to deal with it is to
approximate the minimizing current with the graph of a multiple valued
function on the normal bundle of a suitable, curved, manifold.
Such manifold must be close to the ``average of the sheets'' of the current
(from this the name \textit{center manifold}): the hope is that such a property will guarantee
a singular ``first order expansion'' of the corresponding approximating map.

A ``center manifold'' with such an approximation property
is clearly very far from being uniquely defined
and moreover the relevant estimates are fully justified only by the concluding
arguments, which will appear in \cite{DS5}. 
In this paper, building upon the works \cite{DS1, DS2, DS3}, we
provide a construction of a center manifold $\cM$ and
of an associated approximation of the corresponding area minimizing
current via a multiple valued function $F:\cM \to \Iq(\R^{m+n})$.

The corresponding construction of Almgren is given in \cite[Chapter~4]{Alm}.
Unfortunately, we do not understand this portion of Almgren's monograph deeply enough
to make a rigorous comparison
between the two constructions. Even a comparison between the statements is prohibitive,
since the main ones of Almgren (cf. \cite[4.30 \& 4.33]{Alm}) are rather involved and seem
to require a thorough understanding of most of the chapter (which by itself has the size
of a rather big monograph).
At a first sight, our approach seems to be much simpler and to deliver
better estimates. In the rest of this introduction we will explain some of the main
aspects of our construction.

\subsection{Whitney-type decomposition}
The center manifold is the graph of a classical function over an
$m$-dimensional plane with respect to which the excess
of the minimizing current is sufficiently small.
To achieve a suitable accuracy in the approximation
of the average of the sheets of the current,
it is necessary to define the function at an appropriate scale, which varies locally.
Around any given point such scale is morally
the first at which the sheets of the current cease to be close.
This leads to a Whitney-type decomposition of the reference $m$-plane,
where the refining algorithm is stopped according to three conditions.
In each cube of the decomposition the center manifold is then a smoothing
of the average of the Lipschitz multiple valued approximation constructed in \cite{DS3},
performed in a suitable orthonormal system of coordinates, which changes from
cube to cube.

\subsection{$C^{3,\kappa}$-regularity of $\cM$}
The arguments of \cite{DS5} require that the center manifold is 
at least $C^{3}$-regular.
As it is the case of Almgren's center manifold, we prove actually $C^{3,\kappa}$ estimates, 
which are a natural outcome of some Schauder estimates.
It is interesting to notice that, if the current has multiplicity one everywhere
(i.e., roughly speaking, is made of a single sheet), then the center
manifold coincides with it and, hence, we can conclude
directly a higher regularity than the one given by the usual De Giorgi-type (or Allard-type)
argument.
This is already remarked in the introduction of \cite{Alm} and it has
been proved in our paper \cite{DS-cm} with a relatively simple and short direct argument.
The interested reader might find useful to consult that reference as well,
since many of the estimates of this note appear there in a much
more elementary form.

\subsection{Approximation on $\cM$}
Having defined a center manifold, we then give a multivalued map 
$F$ on its normal bundle which
approximates the current. The relevant estimates on this map and its approximation properties
are then given locally for each cube of the Whitney decomposition used in the construction of the
center manifold. We follow a simple principle:
at each scale where the refinement
of the Whitney decomposition has stopped, the image of such function
coincides (on a large set) with the Lipschitz multiple valued
approximation constructed in \cite{DS3},
i.e.~the same map whose smoothed average has been used to construct
the center manifold.
As a result, the graph of $F$ is well centered, i.e.~the average of $F$
is very close (compared to its Dirichlet energy and its $L^2$ norm)
to be the manifold $\cM$ itself. As far as we understand Almgren is not following this principle
and it seems very difficult to separate his construction of the center manifold from the one
of the approximating map.

\subsection{Splitting before tilting}
The regularity of the center manifold $\cM$
and the centering of the approximating map $F$
are not the only properties needed to
conclude our proof in \cite{DS5}.
Another ingredient plays a crucial role. 
Assume that around a certain point, at all scales larger than a given one,
say $s$, 
the excess decays and the sheets stay very close.
If at scale $s$ the excess is not decaying anymore,
then the sheets must separate as well.
In other words, since the tilting of the current is under control up to scale 
$s$, the current must in some sense ''split before tilting''.
We borrow the terminology from a remarkable work of Rivi\`ere \cite{Ri04},
where this phenomenon was investigated independently of Almgren's monograph in the case
of $2$-dimensional area-minimizing currents. Rivi\`ere's approach relies on a clever ``lower epiperimetric
inequality'', which unfortunately seems limited to the $2$-d context.

\subsection{Acknowledgments.}
The research of Camillo De Lellis has been supported by the ERC grant
agreement RAM (Regularity for Area Minimizing currents), ERC 306247. 
The authors are warmly thankful to Bill Allard and
Luca Spolaor, for several enlightening discussions
and for carefully reading a preliminary version of the paper,
and to Francesco Maggi for many useful comments and
corrections to our previous paper \cite{DS-cm} which have been very
valuable for the preparation of this work.

\section{Construction algorithm and main existence theorem}

The goal of this section is to specify the algorithm
leading to the center manifold. The proofs of the various statements
are all deferred to later sections.

\subsection{Notation, height and excess}  For open balls in $\R^{m+n}$ we use $\B_r (p)$. 
For any linear subspace $\pi\subset \R^{m+n}$, $\pi^\perp$ is
its orthogonal complement, $\p_\pi$ the orthogonal projection onto $\pi$, $B_r (q,\pi)$ the disk $\bB_r (q) \cap (q+\pi)$
and  $\bC_r (p, \pi)$ the cylinder $\{(x+y):x\in \B_r (p), y\in \pi^\perp\}$ (in both cases $q$ is 
omitted if it is the origin and $\pi$ is omitted if it is clear from the context). We also assume that each $\pi$ is {\em oriented} by a
$k$-vector $\vec\pi :=v_1\wedge\ldots \wedge v_k$
(thereby making a distinction when the same plane is given opposite orientations) and with a slight abuse of notation we write $|\pi_2-\pi_1|$ for
$|\vec{\pi}_2 - \vec{\pi}_1|$ (where $|\cdot |$ stands for the norm associated to the usual inner product of $k$-vectors).

A primary role will be played by the $m$-dimensional plane $\mathbb R^m \times \{0\}$ with the standard orientation:
for this plane we use the symbol $\pi_0$ throughout the whole paper.

\begin{definition}[Excess and height]\label{d:excess_and_height}
Given an integer rectifiable $m$-dimensional current $T$ in $\mathbb R^{m+n}$ with finite mass and compact support and $m$-planes $\pi, \pi'$, we define the {\em excess} of $T$ in balls
and cylinders as
\begin{align}
\bE(T,\B_r (x),\pi) &:= \left(2\omega_m\,r^m\right)^{-1}\int_{\B_r (x)} |\vec T - \vec \pi|^2 \, d\|T\|,\\
\qquad \bE (T, \bC_r (x, \pi), \pi') &:= \left(2\omega_m\,r^m\right)^{-1} \int_{\bC_r (x, \pi)} |\vec T - \vec \pi'|^2 \, d\|T\|\, ,
\end{align}
and the {\it height function} in a set $A \subset \R^{m+m}$ as
\[
\bh(T,A,\pi) := \sup_{x,y\,\in\,\supp(T)\,\cap\, A} |\p_{\pi^\perp}(x)-\p_{\pi^\perp}(y)|\, .
\]
\end{definition}

In what follows all currents will have compact support and finite mass and will always be considered as currents
defined in the entire Euclidean space. As a consequence their restrictions to a set $A$ and their pushforward through
a map $\p$ are well-defined as long as $A$ is a Borel set and the map $\p$ is Lipschitz in a neighborhood of their support.

\begin{definition}[Optimal planes]\label{d:optimal_planes}
We say that an $m$-dimensional plane $\pi$ {\em optimizes the excess} of $T$ in a ball $\B_r (x)$ if
\begin{equation}\label{e:optimal_pi}
\bE(T,\B_r (x)):=\min_\tau \bE (T, \B_r (x), \tau) = \bE(T,\B_r (x),\pi).
\end{equation} 
Observe that in general the plane optimizing the excess is not unique and $\bh (T, \bB_r (x), \pi)$ might
depend on the optimizer $\pi$. Since for notational purposes it is convenient to define
a unique ``height'' $\bh (T, \B_r (x))$, we call a plane $\pi$ as in \eqref{e:optimal_pi} {\em optimal} if in addition
\begin{equation}\label{e:optimal_pi_2}
\bh(T,\B_r(x),\pi) = \min \big\{\bh(T,\B_r (x),\tau): \tau \mbox{ satisfies \eqref{e:optimal_pi}}\big\}
= : \bh(T,\B_r(x))\, ,
\end{equation}
i.e. $\pi$ optimizes the height among all planes that optimize the excess. However \eqref{e:optimal_pi_2} does not play
any further role apart from simplifying the presentation. 

In the case of cylinders, instead, $\bE (T, \bC_r (x, \pi))$ will denote $\bE(T, \bC_r (x, \pi), \pi)$ (which coincides with
the cylindrical excess used in \cite{DS3} when $(\p_\pi)_\sharp T \res \bC_r (x, \pi)=
Q \a{B_r (\p_\pi (x), \pi)}$), whereas $\bh (T, \bC_r (x, \pi))$ will be used for $\bh (T, \bC_r (x, \pi), \pi)$. 
\end{definition}

We are now ready to formulate the main assumptions of all the statements in this work.

\begin{ipotesi}\label{ipotesi}
$\eps_0\in ]0,1]$ is a fixed constant and
$\Sigma \subset \bB_{7\sqrt{m}} \subset R^{m+n}$ is a $C^{3,\eps_0}$ $(m+\bar{n})$-dimensional submanifold with no boundary in $\bB_{7\sqrt{m}}$. We moreover assume that, for each $p\in \Sigma$,  $\Sigma$ is the graph of a
$C^{3, \eps_0}$ map $\Psi_p: T_p\Sigma\cap \bB_{7\sqrt{m}} \to T_p\Sigma^\perp$. We denote by $\mathbf{c} (\Sigma)$ the
number $\sup_{p\in \Sigma} \|D\Psi_p\|_{C^{2, \eps_0}}$. 
$T^0$ is an $m$-dimensional integral current of $\mathbb R^{m+n}$ with support in $\Sigma\cap \bar\bB_{6\sqrt{m}}$ and finite mass. It is area-minimizing in $\Sigma$ (i.e.~$\mass (T) \leq \mass (T+ \partial S)$ for any current $S$ with $\supp (S)\subset \Sigma$) and moreover
\begin{gather}
\Theta (0, T^0) = Q\quad \mbox{and}\quad \partial T^0 \res \B_{6\sqrt{m}} = 0,\label{e:basic}\\
\quad \|T^0\| (\B_{6\sqrt{m} \rho}) \leq \big(\omega_m Q (6\sqrt{m})^m + \eps_2^2\big)\,\rho^m
\quad \forall \rho\leq 1,\label{e:basic2}\\
\bE\left(T^0,\B_{6\sqrt{m}}\right)=\bE\left(T^0,\B_{6\sqrt{m}},\pi_0\right),\label{e:pi0_ottimale}\\
\bmo := \max \left\{\mathbf{c} (\Sigma)^2, \bE\left(T^0,\B_{6\sqrt{m}}\right)\right\} \leq \eps_2^2 \leq 1\, .\label{e:small ex}
\end{gather}
$\eps_2$ is a positive number whose choice will be specified in each statement.
\end{ipotesi}
Constants which depend only upon $m,n,\bar{n}$ and $Q$ will be called geometric and usually denoted by $C_0$.

\begin{remark}\label{r:sigma_A}
Note that
\eqref{e:small ex} implies $\bA := \|A_\Sigma\|_{C^0 (\Sigma)}\leq C_0 \bmo^{\sfrac{1}{2}}$,
where $A_\Sigma$ denotes the second fundamental form of $\Sigma$ and $C_0$ is a geometric constant. 
Observe further that for $p\in \Sigma$ the oscillation of $\Psi_p$ is controlled in $T_p \Sigma \cap \bB_{6\sqrt{m}}$ by $C_0 \bmo^{\sfrac{1}{2}}$. 
\end{remark}

In what follows we set $l:= n - \bar{n}$. To avoid discussing domains of definitions it is convenient to extend $\Sigma$ so
that it is an entire graph over all $T_p \Sigma$. Moreover
we will often need to parametrize $\Sigma$ as the graph of a map $\Psi: \mathbb R^{m+\bar n}\to \mathbb R^l$. However
we do not assume that $\mathbb R^{m+\bar n}\times \{0\}$ is tangent to $\Sigma$ at any $p$ and thus we need the
following lemma.

\begin{lemma}\label{l:tecnico3}
There are positive constants $C_0 (m,\bar{n}, n)$ and $c_0 (m, \bar{n}, n)$ such that,
provided $\eps_2 < c_0$, the following holds. If $\Sigma$ is as in Assumption \ref{ipotesi}, then
we can (modify it outside $\bB_{6\sqrt{m}}$ and) extend it to a complete submanifold of $\mathbb R^{m+n}$ which, for every $p\in \Sigma$, is the graph of
a global $C^{3,\eps_0}$ map $\Psi_p : T_p \Sigma \to T_p \Sigma^\perp$ with $\|D \Psi_p\|_{C^{2,\eps_0}}\leq C_0 \bmo^{\sfrac{1}{2}}$.
$T^0$ is still area-minimizing in the extended manifold and in addition
we can apply a global affine isometry which leaves $\mathbb \R^m \times \{0\}$ fixed and maps $\Sigma$ onto
$\Sigma'$ so that
\begin{equation}\label{e:small_tilt}
|\R^{m+\bar{n}}\times \{0\} - T_0 \Sigma'|\leq C_0 \bmo^{\sfrac{1}{2}}\, 
\end{equation}
and $\Sigma'$ is the graph a $C^{3, \eps_0}$ map $\Psi: \mathbb{R}^{m+\bar{n}} \to \mathbb R^l$ with
$\Psi (0)=0$ and
$\|D\Psi\|_{C^{2, \eps_0}} \leq C_0 \bmo^{\sfrac{1}{2}}$.
\end{lemma}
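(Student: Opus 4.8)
\emph{The plan.} The statement bundles three tasks of quite different nature: (a) extend $\Sigma$ to a complete submanifold which is a global graph over each of its tangent planes, keeping the derivative bound; (b) show that, after an isometry fixing $\R^m\times\{0\}$, the tangent plane $T_0\Sigma$ becomes $C_0\bmo^{\sfrac{1}{2}}$-close to $\R^{m+\bar n}\times\{0\}$; (c) deduce that the rotated $\Sigma'$ is the graph of a single $\Psi\colon\R^{m+\bar n}\to\R^l$ with the stated bounds. Only (b) really uses the current $T^0$; (a) and (c) are routine, and I will be brief about them.

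For (a): since $0\in\supp(T^0)\subset\Sigma$, Assumption \ref{ipotesi} writes $\Sigma$ in $\bB_{7\sqrt m}$ as the graph of $\Psi_0\colon T_0\Sigma\cap\bB_{7\sqrt m}\to T_0\Sigma^\perp$ with $\Psi_0(0)=0$ and $\|D\Psi_0\|_{C^{2,\eps_0}}\le\mathbf c(\Sigma)\le\bmo^{\sfrac{1}{2}}$, hence also $\|\Psi_0\|_{C^0}\le C_0\bmo^{\sfrac{1}{2}}$ (cf.\ Remark \ref{r:sigma_A}). I would extend $\Psi_0$ to all of $T_0\Sigma$ by a standard $C^{3,\eps_0}$ extension operator bounded on the $C^{2,\eps_0}$-norm of the derivative, obtaining $\tilde\Psi_0$ with $\|D\tilde\Psi_0\|_{C^{2,\eps_0}}\le C_0\bmo^{\sfrac{1}{2}}$ whose graph $\tilde\Sigma$ agrees with $\Sigma$ in $\bB_{6\sqrt m}$. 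Because $\|D\tilde\Psi_0\|_{C^0}$ is small, every tangent plane of $\tilde\Sigma$ is the graph over $T_0\Sigma$ of a linear map of norm $\le C_0\bmo^{\sfrac{1}{2}}$, so $\tilde\Sigma$ is also a complete $C^{3,\eps_0}$ graph with small derivative over each of its tangent planes; and since $\supp(T^0)\subset\tilde\Sigma\cap\bar\bB_{6\sqrt m}$ and $\partial T^0\res\B_{6\sqrt m}=0$, $T^0$ stays area-minimizing in $\tilde\Sigma$ (for competitors compactly supported there). From now on $\tilde\Sigma$ is renamed $\Sigma$.

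For (b), the heart of the matter: by the almost-monotonicity of $r\mapsto\omega_m^{-1}r^{-m}\|T^0\|(\B_r)$ in $\B_{6\sqrt m}$ (licit since $\partial T^0\res\B_{6\sqrt m}=0$, up to the negligible $O(\bA)$ correction from the mean curvature of $\Sigma$) and $\Theta(0,T^0)=Q\ge1$ from \eqref{e:basic}, one has $\|T^0\|(\B_{\sqrt m})\ge c_0>0$, while \eqref{e:pi0_ottimale}--\eqref{e:small ex} give $\int_{\B_{\sqrt m}}|\vec T^0-\vec\pi_0|^2\,d\|T^0\|\le C_0\bmo$; a Chebyshev argument then produces $x\in\supp(T^0)\cap\B_{\sqrt m}$ at which the approximate tangent plane $\pi_x$ of $T^0$ exists with $|\pi_x-\pi_0|\le C_0\bmo^{\sfrac{1}{2}}$. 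Since $\supp(T^0)\subset\Sigma$ we have $\pi_x\subset T_x\Sigma$, and by (a) $|T_x\Sigma-T_0\Sigma|\le C_0\bmo^{\sfrac{1}{2}}$; as every $m$-plane inside $T_x\Sigma$ lies within $C_0|T_x\Sigma-T_0\Sigma|$ of an $m$-plane inside $T_0\Sigma$, I conclude that $T_0\Sigma$ contains an $m$-plane $\pi'$ with $|\pi'-\pi_0|\le C_0\bmo^{\sfrac{1}{2}}$. Then $\p_{\pi_0}$ restricts to an isomorphism of $\pi'$ onto $\pi_0$, so $\p_{\pi_0}|_{T_0\Sigma}$ is onto $\pi_0$ and $K:=T_0\Sigma\cap(\R^m\times\{0\})^\perp$ is a $\bar n$-dimensional subspace of $\{0\}\times\R^n$. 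Taking $R$ to be the isometry which is the identity on $\R^m\times\{0\}$ and carries $K$ to $\R^{\bar n}\times\{0\}$ inside $\{0\}\times\R^n$ — it fixes $\R^m\times\{0\}$ and preserves area-minimality — and replacing $\Sigma,T^0$ by $\Sigma':=R(\Sigma)$, $R_\sharp T^0$, we get $T_0\Sigma'=R(\pi')\oplus(\R^{\bar n}\times\{0\})$ with $|R(\pi')-\pi_0|\le C_0\bmo^{\sfrac{1}{2}}$, whence $|\R^{m+\bar n}\times\{0\}-T_0\Sigma'|\le C_0\bmo^{\sfrac{1}{2}}$, i.e.\ \eqref{e:small_tilt}.

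For (c): $\Sigma'$ passes through $0$ and is a complete $C^{3,\eps_0}$ graph over $T_0\Sigma'$ with derivative $\le C_0\bmo^{\sfrac{1}{2}}$; composing this parametrization with the rotation taking $T_0\Sigma'$ to $\R^{m+\bar n}\times\{0\}$ — which by \eqref{e:small_tilt} is $C_0\bmo^{\sfrac{1}{2}}$-close to the identity — and invoking the implicit function theorem exhibits $\Sigma'$ as the graph of $\Psi\colon\R^{m+\bar n}\to\R^l$ with $\Psi(0)=0$ and $\|D\Psi\|_{C^{2,\eps_0}}\le C_0\bmo^{\sfrac{1}{2}}$, which is the claim. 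The genuinely substantive point, and the one I expect to require care, is the passage in (b) from smallness of the \emph{excess of the $Q$-valued current $T^0$} relative to $\pi_0$ to near-alignment of the $(m+\bar n)$-plane $T_0\Sigma$: this rests on the inclusion $\pi_x\subset T_x\Sigma$ of approximate tangent planes together with the almost-flatness $\bA\le C_0\bmo^{\sfrac{1}{2}}$ of $\Sigma$. Everything else is bookkeeping, the only things to watch being that the $C^{3,\eps_0}$ bounds survive the extension and the two rotations, and that ``area-minimizing in the extended manifold'' is meant in the localized sense.
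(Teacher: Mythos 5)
Your outline tracks the paper's proof closely: extend $\Psi_0$ to all of $T_0\Sigma$ and use the smallness of $\|D\tilde\Psi_0\|_{C^0}$ to get graphicality over every tangent plane; use the (almost) monotonicity formula together with $\Theta(0,T^0)=Q\ge1$ and a Chebyshev argument to find $p\in\supp(T^0)$ near the origin with $|\vec T^0(p)-\pi_0|\le C_0\bmo^{\sfrac{1}{2}}$; transfer to $T_0\Sigma$ via $|T_p\Sigma-T_0\Sigma|\le C_0\bA\le C_0\bmo^{\sfrac{1}{2}}$; conclude that $T_0\Sigma$ contains an $m$-plane close to $\pi_0$; and apply an isometry fixing $\pi_0$ which rotates within $\pi_0^\perp$ to normalize the $\bar n$-plane $T_0\Sigma\cap\pi_0^\perp$. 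This is exactly the paper's route, and parts (b) and (c) of your write-up are sound.

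The one genuine gap is the assertion that $T^0$ remains area-minimizing in the extended $\tilde\Sigma$. The two facts you invoke --- $\supp(T^0)\subset\bar\bB_{6\sqrt m}$ and $\partial T^0\res\bB_{6\sqrt m}=0$ --- do \emph{not} by themselves rule out competitors $T^0+\partial S$ with $\supp(S)\subset\tilde\Sigma$ whose support strays far outside $\bar\bB_{6\sqrt m}$; the hypothesis only gives minimality against competitors supported in the original $\Sigma\subset\bB_{7\sqrt m}$. The paper closes this gap by taking a minimizer $T'$ in the extended manifold (which has support in some $\bB_{C_0}$ by the monotonicity formula), observing that for $\eps_2$ small the balls $\bB_r\cap\Sigma$ are geodesically convex for $r\le C_0$, and pushing $T'$ into $\bar\bB_{6\sqrt m}\cap\Sigma$ by the resulting $1$-Lipschitz nearest-point retraction; the pushed-forward competitor then falls under the original minimality hypothesis, forcing $\mass(T')=\mass(T^0)$. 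This retraction argument is the only non-trivial content of step (a) and should not be dismissed as routine. Your parenthetical that minimality ``is meant in the localized sense'' also misreads the statement: Assumption \ref{ipotesi} requires $\mass(T^0)\le\mass(T^0+\partial S)$ for every $S$ supported in $\Sigma$, with no restriction to nearby competitors --- which is precisely why the retraction is needed.
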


From now on we assume, w.l.o.g. that $\Sigma' = \Sigma$.
The next lemma is a standard consequence of the theory of area-minimizing currents (we include the proofs of Lemma \ref{l:tecnico3} and
Lemma \ref{l:tecnico1} in Section \ref{ss:standard} for the reader's convenience).

\begin{lemma}\label{l:tecnico1}
There are positive constants $C_0 (m,n, \bar{n},Q)$ and $c_0 (m,n,\bar{n}, Q)$ with the following property.
If $T^0$ is as in Assumption \ref{ipotesi}, $\eps_2 < c_0$ and $T:= T^0 \res \bB_{23\sqrt{m}/4}$, then:
\begin{align}
&\partial T \res \bC_{11\sqrt{m}/2} (0, \pi_0)= 0\, ,\quad (\p_{\pi_0})_\sharp T\res \bC_{11 \sqrt{m}/2} (0, \pi_0) = Q \a{B_{11\sqrt{m}/2} (0, \pi_0)}\label{e:geo semplice 1}\\
&\quad\qquad\qquad\mbox{and}\quad\bh (T, \bC_{5\sqrt{m}} (0, \pi_0)) \leq C_0 \bmo^{\sfrac{1}{2m}}\, .\label{e:pre_height}
\end{align}
In particular for each $x\in B_{11\sqrt{m}/2} (0, \pi_0)$ there is  a point
$p\in \supp (T)$ with $\p_{\pi_0} (p)=x$.
\end{lemma}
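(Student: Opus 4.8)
The plan is to derive everything from two classical facts about area-minimizing currents combined with the smallness hypotheses \eqref{e:basic}--\eqref{e:small ex}: (i) the monotonicity formula, which together with the mass bound \eqref{e:basic2} gives uniform density upper bounds on $T^0$ in balls well inside $\bB_{6\sqrt m}$; and (ii) the (now standard) fact that if the cylindrical excess $\bE(T^0,\bB_{6\sqrt m},\pi_0)$ is small, then the support of $T^0$ does not stray far from the plane $\pi_0$ on slightly smaller balls, i.e. a height bound of the form $\bh(T^0,\bB_{5\sqrt m}(0),\pi_0)\le C_0\,\bmo^{1/2m}$. The exponent $1/(2m)$ is the usual loss coming from interpolating an $L^2$ excess bound against an $L^\infty$ height bound via the monotonicity/De~Giorgi-type argument (this is exactly the content of the height bound already used in \cite{DS3}; here one only needs to note that \eqref{e:pi0_ottimale} makes $\pi_0$ the excess-minimizing plane so that the height is measured with respect to $\pi_0$). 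This establishes \eqref{e:pre_height}, and it immediately forces $\supp(T)\subset\bC_{5\sqrt m}(0,\pi_0)$ to be contained in a thin slab around $\pi_0$, in particular $\supp(T)\cap\de\bC_{11\sqrt m/2}(0,\pi_0)\subset\bB_{23\sqrt m/4}\setminus\bB_{11\sqrt m/2}$ is empty once $\eps_2$ is small, so the cylinder does not ``cut through'' the support laterally.

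First I would fix the restriction $T=T^0\res\bB_{23\sqrt m/4}$ and check the boundary statement $\de T\res\bC_{11\sqrt m/2}(0,\pi_0)=0$. Since $\de T^0\res\bB_{6\sqrt m}=0$ by \eqref{e:basic}, the only boundary of $T$ inside $\bB_{6\sqrt m}$ comes from the cut $\de\bB_{23\sqrt m/4}$; because $11\sqrt m/2<23\sqrt m/4$ and, by the height bound, $\supp(T^0)\cap\bC_{11\sqrt m/2}(0,\pi_0)$ is contained in a ball of radius comfortably smaller than $23\sqrt m/4$ (the slab has half-width $C_0\bmo^{1/2m}$, so its points over $B_{11\sqrt m/2}$ have modulus at most, say, $6\sqrt m$ for $\eps_2$ small — one must just verify the numerical inequality $\sqrt{(11\sqrt m/2)^2+(C_0\bmo^{1/2m})^2}<23\sqrt m/4$), the spherical cut stays outside the cylinder and $\de T\res\bC_{11\sqrt m/2}(0,\pi_0)=\de T^0\res\bC_{11\sqrt m/2}(0,\pi_0)=0$.

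Next I would identify the projected current. Let $S:=(\p_{\pi_0})_\sharp\big(T\res\bC_{11\sqrt m/2}(0,\pi_0)\big)$, an integer rectifiable $m$-current in $B_{11\sqrt m/2}(0,\pi_0)$ with $\de S=0$ there (by the previous step and commutation of pushforward with $\de$). Hence $S=k\,\a{B_{11\sqrt m/2}(0,\pi_0)}$ for some integer $k$, and $k$ is the local degree, which one computes to be $Q$: near the origin this is exactly $\Theta(0,T^0)=Q$ from \eqref{e:basic} together with the fact that the small excess makes $T$ a ``multiplicity $Q$ graph-like'' object in the sense of the Taylor/constancy argument — concretely, the excess being small means $|\vec T-\vec\pi_0|<1$ on a set of full $\|T\|$-measure, so the tangent planes of $T$ project isomorphically onto $\pi_0$, $\p_{\pi_0}|_{\supp T}$ is proper onto $B_{11\sqrt m/2}$, and the algebraic multiplicity of the pushforward equals the constant density $Q$. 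This yields \eqref{e:geo semplice 1}. The final assertion — that every $x\in B_{11\sqrt m/2}(0,\pi_0)$ has a preimage in $\supp(T)$ — is then immediate: if some $x$ had no preimage, then $\p_{\pi_0}|_{\supp T}$ would miss a neighborhood of $x$ (properness), forcing $S$ to vanish near $x$, contradicting $S=Q\a{B_{11\sqrt m/2}(0,\pi_0)}$ with $Q\ge1$.

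The main obstacle is the height estimate \eqref{e:pre_height} with the precise exponent $\bmo^{1/2m}$: proving it from scratch requires the De~Giorgi-type iteration (or the argument via slicing and the monotonicity formula) that trades the $L^2$-excess control for an $L^\infty$-oscillation control of the support, and getting the sharp power $1/(2m)$ rather than some unspecified power is where the real work lies. However, since the lemma is explicitly stated to be ``a standard consequence of the theory of area-minimizing currents'' and the same height bound is already established in \cite{DS3}, in the write-up I would simply invoke that result (noting that \eqref{e:pi0_ottimale} guarantees it is measured against $\pi_0$) and spend the proof on the bookkeeping above: verifying the numerical radius inequalities that keep $\supp(T)$ away from $\de\bB_{23\sqrt m/4}$ inside the cylinder, and running the constancy-theorem argument to pin down the multiplicity $Q$.
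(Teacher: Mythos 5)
Your plan has a circularity that the paper's proof is specifically structured to avoid. You propose to establish the height bound \eqref{e:pre_height} first, by invoking the ``standard height bound'' from \cite{DS3}, and then to deduce the boundary statement and the projection identity in \eqref{e:geo semplice 1} from it. But the height bound you want to invoke --- Theorem \ref{t:height_bound} in the appendix, which is the version used here and in \cite{DS3} --- takes the projection identity $(\p_{\pi_0})_\sharp R\res\bC = Q\a{B_r}$, together with $\partial R\res\bC=0$ and smallness of the \emph{cylindrical} excess, as hypotheses. Assumption \ref{ipotesi} only gives you a small \emph{spherical} excess, the density $\Theta(0,T^0)=Q$ at a single point, and the mass upper bound \eqref{e:basic2}; none of these, by itself, tells you how the current projects onto $\pi_0$, and there is no ``ball version'' of the height bound available off the shelf here. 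This is why the paper proves \eqref{e:geo semplice 1} \emph{first}, by a compactness argument: if the conclusion failed along a sequence with $\eps_2(k)\downarrow 0$, then \eqref{e:basic}, \eqref{e:pi0_ottimale}, \eqref{e:small ex} and the monotonicity formula force $T^0_k\weak Q\a{B_{6\sqrt m}}$ together with Hausdorff convergence of the supports (standard regularity theory); for $k$ large this both keeps the spherical cut $\partial\bB_{23\sqrt m/4}$ out of the cylinder and pins the constancy-theorem multiplicity down to $Q$. Only then is Theorem \ref{t:height_bound} applicable, yielding \eqref{e:pre_height}.

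Two smaller points. The assertion that small excess makes $|\vec T-\vec\pi_0|<1$ on a set of full $\|T\|$-measure is false: the excess only controls the $L^2$ average of $|\vec T-\vec\pi_0|$, so the bad set merely has small $\|T\|$-measure, and your identification of the constancy-theorem integer $k$ with $\Theta(0,T^0)$ needs an actual argument (the crude mass comparison $k\,\omega_m(11\sqrt m/2)^m\le\|T^0\|(\bB_{6\sqrt m})$ does not give $k\le Q$ when $m$ is large). In the paper it is the weak convergence $Q_k\a{B_{11\sqrt m/2}}\weak Q\a{B_{11\sqrt m/2}}$ that settles this. Your properness argument for the final claim is fine and is essentially the paper's slicing argument.
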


From now we will always work with the current $T$ of Lemma \ref{l:tecnico1}.
We specify next some notation which will be recurrent in the paper when dealing with cubes of $\pi_0$.
For each $j\in \N$, $\sC^j$ denotes the family of closed cubes $L$ of $\pi_0$ of the form 
\begin{equation}\label{e:cube_def}
[a_1, a_1+2\ell] \times\ldots  \times [a_m, a_m+ 2\ell] \times \{0\}\subset \pi_0\, ,
\end{equation}
where $2\,\ell = 2^{1-j} =: 2\,\ell (L)$ is the side-length of the cube, 
$a_i\in 2^{1-j}\Z$ $\forall i$ and we require in
addition $-4 \leq a_i \leq a_i+2\ell \leq 4$. 
To avoid cumbersome notation, we will usually drop the factor $\{0\}$ in \eqref{e:cube_def} and treat each cube, its subsets and its points as subsets and elements of $\mathbb R^m$. Thus, for the {\em center $x_L$ of $L$} we will use the notation $x_L=(a_1+\ell, \ldots, a_m+\ell)$, although the precise one is $(a_1+\ell, \ldots, a_m+\ell, 0, \ldots , 0)$.
Next we set $\sC := \bigcup_{j\in \N} \sC^j$. 
If $H$ and $L$ are two cubes in $\sC$ with $H\subset L$, then we call $L$ an {\em ancestor} of $H$ and $H$ a {\em descendant} of $L$. When in addition $\ell (L) = 2\ell (H)$, $H$ is {\em a son} of $L$ and $L$ {\em the father} of $H$.

\begin{definition}\label{e:whitney} A Whitney decomposition of $[-4,4]^m\subset \pi_0$ consists of a closed set $\bGam\subset [-4,4]^m$ and a family $\mathscr{W}\subset \sC$ satisfying the following properties:
\begin{itemize}
\item[(w1)] $\bGam \cup \bigcup_{L\in \mathscr{W}} L = [-4,4]^m$ and $\bGam$ does not intersect any element of $\mathscr{W}$;
\item[(w2)] the interiors of any pair of distinct cubes $L_1, L_2\in \mathscr{W}$ are disjoint;
\item[(w3)] if $L_1, L_2\in \mathscr{W}$ have nonempty intersection, then $\frac{1}{2}\ell (L_1) \leq \ell (L_2) \leq 2\, \ell (L_1)$.
\end{itemize}
\end{definition}

Observe that (w1) - (w3) imply 
\begin{equation}\label{e:separazione}
{\rm sep}\, (\bGam, L) := \inf \{ |x-y|: x\in L, y\in \bGam\} \geq 2\ell (L)  \quad\mbox{for every $L\in \mathscr{W}$.}
\end{equation}
However, we do {\em not} require any inequality of the form 
${\rm sep}\, (\bGam, L) \leq C \ell (L)$, although this would be customary for what is commonly 
called a Whitney decomposition in the literature.

\subsection{Parameters} The algorithm for the construction of the center manifold involves several parameters which depend in
a complicated way upon several quantities and estimates. We introduce these parameters and specify some relations among them
in the following

\begin{ipotesi}\label{parametri}
$C_e,C_h,\beta_2,\delta_2, M_0$ are positive real numbers and $N_0$ a natural number for which we assume
always
\begin{gather}
\beta_2 = 4\,\delta_2 = \min \left\{\frac{1}{2m}, \frac{\gamma_1}{100}\right\}, \quad
\mbox{where $\gamma_1$ is the constant of \cite[Theorem~1.4]{DS3},}\label{e:delta+beta}\\
M_0 \geq C_0 (m,n,\bar{n},Q) \geq 4\,  \quad
\mbox{and}\quad \sqrt{m} M_0 2^{7-N_0} \leq 1\, . \label{e:N0}
\end{gather}
\end{ipotesi}

As we can see, $\beta_2$ and $\delta_2$ are fixed. The other parameters are not fixed but are subject to further restrictions in the various statements, respecting the following ``hierarchy''. As already mentioned, ``geometric constants'' are assumed to depend only upon $m, n, \bar{n}$ and $Q$. The dependence of other constants upon the various parameters $p_i$ will be highlighted using the notation $C = C (p_1, p_2, \ldots)$.

\begin{ipotesi}[Hierarchy of the parameters]\label{i:parametri}
In all the statements of the paper
\begin{itemize}
\item[(a)] $M_0$ is larger than a geometric constant (cf. \eqref{e:N0}) or larger than a costant $C (\delta_2)$, see Proposition~\ref{p:splitting};
\item[(b)] $N_0$ is larger than $C (\beta_2, \delta_2, M_0)$ (see for instance \eqref{e:N0} and Proposition \ref{p:compara});
\item[(c)] $C_e$ is larger than $C(\beta_2, \delta_2, M_0, N_0)$ (see the statements
of Proposition \ref{p:whitney}, Theorem \ref{t:cm} and Proposition \ref{p:splitting});
\item[(d)] $C_h$ is larger than $C(\beta_2, \delta_2, M_0, N_0, C_e)$ (see Propositions~\ref{p:whitney} and \ref{p:separ});
\item[(e)] $\eps_2$ is smaller than $c(\beta_2, \delta_2, M_0, N_0, C_e, C_h)$ (which will always be positive).
\end{itemize}
\end{ipotesi}

The functions $C$ and $c$ will vary in the various statements: 
the hierarchy above guarantees however that there is a choice of the parameters for which {\em all} the restrictions required in the statements of the next propositions are simultaneously satisfied. In fact it is such a choice which is then made in \cite{DS5}. To simplify our exposition, for smallness conditions on $\eps_2$ as in (e) we will use the sentence ``$\eps_2$ is sufficiently small''.

\subsection{The Whitney decomposition} 
Thanks to Lemma \ref{l:tecnico1}, for every $L\in \sC$,  we may choose $y_L\in \pi_L^\perp$ so that $p_L := (x_L, y_L)\in \supp (T)$ (recall that $x_L$ is the center of $L$). $y_L$ is in general not unique and we fix an arbitrary choice.
A more correct notation for $p_L$ would be $x_L + y_L$. This would however become rather cumbersome later, when we deal with various decompositions of the
ambient space in triples of orthogonal planes. We thus abuse the notation slightly in using $(x,y)$ instead of $x+y$ and, consistently, $\pi_0\times \pi_0^\perp$ instead of $\pi_0 + \pi_0^\perp$.

\begin{definition}[Refining procedure]\label{d:refining_procedure}
For $L\in \sC$ we set $r_L:= M_0 \sqrt{m} \,\ell (L)$ and 
$\B_L := \bB_{64 r_L} (p_L)$. We next define the families of cubes $\sS\subset\sC$ and $\sW = \sW_e \cup \sW_h \cup \sW_n \subset \sC$ with the convention that
$\sS^j = \sS\cap \sC^j, \sW^j = \sW\cap \sC^j$ and $\sW^j_{\square} = \sW_\square \cap \sC^j$ for $\square = h,n, e$. We define $\sW^i = \sS^i = \emptyset $ for $i < N_0$. We proceed with $j\geq N_0$ inductively: if { no ancestor of $L\in \sC^j$ is in $\sW$}, then 
\begin{itemize}
\item[(EX)] $L\in \sW^j_e$ if $\bE (T, \B_L) > C_e \bmo\, \ell (L)^{2-2\delta_2}$;
\item[(HT)] $L\in \sW_h^j$ if $L\not \in \mathscr{W}_e^j$ and $\bh (T, \B_L) > C_h \bmo^{\sfrac{1}{2m}} \ell (L)^{1+\beta_2}$;
\item[(NN)] $L\in \sW_n^j$ if $L\not\in \sW_e^j\cup \sW_h^j$ but it intersects an element of $\sW^{j-1}$;
\end{itemize}
if none of the above occurs, then $L\in \sS^j$.
We finally set
\begin{equation}\label{e:bGamma}
\bGam:= [-4,4]^m \setminus \bigcup_{L\in \sW} L = \bigcap_{j\geq N_0} \bigcup_{L\in \sS^j} L.
\end{equation}
\end{definition}
Observe that, if $j>N_0$ and $L\in \sS^j\cup \sW^j$, then necessarily its father belongs to $\sS^{j-1}$.

\begin{proposition}[Whitney decomposition]\label{p:whitney}
Let Assumptions \ref{ipotesi} and \ref{parametri} hold and let $\eps_2$ be sufficiently small.
Then $(\bGam, \mathscr{W})$ is a Whitney decomposition of $[-4,4]^m \subset \pi_0$.
Moreover, for any choice of $M_0$ and $N_0$, there is $C^\star := C^\star (M_0, N_0)$ such that,
if $C_e \geq C^\star$ and $C_h \geq C^\star C_e$, then 
\begin{equation}\label{e:prima_parte}
\sW^{j} = \emptyset \qquad \mbox{for all $j\leq N_0+6$.}
\end{equation}
Moreover, the following 
estimates hold with $C = C(\beta_2, \delta_2, M_0, N_0, C_e, C_h)$:
\begin{gather}
\bE (T, \B_J) \leq C_e \bmo^{}\, \ell (J)^{2-2\delta_2} \quad \text{and}\quad
\bh (T, \B_J) \leq C_h \bmo^{\sfrac{1}{2m}} \ell (J)^{1+\beta_2}
\quad \forall J\in \sS\,, \label{e:ex+ht_ancestors}\\
 \bE (T, \B_L) \leq C\, \bmo^{}\, \ell (L)^{2-2\delta_2}\quad \text{and}\quad
\bh (T, \B_L) \leq C\, \bmo^{\sfrac{1}{2m}} \ell (L)^{1+\beta_2}
\quad \forall L\in \sW\, . \label{e:ex+ht_whitney}
\end{gather}
\end{proposition}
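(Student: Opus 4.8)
The plan is to verify the three defining properties of a Whitney decomposition, then prove the no-refinement-before-$N_0+6$ claim, and finally derive the uniform excess/height estimates on stopping and Whitney cubes.

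\textbf{Properties (w1)--(w3).} Property (w1) is essentially the definition \eqref{e:bGamma} of $\bGam$ together with the observation that $\sS^j$ for each $j$ forms a finite partition of $[-4,4]^m$ into dyadic cubes: I would argue by induction on $j$ that $\bigcup_{L\in\sS^j}L \cup \bigcup_{i\le j}\bigcup_{L\in\sW^i}L = [-4,4]^m$, using that whenever a cube enters $\sW$ none of its descendants are examined, and that every cube of $\sC^j$ not lying under a previously stopped cube lands in exactly one of $\sS^j,\sW^j_e,\sW^j_h,\sW^j_n$. Property (w2) is immediate since distinct cubes in $\sW$ are either nested-incomparable dyadic cubes (hence interior-disjoint) or one is a descendant of the other, which the refining procedure forbids (once a cube is in $\sW$, its descendants are never added). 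The only nontrivial point is (w3): if $L_1,L_2\in\sW$ touch and, say, $\ell(L_1)\le \ell(L_2)$, I must exclude $\ell(L_1)\le \tfrac14\ell(L_2)$. Here is where the \textbf{(NN)} stopping condition does the work: suppose $L_1\in\sC^{j}$ with $j$ large and $L_2\in\sC^{i}$ with $i\le j-2$; then the father $L_1'$ of $L_1$ (in $\sC^{j-1}$) still intersects $L_2\in\sW^i\subset\sW^{j-2}$, so $L_1'$ would have been forced into $\sW_n^{j-1}$ (if it wasn't already stopped for another reason), contradicting the requirement in the refining procedure that $L_1$ is only examined when its father lies in $\sS^{j-1}$. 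This gives (w3), and \eqref{e:separazione} follows formally from (w1)--(w3) as indicated in the text.

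\textbf{The estimate \eqref{e:prima_parte}.} For $j\le N_0+6$ and $L\in\sC^j$, the ball $\B_L=\bB_{64 r_L}(p_L)$ has radius $64 M_0\sqrt{m}\,\ell(L) = 64 M_0\sqrt{m}\,2^{-j}$, which by \eqref{e:N0} is comparable to (indeed bounded by) a fixed number of order one, and $p_L\in\supp(T)\subset\bB_{6\sqrt m}$, so $\B_L$ is contained in a fixed ball on which we have the global smallness \eqref{e:small ex} and the height bound \eqref{e:pre_height}. I would bound $\bE(T,\B_L)$ from above by $C(M_0,N_0)\,\bE(T^0,\B_{6\sqrt m})\le C(M_0,N_0)\,\bmo$, using monotonicity/scaling of the excess integral and \eqref{e:basic2} to control $\|T\|(\B_L)$; since $\ell(L)^{2-2\delta_2}\ge 2^{-(N_0+6)(2-2\delta_2)}$ is a fixed positive number, choosing $C_e\ge C^\star(M_0,N_0)$ makes the right-hand side of \textbf{(EX)} exceed $\bE(T,\B_L)$, so no cube is stopped in $\sW_e$ for $j\le N_0+6$. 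Similarly $\bh(T,\B_L)\le \bh(T,\bC_{5\sqrt m}(0,\pi_0))\le C_0\bmo^{1/2m}$ by \eqref{e:pre_height} (after checking $\B_L\subset\bC_{5\sqrt m}(0,\pi_0)$, which holds once $\eps_2$ — hence $\bmo^{1/2m}$, controlling the tilt and the vertical size — is small), while $\ell(L)^{1+\beta_2}$ is again bounded below; taking $C_h\ge C^\star C_e$ (in fact just $C_h$ larger than a geometric constant suffices here) kills \textbf{(HT)}. With no cube stopped by \textbf{(EX)} or \textbf{(HT)} through generation $N_0+6$, and $\sW^i=\emptyset$ for $i<N_0$ by definition, an induction shows \textbf{(NN)} cannot trigger either, giving \eqref{e:prima_parte}.

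\textbf{The estimates \eqref{e:ex+ht_ancestors} and \eqref{e:ex+ht_whitney}.} For $J\in\sS$: if $J\in\sC^j$ with $j\le N_0$ the bounds follow as in the previous paragraph (crudely), and if $j>N_0$ then the father $J'\in\sS^{j-1}$, so in particular $J'\notin\sW_e^{j-1}\cup\sW_h^{j-1}$, i.e. $\bE(T,\B_{J'})\le C_e\bmo\,\ell(J')^{2-2\delta_2}$ and $\bh(T,\B_{J'})\le C_h\bmo^{1/2m}\ell(J')^{1+\beta_2}$; since $\B_J\subset\B_{J'}$ (as $p_J,p_{J'}\in\supp T$ are within $C\,r_{J'}$ of each other — this needs the height bound on $J'$ to see that the vertical displacement of $p_J$ from $p_{J'}$ is small compared to $r_{J'}$, so $\B_J\subset\bB_{64 r_{J'}}(p_{J'})$) and $\ell(J')=2\ell(J)$, the excess integral over the smaller ball is bounded by a dimensional multiple of the one over the larger, and absorbing the factor $2^{2-2\delta_2}$ (resp. $2^{1+\beta_2}$) into $C_e$ (resp. $C_h$) — or simply noting $\bE(T,\B_J)\le C\bE(T,\B_{J'})$ — yields \eqref{e:ex+ht_ancestors}. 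For $L\in\sW$: its father $L'$ lies in $\sS^{j-1}$, so \eqref{e:ex+ht_ancestors} applies to $L'$, and the same comparison $\B_L\subset\B_{L'}$ gives $\bE(T,\B_L)\le C\,\bmo\,\ell(L)^{2-2\delta_2}$ and $\bh(T,\B_L)\le C\,\bmo^{1/2m}\ell(L)^{1+\beta_2}$ with $C=C(\beta_2,\delta_2,M_0,N_0,C_e,C_h)$, which is \eqref{e:ex+ht_whitney}.

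\textbf{Main obstacle.} The delicate point is the inclusion $\B_L\subset\B_{L'}$ (and more generally the comparability of excess balls of a cube and its father), since $p_L$ and $p_{L'}$ are arbitrary points of $\supp T$ over the centers $x_L,x_{L'}$ and their separation is only controlled through the height bound on the ancestor $L'$ — one must check that $|y_L-y_{L'}|$, and the horizontal distance $|x_L-x_{L'}|\le C\ell(L')$, together stay below, say, $r_{L'}$, which is where the precise numerical choices ($64 r_L$ versus $64 r_{L'}$, the factor $M_0\sqrt m$) and the smallness of $\eps_2$ enter. The combinatorial verification of (w1)--(w3) and \eqref{e:prima_parte} via the \textbf{(NN)} mechanism is routine once the bookkeeping of ``father in $\sS$'' is set up carefully.
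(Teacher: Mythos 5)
Your outline follows the paper's general strategy, but it leaves the central technical point unproved and contains two genuine errors. The main gap is the one you yourself flag at the end: the inclusion $\B_L\subset\B_{L'}$ (more generally $\B_H\subset\B_L$ for $H\subset L$). This is not a routine check to be deferred — it is the heart of the proof, and it cannot be obtained directly from the height bound on the ancestor, because that bound is stated with respect to the \emph{optimal} plane $\hat\pi_{L'}$ of $\B_{L'}$, while the displacement $|y_L-y_{L'}|$ lives in $\pi_0^\perp$. Converting one into the other requires the tilt estimate $|\hat\pi_{L'}-\pi_0|\leq C\bmo^{\sfrac{1}{2}}$, which in turn is proved by summing the tilts $|\hat\pi_{H_{l-1}}-\hat\pi_{H_l}|\leq C\bmo^{\sfrac{1}{2}}\ell(H_l)^{1-\delta_2}$ along the chain of ancestors — and those comparisons themselves presuppose the nesting $\B_{H_l}\subset\B_{H_{l-1}}$. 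The paper breaks this apparent circularity by a single joint induction on the generation (Proposition \ref{p:tilting opt}), proving simultaneously the ball inclusions, the tilt bounds (ii)--(iv), and a height bound in cylinders $\bC_{36r_H}(p_H,\pi_0)$ over the \emph{fixed} plane $\pi_0$ (item (v)); it is the latter, not the optimal-plane height, that controls $|y_{H_{i+1}}-y_{H_i}|$ at the next step. Without setting up this induction your argument does not close.

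Two further points. First, \eqref{e:ex+ht_ancestors} asserts the bounds with the \emph{exact} constants $C_e$ and $C_h$; your route through the father produces $2^{m+2-2\delta_2}C_e$ and similar, and the factor cannot be ``absorbed into $C_e$'' since $C_e$ is a fixed parameter appearing on both sides of the stopping condition. The correct observation is simpler: $J\in\sS^j$ means $J$ itself was examined and satisfies neither (EX) nor (HT), so \eqref{e:ex+ht_ancestors} is literally the negation of the stopping conditions; the father comparison is needed only for \eqref{e:ex+ht_whitney}, where a generic $C$ is claimed. Second, in your proof of \eqref{e:prima_parte} the bound $\bh(T,\B_L)\leq\bh(T,\bC_{5\sqrt m}(0,\pi_0))$ is not legitimate: by Definition \ref{d:optimal_planes}, $\bh(T,\B_L)$ is the height with respect to an excess-optimizing plane $\hat\pi_L$, which need not be $\pi_0$, so one must add the term $C_0M_0\ell(L)|\hat\pi_L-\pi_0|\leq C(M_0,N_0)C_e^{\sfrac{1}{2}}\bmo^{\sfrac{1}{2}}\ell(L)^{2-\delta_2}$; this is precisely why the hypothesis is $C_h\geq C^\star C_e$ and your parenthetical claim that ``$C_h$ larger than a geometric constant suffices'' is false. (A minor combinatorial slip in (w3): the (NN) condition only sees $\sW^{j-1}$, so when $\ell(L_2)\geq 8\ell(L_1)$ you must pass to the ancestor of $L_1$ of generation $i+1$, not merely to its father.)
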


\subsection{Construction algorithm} We fix next two important functions $\vartheta,\varrho: \R^m \to \R$.

\begin{ipotesi}\label{mollificatore}
$\varrho\in C^\infty_c (B_1)$ is radial, $\int \varrho =1$ and $\int |x|^2 \varrho (x)\, dx = 0$. For $\lambda>0$ $\varrho_\lambda$ 
denotes, as usual, $x\mapsto \lambda^{-m} \varrho (\frac{x}{\lambda})$.
$\vartheta\in C^\infty_c \big([-\frac{17}{16}, \frac{17}{16}]^m, [0,1]\big)$ is identically $1$ on $[-1,1]^m$.
\end{ipotesi}

$\varrho$ will be used as convolution kernel for smoothing maps $z$ defined on $m$-dimensional planes $\pi$ of $\mathbb R^{m+n}$. In particular, having fixed an isometry $A$ of $\pi$ onto $\mathbb R^m$, the smoothing will be given by $[(z \circ A) * \varrho] \circ A^{-1}$. Observe that since $\varrho$ is radial, our map does not depend on the choice of the isometry
and we will therefore use the shorthand notation $z*\varrho$.

\begin{definition}[$\pi$-approximations]\label{d:pi-approximations}
Let $L\in \sS\cup \sW$ and $\pi$ be an $m$-dimensional plane. If $T\res\bC_{32 r_L} (p_L, \pi)$ fulfills 
the assumptions of \cite[Theorem 1.4]{DS3} in the cylinder $\bC_{32 r_L} (p_L, \pi)$, then the resulting map $f: B_{8r_L} (p_L, \pi)  \to \Iq (\pi^\perp)$ given by \cite[Theorem 1.4]{DS3} is a {\em $\pi$-approximation of $T$ in $\bC_{8 r_L} (p_L, \pi)$}. 
The map $\hat{h}:B_{7r_L} (p_L, \pi) \to \pi^\perp$ given
by $\hat{h}:= (\etaa\circ f)* \varrho_{\ell (L)}$ will be called the {\em smoothed average of the $\pi$-approximation}, where we recall the notation $\etaa \circ f(x) := Q^{-1} \sum_{i=1}^Q f_i(x)$ for any $Q$-valued map $f = \sum_i \a{f_i}$. 
\end{definition}

\begin{definition}[Reference plane $\pi_L$] \label{d:ref_plane}
For each $L\in \sS\cup \sW$ we let $\hat\pi_L$ be an optimal plane in $\bB_L$ (cf. Definition \ref{d:optimal_planes}) and choose an $m$-plane $\pi_L\subset T_{p_L} \Sigma$ which minimizes $|\hat\pi_L-\pi_L|$.
\end{definition}

In what follows we will deal with graphs of multivalued functions $f$ in several system of coordinates. These objects can
be naturally seen as currents $\bG_f$ (see \cite{DS2}) and in this respect we will use extensively the notation and results of \cite{DS2} (therefore $\gr$ will denote the ``set-theoretic'' graph).

\begin{lemma}\label{l:tecnico2}
Let the assumptions of Proposition \ref{p:whitney} hold and assume $C_e \geq C^\star$ and $C_h \geq C^\star C_e$ (where $C^\star$ is the
constant of Proposition \ref{p:whitney}). For any choice of the other parameters,
if $\eps_2$ is sufficiently small, then $T\res \bC_{32 r_L} (p_L, \pi_L)$ satisfies the assumptions of
\cite[Theorem 1.4]{DS3} for any $L\in \sW\cup \sS$. 
Moreover, if $f_L$ is a $\pi_L$-approximation, denote by $\hat{h}_L$ its smoothed average and by $\bar{h}_L$ the map $\p_{T_{p_L}\Sigma} (\hat{h}_L)$,
which takes value in the plane $\varkappa_L := T_{p_L} \Sigma \cap \pi_L^\perp$, i.e. the orthogonal complement of $\pi_L$ in $T_{p_L} \Sigma$.
If we let $h_L$ be the map $x\mapsto h_L (x):= (\bar{h}_L (x), \Psi_{p_L} (x, \bar{h}_L (x)))\in \varkappa_L \times T_{p_L} \Sigma^\perp$, 
then there is a smooth map $g_L: B_{4r_L} (p_L, \pi_0)\to \pi_0^\perp$ such that 
$\bG_{g_L} = \bG_{h_L}\res \bC_{4r_L} (p_L, \pi_0)$.
\end{lemma}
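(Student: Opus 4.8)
The plan is to reduce the statement to a routine application of the implicit function theorem / graph-reparametrization argument, with the essential input being that all the relevant planes ($\pi_0$, $\pi_L$, $T_{p_L}\Sigma$) are mutually close (with closeness controlled by $C_0\bmo^{\sfrac12}$), and that $h_L$ has small $C^1$ norm on the relevant cylinder. First I would record the three basic smallness facts: (i) $|\pi_L - \pi_0|\leq C_0\bmo^{\sfrac12}$, which follows from \eqref{e:small_tilt} together with the fact that $\hat\pi_L$ is an optimal plane in $\bB_L$ (so $|\hat\pi_L - \pi_0|$ is controlled by the excess bound of Proposition \ref{p:whitney}, and $\pi_L\subset T_{p_L}\Sigma$ is the nearest plane inside a tangent space that is itself $C_0\bmo^{\sfrac12}$-close to $\R^{m+\bar n}\times\{0\}$ by Lemma \ref{l:tecnico3}); (ii) $\Psi_{p_L}$ has $C^{2,\eps_0}$ norm bounded by $C_0\bmo^{\sfrac12}$ (Lemma \ref{l:tecnico3}); (iii) the smoothed average $\bar h_L = \p_{T_{p_L}\Sigma}(\hat h_L)$ has small $C^0$ norm, because $\hat h_L = (\etaa\circ f_L)*\varrho_{\ell(L)}$, the height bound \eqref{e:ex+ht_whitney} gives $\|\hat h_L\|_{C^0}\leq C\,\bmo^{\sfrac{1}{2m}}\ell(L)^{1+\beta_2}$, and mollification does not increase the sup norm. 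One also needs a $C^1$ bound for $\hat h_L$: this comes from the interior elliptic/Lipschitz estimates for $\etaa\circ f_L$ in \cite{DS3} combined with the excess decay in \eqref{e:ex+ht_whitney}, so that $\|D\hat h_L\|_{C^0}\leq C\,\bmo^{\beta}$ for a suitable $\beta>0$ on the slightly smaller ball $B_{7r_L}(p_L,\pi_L)$.

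Next I would set up the reparametrization. By definition $h_L(x) = (\bar h_L(x), \Psi_{p_L}(x,\bar h_L(x)))$ takes values in $\varkappa_L\times T_{p_L}\Sigma^\perp$, so as a point of $\R^{m+n}$ it is $p_L + x + h_L(x)$ for $x\in B_{4r_L}(0,\pi_L)$; thus $\gr(h_L)$ is a $C^{2,\eps_0}$ $m$-dimensional graph over $p_L+\pi_L$. The claim is that, viewed in the $\pi_0$-splitting, this same set is a graph over a domain containing $B_{4r_L}(p_L,\pi_0)$. Write $\Phi(x) := \p_{\pi_0}(p_L + x + h_L(x))$ for $x\in B_{\cdot}(0,\pi_L)$. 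Then $D\Phi(x) = \p_{\pi_0}|_{\pi_L} + \p_{\pi_0}\circ Dh_L(x)$; since $|\pi_L-\pi_0|\leq C_0\bmo^{\sfrac12}$ the first term is within $C_0\bmo^{\sfrac12}$ of an isometry $\pi_L\to\pi_0$, and the second term is bounded by $\|Dh_L\|_{C^0}\leq C\bmo^{\min(\sfrac12,\beta)}$ (using (ii) and the chain rule to pass from $D\bar h_L$ to $Dh_L$). Hence for $\eps_2$ small $D\Phi$ is everywhere invertible and $\Phi$ is a $C^{2,\eps_0}$ diffeomorphism onto its image, and a quantitative inverse-function estimate shows the image contains $B_{4r_L}(p_L,\pi_0)$ provided $\Phi$ is defined on, say, $B_{(4+C_0\bmo^{\sfrac12})r_L}(0,\pi_L)$ — which is fine since $f_L$, hence $\hat h_L$ and $h_L$, are defined on $B_{7r_L}(p_L,\pi_L)$ and $7>4+C_0\bmo^{\sfrac12}$. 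Define $g_L := \p_{\pi_0^\perp}\circ(p_L + \cdot + h_L(\cdot))\circ\Phi^{-1}$ on $B_{4r_L}(p_L,\pi_0)$; this is $C^{2,\eps_0}$ (in particular smooth enough — here "smooth" in the statement just means $C^{2,\eps_0}$, or one can upgrade using that $\bar h_L$ is real-analytic/smooth as a solution of an elliptic system and $\Psi_{p_L}$ is $C^{3,\eps_0}$), and by construction $\gr(g_L)$ and $\gr(h_L)$ agree as subsets of $\R^{m+n}$ over $B_{4r_L}(p_L,\pi_0)$.

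Finally I would upgrade the set-theoretic identity $\gr(g_L) = \gr(h_L)\cap\bC_{4r_L}(p_L,\pi_0)$ to the current identity $\bG_{g_L} = \bG_{h_L}\res\bC_{4r_L}(p_L,\pi_0)$. Since $g_L$ and $h_L$ are (single-valued) $C^1$ functions, both $\bG_{g_L}$ and $\bG_{h_L}$ are the integration currents over the respective $C^1$ graphs with multiplicity one and the natural orientation, as in \cite{DS2}; the reparametrization $\Phi$ is an orientation-preserving diffeomorphism (its differential is close to an orientation-preserving isometry), so $(\iota)_\sharp \bG_{h_L}\res\bC_{4r_L} = \bG_{g_L}$ where $\iota$ is the identity of $\R^{m+n}$ restricted appropriately — i.e. the two currents coincide. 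The main obstacle, and the only place where real work is needed, is the uniform $C^1$ (indeed $C^{2,\eps_0}$) control of $h_L$ on $B_{7r_L}(p_L,\pi_L)$: it requires feeding the Whitney estimates \eqref{e:ex+ht_whitney} into the interior estimates for the $\pi_L$-approximation of \cite{DS3} to bound $D\bar h_L$ (and its Hölder seminorm) on the slightly smaller ball, and then carefully tracking how the $C^{2,\eps_0}$ bound on $\Psi_{p_L}$ and the chain rule produce the corresponding bound on $h_L$; once that is in hand, the inverse-function-theorem reparametrization and the current identity are standard.
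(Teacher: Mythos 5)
Your reparametrization argument for producing $g_L$ from $h_L$ is sound and is essentially the content of the paper's Lemma \ref{l:rotazioni_semplici}, which the paper invokes for exactly this step: the inputs you need are the tilt bound $|\pi_L-\pi_0|\leq C\bmo^{\sfrac{1}{2}}$, the Lipschitz bound $\Lip(h_L)\leq C\bmo^{\gamma_1}\ell(L)^{\gamma_1}$ (which comes directly from the Lipschitz estimate on $f_L$ in \cite[Theorem~1.4]{DS3} plus the fact that mollification does not increase Lipschitz constants — no interior elliptic estimates are needed here), and the $C^0$ bound $\|h_L-\p_{\pi_L^\perp}(p_L)\|_{C^0}\leq C\bmo^{\sfrac{1}{2m}}\ell(L)^{1+\beta_2}$, which uses the oscillation bound of \cite[Theorem~1.4]{DS3} together with $p_L\in\supp(T)$. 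So the second half of your proposal is fine.

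The genuine gap is in the first assertion of the lemma, which your proposal takes for granted: that $T\res\bC_{32r_L}(p_L,\pi_L)$ \emph{satisfies the hypotheses} of \cite[Theorem~1.4]{DS3}, so that the $\pi_L$-approximation $f_L$ exists at all. Everything after your first paragraph presupposes $f_L$, but two nontrivial verifications are missing. First, one must show $(\p_{\pi_L})_\sharp T\res\bC_{32r_L}(p_L,\pi_L)=Q\a{B_{32r_L}(\p_{\pi_L}(p_L),\pi_L)}$ with the \emph{correct} multiplicity $Q$: the inclusion $\supp(T)\cap\bC_{32r_L}(p_L,\pi_L)\subset\bB_L$ (Proposition~\ref{p:tilting opt}(vi)) and the Constancy Theorem only give $k\a{B_{32r_L}}$ for \emph{some} integer $k$. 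The paper identifies $k=Q$ by passing up the chain of ancestors to a cube $M\in\sS^{N_0}$ and then joining $\pi_L$ to $\pi_0$ by a continuous family of planes $\pi(t)$ with $|\pi(t)-\pi_0|\leq C\bmo^{\sfrac12}$, using weak continuity of $t\mapsto(\p_{\pi(t)})_\sharp T\res\bC_{32r_M}(p_M,\pi(t))$ together with $Q(0)=Q$ from \eqref{e:geo semplice 1}. This homotopy/constancy argument is the key idea of this part and is absent from your proposal. Second, one must check that the cylindrical excess $\bE(T,\bC_{32r_L}(p_L,\pi_L))$ is below the threshold $\eps_1$ of \cite[Theorem~1.4]{DS3}; this follows from $\bE(T,\bC_{32r_L}(p_L,\pi_L))\leq \bar C\,\bE(T,\bB_L)+\bar C|\pi_L-\hat\pi_L|^2\leq\bar C\bmo\,\ell(L)^{2-2\delta_2}$ via Proposition~\ref{p:tilting opt}, but it is not automatic and should be stated. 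Without these two steps the lemma is not proved.
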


\begin{definition}[Interpolating functions]\label{d:glued}
The maps $h_L$ and $g_L$ in Lemma \ref{l:tecnico2} will be called, respectively, the
{\em tilted $L$-interpolating function} and the {\em $L$-interpolating function}.
For each $j$ let $\sP^j := \sS^j \cup \bigcup_{i=N_0}^j \sW^i$ and
for $L\in \sP^j$ define $\vartheta_L (y):= \vartheta (\frac{y-x_L}{\ell (L)})$. Set
\begin{equation}
\hat\varphi_j := \frac{\sum_{L\in \sP^j} \vartheta_L g_L}{\sum_{L\in \sP^j} \vartheta_L} \qquad \mbox{on $]-4,4[^m$},
\end{equation}
let $\bar{\varphi}_j (y)$ be the first $\bar{n}$ components of $\hat{\varphi}_j (y)$ and define
$\varphi_j (y) = \big(\bar{\varphi}_j (y), \Psi (y, \bar{\varphi}_j (y))\big)$, where $\Psi$ is the map of Lemma \ref{l:tecnico3}.
$\varphi_j$ will be called the {\em glued interpolation} at the step $j$.
\end{definition}

\begin{theorem}[Existence of the center manifold]\label{t:cm}
Assume that the hypotheses of Lemma \ref{l:tecnico2} hold and
let $\kappa := \min \{\eps_0/2, \beta_2/4\}$. For any choice of the other parameters,
if $\eps_2$ is sufficiently small, then
\begin{itemize}
\item[(i)] $\|D\varphi_j\|_{C^{2, \kappa}} \leq C \bmo^{\sfrac{1}{2}}$ and $\|\varphi_j\|_{C^0}
\leq C \bmo^{\sfrac{1}{2m}}$, with $C = C(\beta_2, \delta_2, M_0, N_0, C_e, C_h)$.
\item[(ii)] if $L\in \sW^i$ and $H$ is a cube concentric to $L$ with $\ell (H)=\frac{9}{8} \ell (L)$, then $\varphi_j = \varphi_k$ on $H$ for any $j,k\geq i+2$. 
\item[(iii)] $\varphi_j$ converges in $C^3$ to a map $\phii$ and $\cM:= \gr (\phii|_{]-4,4[^m}
)$ is a $C^{3,\kappa}$ submanifold of $\Sigma$.
\end{itemize}
\end{theorem}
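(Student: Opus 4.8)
The plan is to establish Theorem \ref{t:cm} by a careful analysis of the glued interpolations $\varphi_j$, deriving uniform $C^{3,\kappa}$ bounds on each cube of the Whitney decomposition and then passing to the limit. The essential mechanism is that the estimates \eqref{e:ex+ht_ancestors}--\eqref{e:ex+ht_whitney} of Proposition \ref{p:whitney} control, for each stopping cube $L$, both the excess and the height of $T$ in the enlarged ball $\B_L$, and these translate via the $\pi_L$-approximation theorem of \cite[Theorem 1.4]{DS3} into quantitative control on the tilted interpolating function $h_L$ and hence on $g_L$. The first step is therefore to record the basic per-cube estimates: on $B_{4r_L}(p_L,\pi_0)$ one has $\|g_L\|_{C^0}\leq C\bmo^{\sfrac{1}{2m}}\ell(L)$, $\|Dg_L\|_{C^0}\leq C\bmo^{\sfrac{1}{2}}\ell(L)^{1-\delta_2}$, and higher derivative bounds $\|D^k g_L\|_{C^0}\leq C\bmo^{\sfrac{1}{2}}\ell(L)^{1-k-\delta_2}$ for $k\leq 3$ together with a $C^{3,\kappa}$ bound scaling like $\ell(L)^{-2-\kappa+(\text{something})}$; these come from interior Schauder-type estimates for the smoothed average (the convolution at scale $\ell(L)$ gains derivatives) combined with the Dirichlet energy and $L^2$ bounds on $f_L$ from \cite{DS3}, plus the $C^{2,\eps_0}$ bound on $\Psi_{p_L}$ from Lemma \ref{l:tecnico3}.

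The second, and technically central, step is to compare the interpolating functions $g_L$ and $g_H$ (equivalently $h_L$ and $h_H$, after changing coordinates between the reference planes $\pi_L$ and $\pi_H$) when $H$ and $L$ are neighboring cubes of comparable size, or when $H$ is an ancestor of $L$. Here one needs: (a) a tilting estimate $|\pi_L-\pi_H|\leq C\bmo^{\sfrac12}\ell(L)^{1-\delta_2}$ coming from the excess decay and the fact that optimal planes in nested balls cannot rotate too fast; (b) a consequence that the two graphs, read in a common system of coordinates, differ by $\|g_L-g_H\|_{C^0}\leq C\bmo^{\sfrac12}\ell(L)^{2+\beta_2/4}$ (roughly) and correspondingly in higher norms with the expected loss of powers of $\ell$. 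The key point is that both $\mathbf{G}_{h_L}$ and $\mathbf{G}_{h_H}$ are smoothed averages of (essentially) the \emph{same} current $T$ in overlapping regions, so their difference is governed by the oscillation of $\eta\circ f$, which is controlled by height, plus the mollification error, which is controlled by the Dirichlet energy. These comparison estimates, fed into the explicit formula for $\hat\varphi_j=\frac{\sum_{L\in\sP^j}\vartheta_L g_L}{\sum_{L\in\sP^j}\vartheta_L}$ and using that at a point $y$ only boundedly many cubes $L$ with comparable side-length contribute (a consequence of (w2)--(w3)) and that $|D^k\vartheta_L|\leq C\ell(L)^{-k}$, yield $\|\varphi_j\|_{C^0}\leq C\bmo^{\sfrac{1}{2m}}$ and $\|D\varphi_j\|_{C^{2,\kappa}}\leq C\bmo^{\sfrac12}$ uniformly in $j$, which is (i). One must separately treat the region near $\bGam$, where cubes shrink to zero size but the height and excess bounds \eqref{e:ex+ht_ancestors} force $\varphi_j$ to be $C^{3,\kappa}$ up to $\bGam$ with the same constant; this is where the continuity of $D^3\varphi_j$ across $\bGam$ is verified.

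The third step proves the stabilization property (ii): once $j\geq i+2$ and $L\in\sW^i$, no cube of $\sP^j$ that meets a fixed concentric dilate $H$ with $\ell(H)=\frac98\ell(L)$ has side-length smaller than a fixed fraction of $\ell(L)$ — this is forced by (w3) and the stopping mechanism, since the cubes touching $H$ are among $L$ and its neighbors or their ancestors — so the defining sum for $\hat\varphi_j$ on $H$ is finite and does not change as $j$ increases further; hence $\varphi_j=\varphi_k$ on $H$ for $j,k\geq i+2$. Combined with (i), the sequence $\varphi_j$ is then locally eventually constant on $]-4,4[^m\setminus\bGam$ and uniformly $C^{3,\kappa}$ everywhere, so by Arzel\`a--Ascoli it converges in $C^3_{\mathrm{loc}}$ to a limit $\phii$ which inherits the $C^{3,\kappa}$ bound; that $\mathcal M=\gr(\phii)$ lies in $\Sigma$ is built into the construction, since each $\varphi_j$ has the form $(\bar\varphi_j,\Psi(\cdot,\bar\varphi_j))$ and this structure is preserved in the limit. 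This gives (iii).

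I expect the main obstacle to be the second step — the neighboring-cube comparison and the resulting Schauder-type estimate for the glued map. The difficulty is bookkeeping the change of coordinates between the planes $\pi_L$ for adjacent cubes while keeping track of the precise powers of $\ell(L)$ at every derivative order (up to order three plus a H\"older seminorm), and simultaneously controlling the mollification scale $\ell(L)$, which varies from cube to cube. One has to be careful that the gain from convolution at scale $\ell(L)$ exactly compensates the loss incurred by the plane tilting and the excess/height thresholds, so that the final constant in $\|D\varphi_j\|_{C^{2,\kappa}}\leq C\bmo^{\sfrac12}$ is independent of $j$; this is precisely the point at which the seemingly arbitrary choices $\beta_2=4\delta_2$ and $\kappa=\min\{\eps_0/2,\beta_2/4\}$ in Assumption \ref{parametri} and the statement are dictated, and getting the inequalities to close requires the full strength of all the estimates assembled in Proposition \ref{p:whitney}.
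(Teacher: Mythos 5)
There is a genuine gap, and it sits exactly where you predicted the difficulty would be. Your Step 1 records per-cube bounds of the form $\|D^k g_L\|_{C^0}\leq C\bmo^{\sfrac{1}{2}}\ell(L)^{1-k-\delta_2}$ and a $C^{3,\kappa}$ seminorm degenerating like a negative power of $\ell(L)$; these are what naive convolution at scale $\ell(L)$ gives, but they are useless for the theorem: already the single term $\chi_L D^3 g_L$ in the Leibniz expansion of $D^3\hat\varphi_j$ would blow up as $\ell(L)\to 0$, so no uniform bound $\|D\varphi_j\|_{C^{2,\kappa}}\leq C\bmo^{\sfrac12}$ can follow. Likewise your Step 2 comparison $\|g_L-g_H\|_{C^0}\lesssim\bmo^{\sfrac12}\ell^{2+\beta_2/4}$ is one full power of $\ell$ short of what the gluing needs: since three derivatives fall on the cutoffs $\vartheta_L$, each costing $\ell^{-1}$, one must have $\|g_L-g_H\|_{C^i}\lesssim\bmo^{\sfrac12}\ell^{3+\kappa-i}$ (this is Proposition \ref{p:stime_chiave}(ii)) for the sum to stay bounded with a H\"older modulus. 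The mechanism you invoke to produce the comparison --- oscillation of $\etaa\circ f$ controlled by the height bound, plus a mollification error controlled by the Dirichlet energy --- can only ever yield $O(\ell^{1+\beta_2})$ in $C^0$, because that is all the height bound knows.

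The missing idea is the first-variation argument of Proposition \ref{p:pde}: testing $\delta T(\chi)=0$ with vector fields tangent to $\Sigma$ shows that $\etaa\circ\bar f_{HL}$ satisfies an elliptic system $\Delta(\etaa\circ\bar f)\approx -(\p_\pi(x-p_H))^t\cdot\bL$ with right-hand side errors of size $\bmo\,r^{m+1+\beta_2}$ in a weak sense. Because the leading term is affine in $x$ and radial mollifiers reproduce affine functions exactly (this is where $\int|x|^2\varrho=0$ and $\int\varrho=1$ enter), the difference of the smoothed averages at consecutive scales satisfies $\|\Delta D^k(\bar h_{HL}-\bar h_{HJ})\|_\infty\leq C\bmo\,r^{1+\beta_2-k}$; combined with the $L^1$ estimate \eqref{e:L1_est} and the interpolation Lemma \ref{l.interpolation} this yields $\|\bar h_{HL}-\bar h_{HJ}\|_{C^k}\leq C\bmo\,r^{3+\beta_2-k}$, and telescoping over the chain of ancestors gives both the \emph{uniform} bound $\|Dg_H\|_{C^{2,\kappa}}\leq C\bmo^{\sfrac12}$ and the neighboring-cube estimate with the correct exponent $3+\kappa-i$. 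Without this harmonicity of the average, neither your Step 1 nor your Step 2 closes. (Two smaller points: for the H\"older seminorm of $D^3\hat\varphi_j$ between points in non-touching cubes you also need the pointwise comparison $|D^3g_H(x_H)-D^3g_L(x_L)|\leq C\bmo^{\sfrac12}|x_H-x_L|^\kappa$ obtained by chaining through common ancestors; and in (iii) the sequence is not eventually constant near $\bGam$, so the paper proves $\|\varphi_j-\varphi_{j+1}\|_{C^0}\leq C2^{-j}$ directly, though your Arzel\`a--Ascoli route can be repaired since the limit is determined on the dense complement of $\bGam$.)
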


\begin{definition}[Whitney regions]\label{d:cm}
The manifold $\cM$ in Theorem \ref{t:cm} is called
{\em a center manifold of $T$ relative to $\pi_0$} and 
$(\bGam, \sW)$ the {\em Whitney decomposition associated to $\cM$}. 
Setting $\Phii(y) := (y,\phii(y))$, we call
$\Phii (\bGam)$ the {\em contact set}.
Moreover, to each $L\in \sW$ we associate a {\em Whitney region} $\cL$ on $\cM$ as follows:
\begin{itemize}
\item[(WR)] $\cL := \Phii (H\cap [-\frac{7}{2},\frac{7}{2}]^m)$, where $H$ is the cube concentric to $L$ with $\ell (H) = \frac{17}{16} \ell (L)$.
\end{itemize}
\end{definition}

\section{The $\cM$-normal approximation and related estimates}

In what follows we assume that the conclusions of Theorem \ref{t:cm} apply and denote by $\cM$ the corresponding
center manifold. For any Borel set $\cV\subset \cM$ we will denote 
by $|\cV|$ its $\cH^m$-measure and will write $\int_\cV f$ for the integral of $f$
with respect to $\cH^m$. 
$\cB_r (q)$ denotes the geodesic balls in $\cM$. Moreover, we refer to \cite{DS2}
for all the relevant notation pertaining to the differentiation of (multiple valued)
maps defined on $\cM$, induced currents, differential geometric tensors and so on.

\begin{ipotesi}\label{intorno_proiezione}
We fix the following notation and assumptions.
\begin{itemize}
\item[(U)] $\bU :=\big\{x\in \R^{m+n} : \exists !\, y = \p (x) \in \cM \mbox{ with $|x- y| <1$ and
$(x-y)\perp \cM$}\big\}$.
\item[(P)] $\p : \bU \to \cM$ is the map defined by (U).
\item[(R)] For any choice of the other parameters, we assume $\eps_2$ to be so small that
$\p$ extends to $C^{2, \kappa}(\bar\bU)$ and
$\p^{-1} (y) = y + \overline{B_1 (0, (T_y \cM)^\perp)}$ for every $y\in \cM$.
\item[(L)] We denote by $\partial_l \bU := \p^{-1} (\de \cM)$ 
the {\em lateral boundary} of $\bU$.
\end{itemize}
\end{ipotesi}

The following is then a corollary of Theorem \ref{t:cm} and the construction algorithm.

\begin{corollary}\label{c:cover}
Under the hypotheses of Theorem \ref{t:cm} and of Assumption \ref{intorno_proiezione}
we have:
\begin{itemize}
\item[(i)] $\supp (\partial (T\res \bU)) \subset \partial_l \bU$, 
$\supp (T\res [-\frac{7}{2}, \frac{7}{2}]^m \times \R^n) \subset \bU$ 
and $\p_\sharp (T\res \bU) = Q \a{\cM}$;
\item[(ii)] $\supp (\langle T, \p, \Phii (q)\rangle) \subset 
\big\{y\, : |\Phii (q)-y|\leq C \bmo^{\sfrac{1}{2m}} 
\ell (L)^{1+\beta_2}\big\}$ for every $q\in L\in \sW$, where\\
$C= C(\beta_2, \delta_2, M_0, N_0,  C_e, C_h)$;
\item[(iii)]  $\langle T, \p, p\rangle = Q \a{p}$ for every $p\in \Phii (\bGam)$.
\end{itemize}
\end{corollary}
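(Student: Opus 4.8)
\textbf{Proof proposal for Corollary \ref{c:cover}.}

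The plan is to read off each of the three conclusions from the center manifold estimates in Theorem \ref{t:cm}, together with the height bound \eqref{e:pre_height} of Lemma \ref{l:tecnico1}, the slicing theory for currents, and the monotonicity/constancy properties of area minimizing currents. For part (i): since $\|\phii\|_{C^0}\leq C\bmo^{\sfrac{1}{2m}}$ and $\|D\phii\|_{C^{2,\kappa}}\leq C\bmo^{\sfrac12}$ by Theorem \ref{t:cm}(i), the graph $\cM$ over $[-\frac72,\frac72]^m$ stays within a tube of thickness $C\bmo^{\sfrac1{2m}}$ of $\pi_0$, while by \eqref{e:pre_height} the support of $T$ in the cylinder $\bC_{5\sqrt m}(0,\pi_0)$ lies within $C_0\bmo^{\sfrac1{2m}}$ of $\pi_0$ as well; comparing these two bounds and using that $\eps_2$ is small, every point of $\supp(T)\cap([-\frac72,\frac72]^m\times\R^n)$ lies at normal distance $<1$ from $\cM$, hence in $\bU$. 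The statement on $\supp(\partial(T\res\bU))$ then follows because $\partial T\res \bC_{11\sqrt m/2}(0,\pi_0)=0$ by \eqref{e:geo semplice 1}, so the only contribution to the boundary of the restriction comes from where $\bU$ is ``cut'', namely the lateral boundary $\partial_l\bU$. Finally $\p_\sharp(T\res\bU)=Q\a{\cM}$: the pushforward is an integer rectifiable $m$-current supported in $\cM$ with $\partial$ supported in $\partial\cM$; by \eqref{e:geo semplice 1} its projection to $\pi_0$ is $Q\a{B_{11\sqrt m/2}(0,\pi_0)}$ restricted appropriately, and since $\p$ is a $C^{2,\kappa}$ diffeomorphism close to $\Id$ on the relevant region, a degree/constancy argument forces the multiplicity to be the constant $Q$ on $\cM$.

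For part (ii), fix $q\in L\in\sW$ and consider the slice $\langle T,\p,\Phii(q)\rangle$, which is supported in $\p^{-1}(\Phii(q))=\Phii(q)+\overline{B_1(0,(T_{\Phii(q)}\cM)^\perp)}$ by Assumption \ref{intorno_proiezione}(R). The point is to upgrade the trivial bound $1$ on the fiber radius to the sharp $C\bmo^{\sfrac1{2m}}\ell(L)^{1+\beta_2}$. This I would do using the Whitney estimates \eqref{e:ex+ht_whitney}: the cube $L$ survived the refining until stopped, so its father $L'$ (or the relevant ancestor in $\sS$) satisfies the height bound $\bh(T,\B_{L'})\leq C_h\bmo^{\sfrac1{2m}}\ell(L')^{1+\beta_2}\leq C\bmo^{\sfrac1{2m}}\ell(L)^{1+\beta_2}$, and $\B_L=\bB_{64 r_L}(p_L)$ with $r_L=M_0\sqrt m\,\ell(L)$ contains a fixed-factor enlargement of the relevant region around $\Phii(q)$ (since $q\in L$ and $|p_L-\Phii(x_L)|$ is controlled by the $C^0$ bound on $\phii$ and the height). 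Hence any point $y\in\supp(T)$ with $\p(y)=\Phii(q)$ lies in $\B_{L'}$ (after fixing constants), so $|\p_{\hat\pi_{L'}^\perp}(y)-\p_{\hat\pi_{L'}^\perp}(p_{L'})|\leq \bh(T,\B_{L'})$, and since $\Phii(q)$ and $p_{L'}$ are themselves within $C\bmo^{\sfrac1{2m}}\ell(L)^{1+\beta_2}$ of each other in the direction transverse to a fixed plane close to $\pi_0$, the triangle inequality combined with the near-orthogonality $|\pi_0-\hat\pi_{L'}|\leq C\bmo^{\sfrac12}\ell(L')^{1-\delta_2}$ yields $|y-\Phii(q)|\leq C\bmo^{\sfrac1{2m}}\ell(L)^{1+\beta_2}$, which is exactly (ii).

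For part (iii), let $p\in\Phii(\bGam)$. By definition of $\bGam=\bigcap_{j\geq N_0}\bigcup_{L\in\sS^j}L$, the point $\p_{\pi_0}(p)$ lies in a nested sequence of stopped cubes $L_j\in\sS^j$ with $\ell(L_j)\to 0$, and on each of these the excess and height satisfy the stopping-threshold bounds \eqref{e:ex+ht_ancestors}. Thus as $j\to\infty$ the current $T$, rescaled around $p$ at scale $\ell(L_j)$, has excess $\to 0$ and height $\to 0$ relative to $\pi_0$ (after normalization); consequently every tangent cone of $T$ at $p$ is a flat $Q$-plane parallel to $\pi_0$, and in particular $\Theta(p,T)=Q$. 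Now the slice $\langle T,\p,p\rangle$ is an integer rectifiable $0$-current of the form $\sum_i\a{p_i}$ with total mass $Q$ (from $\p_\sharp(T\res\bU)=Q\a{\cM}$ together with the coarea/slicing identity), supported in the tiny ball $\{|p-y|\leq C\bmo^{\sfrac1{2m}}\ell(L_j)^{1+\beta_2}\}$ for every $j$ by part (ii) applied along the sequence $L_j$; letting $j\to\infty$ the support shrinks to the single point $p$, forcing $\langle T,\p,p\rangle=Q\a{p}$.

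The main obstacle I expect is part (i), specifically the verification that $\p_\sharp(T\res\bU)=Q\a{\cM}$: establishing that the multiplicity is the \emph{constant} $Q$ (and not merely that the pushforward is some integer multiple supported on $\cM$) requires carefully combining the constancy theorem with the fact that over the contact set the density of $T$ is exactly $Q$ (part (iii)-type information at the boundary of the argument) and that $\bGam$ is nonempty, so that one genuinely pins down the value of the constant rather than just its existence. The slicing bound in (ii) is technically the most computation-heavy, but conceptually it is a routine propagation of the Whitney height estimate through the near-orthogonal change of planes; the delicate logical point is to make sure the ancestor cube whose height bound one invokes has comparable side-length to $L$, which is exactly what the remark ``if $j>N_0$ and $L\in\sS^j\cup\sW^j$ then its father belongs to $\sS^{j-1}$'' together with \eqref{e:ex+ht_whitney} guarantees.
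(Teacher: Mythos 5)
The overall architecture of your proof matches the paper's, but part (ii) has a genuine gap. You reduce (ii) to the claim that $\Phii(q)$ and the point $p_{L'}\in\supp(T)$ lie within $C\bmo^{\sfrac{1}{2m}}\ell(L)^{1+\beta_2}$ of each other in the transverse direction, and you justify this by ``the $C^0$ bound on $\phii$ and the height''. But $\|\phii\|_{C^0}\leq C\bmo^{\sfrac{1}{2m}}$ only locates $\cM$ at scale $\bmo^{\sfrac{1}{2m}}$, which misses the crucial factor $\ell(L)^{1+\beta_2}$; the height bound $\bh(T,\B_{L'})$ controls the spread of $\supp(T)$ but says nothing about where $\cM$ sits relative to it. The point that actually needs proving — and is the content of the paper's argument — is that over the Whitney region the center manifold passes within $C\bmo^{\sfrac{1}{2m}}\ell(L)^{1+\beta_2}$ of $\supp(T)$. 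This requires the chain: $\|\phii-g_L\|_{C^0}\leq C\bmo^{\sfrac{1}{2}}\ell(L)^{3+\kappa}$ on the concentric cube (from Theorem \ref{t:cm}(ii) and Proposition \ref{p:stime_chiave}(ii), since $\phii$ near $L$ is glued only from the $g_H$ with $H\cap L\neq\emptyset$); the identification $\bG_{g_L}=\bG_{h_L}$; and $\|h_L-w_L\|_{C^0}\leq C\bmo^{\sfrac{1}{2m}}\ell(L)^{1+\beta_2}$, obtained from the oscillation bound of \cite[Theorem 1.4]{DS3} for the $\pi_L$-approximation $f_L$ together with the anchoring $p_L=(z_L,w_L)\in\supp(T)$. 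Without this last chain the conclusion of (ii) does not follow from the Whitney height estimates alone.

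Two smaller issues. In (iii) you invoke ``part (ii) applied along the sequence $L_j$'', but (ii) is stated for $q\in L\in\sW$, whereas the $L_j$ through a point of $\bGam$ all lie in $\sS^j$ and $\bGam$ meets no cube of $\sW$; you must instead rerun the height-bound argument directly for the $\sS$-cubes using \eqref{e:ex+ht_ancestors} and Proposition \ref{p:tilting opt}(vi), which is what the paper does (and one must also use that the fiber direction is nearly orthogonal to $\pi_j$, with error $C\bmo^{\sfrac{1}{2}}2^{-j(1-\delta_2)}$, to absorb the tilting). In (i), your ``degree/constancy argument'' for $\p_\sharp(T\res\bU)=Q\a{\cM}$ is the right idea but needs to be made precise: the paper interpolates $\phii_t=t\phii$, obtains $(\p_t)_\sharp(T\res\bU_t)=Q(t)\a{\cM_t}$ with $Q(t)$ an integer depending continuously on $t$, and evaluates at $t=0$ where $\p_0=\p_{\pi_0}$ and \eqref{e:geo semplice 1} gives $Q(0)=Q$. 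Your fallback of pinning down the constant via density-$Q$ points of the contact set is not robust, since $\bGam$ may be empty.
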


The main goal of this paper is to couple the center manifold of Theorem \ref{t:cm} with a good approximating map defined on it.

\begin{definition}[$\cM$-normal approximation]\label{d:app}
An {\em $\cM$-normal approximation} of $T$ is given by a pair $(\cK, F)$ such that
\begin{itemize}
\item[(A1)] $F: \cM\to \Iq (\bU)$ is Lipschitz (with respect to the geodesic distance on $\cM$) and takes the special form 
$F (x) = \sum_i \a{x+N_i (x)}$, with $N_i (x)\perp T_x \cM$ and $x+N_i (x) \in \Sigma$
for every $x$ and $i$.
\item[(A2)] $\cK\subset \cM$ is closed, contains $\Phii \big(\bGam\cap [-\frac{7}{2}, \frac{7}{2}]^m\big)$ and $\bT_F \res \p^{-1} (\cK) = T \res \p^{-1} (\cK)$.
\end{itemize}
The map $N = \sum_i \a{N_i}:\cM \to \Iq (\R^{m+n})$ is {\em the normal part} of $F$.
\end{definition}

In the definition above it is not required that the map $F$ approximates efficiently the current
outside the set $\Phii \big(\bGam\cap [-\frac{7}{2}, \frac{7}{2}]^m\big)$. However, all the maps constructed
in this paper and used in the subsequent note \cite{DS5} will approximate $T$ with a high degree of accuracy
in each Whitney region: such estimates are detailed
in the next theorem. In order to simplify the notation, we will use $\|N|_{\cV}\|_{C^0}$ (or $\|N|_{\cV}\|_0$) to denote the number
$\sup_{x\in \cV} \cG (N (x), Q\a{0})$.

\begin{theorem}[Local estimates for the $\cM$-normal approximation]\label{t:approx}
Let $\gamma_2 := \frac{\gamma_1}{4}$, with $\gamma_1$ the constant
of \cite[Theorem 1.4]{DS3}.
Under the hypotheses of Theorem \ref{t:cm} and Assumption~\ref{intorno_proiezione},
if $\eps_2$ is suitably small (depending upon all other parameters), then
there is an $\cM$-normal approximation $(\cK, F)$ such that
the following estimates hold on every Whitney region $\cL$ associated to
a cube $L\in \sW$, with constants $C = C(\beta_2, \delta_2, M_0, N_0, C_e, C_h)$:
\begin{gather}
\Lip (N|
_\cL) \leq C \bmo^{\gamma_2} \ell (L)^{\gamma_2} \quad\mbox{and}\quad  \|N|
_\cL\|_{C^0}\leq C \bmo^{\sfrac{1}{2m}} \ell (L)^{1+\beta_2},\label{e:Lip_regional}\\
|\cL\setminus \cK| + \|\bT_F - T\| (\p^{-1} (\cL)) \leq C \bmo^{1+\gamma_2} \ell (L)^{m+2+\gamma_2},\label{e:err_regional}\\
\int_{\cL} |DN|^2 \leq C \bmo \,\ell (L)^{m+2-2\delta_2}\, .\label{e:Dir_regional}
\end{gather}
Moreover, for any $a>0$ and any Borel $\cV\subset \cL$, we have (for $C=C(\beta_2, \delta_2, M_0, N_0, C_e, C_h)$)
\begin{equation}\label{e:av_region}
\int_\cV |\etaa\circ N| \leq 
{ C \bmo \left(\ell (L)^{m+3+\sfrac{\beta_2}{3}} + a\,\ell (L)^{2+\sfrac{\gamma_2}{2}}|\cV|\right)}  + \frac{C}{a} 
\int_\cV \cG \big(N, Q \a{\etaa\circ N}\big)^{2+\gamma_2}\, .
\end{equation} 
\end{theorem}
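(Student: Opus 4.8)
### Proof proposal for Theorem \ref{t:approx}

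The plan is to build the approximation cube by cube, exploiting that the construction of $\cM$ already singled out, for each $L\in\sW\cup\sS$, a reference plane $\pi_L$ together with a $\pi_L$-approximation $f_L$ produced by \cite[Theorem~1.4]{DS3}; the map $N$ should be, on each Whitney region $\cL$, essentially the normal reparametrization of $f_L$ over $\cM$. The first step is to fix $L\in\sW$ and transfer $f_L$ from the tilted cylinder $\bC_{8r_L}(p_L,\pi_L)$ to a multivalued graph over $\cM$: since $|\pi_L - T_{p_L}\cM|\le C\bmo^{1/2}$ and $\cM$ itself is a $C^{3,\kappa}$ graph with $\|D\phii\|_{C^{2,\kappa}}\le C\bmo^{1/2}$ over $\pi_0$, the orthogonal projection $\p$ onto $\cM$ is a $C^{2,\kappa}$ diffeomorphism near $\supp(T)$ (Assumption~\ref{intorno_proiezione}(R)), and one can change coordinates to write $T\res\p^{-1}(\cL)$ as $\bG_{N_L}$ for a Lipschitz $Q$-valued $N_L$ on $\cL$ with values in the normal bundle, up to the same error set on which $f_L$ fails to parametrize $T$. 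The Lipschitz bound \eqref{e:Lip_regional} and the Dirichlet bound \eqref{e:Dir_regional} then come from the corresponding estimates of \cite[Theorem~1.4]{DS3} — $\Lip(f_L)\le C\bE^{\gamma_1}\le C(\bmo\,\ell(L)^{2-2\delta_2})^{\gamma_2/\ldots}$ and $\int|Df_L|^2\le C\bE\,r_L^m$ — combined with the Whitney estimate \eqref{e:ex+ht_whitney} for $\bE(T,\B_L)$ and $\bh(T,\B_L)$; the $C^0$ bound on $N_L$ is exactly the height bound $\bh(T,\B_L)\le C\bmo^{1/2m}\ell(L)^{1+\beta_2}$ transported to $\cM$ (plus the distance of $\cM$ from the sheets, which is of the same order because $\cM$ is built from the smoothed average $\hat h_L$, also controlled by the height). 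The error estimate \eqref{e:err_regional} combines the bad-set estimate of \cite[Theorem~1.4]{DS3}, $|B_{8r_L}\setminus K_L| + \mass\big((\bT_{f_L}-T)\res\bC_{8r_L}\big)\le C\bE^{1+\gamma_1}r_L^{m+2}$, with the inclusion of $\cL$ in the region covered by a bounded number of such cylinders (by (w3)); one also checks that $\bT_{N_L}$ and $T$ agree on $\p^{-1}(\cL\cap\cK)$, defining $\cK$ as the union over $L$ of the images under $\p$ of the good sets $K_L$, intersected appropriately so that $\cK$ is closed and contains $\Phii(\bGam\cap[-\tfrac72,\tfrac72]^m)$ — the latter because on the contact set $\langle T,\p,p\rangle = Q\a{p}$ by Corollary~\ref{c:cover}(iii).

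The genuinely delicate part is gluing the local $N_L$ into a single globally Lipschitz $F:\cM\to\Iq(\bU)$ that still takes the special form $F(x)=\sum_i\a{x+N_i(x)}$ with $x+N_i(x)\in\Sigma$. Adjacent cubes $L,L'$ carry different reference planes $\pi_L,\pi_{L'}$ and a priori different $\pi$-approximations, so one needs the comparison estimates between $f_L$ and $f_{L'}$ (of the type proved for the interpolating functions $g_L$ in Lemma~\ref{l:tecnico2} and Theorem~\ref{t:cm}(ii), i.e. uniqueness-type estimates showing that two Lipschitz approximations of the same current on overlapping cylinders are close in $L^2$/energy): this forces the $N_L$ to be close on overlaps up to errors of the right order, so that a partition-of-unity gluing (with the same $\vartheta_L$'s used for $\varphi_j$) produces a global map whose Lipschitz constant, $C^0$ norm and Dirichlet energy on each $\cL$ are still governed by \eqref{e:Lip_regional}–\eqref{e:Dir_regional}. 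The constraint $x+N_i(x)\in\Sigma$ is then restored by composing with the nearest-point projection onto $\Sigma$ (which costs only $O(\bA)=O(\bmo^{1/2})$ and is harmless at the orders in play), and finally one re-selects the good set $\cK$ so that (A2) holds for the glued $F$ rather than the pieces; this is where the bulk of the technical work lies, because one must track that the finitely-many overlap corrections do not destroy the $\bmo^{1+\gamma_2}\ell(L)^{m+2+\gamma_2}$ error bound.

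Finally, for \eqref{e:av_region} the point is to estimate the average $\etaa\circ N$ using that $N$ is, up to a small error, the normal reparametrization of $f_L$ whose average $\etaa\circ f_L$ has smoothed version $\hat h_L$ used to build $\cM$. Concretely, $\etaa\circ N$ differs from the (signed) normal component of $\etaa\circ f_L - \hat h_L$ by a curvature term of size $O(\bmo^{1/2})\,\|N\|_0^2$ plus a term comparable to $|\etaa\circ f_L - \hat h_L|$; the first is $\le C\bmo\,\ell(L)^{2+2\beta_2}$ which is absorbed into $C\bmo\,\ell(L)^{3+\beta_2/3}|\cV|$, while for the second I would use that $\hat h_L = (\etaa\circ f_L)*\varrho_{\ell(L)}$, so $|\etaa\circ f_L-\hat h_L|(x)$ is controlled by an oscillation of $\etaa\circ f_L$ over a ball of radius $\ell(L)$, hence by $\ell(L)^{1/2}(\int_{B_{\ell(L)}}|D(\etaa\circ f_L)|^2)^{1/2}$; the harmonic-approximation/elliptic estimate $\int|D(\etaa\circ f_L)|^2\le C\int|Df_L|^2 \le C\bmo\,\ell(L)^{m+2-2\delta_2}$ together with a Poincaré/interpolation inequality of the form $\int_\cV|\etaa\circ f|\le C a\,\ell(L)\int_\cV|D(\etaa\circ f)| + \tfrac{C}{a}\int_\cV\cG(N,Q\a{\etaa\circ N})^{2+\gamma_2}$ (the second piece capturing the deviation of $N$ from $Q$ copies of its average, which by \cite{DS2} dominates the nonlinear part of $\etaa$) yields the stated trichotomy after choosing exponents so that $2+\gamma_2/2$ and $3+\beta_2/3$ emerge. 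The parameter $a$ enters exactly through this last interpolation, so no further optimization in $a$ is needed. The main obstacle, as noted, is the gluing step and the attendant comparison estimates between $\pi_L$- and $\pi_{L'}$-approximations; everything else is a bookkeeping of the \cite{DS3} estimates against the Whitney bounds \eqref{e:ex+ht_whitney}.
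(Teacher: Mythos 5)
Your first paragraph is essentially the paper's setup: on each $\cL'$ one reparametrizes $\bG_{f_L}$ over $\cM$ via \cite[Theorem 5.1]{DS2} to get $F_L=\sum_i\a{p+(N_L)_i(p)}$, and $\cK$ is obtained by removing the projections of the sets where $\gr(f_M)$ and $\supp(T)$ differ, for $M$ ranging over the neighbours of $L$; the bounds \eqref{e:Lip_regional}--\eqref{e:err_regional} then follow from \cite[Theorem 1.4]{DS3} and Corollary \ref{c:cover}(ii) as you say. But your second paragraph misidentifies the gluing mechanism, and in a way that would break the argument. A ``partition-of-unity gluing with the $\vartheta_L$'s'' is not a well-defined operation on $\Iq$-valued maps: $\Iq(\R^n)$ has no linear structure, so convex combinations of the $N_L$'s do not make sense (and any measurable pairing of the sheets would destroy the Lipschitz bound near points where sheets collide). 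The paper's point is that no gluing is needed on $\cK$ at all: there the maps $F_L$ and $F_J$ of overlapping regions \emph{coincide exactly}, because both restrict to the slice of the same current $T$ under $\p$. Off $\cK$ one does not interpolate between the $F_L$'s either; one extends the single well-defined map $N|_\cK$ by the $Q$-valued Lipschitz extension theorem \cite[Theorem 1.7]{DS1}, applied inductively on a decomposition into neighbourhoods $U^\sigma$ of the skeleta of $\sW$ (so that points treated in different extension steps are $c_0\ell(L)$ apart, which is what preserves the local Lipschitz constant $C\bmo^{\gamma_2}\ell(L)^{\gamma_2}$). Also, restoring $x+N_i(x)\in\Sigma$ by the nearest-point projection onto $\Sigma$ would destroy the orthogonality $N_i(x)\perp T_x\cM$; one must first project onto $(T_x\cM)^\perp$ and then use the map $\Xi(x,\cdot)$ which selects the point of $\Sigma$ orthogonal to $T_x\cM$ with prescribed component in $T_x\Sigma\cap (T_x\cM)^\perp$.

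The second genuine gap is in your treatment of \eqref{e:av_region}. You propose to bound $|\etaa\circ f_L-\hat h_L|$ by the oscillation of $\etaa\circ f_L$ on a ball of radius $\ell(L)$, controlled through $\int|D(\etaa\circ f_L)|^2\le C\bmo\,\ell(L)^{m+2-2\delta_2}$. This yields at best $\int_\cV|\etaa\circ f_L-\hat h_L|\lesssim \bmo^{\sfrac12}\ell(L)^{2-\delta_2}|\cV|$, which falls short of the required $\bmo\,\ell(L)^{3+\sfrac{\beta_2}{3}}|\cV|$ by half a power of $\bmo$ and more than a full power of $\ell(L)$. The exponent $3+\sfrac{\beta_2}{3}$ cannot be reached by any Poincar\'e-type argument; it comes from the first-variation computation of Proposition \ref{p:pde}, which shows that $\etaa\circ\bar f_L$ solves an elliptic system up to errors of size $\bmo\,r^{m+1+\beta_2}$, combined with the hypothesis $\int|x|^2\varrho=0$ on the mollifier (so that convolution with $\varrho_\ell$ gains \emph{two} powers of $\ell$ on almost-solutions, giving \eqref{e:L1_est}: $\|\bar h_L-\etaa\circ\bar f_L\|_{L^1}\le C\bmo\,r^{m+3+\beta_2}$). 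This estimate propagates into Proposition \ref{p:stime_chiave}(v), $\|\phii-g_L\|_{L^1}\le C\bmo\,\ell(L)^{m+3+\sfrac{\beta_2}{3}}$, which is what the paper actually feeds into \eqref{e:av_region} after the pointwise bound of \cite[Theorem 5.1(5.4)]{DS2} and a Young inequality in the parameter $a$ (producing the $a\,\ell(L)^{2+\sfrac{\gamma_2}{2}}$ and $a^{-1}\cG(N,Q\a{\etaa\circ N})^{2+\gamma_2}$ terms). Without the PDE for the average and the vanishing second moment of $\varrho$, the ``well-centered'' estimate \eqref{e:av_region} is out of reach.
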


From \eqref{e:Lip_regional} - \eqref{e:Dir_regional} it is not difficult to infer analogous ``global versions'' of the estimates.

\begin{corollary}[Global estimates]\label{c:globali} Let $\cM'$ be
the domain $\Phii \big([-\frac{7}{2}, \frac{7}{2}]^m\big)$ and $N$ the map of Theorem \ref{t:approx}. Then,  (again with $C = C(\beta_2, \delta_2, M_0, N_0, C_e, C_h)$)
\begin{gather}
\Lip (N|_{\cM'}) \leq C \bmo^{\gamma_2} \quad\mbox{and}\quad \|N|_{\cM'}\|_{C^0}
\leq C \bmo^{\sfrac{1}{2m}},\label{e:global_Lip}\\ 
|\cM'\setminus \cK| + \|\bT_F - T\| (\p^{-1} (\cM')) \leq C \bmo^{1+\gamma_2},\label{e:global_masserr}\\
\int_{\cM'} |DN|^2 \leq C \bmo\, .\label{e:global_Dir}
\end{gather}
\end{corollary}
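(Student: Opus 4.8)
The strategy is to deduce the global bounds from the local ones stated in Theorem~\ref{t:approx} by summing over the Whitney cubes $L\in\sW$ that cover $[-4,4]^m$ (more precisely, the part of the decomposition meeting $[-\frac72,\frac72]^m$). The geometric input is that the Whitney regions $\cL = \Phii(H\cap[-\frac72,\frac72]^m)$, with $H$ concentric to $L$ and $\ell(H)=\frac{17}{16}\ell(L)$, have bounded overlap — each point of $\cM'$ lies in at most $C_0$ of them — because of the Whitney property (w3) (comparable sidelengths of neighbouring cubes) and the fact that $\Phii$ is a bi-Lipschitz parametrization with constant close to $1$ by Theorem~\ref{t:cm}(i). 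Thus $\sum_{L\in\sW}|\cL|\le C_0\,|\cM'|\le C_0$, and more generally any sum $\sum_L \int_\cL g$ is controlled by $C_0\int_{\cM'} g$ up to the overlap constant.

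\textbf{Key steps.} First I would record the elementary bound $\ell(L)\le C_0$ for every $L\in\sW$ (indeed $\ell(L)\le 2^{1-N_0}\le C_0$), so that every power $\ell(L)^{\alpha}$ with $\alpha\ge 0$ appearing on the right-hand sides of \eqref{e:Lip_regional}--\eqref{e:Dir_regional} is bounded by a geometric constant. Then:
\begin{itemize}
\item[(i)] For \eqref{e:global_Lip}: the $C^0$ bound is immediate since $\|N|_{\cM'}\|_{C^0}=\sup_L\|N|_\cL\|_{C^0}\le C\bmo^{\sfrac1{2m}}\sup_L\ell(L)^{1+\beta_2}\le C\bmo^{\sfrac1{2m}}$. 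For the Lipschitz bound one argues that a curve in $\cM'$ can be broken into pieces lying in individual Whitney regions (using that neighbouring $\cL$'s overlap and have comparable size), on each of which $\Lip(N|_\cL)\le C\bmo^{\gamma_2}\ell(L)^{\gamma_2}\le C\bmo^{\gamma_2}$; together with the uniform $C^0$ bound this gives a global Lipschitz constant $C\bmo^{\gamma_2}$ on $\cM'$. (Alternatively one invokes that $N$ is already known to be Lipschitz on $\cM$ by (A1), and only the quantitative constant needs to be extracted regionally.)
\item[(ii)] For \eqref{e:global_masserr} and \eqref{e:global_Dir}: sum the regional estimates. Using bounded overlap,
\[
|\cM'\setminus\cK|+\|\bT_F-T\|(\p^{-1}(\cM'))\le \sum_{L\in\sW}\Big(|\cL\setminus\cK|+\|\bT_F-T\|(\p^{-1}(\cL))\Big)\le C\bmo^{1+\gamma_2}\sum_{L\in\sW}\ell(L)^{m+2+\gamma_2},
\]
and likewise $\int_{\cM'}|DN|^2\le C_0\sum_L\int_\cL|DN|^2\le C\bmo\sum_L\ell(L)^{m+2-2\delta_2}$. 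It remains to bound the cube sums $\sum_{L\in\sW}\ell(L)^{m+\sigma}$ for $\sigma>0$. Since the cubes in $\sW$ have pairwise disjoint interiors and are contained in $[-4,4]^m$, $\sum_{L\in\sW}\ell(L)^m=C_0\sum_{L\in\sW}|L|\le C_0\,8^m$, and then $\sum_L\ell(L)^{m+\sigma}\le (\sup_L\ell(L))^{\sigma}\sum_L\ell(L)^m\le C_0$.
\end{itemize}

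\textbf{Main obstacle.} The only genuinely non-routine point is justifying the bounded overlap of the Whitney regions $\cL$ and, relatedly, the patching argument for the global Lipschitz constant: one must check that enlarging $L$ to the concentric cube $H$ with $\ell(H)=\frac{17}{16}\ell(L)$ (as in (WR)) does not destroy the finite-overlap property, which follows from (w2)--(w3) since an $H$ of sidelength $\frac{17}{16}\ell(L)$ can meet only boundedly many cubes of comparable size, and from the fact that $\Phii$ distorts distances and measures by a factor $1+O(\bmo^{\sfrac12})$. Once this combinatorial/geometric fact is in place, everything else is a direct summation, and the remaining estimates \eqref{e:global_Lip}--\eqref{e:global_Dir} drop out. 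The passage from the regional to the global form therefore costs only a further geometric factor, which is absorbed into $C=C(\beta_2,\delta_2,M_0,N_0,C_e,C_h)$.
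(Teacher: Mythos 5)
Your treatment of \eqref{e:global_masserr}, \eqref{e:global_Dir} and of the $C^0$ part of \eqref{e:global_Lip} is correct and is essentially the paper's argument: one only needs $\sum_{L\in\sW}\ell(L)^m\leq C_0$ (disjoint interiors inside $[-4,4]^m$) together with $\ell(L)\leq 1$, so that $\sum_L\ell(L)^{m+\sigma}\leq C_0$ for $\sigma>0$. Note that for these inequalities you do not actually need bounded overlap of the regions $\cL$ — that would be relevant for a reverse inequality — you only need that the $\cL$ cover $\cM'\setminus\Phii(\bGam)$ and that $N\equiv Q\a{0}$ on $\Phii(\bGam)$; so the point you single out as the ``main obstacle'' is not where the difficulty lies.

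The genuine gap is in the global Lipschitz bound. Your chaining argument (break a segment into finitely many pieces, each inside one Whitney region) works only when the segment $[x,y]\subset[-\tfrac72,\tfrac72]^m$ is disjoint from the contact set $\bGam$: since the cubes of $\sW$ shrink as they approach $\bGam$, a segment meeting $\bGam$ cannot be decomposed into finitely many pieces each lying in a single $L\in\sW$, and the ``uniform $C^0$ bound'' $\|N\|_{C^0}\leq C\bmo^{\sfrac{1}{2m}}$ that you invoke to close this case gives no control on the difference quotient when $|x-y|$ is small. What is needed — and what the paper uses — is the combination of three facts: $N\equiv Q\a{0}$ on $\Phii(\bGam)$; the \emph{scale-dependent} sup bound $\|N|_{\cL}\|_{C^0}\leq C\bmo^{\sfrac{1}{2m}}\ell(L)^{1+\beta_2}$ from \eqref{e:Lip_regional}; and the separation property \eqref{e:separazione}, which gives $\ell(L)\leq\frac12\,|x-y|$ whenever $x\in\bGam$ and $y\in L$. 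These yield $\cG(N(\Phii(y)),Q\a{0})\leq C\bmo^{\sfrac{1}{2m}}\ell(L)^{1+\beta_2}\leq C\bmo^{\sfrac{1}{2m}}|x-y|$, which handles the case of one endpoint on $\bGam$ and, splitting at the first and last intersection of $[x,y]$ with $\bGam$, the case of a segment crossing $\bGam$; the remaining case ($[x,y]\cap\bGam=\emptyset$) is the one your chaining covers, since then the segment has positive distance from $\bGam$ and meets only finitely many cubes. One finally uses $\gamma_2\leq\frac{1}{2m}$ to unify the exponents of $\bmo$. Your alternative suggestion — that $N$ is ``already known to be Lipschitz by (A1)'' — is not a substitute, since (A1) is a qualitative statement and carries no constant.
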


\section{Additional conclusions upon $\cM$ and the $\cM$-normal approximation}

\subsection{Height bound and separation}
We now analyze more in detail the consequences of the various stopping conditions for the cubes in $\sW$. 
We first deal with $L\in \sW_h$.

\begin{proposition}[Separation]\label{p:separ}
There is a constant $C^\sharp (M_0) > 0$ with the following property.
Assume the hypotheses of Theorem \ref{t:approx} and in addition
$C_h^{2m} \geq C^\sharp C_e$. 
If $\eps_2$ is sufficiently small, then the following conclusions hold for every $L\in \sW_h$:
\begin{itemize}
\item[(S1)] $\Theta (T, p) \leq Q - \frac{1}{2}$ for every $p\in \B_{16 r_L} (p_L)$.
\item[(S2)] $L\cap H= \emptyset$ for every $H\in \sW_n$
with $\ell (H) \leq \frac{1}{2} \ell (L)$;
\item[(S3)] $\cG \big(N (x), Q \a{\etaa \circ N (x)}\big) \geq \frac{1}{4} C_h \bmo^{\sfrac{1}{2m}}
\ell (L)^{1+\beta_2}$  for every { $x\in \Phii (B_{2 \sqrt{m} \ell (L)} (x_L, \pi_0))$}.
\end{itemize}
\end{proposition}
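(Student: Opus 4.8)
The plan is to extract geometric information from the stopping condition (HT) and combine it with the height bound of Proposition~\ref{p:whitney} at the father's scale. If $L\in\sW_h$, then $L\notin\sW_e$, so by (EX) we control the excess $\bE(T,\B_L)\le C_e\bmo\,\ell(L)^{2-2\delta_2}$; by the refining procedure the father $L'$ of $L$ lies in $\sS$, so \eqref{e:ex+ht_ancestors} gives the ``good'' height bound $\bh(T,\B_{L'})\le C_h\bmo^{\sfrac{1}{2m}}\ell(L')^{1+\beta_2}$ at scale $\ell(L')=2\ell(L)$, while (HT) gives the ``bad'' lower bound $\bh(T,\B_L)>C_h\bmo^{\sfrac{1}{2m}}\ell(L)^{1+\beta_2}$ at scale $\ell(L)$. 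These two facts are incompatible with the current being made of $Q$ sheets that all carry density close to $1$: if all $Q$ sheets stayed mutually close throughout $\B_L$ they would be graphs of Lipschitz functions over $\pi_L$ whose separation is bounded by the height at the father's scale, contradicting the (HT) excess/height imbalance once $C_h$ is large compared to $C_e$ (this is exactly the role of the hypothesis $C_h^{2m}\ge C^\sharp C_e$). The precise mechanism is that the Lipschitz approximation $f_L$ of \cite{DS3} on $\bC_{8r_L}(p_L,\pi_L)$ has oscillation controlled by a power of the excess, so the only way the height can be as large as (HT) demands is that the values of $f_L$ split into at least two groups that are far apart; a sheet in such a group then has density at most $Q-1$ at nearby points. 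I would make this quantitative via the height bound and the Almgren-type estimates of \cite{DS3} (the ``$\mathrm{osc}$'' bound and the comparison between $\bh$ and the excess), which, after rescaling to unit scale and using $\eps_2$ small, forces $\Theta(T,p)\le Q-\tfrac12$ on $\B_{16r_L}(p_L)$ — this is (S1). The main obstacle here is the bookkeeping of the various radii ($\B_L=\bB_{64r_L}(p_L)$, $\B_{16r_L}(p_L)$, $\bC_{32r_L}$, $\bC_{8r_L}$) and transferring the height/excess information from $\B_{L'}$ down to $\B_L$ while controlling the tilting of optimal planes, for which I would invoke the comparison estimates between $\pi_L$, $\pi_{L'}$ and $\hat\pi_L$ that are part of the construction (cf.\ Proposition~\ref{p:whitney} and the companion tilting lemmas).

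For (S2), I would argue by contradiction combined with the monotonicity/upper-semicontinuity of density: suppose $H\in\sW_n$ with $\ell(H)\le\tfrac12\ell(L)$ meets $L$. Since $H\in\sW_n$, it was stopped only because it touches an element of $\sW^{j-1}$ — so $H$ itself satisfies neither (EX) nor (HT), hence the sheets over $H$ are close and carry full density $Q$ at some point of $\supp(T)$ near $p_H$ (using Lemma~\ref{l:tecnico1} to produce such a point). But $p_H$ is within a controlled multiple of $\ell(L)$ of $p_L$ because $H\cap L\ne\emptyset$ and $\ell(H)\le\tfrac12\ell(L)$; a short computation with $r_H=M_0\sqrt m\,\ell(H)$ shows $p_H\in\B_{16r_L}(p_L)$. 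This contradicts (S1), which says every point of $\B_{16r_L}(p_L)$ has density at most $Q-\tfrac12$. The delicate point is the precise radius accounting: I need $H\cap L\ne\emptyset$ and the side-length ratio to guarantee $\mathrm{dist}(p_H,p_L)\le C_0\,\ell(L)\le 16 r_L=16M_0\sqrt m\,\ell(L)$, which holds once $M_0$ exceeds a geometric constant — consistent with Assumption~\ref{i:parametri}(a). I would also need the height bound \eqref{e:ex+ht_whitney} on $L$ (or on the relevant ancestor in $\sS$) to ensure that the full-density point produced near $p_H$ actually lies in the ball $\B_{16 r_L}(p_L)$ rather than merely over the cube.

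Finally, (S3) is a quantitative version of (S1) transferred to the normal approximation $N$: on the Whitney region $\cL=\Phii(H\cap[-\tfrac72,\tfrac72]^m)$, the map $F(x)=\sum_i\a{x+N_i(x)}$ represents $T$ up to the error in \eqref{e:err_regional}, and by (S1) the $Q$ sheets over (most of) $\cL$ split into at least two clusters whose mutual distance is comparable to $\bh(T,\B_L)>C_h\bmo^{\sfrac{1}{2m}}\ell(L)^{1+\beta_2}$. Hence for every $x\in\cL$ the points $N_i(x)$ cannot all lie within a small ball: their spread around the average $\etaa\circ N(x)$ is at least a fixed fraction of the height, giving $\cG(N(x),Q\a{\etaa\circ N(x)})\ge\tfrac14 C_h\bmo^{\sfrac{1}{2m}}\ell(L)^{1+\beta_2}$. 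To make this rigorous I would use that the diameter of $\supp\langle T,\p,\Phii(q)\rangle$ is controlled by Corollary~\ref{c:cover}(ii) but is also \emph{bounded below} by (HT) — more precisely, I would combine the lower height bound from (HT) with the $C^0$ bound $\|N|_\cL\|_0\le C\bmo^{\sfrac{1}{2m}}\ell(L)^{1+\beta_2}$ and the Lipschitz bound \eqref{e:Lip_regional} to propagate the separation from the single scale $\B_L$ to every point of $\cL$, absorbing the errors since $\gamma_2$ and $\beta_2$ are small and $r_L\sim\ell(L)$. The constant $\tfrac14$ versus $C_h$ is the slack left for these absorptions; the step I expect to be the real obstacle is precisely this propagation, i.e.\ upgrading the height lower bound ``at scale $\B_L$'' to a pointwise lower bound on $\cG(N,Q\a{\etaa\circ N})$ over the whole Whitney region, which requires carefully tracking where the two clusters of sheets sit relative to the graph of $\phii$ and using that the center manifold, being a smoothed average, lies between them.
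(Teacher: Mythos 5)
Your plans for (S1) and (S3) are essentially the paper's: the engine is the strengthened, stratified height bound of Theorem \ref{t:height_bound} in the appendix, applied in the cylinder $\bC_{36 r_J}(p_J,\pi_J)$ over the father $J$ of $L$. That theorem supplies in one package the three things you are trying to assemble by hand: the decomposition of $\supp(T)$ into stripes $\bS_i$ of width $C_0 E^{\sfrac{1}{2m}} r_J$, the fact that the projection of $T\res\bS_i$ is $Q_i\a{\cdot}$ with $Q_i\geq 1$, and the density bound $\Theta<\max_i Q_i+\frac12$. The hypothesis $C_h^{2m}\geq C^\sharp C_e$ forces the two points realizing (HT) into different stripes, whence $\max_i Q_i\leq Q-1$ and (S1). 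For (S3), the constancy statement of that theorem is exactly what lets you say that over every $p\in\cK\cap\cL$ the slice of $T$ meets both stripes — this is the propagation step you flagged as the obstacle — and the passage from $\cK\cap\cL$ to all of $\cL$ via the smallness of $|\cL\setminus\cK|$ and the Lipschitz bound is as you describe. (Your heuristic that a sheet in one group ``has density at most $Q-1$'' is not literally correct — a point in the stripe of multiplicity $Q_i$ can have density up to nearly $Q_i+\frac12$ — but the conclusion $\leq Q-\frac12$ survives.)

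The genuine gap is (S2). Your argument rests on the claim that a cube $H\in\sW_n$ carries a point of density $Q$ near $p_H$, and that is false: membership in $\sW_n$ only says that $H$ violates neither (EX) nor (HT) and touches a cube of the previous generation in $\sW$; small excess and small height do not produce a density-$Q$ point (consider $Q$ disjoint nearby sheets of multiplicity one, where every point has density $1$). Lemma \ref{l:tecnico1} yields only a point of $\supp(T)$ over each $x$, with no density information; density-$Q$ points are guaranteed only over the contact set $\Phii(\bGam)$, i.e.\ where the refinement never stops. So there is nothing to contradict (S1) with. The correct mechanism is different: one shows that the two stripes, and hence a vertical separation of order $C_h\bmo^{\sfrac{1}{2m}}\ell(L)^{1+\beta_2}= 2^{1+\beta_2}C_h\bmo^{\sfrac{1}{2m}}\ell(H)^{1+\beta_2}$, persist inside $\bB_H$ after tilting the reference plane from $\pi_J$ to $\hat\pi_H$ (the tilting errors are of order $\bmo^{\sfrac{1}{2}}\ell(H)^{2-\delta_2}$ and get absorbed). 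Hence $H$ itself satisfies the stopping condition (HT), and since (HT) has priority over (NN) in the refining procedure of Definition \ref{d:refining_procedure}, $H$ cannot belong to $\sW_n$.
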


A simple corollary of the previous proposition is the following.

\begin{corollary}\label{c:domains}
Given any $H\in \sW_n$ there is a chain $L =L_0, L_1, \ldots, L_j = H$ such that:
\begin{itemize}
\item[(a)] $L_0\in \sW_e$ and $L_i\in \sW_n$ for all $i>0$; 
\item[(b)] $L_i\cap L_{i-1}\neq\emptyset$ and $\ell (L_i) = \frac{1}{2} \ell (L_{i-1})$ for all $i>0$.
\end{itemize}
In particular,  $H\subset B_{3\sqrt{m}\ell (L)} (x_L, \pi_0)$.
\end{corollary}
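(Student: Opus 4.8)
The plan is to build the chain by following the ancestry of $H$ downward through the refining procedure and to exploit condition (S2) of Proposition \ref{p:separ} to rule out that any proper ancestor of $H$ lies in $\sW_h$. First I would recall from the remark following Definition \ref{d:refining_procedure} that, since $H\in \sW_n\subset \sW$, for $j>N_0$ every cube in $\sS^j\cup\sW^j$ has its father in $\sS^{j-1}$; in particular the father $H'$ of $H$ belongs to $\sS$, and $H$ was put into $\sW_n$ precisely because it is not in $\sW_e\cup\sW_h$ but it intersects some cube $K\in\sW^{j-1}$. By (w3) of the Whitney decomposition (equivalently, by the refining rule, which only creates neighbours differing by a factor $2$ in side-length), such a $K$ satisfies $\ell(K)\in\{\tfrac12\ell(H),\ell(H),2\ell(H)\}$; but $K\in\sW^{j-1}=\sW\cap\sC^{j-1}$ has $\ell(K)=2\ell(H)$. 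Set $L_{j-1}:=K$. I would then iterate: $L_{j-1}\in\sW$; if $L_{j-1}\in\sW_n$, it too was created by intersecting a cube $L_{j-2}\in\sW$ of side-length $2\ell(L_{j-1})$, and we continue; the process stops the first time we reach a cube $L_0$ that is in $\sW_e\cup\sW_h$ (it cannot stop at a cube in $\sW_n$ without producing a predecessor, and it must stop because the side-lengths are strictly increasing and bounded above by $2^{1-N_0}$, equivalently because $\sW^i=\emptyset$ for $i\le N_0+6$ by \eqref{e:prima_parte}). Relabelling $L_0,L_1,\dots,L_j=H$ in increasing generation order gives (b): consecutive cubes intersect and $\ell(L_i)=\tfrac12\ell(L_{i-1})$.

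It remains to show $L_0\in\sW_e$, i.e.\ to exclude $L_0\in\sW_h$. This is where Proposition \ref{p:separ}(S2) enters, and it is the main point of the argument. Suppose $L_0\in\sW_h$ and write $\ell:=\ell(L_0)$. By construction $L_1\cap L_0\neq\emptyset$, $L_1\in\sW_n$ and $\ell(L_1)=\tfrac12\ell(L_0)$, so $L_1$ is an element of $\sW_n$ with $\ell(L_1)\le\tfrac12\ell(L_0)$ meeting $L_0$, contradicting (S2). Hence $L_0\notin\sW_h$; since by definition of the chain $L_0\in\sW_e\cup\sW_h$, we get $L_0\in\sW_e$, proving (a). (One should note here that the chain has length $j\ge 1$: if $H$ itself were in $\sW_e\cup\sW_h$ there would be nothing to prove, but $H\in\sW_n$ forces at least one step, so $L_1$ exists and the contradiction is genuine.)

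For the final inclusion, I would sum the diameters along the chain. Each cube $L_i$ has side-length $\ell(L_i)=2^{-i}\ell(L_0)$, hence diameter $\sqrt{m}\,\ell(L_i)=2^{-i}\sqrt{m}\,\ell(L_0)$, and since consecutive cubes intersect, for any point $x\in H=L_j$ and the centre $x_{L}:=x_{L_0}$ one has
\[
|x-x_{L}|\le \operatorname{diam}(L_0)+\sum_{i=1}^{j}\operatorname{diam}(L_i)
\le \sqrt{m}\,\ell(L_0)\sum_{i=0}^{\infty}2^{-i}=2\sqrt{m}\,\ell(L_0).
\]
A slightly more careful bookkeeping (measuring from the centre $x_{L_0}$, which is at distance $\tfrac{\sqrt m}{2}\ell(L_0)$ from the boundary of $L_0$, and adding the successive diameters) gives $|x-x_{L_0}|\le 3\sqrt{m}\,\ell(L_0)$, so $H\subset B_{3\sqrt{m}\,\ell(L)}(x_L,\pi_0)$ with $L=L_0$, as claimed. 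The only genuinely non-formal input is (S2); everything else is a bookkeeping of the refining procedure and a geometric series, so I expect the main obstacle to be purely expository — making sure the chain is extracted cleanly and that the case $j=1$ (where $H$'s grandfather is already the stopping cube) is not overlooked.
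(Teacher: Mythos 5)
Your proof is correct and follows essentially the same route as the paper's: both build the chain by repeatedly applying the (NN) stopping rule, both invoke Proposition \ref{p:separ}(S2) to rule out $\sW_h$ at the terminal cube (the paper does this at each step, you at the end — equivalent), and both bound $|x_H-x_L|$ by a geometric series along the chain. Your only slips are cosmetic: under the paper's convention $\ell(L)$ is the \emph{half}-side (so $\operatorname{diam}(L_i)=2\sqrt m\,\ell(L_i)$, not $\sqrt m\,\ell(L_i)$), and the accurate per-step bound is $|x_{L_{i-1}}-x_{L_i}|\le\tfrac{3}{2}\sqrt m\,\ell(L_{i-1})$; summing these and adding the half-diagonal of $H$ still lands strictly inside $B_{3\sqrt m\,\ell(L)}(x_L)$, as your ``careful bookkeeping'' caveat anticipates.
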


We use this last corollary to partition $\sW_n$.

\begin{definition}[Domains of influence]\label{d:domains}
We first fix an ordering of the cubes in $\sW_e$ as $\{J_i\}_{i\in \mathbb N}$ so that their sidelengths do not increase. Then $H\in \sW_n$
belongs to $\sW_n (J_0)$ (the domain of influence of $J_0$) if there is a chain as in Corollary \ref{c:domains} with $L_0 = J_0$.
Inductively, $\sW_n (J_r)$ is the set of cubes $H\in \sW_n \setminus \cup_{i<r} \sW_n (J_i)$ for which there is
a chain as in Corollary \ref{c:domains} with $L_0 = J_r$.
\end{definition}

\subsection{Splitting before tilting I} The following proposition contains a ``typical'' splitting-before-tilting phenomenon: the key assumption of the 
theorem (i.e. $L\in \sW_e$) is that the excess does not decay at some given scale (``tilting'') and the main conclusion \eqref{e:split_2} implies a certain amount of separation between the sheets of the current (``splitting'').

\begin{proposition}(Splitting I)\label{p:splitting}
There are functions $C_1 (\delta_2), C_2 (M_0, \delta_2)$ such that, if $M_0 \geq C_1 ( \delta_2)$, $C_e \geq C_2 (M_0, \delta_2)$, if
the hypotheses of Theorem~\ref{t:approx} hold and if $\eps_2$ is chosen sufficiently small,
then the following holds. If $L\in \sW_e$, $q\in \pi_0$ with $\dist (L, q) \leq 4\sqrt{m} \,\ell (L)$ and $\Omega = \Phii (B_{\ell (L)/4} (q, \pi_0))$, then (with $C, C_3 = C(\beta_2, \delta_2, M_0, N_0, C_e, C_h)$):
\begin{align}
&C_e \bmo \ell(L)^{m+2-2\delta_2} \leq \ell (L)^m \bE (T, \B_L) \leq C \int_\Omega |DN|^2\, ,\label{e:split_1}\\
&\int_{\cL} |DN|^2 \leq C \ell (L)^m \bE (T, \B_L) \leq C_3 \ell (L)^{-2} \int_\Omega |N|^2\, . \label{e:split_2}
\end{align}
\end{proposition}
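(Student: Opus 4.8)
\textbf{Proof strategy for Proposition \ref{p:splitting} (Splitting I).}

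The plan is to exploit the three facts that are available for $L\in\sW_e$: the stopping condition (EX) at $L$, which forces $\bE(T,\B_L)>C_e\bmo\,\ell(L)^{2-2\delta_2}$; the fact that the father $L'$ of $L$ does \emph{not} satisfy (EX), so that $\bE(T,\B_{L'})\le C_e\bmo\,\ell(L')^{2-2\delta_2}$, and likewise all ancestors of $L$ belong to $\sS$ and hence obey the estimates \eqref{e:ex+ht_ancestors}; and the comparison between the excess of $T$ computed in $\B_L$ and the Dirichlet energy of the $\cM$-normal approximation $N$ on the corresponding Whitney region, which is exactly the content of the local estimates of Theorem \ref{t:approx} together with the construction principle that on the relevant scale the image of $F$ coincides on a large set with the $\pi_L$-approximation of \cite{DS3}. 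Concretely, I would first establish \eqref{e:split_1}. The left inequality is immediate from (EX). For the right inequality, one compares the excess $\bE(T,\B_L)$ (measured with respect to the optimal plane $\hat\pi_L$, equivalently up to errors controlled by $\bmo$ with respect to $\pi_L$ and to $T_{p_L}\cM$) with $\int_\Omega|DN|^2$: since $\Omega=\Phii(B_{\ell(L)/4}(q,\pi_0))$ with $\dist(L,q)\le 4\sqrt m\,\ell(L)$, the region $\Omega$ together with its neighbours in the Whitney decomposition (using (w3) and \eqref{e:separazione}) covers a definite portion of the center manifold near $p_L$ at the scale $r_L\sim\ell(L)$; on $\p^{-1}(\Omega)$ the current $T$ agrees with $\bT_F$ up to the mass error $\bmo^{1+\gamma_2}\ell(L)^{m+2+\gamma_2}$ of \eqref{e:err_regional}, and the tilt of $T$ relative to $T\cM$ is, by the Taylor expansion of the area functional, pointwise comparable to $|DN|^2$ plus the curvature term $\bA^2\lesssim\bmo$ and plus the tilt of $\cM$ itself relative to $\hat\pi_L$. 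The tilt of $\cM$ relative to $\hat\pi_L$ on $\B_L$ is $O(\bmo\,\ell(L)^{2-2\delta_2})$ by the $C^{3,\kappa}$ estimate (i) of Theorem \ref{t:cm} combined with \eqref{e:ex+ht_ancestors}; so does the curvature term since $\ell(L)\le 2^{-N_0}$. Hence these lower-order contributions can be absorbed into the left-hand side of \eqref{e:split_1} once $C_e$ is taken large (this fixes $C_2(M_0,\delta_2)$), and what remains is precisely $\ell(L)^m\bE(T,\B_L)\le C\int_\Omega|DN|^2$.

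Next I would prove \eqref{e:split_2}. The first inequality $\int_\cL|DN|^2\le C\,\ell(L)^m\bE(T,\B_L)$ is the ``reverse'' comparison and follows the same Taylor-expansion bookkeeping, now bounding $\int_\cL|DN|^2$ from above: on $\cL=\Phii(H\cap[-\tfrac72,\tfrac72]^m)$ with $\ell(H)=\tfrac{17}{16}\ell(L)$ one has $\cL\subset\p(\B_L)$ up to the usual comparison of balls and cylinders (Lemma \ref{l:tecnico1} and the definition of $\B_L=\bB_{64r_L}(p_L)$ with $r_L=M_0\sqrt m\,\ell(L)$), so $\int_\cL|DN|^2$ is controlled by the tilt excess of $\bT_F$ in that cylinder, which in turn is controlled by $\bE(T,\B_L)$ plus the same mass error and curvature terms, all of which are $\le\bmo\,\ell(L)^{2-2\delta_2}\le C_e^{-1}\bE(T,\B_L)$ by (EX). The second and more substantial inequality in \eqref{e:split_2}, namely $\ell(L)^m\bE(T,\B_L)\le C_3\,\ell(L)^{-2}\int_\Omega|N|^2$, is a genuine splitting statement and is where the real work lies: it says that a non-decaying excess at scale $\ell(L)$ forces the sheets to be separated at that scale in an $L^2$ sense. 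I would argue by contradiction and compactness, in the spirit of the harmonic approximation / Almgren's ``squash'' arguments: rescale at $p_L$ by $\ell(L)$, so that $\cM$ becomes nearly flat and $N$, suitably normalized by its own Dirichlet energy (which by the first inequality of \eqref{e:split_2} and (EX) is $\gtrsim C_e\bmo\,\ell(L)^{m+2-2\delta_2}$, hence nondegenerate after rescaling), converges to a nontrivial $\Iq$-valued Dirichlet-minimizing function $N_\infty$ on the unit ball. If $\int_\Omega|N|^2$ were too small compared to $\ell(L)^{m+2}\bE(T,\B_L)$, the limit $N_\infty$ would vanish on $B_{1/4}(q_\infty)$; but a nonzero $\Dir$-minimizer cannot vanish on an open set (unique continuation for $\Iq$-valued harmonic maps, or the frequency-function estimates of \cite{DS1}), contradicting $\int|DN_\infty|^2=1$. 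The quantitative version of this contradiction, pushed through the error terms of the $\cM$-normal approximation (using \eqref{e:err_regional}, \eqref{e:Dir_regional} and the height bound \eqref{e:ex+ht_ancestors}–\eqref{e:ex+ht_whitney} to control the passage between $T$ and $\bT_F$ and the deviation of $N_\infty$ from the true $N$), yields the claimed constant $C_3=C_3(\beta_2,\delta_2,M_0,N_0,C_e,C_h)$.

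The main obstacle I anticipate is the last inequality of \eqref{e:split_2}: making the compactness argument quantitative requires carefully tracking that the normalization of $N$ by its Dirichlet energy on $\cL$ does not degenerate, which forces one to use the lower bound on $\int_\cL|DN|^2$ coming from (EX) in an essential way, and simultaneously that the $L^2$-norm $\int_\Omega|N|^2$ — taken over a possibly \emph{different}, shifted region $\Omega$ near but not equal to $\cL$ — is comparable to $\int_\cL|N|^2$; this comparison uses the chain/neighbour structure of the Whitney cubes (Corollary \ref{c:domains} and property (w3)) together with a Poincaré-type inequality on $\cM$ for $\Iq$-valued maps, being careful that $N$ is only Lipschitz and that its average $\etaa\circ N$ is small but not zero (this is where \eqref{e:av_region} enters, to bound $\int|\etaa\circ N|$ against the Dirichlet energy and thus to pass freely between $\cG(N,Q\a{\etaa\circ N})$ and $|N|=\cG(N,Q\a 0)$). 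Keeping all the geometric error terms (the curvature of $\Sigma$, the second fundamental form of $\cM$, the tilt between $\pi_L$, $\hat\pi_L$ and $T_{p_L}\cM$, and the mass discrepancy $\|\bT_F-T\|$) at an order strictly smaller than $\bmo\,\ell(L)^{2-2\delta_2}$, so that they are all dominated by the right-hand side of (EX) after the constants $M_0$ and $C_e$ are chosen large in the prescribed hierarchy, is the bookkeeping heart of the proof.
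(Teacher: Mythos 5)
There is a genuine gap, and it sits exactly at the heart of the proposition: the right inequality of \eqref{e:split_1}. You treat $\ell(L)^m\bE(T,\B_L)\le C\int_\Omega|DN|^2$ as a tilt-comparison plus absorption of error terms, but no bookkeeping of this kind can work. First, $\bE(T,\B_L)$ is an average over the ball $\B_L$ of radius $64 M_0\sqrt m\,\ell(L)$, while $\Omega$ is the image of a ball of radius $\ell(L)/4$ that may even be shifted off $L$; nothing prevents $DN$ from being tiny on $\Omega$ while the excess lives elsewhere, unless one uses minimality. Second, the term you propose to absorb — the tilt of $\cM$ relative to $\hat\pi_L$ — is of size $C\,\bmo\,\ell(L)^{2-2\delta_2}$ with $C$ growing like $C_e$ (it is inherited from \eqref{e:ex+ht_ancestors} via Proposition \ref{p:tilting opt}(ii) and Proposition \ref{p:stime_chiave}(iv)), so it is of the \emph{same} order as $\bE(T,\B_L)$ and cannot be made small by enlarging $C_e$. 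The statement is in fact false without the splitting mechanism: if $T$ were $Q$ copies of a single nearly-harmonic sheet, $N$ would essentially vanish while the excess with respect to the optimal plane need not. What rescues it is precisely the input you never use: $L\in\sW_e$ while its ancestors $H,J$ lie in $\sS$, so $\bE(T,\B_L)\ge 2^{2\delta_2-2}\bE(T,\B_H)$, i.e.\ the excess fails to decay by the factor a harmonic (single- or multi-sheeted but unsplit) competitor would force. The paper's proof runs this through the harmonic approximation of \cite{DS3}, a decay estimate for Dir-minimizers (Proposition \ref{p:harmonic split}: non-decay of the excess forces the average-free part $\bar w$ to carry a definite fraction of the energy) and unique continuation (Lemma \ref{l:UC}: that fraction is then seen on \emph{every} sub-ball of definite size, in both energy and $L^2$ norm). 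Both nontrivial inequalities — the right one in \eqref{e:split_1} and the last one in \eqref{e:split_2} — come out of this single chain, \eqref{e:stima dritta}, and are then transferred from $f$ to $N$.

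Your compactness/unique-continuation argument for the last inequality of \eqref{e:split_2} is in the right spirit (the paper proves Lemma \ref{l:UC} and Proposition \ref{p:harmonic split} by exactly such blow-up arguments), but as written it is circular and incomplete. The non-degeneracy of the normalized limit $N_\infty$ requires a \emph{lower} bound on $\int_\Omega|DN|^2$ in terms of $\ell(L)^m\bE(T,\B_L)$; you cite ``the first inequality of \eqref{e:split_2} and (EX)'', which only give an upper bound — the needed lower bound is the right inequality of \eqref{e:split_1}, i.e.\ the very statement your proof leaves unproved. Moreover, without feeding the non-decay hypothesis into the blow-up, the limit could be $Q$ copies of a high-frequency harmonic polynomial, whose $L^2$ mass on a shifted sub-ball is an arbitrarily small fraction of its Dirichlet energy on the unit ball; unique continuation alone (which only yields strict positivity, not a quantitative constant) cannot exclude this. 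The fix is to do what the paper does: establish the harmonic approximation at the ancestor scale, verify the hypothesis \eqref{e.no decay} of Proposition \ref{p:harmonic split} from the comparison between $\bE(T,\B_L)$ and $\bE(T,\B_H)$, and only then invoke unique continuation.
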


\subsection{Persistence of $Q$ points} We next state two important properties triggered by the existence of $p\in \supp (T)$ with $\Theta (p, T)=Q$,
both related to the splitting before tilting.

\begin{proposition}(Splitting II)\label{p:splitting_II}
Let the hypotheses of Theorem~\ref{t:cm} hold and assume $\eps_2$ is sufficiently small. For any
$\alpha, \bar{\alpha}, \hat\alpha >0$, there is $\eps_3 = \eps_3 (\alpha, \bar{\alpha}, \hat\alpha, \beta_2, \delta_2, M_0, N_0, C_e, C_h) >0$ as follows. If, for some $s\leq 1$
\begin{equation}\label{e:supL}
\sup \big\{\ell (L): L\in \sW, L\cap B_{3s} (0, \pi_0) \neq \emptyset\big\} \leq s\, ,
\end{equation}
\begin{equation}\label{e:many_Q_points}
\cH^{m-2+\alpha}_\infty \big(\{\Theta (T, \cdot) = Q\}\cap \B_{s}\big) \geq \bar{\alpha} s^{m-2+\alpha},
\end{equation}
and $\min \big\{s, \bmo\big\}\leq\eps_3$, then,
\[
\sup \big\{ \ell (L): L\in \sW_e \mbox{ and } L\cap B_{19 s/16} (0, \pi_0)\neq \emptyset\big\} 
\leq \hat{\alpha} s\, .
\]
\end{proposition}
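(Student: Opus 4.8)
The plan is to argue by contradiction and compactness, combined with the quantitative splitting estimate of Proposition~\ref{p:splitting}. Suppose the conclusion fails: then there is a sequence of currents $T_k$ (with associated center manifolds $\cM_k$, Whitney decompositions, etc.) and a scale $s_k \leq 1$ satisfying \eqref{e:supL} and \eqref{e:many_Q_points} with $\min\{s_k, \bmo^{(k)}\} \to 0$, but for which there is $L_k \in \sW_e$ with $L_k \cap B_{19 s_k/16}(0,\pi_0) \neq \emptyset$ and $\ell(L_k) > \hat\alpha\, s_k$. By \eqref{e:supL} we also have $\ell(L_k) \leq s_k$, so $\ell(L_k) \simeq s_k$ up to the fixed factor $\hat\alpha$. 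The idea is that such an $L_k$ forces, via \eqref{e:split_1}--\eqref{e:split_2}, a definite amount of Dirichlet energy — hence of separation of the sheets — in a region of size comparable to $s_k$ around the origin, while on the other hand the hypothesis \eqref{e:many_Q_points} says that a set of $Q$-points of large Hausdorff content survives at scale $s_k$; a point of density $Q$ is a point where all sheets come together, and one should be able to derive a contradiction by a blow-up/rescaling argument.

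\textbf{Key steps.} First I would rescale at scale $s_k$ (i.e.\ replace $T_k$ by $(\iota_{0,s_k})_\sharp T_k$), so that the relevant ball becomes $\B_1$ and $L_k$ becomes a cube of side length comparable to $\hat\alpha$. Second, I would apply Proposition~\ref{p:splitting} to $L_k$: choosing $q \in \pi_0$ with $\dist(L_k, q) \leq 4\sqrt m\, \ell(L_k)$ appropriately, \eqref{e:split_1} gives a lower bound $\int_\Omega |DN_k|^2 \geq c(\hat\alpha)\, C_e\, \bmo^{(k)}\, s_k^{m+2-2\delta_2}$ on the energy in a region $\Omega$ of size $\simeq \hat\alpha\, s_k$, and \eqref{e:split_2} together with the $L^2$ estimate converts this into a lower bound on $\int_\Omega |N_k|^2$ of order $\bmo^{(k)}\, s_k^{m+2}$, i.e.\ the normal approximation is not $o(\bmo^{1/2})$ small in $L^2$-average on $\Omega$. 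Third — and this is where \eqref{e:many_Q_points} enters — I would use the persistence of $Q$-points: since $\min\{s_k,\bmo^{(k)}\}\to 0$, after rescaling and normalising $N_k$ by $\bmo^{(k)\,1/2}$ one can pass (Almgren/De Lellis--Spadaro compactness for $Q$-valued $\mathrm{Dir}$-minimizers, as developed in \cite{DS3}) to a limiting $\mathrm{Dir}$-minimizing $Q$-valued function $N_\infty$ on $B_1 \subset \pi_0$; the content hypothesis \eqref{e:many_Q_points} forces, in the limit, that the set where $N_\infty$ takes the single value $Q\a{\etaa \circ N_\infty}$ has Hausdorff dimension (or content) at least $m-2+\alpha>m-2$, while step two forces $N_\infty$ to be nontrivial (non-affine, with a definite amount of separation) on a region of fixed size. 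This contradicts the structure of $\mathrm{Dir}$-minimizers, whose ``collapsed'' set $\{N=Q\a{\etaa\circ N}\}$, when it is this large, forces $N$ itself to collapse (the argument being essentially the one behind the estimates on the singular set of $\mathrm{Dir}$-minimizers). One must also feed in \eqref{e:av_region} to control $\etaa\circ N_k$ and Proposition~\ref{p:separ} to relate separation of sheets to $\cG(N, Q\a{\etaa\circ N})$.

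\textbf{Main obstacle.} The delicate point is the third step: making rigorous the passage to the limit and the derivation of the contradiction from \eqref{e:many_Q_points}. One has to be careful that the $Q$-points of $T_k$ — detected via Corollary~\ref{c:cover}(iii) and the persistence properties — really do survive the rescaling and the coupling with the center manifold, so that in the limit they become genuine ``collapse points'' of $N_\infty$ of large Hausdorff content; and one has to quantify, for $\mathrm{Dir}$-minimizers, that a collapse set of content $\gtrsim r^{m-2+\alpha}$ in $B_r$ together with a definite lower bound on the energy in a region of comparable size is impossible. This is really a statement about the fine structure of $\mathrm{Dir}$-minimizing $Q$-valued functions and it is the heart of the matter; the rescaling, the application of Proposition~\ref{p:splitting}, and the compactness are comparatively routine. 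An alternative, perhaps cleaner, route avoiding a full blow-up is to iterate Proposition~\ref{p:splitting_II}-type reasoning down the Whitney chain from $L_k$ using Corollary~\ref{c:domains}, together with a Simon-type ``no concentration of $Q$-points'' argument at each scale; but the compactness argument is conceptually the most transparent and I would pursue that first.
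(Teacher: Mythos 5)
Your proposal follows essentially the same route as the paper's proof: contradiction, rescaling at scale $s_k$, the lower energy bound from Proposition~\ref{p:splitting} applied to $L_k$, harmonic approximation and compactness of Dir-minimizers to pass to a nontrivial limit, and finally the incompatibility between a collapse set of Hausdorff content $\gtrsim s^{m-2+\alpha}$ and the dimension bound on the singular set of Dir-minimizers (\cite[Theorem 0.11]{DS1}) combined with unique continuation (Lemma~\ref{l:UC}). The only cosmetic difference is that the paper runs the limit through the Lipschitz $\pi$-approximations $f_k, g_k$ (so that the quantitative persistence statement \cite[Theorem 1.7]{DS3} can be invoked to show the limit vanishes on the limit of the $Q$-point sets) rather than through the $\cM$-normal approximation $N_k$, but this is exactly the mechanism you identify as the ``main obstacle.''
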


\begin{proposition}(Persistence of $Q$-points)\label{p:persistence}
Assume the hypotheses of Proposition \ref{p:splitting} hold.
For every $\eta_2>0$ there are $\bar{s}, \bar{\ell} > 0$, depending upon $\eta_2, \beta_2, \delta_2, M_0, N_0, C_e$ and $C_h$, such that,
if $\eps_2$ is sufficiently small, then the following property holds. 
If $L\in \sW_e, \ell (L)\leq \bar\ell$, $\Theta (T, p) = Q$ and
$\dist (\p_{\pi_0} (\p (p)), L) \leq 4 \sqrt{m} \,\ell (L)$, then
\begin{equation}\label{e:persistence}
\mint_{\cB_{\bar{s} \ell (L)} (\p (p))} \cG \big(N, Q \a{\etaa\circ N}\big)^2 \leq \frac{\eta_2}{\ell(L)^{m-2}}
\int_{\cB_{\ell (L)} (\p (p))} |DN|^2\, .
\end{equation}
\end{proposition}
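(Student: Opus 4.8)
The plan is to exploit the defining property of the $\cM$-normal approximation, namely that on the contact set $\Phii(\bGam)$ the slice $\langle T, \p, p\rangle$ equals $Q\a{p}$ (Corollary \ref{c:cover}(iii)), together with the elementary fact that a point $p$ with $\Theta(T,p)=Q$ forces all $Q$ sheets of $T$ to concentrate near $p$ at small scales. The heart of the matter is a contradiction/compactness argument. First I would fix $\eta_2>0$ and suppose the statement fails: then there is a sequence of currents $T^{(k)}$, center manifolds $\cM^{(k)}$, normal approximations $N^{(k)}$, and cubes $L^{(k)}\in\sW_e$ with $\ell(L^{(k)})\to 0$, points $p^{(k)}$ with $\Theta(T^{(k)},p^{(k)})=Q$ and $\dist(\p_{\pi_0}(\p(p^{(k)})),L^{(k)})\le 4\sqrt m\,\ell(L^{(k)})$, for which \eqref{e:persistence} is violated no matter how small $\bar s$ is taken. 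The idea is to rescale around $\p(p^{(k)})$ at scale $\ell(L^{(k)})$: the rescaled center manifolds converge to a flat $m$-plane (by the $C^{3,\kappa}$ bounds of Theorem \ref{t:cm}(i) and the fact that $\bmo\to 0$ along the needed subsequence, via hypothesis (e) on $\eps_2$), and the rescaled normal maps $N^{(k)}$, suitably normalized by their Dirichlet energy, converge strongly in $W^{1,2}_{\mathrm{loc}}$ to a nontrivial $\mathrm{Dir}$-minimizing $Q$-valued function $N_\infty$ on $B_1\subset\pi_0$ (this is exactly the convergence machinery of \cite{DS3}, available because $L^{(k)}\in\sW_e$ guarantees, via \eqref{e:split_1} in Proposition \ref{p:splitting}, that the Dirichlet energy is comparable to the excess and hence does not degenerate relative to itself).

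Next I would track what the condition $\Theta(T^{(k)},p^{(k)})=Q$ becomes in the limit. Since $\p(p^{(k)})$ lies within a bounded number of cube-sidelengths of $L^{(k)}$ and $\bG_{F^{(k)}}$ coincides with $T^{(k)}$ on $\p^{-1}(\cK^{(k)})$, and since the error between $\bT_{F}$ and $T$ on Whitney regions is superlinear in $\ell(L)$ by \eqref{e:err_regional}, in the blow-up limit the current is represented exactly by the graph of $N_\infty$. A density-$Q$ point of $T^{(k)}$ at a uniformly bounded rescaled location therefore passes to a point $q_\infty\in \overline{B_1}$ at which all $Q$ values of $N_\infty$ coincide, i.e. $N_\infty(q_\infty)=Q\a{\etaa\circ N_\infty(q_\infty)}$. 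But $N_\infty$ is $\mathrm{Dir}$-minimizing, so after subtracting its (harmonic, hence smooth) average $\etaa\circ N_\infty$ we get a $\mathrm{Dir}$-minimizing function with a genuine $Q$-point; by the estimates on $\mathrm{Dir}$-minimizers in \cite{DS1} (in particular the reverse-Poincaré / frequency estimates controlling $\mathcal G(N_\infty, Q\a{\etaa\circ N_\infty})^2$ near a collapse point), the normalized $L^2$-average of $\mathcal G(N_\infty, Q\a{\etaa\circ N_\infty})^2$ on small balls around $q_\infty$ tends to $0$ relative to the Dirichlet energy on a fixed ball. This contradicts the assumed violation of \eqref{e:persistence} once $\bar s$ is chosen small, because all the quantities appearing in \eqref{e:persistence} converge to their limiting counterparts under the rescaling (the right-hand side is bounded below by a positive multiple of the limiting Dirichlet energy thanks to \eqref{e:split_2}, and the left-hand side converges to the limiting average of $\mathcal G(N_\infty, Q\a{\etaa\circ N_\infty})^2$).

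I would then unwind the contradiction to extract the explicit thresholds $\bar s,\bar\ell$: $\bar s$ is dictated by how small a radius is needed in the $\mathrm{Dir}$-minimizer estimate to beat $\eta_2$, and $\bar\ell$ (together with the smallness of $\eps_2$) is dictated by how small $\ell(L)$ and $\bmo$ must be for the blow-up approximations to be valid with the stated errors — all of which depend only on $\eta_2,\beta_2,\delta_2,M_0,N_0,C_e,C_h$ as claimed.

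The main obstacle I anticipate is controlling the interplay of the two scales: the average $\etaa\circ N$ is not harmonic but only close to harmonic (its defining smoothing introduces errors, and $\Sigma$ is curved, not flat), so one must be careful that subtracting $Q\a{\etaa\circ N}$ genuinely produces something to which the flat $\mathrm{Dir}$-minimizing theory applies in the limit; this requires the $C^{2,\kappa}$ bounds on $\varphi_j$ from Theorem \ref{t:cm}(i) and the centering estimate \eqref{e:av_region} to ensure $\etaa\circ N$ is negligible at the relevant scale. A secondary technical point is verifying that the rescaled currents do not lose mass or develop spurious boundary near $\p^{-1}(\p(p^{(k)}))$ — this is handled by Corollary \ref{c:cover}(i) (support containment and $\p_\sharp(T\res\bU)=Q\a\cM$) together with the monotonicity formula applied at the density-$Q$ point $p^{(k)}$.
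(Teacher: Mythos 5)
Your overall architecture (lower Dirichlet bound from the splitting proposition, smallness of the sheet separation near the density-$Q$ point, transfer between the Lipschitz approximation and $N$) matches the shape of the paper's argument, but there is one genuine gap at the heart of your compactness step. You claim that, after rescaling at scale $\ell(L^{(k)})$ and normalizing by the excess, the density-$Q$ point $p^{(k)}$ ``passes to a point $q_\infty$ at which all $Q$ values of $N_\infty$ coincide.'' This does not follow from the convergence you have. The normalized maps converge only strongly in $L^2$ (resp.\ $W^{1,2}$), which carries no information about values at a single point; and there is no equicontinuity to rescue you, since the Lipschitz constant of the approximation is of order $E_k^{\gamma_1}$, so after dividing by $E_k^{\sfrac{1}{2}}$ it blows up. Nor does the height bound help: Theorem \ref{t:height_bound} forces a single stripe around a density-$Q$ point, but that only gives a sheet separation of order $E_k^{\sfrac{1}{2m}}\rho$ in $L^\infty$, i.e.\ an $L^2$-average of order $E_k^{\sfrac{1}{m}}\rho^{m+2}$, which is far larger than the $o(E_k)\rho^{m+2}$ you need to conclude that $N_\infty$ collapses at $q_\infty$. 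The statement that the $L^2$-average of $\cG(f,Q\a{\etaa\circ f})^2$ on small balls around the projection of a density-$Q$ point is at most $\bar\eta\, E$ is precisely \cite[Theorem 1.7]{DS3} --- a nontrivial theorem of the companion paper, not a consequence of the blow-up convergence --- and your proposal implicitly re-asserts it without proof. (A smaller inaccuracy: along your contradiction sequence it is $\ell(L^{(k)})$, not $\bmo$, that tends to zero; the rescaled excess decays because $E\leq C\bmo\,\ell(L)^{2-2\delta_2}$, not because $\eps_2$ shrinks.)

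For comparison, the paper's proof is direct and avoids any new compactness argument: it applies \cite[Theorem 1.7]{DS3} to the $\hat\pi_H$-approximation $f$ in the cylinder over the ancestor $J$ of $L$ (applicability being guaranteed by the excess lower bound \eqref{e:eccesso alto}), obtaining $\int_{B_{2\bar s\ell}}\cG(f,Q\a{\etaa\circ f})^2\leq\bar\eta\,\bar s^m\ell^{m+2}E$ for a suitable $\bar s(\bar\eta)$; it pairs this with the lower bound $\int_{\cB_{\ell}}|DN|^2\geq\bar c_1\bmo\,\ell^{m+2-2\delta_2}$ from Proposition \ref{p:splitting}; it transfers the $f$-estimate to $N$ via the reparametrization estimates of \cite[Theorem 5.1]{DS2} and the measure bound on the set where $T$ and $\bG_f$ differ; and it then fixes first $\bar\eta$ (hence $\bar s$) and then $\bar\ell$ to absorb the error terms. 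If you insert the citation of \cite[Theorem 1.7]{DS3} at the point where you currently assert the collapse of $N_\infty$ at $q_\infty$, your argument essentially reduces to the paper's, with the compactness layer becoming redundant.
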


\subsection{Comparison between different center manifolds} 
We list here a final key consequence of the splitting before tilting phenomenon. $\iota_{0,r}$ denotes the map $z\mapsto \frac{z}{r}$.

\begin{proposition}[Comparing center manifolds]\label{p:compara}
There is a geometric constant $C_0$ and a function $\bar{c}_s (\beta_2, \delta_2, M_0, N_0, C_e, C_h) >0$ with
the following property. Assume the hypotheses of Proposition \ref{p:splitting}, $N_0 \geq C_0$, $c_s := \frac{1}{64\sqrt{m}}$
and $\eps_2$ is sufficiently small. If for some $r\in ]0,1[$:
\begin{itemize}
\item[(a)] $\ell (L) \leq  c_s \rho$ for every $\rho> r$ and every
$L\in \sW$ with $L\cap B_\rho (0, \pi_0)\neq \emptyset$;
\item[(b)] $\bE (T, \B_{6\sqrt{m} \rho}) < \eps_2$ for every $\rho>r$;
\item[(c)] there is $L\in \sW$ such that $\ell (L) \geq c_s r $ and $L\cap \bar B_r (0, \pi_0)\neq\emptyset$;
\end{itemize}
then
\begin{itemize}
\item[(i)] the current $T':= (\iota_{0,r})_\sharp T \res \B_{6\sqrt{m}}$ and the submanifold 
$\Sigma':= \iota_{0,r} (\Sigma)\cap \bB_{7\sqrt{m}}$ satisfy the assumptions
of Theorem \ref{t:approx} for
some plane $\pi$ in place of $\pi_0$;
\item[(ii)] for the center manifold $\cM'$ of $T'$ relative to $\pi$ and the 
$\cM'$-normal approximation $N'$ as in Theorem \ref{t:approx}, we have
\begin{equation}\label{e:restart}
\int_{\cM'\cap \bB_2} |N'|^2 \geq \bar{c}_s \max
\big\{\bE (T', \bB_{6\sqrt{m}}), \mathbf{c} (\Sigma')^2\big\}\, . 
\end{equation}
\end{itemize}
\end{proposition}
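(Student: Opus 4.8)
\textbf{Proof proposal for Proposition \ref{p:compara}.}

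The plan is to treat (i) as essentially a bookkeeping/rescaling statement and to concentrate the real work on the lower bound \eqref{e:restart} in (ii), which is where the splitting-before-tilting machinery enters. First I would verify (i): the rescaling $\iota_{0,r}$ multiplies lengths by $r^{-1}$, leaves the area-minimizing property and the boundary condition \eqref{e:basic} intact, and scales the excess and the curvature of $\Sigma$ by factors that, under hypothesis (b), keep $\bmo$ for $T'$ below $\eps_2$; the optimal plane may no longer be $\pi_0$, so one applies a rotation (as in Lemma \ref{l:tecnico3}) to produce the plane $\pi$, checking that the small-tilt condition \eqref{e:small_tilt} survives because the tilt between optimal planes at comparable scales is controlled by the excess decay built into \eqref{e:ex+ht_whitney}. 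This gives a current and a manifold to which Theorems \ref{t:cm} and \ref{t:approx} apply, producing $\cM'$ and $N'$.

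The heart of the matter is the lower bound. The point of hypothesis (c) is that, at scale $1$ for the rescaled current, the Whitney refinement has \emph{just stopped}: there is a cube $L\in\sW$ (for $T'$) of size $\ell(L)\simeq c_s$ meeting $\bar B_1(0,\pi_0)$, and by hypothesis (a) no larger cube near the origin has been refined, so this $L$ was assigned to $\sW$ at a bounded generation. The argument would split according to why $L$ stopped. If $L\in\sW_e$, I would invoke Proposition \ref{p:splitting} (Splitting I): the stopping condition (EX) gives $\bE(T',\B_L)\gtrsim C_e\bmo\,\ell(L)^{2-2\delta_2}\simeq \bmo$, and then \eqref{e:split_1}--\eqref{e:split_2} turn a lower bound on the excess over $\B_L$ into a lower bound $\int_\Omega |DN'|^2\gtrsim \bmo\,\ell(L)^m$ and thence, via the second inequality in \eqref{e:split_2}, into $\int_{\cM'\cap\bB_2}|N'|^2\gtrsim \ell(L)^2\int_\Omega|DN'|^2\gtrsim \bmo$, which is the content of \eqref{e:restart} for the excess term. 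If instead $L\in\sW_h$, I would use Proposition \ref{p:separ}: conclusion (S3) gives a pointwise lower bound $\cG(N'(x),Q\a{\etaa\circ N'(x)})\gtrsim \bmo^{1/2m}\ell(L)^{1+\beta_2}$ on all of $\cL$, and since $|\cL|\simeq \ell(L)^m$ and $\cL\subset\bB_2$ (using that $\ell(L)$ is bounded and $L$ meets $\bar B_1$) one again gets $\int_{\cM'\cap\bB_2}|N'|^2\gtrsim \bmo\,\ell(L)^{m+2+2\beta_2}\gtrsim\bmo$; note the height stopping condition also forces, via \eqref{e:ex+ht_whitney} or the very definition of $\sW_h$ as coming after $\sW_e$, that the excess term is itself controlled, so dominating $\bmo=\max\{\bE,\mathbf{c}(\Sigma')^2\}$ is legitimate provided we also dispatch the $\mathbf{c}(\Sigma')^2$ contribution. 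The case $L\in\sW_n$ reduces to one of the previous two via Corollary \ref{c:domains}: a neighbor-chain leads back to a cube in $\sW_e$ of comparable size and within a bounded distance, still inside $\bB_2$ after the rescaling, so the $\sW_e$ estimate applies on that ancestor's region.

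It remains to control $\mathbf{c}(\Sigma')^2$ in \eqref{e:restart}. Here I would argue that if $\mathbf{c}(\Sigma')^2$ were the dominant term in $\bmo$ for $T'$, then $\bmo\simeq \mathbf{c}(\Sigma')^2$, and one still needs $\int|N'|^2\gtrsim\mathbf{c}(\Sigma')^2$; but whichever of the three stopping conditions put $L$ into $\sW$, the resulting lower bound was in terms of $\bmo$ itself (for (EX) the excess \emph{is} comparable to $\bmo$ by the stopping inequality and \eqref{e:ex+ht_whitney}; for (HT) the separation bound is in terms of $\bmo^{1/2m}$), so all the displayed inequalities above in fact produce $\gtrsim\bmo$ and hence $\gtrsim\mathbf{c}(\Sigma')^2$. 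The constant $\bar c_s$ then depends on the generation bound for $L$ (hence on $N_0$, $M_0$ and $c_s=\frac{1}{64\sqrt m}$) and on the constants in Propositions \ref{p:separ} and \ref{p:splitting}, i.e. on $\beta_2,\delta_2,M_0,N_0,C_e,C_h$, as claimed.

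\textbf{Main obstacle.} The genuinely delicate point is the bridge in \eqref{e:split_2} from $\int|DN'|^2$ to $\int|N'|^2$: one is trading a scale-invariant Dirichlet energy for an $L^2$ norm with a favorable power of $\ell(L)$, and this is exactly where ``splitting before tilting'' does its work. Making sure the domain $\Omega$ in Proposition \ref{p:splitting} sits inside $\cM'\cap\bB_2$ after rescaling — which needs $\ell(L)\simeq c_s$ small and $L$ anchored near the origin, precisely hypotheses (a) and (c) — and that the case analysis over $\sW_e,\sW_h,\sW_n$ is exhaustive with all constants tracked through the rescaling, is the part that requires care rather than cleverness.
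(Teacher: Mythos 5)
Your outline of (i) is fine and matches the paper. But the argument for (ii) has a genuine gap at its core: you treat the cube $L$ of hypothesis (c) as if it belonged to the Whitney decomposition of the \emph{rescaled} current $T'$ relative to the new plane $\pi$, and you then apply Propositions \ref{p:splitting} and \ref{p:separ} directly to the new normal approximation $N'$ on ``its'' Whitney region. Hypothesis (c) concerns the decomposition $\sW$ of the \emph{original} current $T$ relative to $\pi_0$; the decomposition of $T'$ relative to $\pi$ is a different object, and nothing guarantees it contains a stopping cube of size $\simeq c_s$ near the origin (its refinement could stop earlier, later, or for a different reason). Since $N'$ is defined only through the \emph{new} decomposition, the estimates \eqref{e:split_1}--\eqref{e:split_2} and (S3) — which quantify $N$ over Whitney regions of the \emph{old} decomposition — say nothing about $N'$ a priori. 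This is exactly the difficulty the proposition is meant to overcome (hence its name), and your proposal assumes it away. The paper's route is: derive, from the stopping condition for $L$ (either via \eqref{e:stima dritta} if $L\in\sW_e$, or via the stripe decomposition of Theorem \ref{t:height_bound} if $L\in\sW_h$), a lower bound $\int_{B_{\ell/8}}\cG(f,Q\a{\etaa\circ f})^2\geq c\,\bmo\, r^{m+4-2\delta_2}$ for the \emph{Lipschitz approximation} $f$ attached to the old decomposition; rescale this to the map $g$ with $\bG_g=(\iota_{0,r})_\sharp\bG_f$; and only then transfer the bound to $N'$ using that $\bG_g$ and $\bT_{F'}$ both coincide with $T'$ outside a set of measure $O(\hat{{\bm m}}_0^{1+\gamma_2})$ and that $\cM'$ is a graph of small $C^1$ norm over $\hat\pi_H$, so that $\cG(g(x),Q\a{\etaa\circ g(x)})\leq C_0\,\cG(G(p),Q\a{p})$ by \cite[Theorem 5.1(5.3)]{DS2}. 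This transfer step is entirely absent from your proposal.

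Two smaller points. First, the case $L\in\sW_n$ does not need Corollary \ref{c:domains}: hypothesis (a) rules it out outright, since a neighbor $J$ with $\ell(J)=2\ell(L)=2c_sr$ would meet $B_\rho(0,\pi_0)$ for $\rho=(1+2\sqrt m\,c_s)r$ while $\ell(J)>c_s\rho$. Second, the target of \eqref{e:restart} is $\hat{{\bm m}}_0=\max\{\bE(T',\bB_{6\sqrt m}),\mathbf c(\Sigma')^2\}$, and the needed intermediate fact is $\hat{{\bm m}}_0\leq C\,\bmo\,r^{2-2\delta_2}$ (which follows from the excess decay $\bE(T,\bB_{6\sqrt m\rho})\leq C\bmo\rho^{2-2\delta_2}$ along the chain of ancestors of $L$); your ``$\gtrsim\bmo$'' conflates the original $\bmo$ with $\hat{{\bm m}}_0$ and skips this comparison, which is what makes the final inequality close.
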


\section{Center manifold's construction}\label{s:parte_tecnica_1}

In this section we lay down the technical preliminaries to prove Theorem \ref{t:cm}, state
the related fundamental estimates and show how the theorem follows from them. 

\subsection{Technical preliminaries and proof of \eqref{e:prima_parte}}\label{ss:standard}

\begin{proof}[Proof of Lemma \ref{l:tecnico1}]
Recalling that $T:= T^0 \res \bB_{23\sqrt{m}/4}$,
we want to show that \eqref{e:geo semplice 1} hold.
To this regard, we can argue by contradiction. If for instance the second statement in \eqref{e:geo semplice 1}
were false, then we would have a sequence of currents 
$T^0_k$ in $\bB_{6\sqrt{m}}$ and of submanifolds $\Sigma_k$ satisfying
Assumption~\ref{ipotesi} with $\eps_2 (k)\downarrow 0$
and $(\p_{\pi_0})_\sharp T^0_k \res (\bC_{11 \sqrt{m}/2}\cap \bB_{23 \sqrt{m}/4}) \neq Q
\a{B_{11\sqrt{m}/2}}$. On the other hand, from \eqref{e:basic}, \eqref{e:pi0_ottimale}, \eqref{e:small ex} and the standard monotonicity formula
\[
T^0_k \weak T_\infty := Q\a{B_{6\sqrt{m}}}\, .
\]
Also, by standard regularity theory for area minimizing currents, we conclude that
$\supp (T^0_k)\cap \bB_r$ converges to $\supp (T_\infty )\cap \bB_r$ in the Hausdorff distance for every $r<6\sqrt{m}$.
Since $\partial T^0_k$ vanishes in $\bB_{6\sqrt{m}}$, 
$T^0_k\res (\bC_{11 \sqrt{m}/2}\cap \bB_{23 \sqrt{m}/4})$ has no boundary in $\bC_{11\sqrt{m}/2}$ for $k$ large enough, 
thereby implying that 
$(\p_{\pi_0})_\sharp T^0_k\res (\bC_{11 \sqrt{m}/2}\cap  \bB_{23 \sqrt{m}/4})= Q_k \a{B_{11\sqrt{m}/2}}$ for some integer $Q_k$. 
Since $T^0_k \weak T_\infty$, we deduce that $Q_k=Q$ for $k$ large enough, giving
the desired contradiction. Note that the argument actually shows also the first statement in \eqref{e:geo semplice 1}. The height bound \eqref{e:pre_height} now follows
from Theorem~\ref{t:height_bound} because 
$(\p_{\pi_0})_\sharp T^0\res (\bC_{11 \sqrt{m}/2}\cap \bB_{23 \sqrt{m}/4}) = Q \a{B_{11\sqrt{m}/2}}$
and $\Theta (T^0, 0) = Q$: in particular, the latter assumption and Theorem \ref{t:height_bound}(iii) imply
that there is one single open set $\bS_1$ as in Theorem \ref{t:height_bound}(i), which in turn must contain the origin.

By the slicing theory of currents (see \cite[Section 28]{Sim} or \cite[4.3.8]{Fed}) and by
\eqref{e:geo semplice 1},
there is a set $A\subset B_{5\sqrt{m}}$ of 
full measure such that
\[
\langle T, \p_{\pi_0}, x\rangle =
\sum_{i=1}^{N(x)} k_i (x) \delta_{(x, y_i (x))} \qquad \forall x\in A\, ,
\]
where $N(x)\in \mathbb N$, 
 $k_i(x)\in\Z$ with $\sum_{i} {k_i} = Q$, and $(x,y_i (x))\in \supp (T)$ with
 $|y_i (x)|\leq C_0 \bmo^{\sfrac{1}{2m}}$.
By the density of $A$ in $B_{5\sqrt{m}}$,
we conclude 
that $\supp (T) \cap (x+\pi_0^\perp) \neq \emptyset$ for every $x\in \overline{B}_{5\sqrt{m}}$. 
This completes the proof of Lemma \ref{l:tecnico1}. Observe also that as a consequence, if $L\in \sC$, then 
\begin{equation}\label{e:bound_p_L}
|y_L| \leq C \bmo^{\sfrac{1}{2m}}\qquad\mbox{and}\qquad |p_L| \leq 4\sqrt{m} + C_0 \bmo^{\sfrac{1}{2m}}\, 
\end{equation}
(recall that $p_L = (x_L, y_L)\in \pi_0\times \pi_0^\perp \cap \supp (T)$ is the center of $\bB_L$, cf. Definition \ref{d:refining_procedure}).
\end{proof}

\begin{proof}[Proof of Lemma \ref{l:tecnico3}]
The first part of the statement, i.e. the extension of the manifold $\Sigma$, is a fairly standard fact: it suffices to
make the correct extension of the map $\Psi_0$ to $T_0 \Sigma$ and then use the smallness of the norm to show that 
$\Sigma$ is globally graphical over every $T_p \Sigma$. The fact that $T^0$ remains area-minimizing is also fairly simple:
any area minimizing current $T'$ in the extended manifold $\Sigma$ with $T' - T^0 = \partial S$ must be supported in $\bB_{C_0}$ for some geometric constant $C_0$, by the monotonicity formula. On the other hand, for a sufficiently small $\eps_2$, $\bB_r \cap \Sigma$ is geodesically
convex in $\Sigma$ for every $r\in ]0, C_0]$ and thus there is a projection $\p: \bB_{C_0}\cap \Sigma \to \bar\bB_{6\sqrt{m}}\cap \Sigma$ which is $1$-Lipschitz with respect to the Riemannian metric on $\Sigma$. Since $\pi_\sharp T'$ cannot have mass smaller than $T'$, $T'$ must be supported in $\bar \bB_{6\sqrt{m}}$. But then $T'$ is area-minimizing even in the original (i.e. not extended) $\Sigma$ and must have the same mass as $T^0$.

By Assumption \ref{ipotesi} and Remark \ref{r:sigma_A}, $\bA \leq C_0 \bmo^{\sfrac{1}{2}} \leq C_0$. Then,
by the monotonicity formula, $\|T^0\| (\bB_1) \geq c_0>0$ and so there is $p\in \supp (T)\cap \bB_1$ such that
\[
|\vec{T} (p) - \pi_0|^2 = |\vec{T^0} (p) - \pi_0|^2 \leq C_0 \frac{\bE (T^0, \bB_1, \pi_0)}{\|T\| (\bB_1)} \leq C_0 \bmo\, .
\]
We conclude that, if $\eps_2$ is smaller than a geometric constant, $\p_{T_p \Sigma} (\pi_0)$ is an $m$-dimensional plane with
$|\p_{T_p \Sigma} (\pi_0) - \pi_0| \leq C_0 \bmo^{\sfrac{1}{2}}$. On the other hand $|\p_{T_0 \Sigma} - \p_{T_p \Sigma}| \leq C_0 |T_p \Sigma - T_0 \Sigma| \leq C_0 \bA \leq C_0 \bmo^{\sfrac{1}{2}}$ and we conclude $|\p_{T_0 \Sigma} (\pi_0) - \pi_0|\leq C_0 \bmo^{\sfrac{1}{2}}$. Therefore there is an $n$-dimensional plane $\varkappa_0$ orthogonal to  $\pi_0$ such that $|\pi_0\times \varkappa_0 - T_0 \Sigma| \leq C_0 \bmo^{\sfrac{1}{2}}$. We then find a rotation which fixes $\pi_0$ and maps $\varkappa_0$ onto $\{0\}\times \R^n \times \{0\}$. The remaining statements follows easily from Lemma \ref{l:rotazioni_semplici}.
\end{proof}

\begin{proof}[Proof of \eqref{e:prima_parte}]
Fix $L\in \sW^j$ with $N_0 \leq j\leq N_0+6$.
Since $r_L\leq 2^{-7}$ (cf. Assumption \ref{parametri}), \eqref{e:bound_p_L} guarantees
$\B_L\subset \B_{5\sqrt{m}}$ if $\eps_2$ is small enough. Moreover
\[
\bE (T, \bB_L, \pi_0) \leq \frac{6^m}{(64M_0 2^{-N_0-6})^m} \bE (T^0, \bB_{6\sqrt{m}}, \pi_0) \leq 
\frac{6^m}{(64 M_0)^m 2^{-(N_0 +6) m}}\,\bmo\, .
\]
For a suitable $C^\star (M_0, N_0)$ the inequality $C_e \geq C^\star$ implies 
\[
\bE (T, \B_L) \leq 
\bE (T, \B_L, \pi_0)\leq C_e \bmo \,\ell(L)^{2-2\delta_2}\, .
\]
Let now $\hat\pi_L$ be an optimal plane in $\bB_L$: 
since the center $p_L$ belongs to 
$\supp (T)$, by the monotonicity formula $\|T\| (\bB_L) \geq c_0 r_L^m$ 
(cf.~\cite[Section 17]{Sim} or \cite[Appendix A]{DS3}). Thus
\begin{equation}\label{e:primo tilt}
|\hat\pi_L-\pi_0|^2 \leq C_0 \big(\bE (T, \B_L, \pi_0) + \bE (T, \bB_L, \hat{\pi}_L)\big)  \leq C_0 C_e \bmo \, \ell (L)^{2-2\delta_2}\, ,
\end{equation}
where $C_0$ is a geometric constant. This in turn implies that
\begin{align*}
\bh (T, \bB_L) &\leq C_0  M_0 \, |\hat{\pi}_L-\pi_0|\,\ell(L) + \bh (T, \bB_L, \pi_0) \leq 
C_0\, M_0 C_e^{\sfrac{1}{2}} \bmo^{\sfrac{1}{2}} \ell (L)^{2-\delta_2} +
\bh (T, \bC_{5\sqrt{m}})\\
& \stackrel{\mathclap{\eqref{e:pre_height}}}{\leq} C(M_0, N_0) (C_e^{\sfrac{1}{2}} + 1) 
\bmo^{\sfrac{1}{2m}} \ell (L)^{1+\beta_2}\, .
\end{align*}
Thus, if $C^\star (M_0, N_0)$ is chosen sufficiently large 
and $C_h \geq C^\star C_e \geq (C^\star)^2$, neither the condition (EX) nor (HT) apply to $L$.
Therefore,  $\sW^j=\emptyset$ for every $j\leq N_0+6$.
\end{proof}

\subsection{Tilting of planes and proof of Proposition \ref{p:whitney}}
Next we compare optimal planes
and height functions across different cubes of $\sW\cup\sS$.

\begin{proposition}[Tilting of optimal planes]\label{p:tilting opt}
Assume that the hypotheses of Assumptions \ref{ipotesi} and \ref{parametri} hold, that
$C_e \geq C^\star$ and $C_h \geq C^\star C_e$, where $C^\star (M_0, N_0)$ is the constant of the previous section. 
If $\eps_2$ is sufficiently small, then 
\begin{itemize}
\item[(i)] $\bB_H\subset\bB_L \subset \bB_{5\sqrt{m}}$ for all $H, L\in \sW\cup \sS$ with $H\subset L$.
\end{itemize}
Moreover, if $H, L \in \sW\cup\sS$ and either $H\subset L$ or $H\cap L \neq \emptyset$ and $\frac{\ell (L)}{2} \leq \ell (H) \leq \ell (L)$, then the following holds, for $\bar{C} = \bar{C} (\beta_2, \delta_2, M_0, N_0, C_e)$ and $C = C(\beta_2, \delta_2, M_0, N_0, C_e, C_h)$:
\begin{itemize}
\item[(ii)]$|\hat\pi_H - \pi_H| \leq \bar{C} \bmo^{\sfrac{1}{2}} \ell (H)^{1-\delta_2}$;
\item[(iii)] $|\pi_H-\pi_L| \leq \bar{C} \bmo^{\sfrac{1}{2}} \ell (L)^{1-\delta_2}$;
\item[(iv)] $|\pi_H-\pi_0| \leq  \bar C \bmo^{\sfrac{1}{2}}$;
\item[(v)] $\bh (T, \bC_{36 r_H} (p_H, \pi_0)) \leq C \bmo^{\sfrac{1}{2m}} \ell (H)$ and $\supp (T) \cap \bC_{36 r_H} (p_H, \pi_0) \subset \bB_H$; 
\item[(vi)]  For $\pi= \pi_H, \hat{\pi}_H$, $\bh (T, \bC_{36r_L} (p_L, \pi))\leq C \bmo^{\sfrac{1}{2m}} \ell (L)^{1+\beta_2}$
and $\supp (T) \cap \bC_{36 r_L} (p_L, \pi)\subset \bB_L$.
\end{itemize}
In particular, the conclusions of Proposition~\ref{p:whitney} hold.
\end{proposition}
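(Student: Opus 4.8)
The plan is to proceed by induction on the generation $j$ of the cube, following the refining procedure, and to establish all of (i)--(vi) simultaneously together with the claims of Proposition \ref{p:whitney}. Part (i) is immediate once one knows that all cubes are descendants of cubes in $\sS^{N_0+1},\dots,\sS^{N_0+6}$ (by \eqref{e:prima_parte}) and that the balls $\B_L = \bB_{64 r_L}(p_L)$ grow comparably to $r_L = M_0\sqrt{m}\,\ell(L)$: indeed for $H\subset L$ one has $|p_H - p_L| \leq C\sqrt m\,\ell(L) + \bh(T,\B_L)$, which by the already-proved excess/height bounds along the stopped ancestors is $\leq 2 r_L$ once $M_0$ is large; combined with $r_H \leq r_L$ this yields $\B_H\subset \B_L$, and the chain of inclusions back up to a cube of generation $\leq N_0+6$ whose ball sits inside $\B_{5\sqrt m}$ (as in the proof of \eqref{e:prima_parte}) gives the last inclusion.

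For (ii): $\hat\pi_L$ optimizes the excess in $\B_L$ and $\pi_L\subset T_{p_L}\Sigma$ is the closest $m$-plane to $\hat\pi_L$ inside $T_{p_L}\Sigma$; since $T_{p_L}\Sigma$ contains an $m$-plane within distance $C_0\bmo^{1/2}$ of $\pi_0$ (Remark \ref{r:sigma_A}, Lemma \ref{l:tecnico3}) and $\hat\pi_L$ is within $C\bmo^{1/2}\ell(L)^{1-\delta_2}$ of $\pi_0$ by the argument of \eqref{e:primo tilt} applied with the stopping-condition bound $\bE(T,\B_L)\leq C_e\bmo\,\ell(L)^{2-2\delta_2}$ valid for every $J\in\sS$ and hence, up to the father, for every $L\in\sW$, one gets $|\hat\pi_L - \pi_L|\leq |\hat\pi_L - \p_{T_{p_L}\Sigma}(\text{closest plane})| \leq C\bmo^{1/2}\ell(L)^{1-\delta_2}$ (the $\bmo^{1/2}$-term from the manifold is absorbed since $1-\delta_2<1$ and $\ell(L)\leq 1$ — here one uses $\ell(L)^{1-\delta_2}\geq \ell(L)$, so actually the bound should be read with the dominant term; more precisely the manifold contributes $C\bmo^{1/2}$ which is worse, so (ii) as stated requires $\ell(H)$ bounded below, i.e.\ it is the tilting \emph{relative to $\pi_0$} that degrades — the clean statement is obtained by comparing $\hat\pi_H$ to $\pi_H$ both of which track the current). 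For (iii) one telescopes: if $H\subset L$, chain through the fathers $H = L_0\subset L_1 \subset\dots\subset L_N = L$ all in $\sS$ except possibly the endpoints, estimate $|\pi_{L_{i+1}} - \pi_{L_i}|$ by comparing both to $\hat\pi_{L_i}$ and using that $\B_{L_i}\subset \B_{L_{i+1}}$ with comparable radii forces $|\hat\pi_{L_i} - \hat\pi_{L_{i+1}}|^2 \leq C(\bE(T,\B_{L_i},\hat\pi_{L_{i+1}}) + \bE(T,\B_{L_{i+1}},\hat\pi_{L_i})) \leq C\bmo\,\ell(L_{i+1})^{2-2\delta_2}$; summing the geometric series $\sum_i \ell(L_i)^{1-\delta_2} \leq C\ell(L)^{1-\delta_2}$ gives (iii). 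The neighbour case ($H\cap L\neq\emptyset$, comparable sizes) reduces to the subcase case by passing through a common ancestor of controlled generation (Whitney property (w3)). Then (iv) follows from (iii) by telescoping all the way up to generation $N_0+6$ and adding $|\pi_{L_{N_0+6}} - \pi_0|\leq C\bmo^{1/2}$ from (ii) plus \eqref{e:primo tilt}; the exponent $1-\delta_2 > 0$ makes $\sum \ell^{1-\delta_2}$ over all ancestors converge to a constant.

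For (v) and (vi): the height in a cylinder $\bC_{36 r_H}(p_H,\pi)$ is controlled by the height in the ball $\B_H$ plus a ``tilting error'' $|\pi - \pi_0|\cdot r_H$ (resp.\ $|\pi - \hat\pi_H|\cdot r_L$), provided one first knows $\supp(T)\cap \bC_{36 r_H}(p_H,\pi)\subset \B_H$; the latter containment is itself proved by the standard argument that if a support point were outside the ball, the height bound $\bh(T,\B_H)\leq C_h\bmo^{1/(2m)}\ell(H)^{1+\beta_2} \ll r_H$ (valid for $H\in\sS$, and for $H\in\sW$ up to its father) together with the density lower bound and \eqref{e:geo semplice 1} would force the slice mass to jump, a contradiction. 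Then plug (iv) (for $\pi_0$) or (ii)+(iii) (for $\pi_H$, $\hat\pi_H$) into the tilting-error term. The main obstacle is bookkeeping the induction cleanly: every estimate for $L\in\sW$ is inherited from its father in $\sS$ (for which the stopping conditions (EX), (HT) \emph{failed}, giving the clean bounds \eqref{e:ex+ht_ancestors}), and one must verify the hierarchy of constants ($C_e\geq C^\star$, $C_h\geq C^\star C_e$, then $\bar C$ depending on $C_e$ but not $C_h$, then $C$ on both, then $\eps_2$ small) is genuinely consistent — in particular that the geometric series are summed with constants that do not blow up as $N_0\to\infty$, which is where the choice $\delta_2 < 1$ (hence $\ell(L)^{1-\delta_2}$ summable) and $r_L = M_0\sqrt m\,\ell(L)$ with $M_0$ large are used. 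Finally, Proposition \ref{p:whitney}'s estimates \eqref{e:ex+ht_ancestors}--\eqref{e:ex+ht_whitney} are read off directly: \eqref{e:ex+ht_ancestors} is the negation of the stopping conditions for $J\in\sS$, and \eqref{e:ex+ht_whitney} follows because $L\in\sW$ has its father in $\sS$ and $\B_L\subset \B_{\text{father}}$ with comparable radii, so the father's bound transfers to $L$ up to a dimensional factor; the properties (w1)--(w3) of the Whitney decomposition are checked against Definition \ref{d:refining_procedure} using (i) and \eqref{e:prima_parte}.
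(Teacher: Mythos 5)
Your overall architecture (induction on the generation, inheriting bounds from the stopped ancestors in $\sS$, controlling tilted cylinders by ball-heights plus tilting errors) matches the paper's proof. But there is a genuine gap in your argument for (ii), and you in fact flagged it yourself without repairing it. Comparing $\hat\pi_H$ to $\pi_0$ and $T_{p_H}\Sigma$ to $\pi_0$ can only ever yield $|\hat\pi_H-\pi_H|\leq C\bmo^{\sfrac{1}{2}}$, because for deep cubes the tilt of $\hat\pi_H$ relative to $\pi_0$ accumulates to $O(\bmo^{\sfrac12})$ (that is precisely conclusion (iv)) and the global closeness of $T_{p_H}\Sigma$ to $\R^{m+\bar n}\times\{0\}$ is also only $O(\bmo^{\sfrac12})$. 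Your parenthetical remark that ``the clean statement is obtained by comparing $\hat\pi_H$ to $\pi_H$ both of which track the current'' is a restatement of what must be proved, not a proof. The missing mechanism is local: since $\|T\|(\B_H)\geq c_0 r_H^m$ by monotonicity, the excess bound $\bE(T,\B_H)\leq C\bmo\,\ell(H)^{2-2\delta_2}$ produces a point $p\in\supp(T)\cap\B_H$ with $|\vec T(p)-\hat\pi_H|^2\leq \bar C\bmo\,\ell(H)^{2-2\delta_2}$; the plane spanned by $\vec T(p)$ lies \emph{inside $T_p\Sigma$}, so $T_p\Sigma$ contains an $m$-plane within $\bar C\bmo^{\sfrac12}\ell(H)^{1-\delta_2}$ of $\hat\pi_H$; and finally $|T_p\Sigma-T_{p_H}\Sigma|\leq C_0\bA\,|p-p_H|\leq \bar C\bmo^{\sfrac12}\ell(H)$ because both points lie in $\B_H$ of radius $64 r_H$. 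It is this second-fundamental-form estimate over the \emph{small} ball $\B_H$, not the global comparison with $\pi_0$, that produces the factor $\ell(H)^{1-\delta_2}$. Since (iii), (iv) and (vi) are all built on (ii), the gap propagates through the whole proposition.

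A secondary but real flaw: in the neighbouring case of (iii) you propose to ``pass through a common ancestor of controlled generation'' using (w3). Two adjacent dyadic cubes of side $2^{-j}$ need not have any common dyadic ancestor of comparable size (they may straddle a coarse dyadic hyperplane, in which case the only common ancestor is essentially all of $[-4,4]^m$), so this reduction fails. The correct device is metric, not genealogical: if $J$ is the father of the larger of the two cubes, then $|p_H-p_J|\leq 5\sqrt m\,\ell(J)$, and since the balls $\B_L$ have radius $64 M_0\sqrt m\,\ell(L)$ with $M_0$ large, one gets $\B_H\cup\B_L\subset\B_J$ even though $J$ is not an ancestor of both; the tilt comparison is then run inside $\B_J$ exactly as in the nested case.
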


\begin{proof} In this proof we will use the following convention: geometric constants will be denoted by $C_0$ or $c_0$, constants depending upon
$\beta_2, \delta_2, M_0, N_0, C_e$ will be denoted by $\bar{C}$ or $\bar{c}$ and constants depending upon $\beta_2, \delta_2, M_0, N_0, C_e$ and $C_h$ will be denoted by $C$ or $c$.

\medskip

{\bf Proof of (i)--(vi) when $H\subset L$.} The proof is by induction over $i = - \log_2 (\ell (H))$, where we start with
$i = N_0$. For the starting step $i=N_0$ we need to check (i), (ii), (iv), (v) and (vi), all in the special case $H=L$.  Observe first that (i) is a consequence of
\eqref{e:bound_p_L} and the estimate $64 r_L \leq M_0 \sqrt{m} 2^{-N_0} \leq \sqrt{m}/2$.
Since $\sW^{N_0}= \emptyset$, for $i=N_0$ we have $H\in \sS^{N_0}$, which means that $H$ satisfies neither condition (EX) nor condition (HT).
Since by the monotonicity formula
$\|T\| (\bB_{H}) \geq c_0 \, r_{H}^m$, 
there exists at least a point $p\in \supp (T) \cap \B_{{H}}$ such that 
\begin{equation}\label{e:tilt tangent}
|\vec{T} (p) - \hat{\pi}_{{H}}|^2\leq \bE (T, \bB_{{H}})
\frac{C_0\, r_{H}^m}{\|T\| (\bB_{{H}})} 
\leq \bar{C} \bmo \,\ell ({H})^{2-2\delta_2}\, .
\end{equation}
Since $\vec{T} (p)$ is an
$m$-vector of $T_p \Sigma$, this implies that
$|\p_{T_p \Sigma} (\hat\pi_{{H}}) - \hat\pi_{{H}}| \leq \bar{C} \bmo^{\sfrac{1}{2}} \ell ({H})^{1-\delta_2}$.
Recalling that
$|\p_{T_{p_{{H}}} \Sigma} - \p_{T_p \Sigma}| \leq 
C_0 r_{{H}} \bA \leq \bar{C} \bmo^{\sfrac{1}{2}} \ell ({H})$, we conclude (ii).
(iv) follows simply from \eqref{e:primo tilt} and (ii). As for (v), observe that the radius of $\bC_{36 r_{H}}(p_{H}, \pi_{0})$ is smaller than $\sqrt{m}/2$ and its center $p_H = (x_H, y_H)$ satisfies $|x_H|\leq 4\sqrt{m}$. Thus $\bC_{36r_H} (p_H, \pi_0)\subset \bC_{5\sqrt{m}} (0, \pi_0)=: \bC$ and the first conclusion of (v) is a consequence of \eqref{e:pre_height}. The second conclusion follows from the first provided $\eps_2 < c$.
Finally, with regard to (vi), recall that $H=L$. There are two cases: $\pi=\pi_H$ and $\pi = \hat{\pi}_H$. Since the arguments are entirely analogous, we just give it in the first case. The base point $p_H$ of the cylinder $\bC' := \bC_{36r_H} (p_H, \pi_H)$ satisfies, by \eqref{e:pre_height}
$|p_H|\leq 4\sqrt{m} + C_0 \bmo^{\sfrac{1}{2m}}$ and its radius is smaller than $\sqrt{m}/2$. By a simple geometric consideration,  $\bC'\cap \bB_{6\sqrt{m}}\subset \bC$ holds provided $|\pi_H - \pi_0|$ and $|p_H| - 4\sqrt{m}$ are smaller than a geometric constant: this requires $\eps_2 \leq \bar{c}$. Under this assumption $\supp (T) \cap \bC'\subset \bC$ and from \eqref{e:pre_height} and (iv) we conclude $\bh (T, \bC', \pi_H) \leq C_0 |\pi_H - \pi_0| + \bh (T, \bC_{5\sqrt{m}}, \pi_0) \leq \bar{C} \bmo^{\sfrac{1}{2m}}$.
it follows then that $\supp(T) \cap \bC' \subset \bB_{H}$, provided $\eps_2$ is sufficiently small.
Since $H \not\in \sW$, from (ii) we then conclude
\begin{align*}
\bh (T, \bC', \pi_H) &\leq 
\bh (T, \bB_H)+ C_0 M_0 \ell (H) |\pi_H - \hat{\pi}_{H}| &\leq C \bmo^{\sfrac{1}{2m}}
\ell (H)^{1+\beta_2} + \bar{C} \bmo^{\sfrac{1}{2}} \ell (H)^{2-\delta_2}\, .
\end{align*}

Now we pass to the inductive step. Thus fix some $H_{i+1}\in \sS^{i+1} \cup \sW^{i+1}$ and consider a chain $H_{i+1}\subset H_i \subset \ldots \subset H_{N_0}$ with $H_l\in \sS^l$ for $l\leq i$.
We wish to prove all the conclusions (i)--(vi) when $H = H_{i+1}$ and $L = H_j$ for some $j \leq i+1$, recalling that, by inductive assumption, all the statements hold when $H =H_k$ and $L= H_l$ for $l\leq k \leq i$. 
With regard to (i), it is enough to prove that $\bB_{H_{i+1}} \subset \bB_{H_{i}}$. By inductive assumption we know (v) holds with $ H=H_i$, whereas $|x_{H_i}-x_{H_{i+1}}| \leq \sqrt{m}\,\ell(H_i)$: so
$|p_{H_{i+1}}-p_{H_i} | \leq (\sqrt{m} + C \bmo^{\sfrac{1}{2m}}) 2 \ell(H)_{i+1}$.
In particular, for $\eps_2$ small enough, we conclude $|p_{H_{i+1}}-p_{H_i}|\leq 3 \sqrt{m} \ell (H_{i+1})$. 
Assuming that the geometric constant in the first inequality of \eqref{e:N0} is large enough, we infer $\bB_{H_{i+1}} \subset \bB_{H_i}$.
We show now (ii). By (i),
\begin{equation}\label{e:prop whitney1}
\bE(T, \bB_{H_{i+1}}) \leq 2^m \,\bE(T, \bB_{H_{i}}) \leq 2^{m+2-2\delta_2} C_e\, \bmo\,\ell(H_{i+1})^{2- 2\delta_2}\, .
\end{equation}
Therefore, we can argue as above in the case $i = N_0$ to achieve (ii).
We next come to (iii) and (iv). Fix any $l \in \{N_0+1, \ldots, i+1\}$.
By the inclusion in (i), we can argue similarly to infer
\begin{equation}\label{e:tilt opt vicini}
|\hat\pi_{H_{l-1}} - \hat\pi_{H_l}|^2 \leq 
\big(\bE (T, \bB_{H_{l-1}}) + \bE (T, \bB_{H_l})\big)
\frac{C_0\, r_{H_{l-1}}^m}{\|T\| (\bB_{H_l})}\leq \bar{C}\, \bmo \,\ell (H_l)^{2-2\delta_2}\, .
\end{equation}
Using the estimate
$\sum_{l = j}^{\infty} \ell(H_l)^{1-\delta_2} \leq C_0\,\ell(H_j)^{1- \delta_2}$
and (ii), we conclude (iii) for
$H = H_{i+1}$ and $L = H_j$ .
As for (iv) it follows from (iii) and the case $|\pi_{H_{N_0}} - \pi_0| \leq \bar{C} \bmo^{\sfrac{1}{2}}$.
We next come to (v).
(v) holds  for $H_i$ and so we conclude 
$\supp (T) \cap \bC_{36 r_{H_{i}}} (p_{H_{i}}, \pi_{0}) \subset \bB_{H_{i}}$. Since
$|p_{H_{i+1}} - p_{H_i}| \leq 3 \sqrt{m}\, \ell(H_{i+1})$
and $r_{H_{i+1}} = \frac{1}{2} r_{H_i}$, we have 
$\bC_{36 r_{H_{i+1}}} (p_{H_{i+1}}, \pi_{0})\subset \bC_{36 r_{H_{i}}} (p_{H_{i}}, \pi_{0})$ provided the geometric constant in the first inequality of \eqref{e:N0} is large enough. Thus:
\begin{align*}
\bh (T, \bC_{36 r_{H_{i+1}}} (p_{H_{i+1}}, \pi_{0})) & \leq
\bh (T, \bB_{H_{i}}) + C_0\,r_{H_i} |\hat \pi_{H_{i}} - \pi_{0}| \\
& \stackrel{\mathclap{\textup{(iv)}}}
\leq \,
C_h\,\bmo^{\sfrac{1}{2m}} \ell(H_i)^{1+\beta_2} + \bar{C}\,\bmo^{\sfrac{1}{2}}
\ell(H_i) \leq C\, \bmo^{\sfrac{1}{2m}} \ell(H_i),
\end{align*}
where we used $H_i \in \sS^i$. Thus (v) follows easily for $H=H_{i+1}$. The inclusion $\supp (T) \cap \bC_{36 r_{H_{i+1}}} (p_{H_{i+1}}, \pi_0) \subset \bB_{H_{i+1}}$
is an obvious corollary of the bound and of the fact that the center of the ball $\bB_{H_{i+1}}$ (i.e. the
point $p_{H_{i+1}}$) belongs to $\supp (T) \cap \bC_{36 r_{H_{i+1}}} (p_{H_{i+1}}, \pi_0)$: we again need to ensure that $\eps_2$ is chosen small enough.

Next we show (vi) for $H= H_{i+1}$ and $L = H_j$ with $j\leq i+1$
(including the case $L=H_{i+1}$). The argument is the same in both cases $\pi_H$ and $\hat{\pi}_H$ and we show it in the first case. We first prove the second claim of (vi) inductively on $j$. Observe that for $j =N_0$ we can argue as for the inclusion $\bC_{36 r_{H_{N_0}}} (p_{H_{N_0}}, \pi_{H_{N_0}}) \cap \bB_{6\sqrt{m}} \subset \bC_{5\sqrt{m}} (0, \pi_0)$ to infer also 
$\bC_{36 r_{H_{N_0}}} (p_{H_{N_0}}, \pi_H) \cap \bB_{5\sqrt{m}}
\subset \bC_{5\sqrt{m}} (0, \pi_0)$: since $|\pi_{H_{N_0}} - \pi_{H}|\leq \bar{C} \bmo^{\sfrac{1}{2}} \ell (H_{N_0})^{1-\delta_2}$, such inclusion simply requires a smaller choice of $\eps_2$. We can then use \eqref{e:pre_height} to infer
\[
\bh (T, \bC_{36 r_{H_{N_0}}} (p_{H_{N_0}}, \pi_H)) \leq \bh (T, \bC_{5\sqrt{m}} (0, \pi_0), \pi_0) + C_0 r_{H_{N_0}} |\pi_0 - \pi_H| \leq C \bmo^{\sfrac{1}{2m}} \ell (H_{N_0})^{1+\beta_2}\, .
\]
Again the inclusion $\supp (T) \cap \bC_{36 r_{H_{N_0}}} (p_{H_{N_0}}, \pi_H)\subset \bB_{H_{N_0}}$ follows from assuming $\eps_2$ sufficiently small. Next, assume that the second claim of (vi) holds for $H$ and $L= H_l$. Observe that 
\[
\bC_{36 r_{H_{l+1}}} (p_{H_{l+1}}, \pi_H) \subset \bC_{36 r_{H_l}} (p_{H_l}, \pi_H):
\] 
in fact, arguing as above, we have $|p_{H_{l+1}} - p_H| \leq 3 \sqrt{m} \ell (H_{l+1})$ and thus such inclusion requires only a sufficiently large geometric constant in the first inequality of \eqref{e:N0}. But then, we know
$\bC_{36 r_{H_{l+1}}} (p_{H_{l+1}}, \pi_H) \subset \bB_{H_l}$ and we can therefore conclude
\[
\bh (T, \bC_{36 r_{H_{l+1}}} (p_{H_{l+1}}, \pi_H), \pi_H) \leq \bh (T, \bB_{H_l}) + C_0 r_{H_{l+1}} |\pi_H - \hat{\pi}_{H_l}|\, .
\]
From this we then conclude the second claim of (vi), i.e. $\supp (T) \cap \bC_{36 r_{H_{l+1}}} (p_{H_{l+1}}, \pi_H) \subset \bB_{H_{l+1}}$. 
Next, the first claim of (vi) is an obvious consequence of the second claim when $L = H_j$ for $j\leq i$ because $L\in \sS$: in this case we have, as computed above,
\begin{align*}
\bh (T, \bC_{36 r_{L}} (p_{L}, \pi_H)) & \leq
\bh (T, \bB_{L}) + C_0\,r_L |\hat \pi_L - \pi_H| \\
& \stackrel{\mathclap{\textup{(iii)\&(ii)}}}
\leq \,
C_h\,\bmo^{\sfrac{1}{2m}} \ell(L)^{1+\beta_2} + \bar{C}\,\bmo^{\sfrac{1}{2}}
\ell(L)^{2-\delta_2} \leq C\, \bmo^{\sfrac{1}{2m}} \ell(L)^{1+\beta_2}\, .
\end{align*}
Finally, since $\bC_{36 r_H} (p_H, \pi_H) \subset \bC_{36 r_{H_i}} (p_{H_i}, \pi_H) \subset \bB_{H_i}$ and the sidelengths $\ell (H)$ and $\ell (H_i)$ differ by a factor $2$, we conclude as well that the first claim of (vi) holds for $H=L$.

\medskip

{\bf Proof of Proposition \ref{p:whitney}.} Observe that \eqref{e:prima_parte} has already been shown
in the previous subsection and that \eqref{e:ex+ht_ancestors} is an obvious consequence of the definition of
$\sS$: it only remains to
show \eqref{e:ex+ht_whitney}. Fix then $L\in \sW$ and recall that its father $J$ belongs to $\sS$. However, having proved (i)--(vi) for pairs of cubes in which one is the ancestor of the other, we know that $\bB_L \subset \bB_J$ and thus we achieve
\begin{align}
\bE (T, \bB_L) \leq 2^m \bE (T, \bB_J) \leq 2^m C_e \bmo \ell (J)^{2-2\delta_2} \leq 2^{m+2-2\delta_2} C_e \bmo \ell (L)^{2-2\delta_2}\\
\bh (T, \bB_L) \leq \bh (T, \bB_J) + C_0 r_L |\hat{\pi}_J - \hat{\pi}_L| \stackrel{(ii)\&(iii)}{\leq} C \bmo^{\sfrac{1}{2m}} \ell (L)^{1+\beta_2}\, .
\end{align}

\medskip

{\bf Proof of (i)--(vi) for neighboring $H$ and $L$.} Observe that in this case we only have to show (iii) and (vi). 
The argument for (iii) is entirely analogous to the case $H\subset L$. Assume first that $L\not \in \sS^{N_0}$. Then $L$ has a father $J$. As already
seen we have $|p_L - p_J| \leq 3 \sqrt{m} \ell (J)$. On the other hand it is also easy to see that, with the same argument,
we conclude $|p_H- p_L|\leq 3 \sqrt{m} \ell (L)$ and thus $|p_H- p_J| \leq 5 \sqrt{m} \ell (J)$. We therefore easily conclude
$\bB_H \cup \bB_L \subset \bB_J$, provided the geometric constant in the first inequality of \eqref{e:N0} is large enough.
Therefore, we can estimate
\[
|\hat{\pi}_L - \hat{\pi}_J| \leq C_0 (\bE (T, \bB_J) + \bE (T, \bB_L))^{\sfrac{1}{2}}
\]
and use (ii) to conclude. In case $L\in \sS^{N_0}$, we can simply replace $\bB_J$ with $\bB_{5\sqrt{m}}$. 

We pass finally to (vi). We can in fact use the very same argument already explained when $H\subset L$: we claim indeed that (vi) holds not only for $L$ but also for all its ancestors $J$ and prove this claim by induction exactly as done above.
\end{proof}

\subsection{Existence of several approximating maps}
Next, we prove that the building blocks for the construction of the
center manifold are well-defined.

\begin{proposition}[Existence of interpolating functions]\label{p:gira_e_rigira}
Assume the conclusions of Proposition \ref{p:tilting opt} apply.
The following facts are true provided $\eps_2$ is sufficiently small. 
Let $H, L\in \sW\cup \sS$ be such that either $H\subset L$ or $H\cap L \neq \emptyset$ and
$\frac{\ell (L)}{2} \leq \ell (H) \leq \ell (L)$. Then,
\begin{itemize}
\item[(i)] for $\pi= \pi_H, \hat{\pi}_H$, $(\p_{\pi})_\sharp T\res \bC_{32r_L} (p_L, \pi) = Q \a{B_{32r_L} (p_L, \pi))}$ and $T$ satisfies the assumptions of \cite[Theorem 1.4]{DS3} in the cylinder $\bC_{32 r_L} (p_L, \pi)$;
\item[(ii)] Let $f_{HL}$ be the $\pi_H$-approximation of $T$ in $\bC_{8 r_L} (p_L, \pi_H)$ and $h := (\etaa\circ f_{HL})*\varrho_{\ell (L)}$ be its smoothed average. Set $\varkappa_H := \pi_H^\perp \cap T_{p_H} \Sigma$ and consider the maps 
\begin{equation*}
\begin{array}{lll}
x\quad \mapsto\quad \bar{h} (x)  &:=  \p_{T_{p_H}\Sigma} (h)&\in \varkappa_H\\ 
x\quad \mapsto\quad h_{HL} (x) &:= (\bar{h} (x), \Psi_{p_H} (x, \bar{h} (x)))&\in \varkappa_H \times (T_{p_H} (\Sigma))^\perp\, .
\end{array}
\end{equation*} 
Then there is a smooth
$g_{HL} :  B_{4r_L} (p_L, \pi_0)\to \pi_0^\perp$ s.t. $\bG_{g_{HL}} = \bG_{h_{HL}}\res \bC_{4r_L} (p_L, \pi_0)$.
\end{itemize}
\end{proposition}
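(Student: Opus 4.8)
The proof proceeds in two stages corresponding to the two assertions, and both rest essentially on the tilting and height estimates of Proposition~\ref{p:tilting opt} plied against the hypotheses of \cite[Theorem 1.4]{DS3}.

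\textbf{Step 1: verifying the hypotheses of \cite[Theorem 1.4]{DS3}.} Fix $\pi\in\{\pi_H,\hat\pi_H\}$. The cylinder $\bC_{32r_L}(p_L,\pi)$ has radius $32 r_L = 32 M_0\sqrt m\,\ell(L)$, which for $\eps_2$ small is well below $\sqrt m/2$; combining this with \eqref{e:bound_p_L} and (vi) of Proposition~\ref{p:tilting opt} (applied with the cube $L$ and possibly its ancestors, using that the estimate (vi) propagates up the chain) we get that $\supp(T)\cap\bC_{32r_L}(p_L,\pi)\subset\bB_L\subset\bB_{5\sqrt m}$, so that $\partial T$ vanishes there and the current sits in the cylinder with the correct boundary behaviour. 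To identify the projected multiplicity, note that $p_L\in\supp(T)$ and the slice structure from Lemma~\ref{l:tecnico1} gives $(\p_{\pi_0})_\sharp T\res\bC_{11\sqrt m/2}(0,\pi_0)=Q\a{B_{11\sqrt m/2}}$; since $|\pi-\pi_0|\le\bar C\bmo^{1/2}$ by (ii) and (iv), a small perturbation argument (the excess between $\pi$ and $\pi_0$ is tiny, and the cylinder over $\pi$ is a graph-like modification of the one over $\pi_0$) upgrades this to $(\p_\pi)_\sharp T\res\bC_{32r_L}(p_L,\pi)=Q\a{B_{32r_L}(p_L,\pi)}$. The cylindrical excess hypothesis $\bE(T,\bC_{32r_L}(p_L,\pi),\pi)\le\eps_2^2$ (or whatever smallness \cite[Theorem 1.4]{DS3} demands) follows from \eqref{e:ex+ht_whitney}, \eqref{e:ex+ht_ancestors} and the tilting bounds: $\bE(T,\bB_L,\pi)\lesssim\bE(T,\bB_L)+|\pi-\hat\pi_L|^2\lesssim\bmo\,\ell(L)^{2-2\delta_2}$, and passing from the ball to the slightly larger cylinder costs only a geometric factor. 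The height hypothesis is handled identically using (v) and (vi). Thus \cite[Theorem 1.4]{DS3} applies and produces the $\pi$-approximation, proving (i).

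\textbf{Step 2: reparametrizing the smoothed tilted graph over $\pi_0$.} Given $f_{HL}$ and its smoothed average $h=(\etaa\circ f_{HL})*\varrho_{\ell(L)}$, the map $h$ is $C^\infty$ on $B_{7r_L}(p_L,\pi_H)$ with $\|h\|_{C^0}$ controlled by $\bh(T,\bC_{8r_L}(p_L,\pi_H),\pi_H)\le C\bmo^{1/(2m)}\ell(L)^{1+\beta_2}$ and, by standard interior estimates for convolutions against the Lipschitz approximation together with the Dirichlet energy bound of \cite[Theorem 1.4]{DS3}, $\|Dh\|_{C^0}\le C\bmo^{1/2}\ell(L)^{1-\delta_2}$, with all higher derivatives similarly small (gaining powers of $\ell(L)^{-1}$ each time but staying small because $r_L$ is comparable to $\ell(L)$ and $\bmo$ is tiny). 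The tilted interpolating function $h_{HL}(x)=(\bar h(x),\Psi_{p_H}(x,\bar h(x)))$ is then a $C^\infty$ graph over $\varkappa_H$ inside $T_{p_H}\Sigma\times(T_{p_H}\Sigma)^\perp$, still with small $C^1$ norm (here one uses $\|D\Psi_{p_H}\|_{C^0}\le C_0\bmo^{1/2}$ from Assumption~\ref{ipotesi}/Lemma~\ref{l:tecnico3}). Its graph $\bG_{h_{HL}}$ is a current in $\R^{m+n}$ lying in $\Sigma$ and close to $\pi_H\subset T_{p_H}\Sigma$, which by (ii)--(iv) is within $C_0\bmo^{1/2}$ of $\pi_0$. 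Consequently $\p_{\pi_0}$ restricted to $\gr(h_{HL})$ is a diffeomorphism onto its image (the Jacobian is uniformly close to the identity), and over $B_{4r_L}(p_L,\pi_0)$ this image covers the full disk: this follows because $\supp(T)\cap\bC_{36r_L}(p_L,\pi_0)\subset\bB_L$ by (v), so the tilted graph, which agrees with the relevant sheet of $T$ away from a small bad set, projects onto $B_{4r_L}(p_L,\pi_0)$ surjectively. Inverting, one defines $g_{HL}:B_{4r_L}(p_L,\pi_0)\to\pi_0^\perp$ by $x+g_{HL}(x)\in\gr(h_{HL})$; it is smooth because it is obtained by composing $h_{HL}$, the coordinate change between $(\pi_H,\pi_H^\perp)$ and $(\pi_0,\pi_0^\perp)$, and the inverse of a smooth submersion, and by construction $\bG_{g_{HL}}=\bG_{h_{HL}}\res\bC_{4r_L}(p_L,\pi_0)$.

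\textbf{Main obstacle.} The delicate point is Step~2's claim that $\gr(h_{HL})$ projects \emph{onto} the full disk $B_{4r_L}(p_L,\pi_0)$ — i.e.\ that after tilting from $\pi_H$-coordinates to $\pi_0$-coordinates one does not lose part of the domain near the boundary. This requires quantitatively comparing $4r_L$ (the target radius over $\pi_0$) with $7r_L$ (the domain radius over $\pi_H$ on which $h$ is defined) against the tilt $|\pi_H-\pi_0|\le C_0\bmo^{1/2}$ and the height $C\bmo^{1/(2m)}\ell(L)^{1+\beta_2}$: the geometric loss in passing between the two cylinders is at most of order $(\text{tilt})\cdot r_L + \text{height}$, which is $\ll r_L$ once $\eps_2$ is small, leaving ample room since $4r_L < 7r_L$. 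Making this purely geometric containment-and-surjectivity argument precise, uniformly in $H,L$, while invoking (v)--(vi) of Proposition~\ref{p:tilting opt} to control $\supp(T)$ inside the right cylinders, is the technical heart of the proof; everything else is bookkeeping with the estimates already in hand.
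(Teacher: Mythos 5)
Your overall strategy matches the paper's: part (i) is reduced to the $\pi_0$-statement of Lemma \ref{l:tecnico1} plus the tilting estimates, and part (ii) is a graph-reparametrization over $\pi_0$ using the smallness of the tilt together with the Lipschitz and height bounds on $h_{HL}$. For (ii) you are essentially re-deriving the content of Lemma \ref{l:rotazioni_semplici}, which the paper simply invokes after checking $\Lip(h_{HL})\leq \bar C\bmo^{\gamma_1}\ell(L)^{\gamma_1}$ and $\|h_{HL}-\p_{\pi_H^\perp}(p_L)\|_{C^0}\leq C\bmo^{\sfrac{1}{2m}}\ell(L)^{1+\beta_2}$; your sketch of the surjectivity/invertibility of the projection is the right argument and the radius bookkeeping ($7r_L$ down to $4r_L$ versus a loss of order $({\rm tilt})\cdot r_L + {\rm height}$) is exactly what that lemma quantifies.

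The genuine gap is in the multiplicity identification in (i). You propose to ``upgrade'' $(\p_{\pi_0})_\sharp T\res\bC_{11\sqrt m/2}(0,\pi_0)=Q\a{B_{11\sqrt m/2}}$ to the tilted plane by a small perturbation argument carried out at the scale of $\bC_{32r_L}(p_L,\pi)$. This cannot be run directly at scale $r_L$: for an intermediate plane $\pi(t)$ between $\pi_0$ and $\pi_H$ you have no height bound relative to $\pi(t)$ at scale $\ell(L)$, so you cannot exclude that $\supp(T)\cap\bC_{32r_L}(p_L,\pi(t))$ meets the lateral boundary of the cylinder (other sheets of $T$ lie within the \emph{global} height $\bmo^{\sfrac{1}{2m}}$ of the sheet through $p_L$, which for small $\ell(L)$ dwarfs $r_L$); then the projected current need not be an integer multiple of a disk, and the multiplicity can jump. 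The Constancy Theorem only gives you an integer $k$ in $(\p_\pi)_\sharp T\res\bC_{32r_L}(p_L,\pi)=k\a{\cdot}$; identifying $k=Q$ requires the paper's detour: pass to the top ancestor $M\in\sS^{N_0}$ of $L$, where $r_M$ is of unit order so that $\bB_{6\sqrt m}\cap\bC_{32r_M}(p_M,\pi(t))\subset\bC_{5\sqrt m}(0,\pi_0)$ holds \emph{uniformly} along a continuous path of planes $\pi(t)$ joining $\pi_0$ to $\pi_H$; there the Constancy Theorem gives an integer $Q(t)$ for each $t$, weak continuity of $t\mapsto (\p_{\pi(t)})_\sharp T\res\bC_{32r_M}(p_M,\pi(t))$ forces $Q(t)\equiv Q(0)=Q$, and the inclusion $\bC_{32r_L}(p_L,\pi_H)\subset\bC_{32r_M}(p_M,\pi_H)$ (no boundary in between) then transfers $k=Q$ down to scale $r_L$. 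Without this two-step structure the perturbation argument does not close.
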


\begin{definition}\label{d:mappe_h_HL}
$h_{HL}$ and $g_{HL}$ will be called, respectively, {\em tilted $(H,L)$-interpolating function} and {\em $(H,L)$-interpolating function}.
\end{definition}

Observe that the tilted $(L,L)$-interpolating function and the $(L,L)$-interpolating function correspond to the tilted $L$-interpolating function and to the $L$-interpolating function 
of Definition~\ref{d:glued}. Obviously, Lemma \ref{l:tecnico2} is just a particular case of Proposition \ref{p:gira_e_rigira}.

\begin{proof} We use the convention that $C_0$ and $c_0$ denote geometric constants, $\bar{C}$ and $\bar{c}$ denote dependence upon $\beta_2, \delta_2, M_0, N_0$ and $C_e$, whereas $C$ and $c$ dependence upon
$\beta_2, \delta_2, M_0, N_0, C_e$ and $C_h$. There are two cases: (i) $\pi = \pi_H$ and (ii) $\pi = \hat{\pi}_H$; since the argument for case (ii) is entirely analogous to that for case (i) we only give it for case (i). First recall that, by Proposition~\ref{p:tilting opt},
\begin{equation}\label{e:cilindro_dentro_palla}
\supp (T\res \bC_{32r_L} (p_L, \pi_H)) \subset \B_L\subset \B_{5\sqrt{m}}.
\end{equation}
We thus have $\partial T \res \bC_{32r_L} (p_L, \pi_H)=0$ and thus, setting $\p:=\p_{\pi_H}$, we conclude
\begin{equation}\label{e:proiezione_intera}
\p_\sharp T \res \bC_{32r_L} (p_L, \pi_H) = k \a{B_{32r_L} (\p (p_L), \pi_H)}
\end{equation}
for some integer $k$. We will show now that $Q=k$. If $J$ is the father of $L$, we then have proved in the previous section that
$|p_L-p_J|\leq 3\sqrt{m} \ell (L)$. We thus conclude $\bC_{32 r_L} (p_L, \pi_H) \subset \bC_{32 r_J} (p_J, \pi_H)$, provided $M_0$ is larger than a geometric constant. Consider the
chain of ancestors $J\subset \ldots \subset M$ of $L$, till $M\in \sS^{N_0}$. 
We then have $\bC_{32 r_L} (p_L, \pi_H) \subset \bC_{32 r_M} (p_M, \pi_H)$ and it suffices to show
\begin{equation}\label{e:di_nuovo_proietta}
\p_\sharp T \res \bC_{32r_M} (p_M, \pi_H) = Q \a{B_{32r_M} (\p (p_M), \pi_H)}
\end{equation}
Observe also that $|\pi_0 - \pi_H|\leq \bar{C} \bmo^{\sfrac{1}{2}}$, by Proposition~\ref{p:tilting opt}.
Join $\pi_{H} =: \pi (1)$ and $\pi_{0} =: \pi (0)$ 
with a continuous one-parameter family of planes $\pi (t)$ with the property
that 
\begin{equation}\label{e:uniforme}
|\pi (t) - \pi_{0}| \leq C_0 |\pi_{H} - \pi_{0}|\leq \bar{C}\bmo^{\sfrac{1}{2}}\, ,
\end{equation}
where $C_0 >0$ is some geometric constant. Since $\bar C = \bar C(\beta_2, \delta_2, M_0, N_0, C_e)$, it is then clear from \eqref{e:uniforme} that, 
if $\eps_2$ is suitably small, then
we have $\bB_{6\sqrt{m}} \cap \bC_{32r_{M}} (p_{M}, \pi_{t})
\subset \bC_{5\sqrt{m}} (0, \pi_{0})$ for every $t\in [0,1]$ (as already argued in the proof of Proposition~\ref{p:tilting opt}). 
We consider then the currents
$S(t):=(\p_{\pi(t)})_\sharp T \res \bC_{32r_{M}} (p_{L}, \pi (t))$ and get
$S(t) = Q (t) \a{ B_{34r_{M}} (\p_{\pi (t)} (p_{M}),
\pi(t))}$, where $Q(t)$ is an integer for every $t$
by the Constancy Theorem. 
On the other hand $t\mapsto S(t)$ is weakly continuous in the space of currents
and thus $Q(t)$ must be constant. Since $Q(0)=Q$ by \eqref{e:geo semplice 1},
this proves the desired claim.

\medskip

Observe next that, again from Proposition \ref{p:tilting opt}, 
\begin{equation*}
\bE (T, \bC_{32 r_L} (p_L, \pi_H)) \leq \bar{C} \bE (T, \bB_L, \pi_H) \leq \bar{C} \bE (T, \bB_L) + \bar{C} |\pi_H -\hat\pi_L|^2 \leq \bar{C} \bmo \,\ell(L)^{2-2\delta_2}\, .
\end{equation*}
If $\eps_2$ is sufficiently small, then
$\bE (T, \bC_{32 r_L} (p_L, \pi_H)) < \eps_{1}$,
where $\eps_1$ is the constant of \cite[Theorem~1.4]{DS3}. 
Therefore, 
the current $T\res \bC_{32 r_L} (p_L, \pi_H)$ and the submanifold $\Sigma$
satisfy all the assumptions of \cite[Theorem~1.4]{DS3} 
in the cylinder $\bC_{32 r_L} (p_L, \pi_H)$: we apply it to construct the $\pi_H$-approximation $f_{HL}$.
By \cite[Theorem~1.4]{DS3} and the properties of $\Psi_{p_H}$, we have 
\[
\Lip (h_{HL}) \leq C_0 \Lip (\etaa \circ f_{HL}) \leq \bar{C} \left(\bE (T, \bC_{32 r_L} (p_L, \pi_H))\right)^{\gamma_1} \leq \bar{C} \bmo^{\gamma_1}\ell(L)^{\gamma_1},
\]
and
\begin{align*}
\|h_{HL} - \p_{\pi_H^\perp} (p_L)\|_{C^0} &\leq C_0 \|\etaa\circ f_{HL} - \p_{\pi_H^\perp} (p_L)\|_{C^0}
\leq C_0 \|\cG \big(f_{HL}, Q \,\llbracket p_{\pi_H^\perp} (p_L) \rrbracket\big)\|_{C^0}\nonumber\\ 
&\leq\; C_0 \bh (T, \bC_{32r_L} (p_L, \pi_H)) + \big(\bE (T, \bC_{32r_L} (p_L, \pi_H))^{\sfrac{1}{2}} + \bA\, r_L\big) r_L\notag\\
&\leq C \bmo^{\sfrac{1}{2m}} \ell (L)^{1+\beta_2}\, .
\end{align*}
Since $C$ does not depend on $\eps_2$, if the latter is smaller than a suitable positive constant $c(\beta_2, \delta_2, M_0, N_0, C_e, C_h)$, we can apply Lemma \ref{l:rotazioni_semplici} 
to conclude that the interpolating function $g_{HL}$ is well-defined. 
\end{proof}

\subsection{Key estimates and proof of Theorem \ref{t:cm}}
We are now ready to state the key construction estimates and show
how Theorem \ref{t:cm} follows easily from them.

\begin{proposition}[Construction estimates]\label{p:stime_chiave}
 Assume the conclusions of Propositions \ref{p:tilting opt} and \ref{p:gira_e_rigira} apply 
and set $\kappa = \min \{\beta_2/4, \eps_0/2\}$. Then,
the following holds for any pair of cubes $H, L\in \sP^j$ (cf. Definition \ref{d:glued}), where
$C = C (\beta_2, \delta_2, M_0, N_0, C_e, C_h)$:
\begin{itemize}
\item[(i)] $\|g_H\|_{C^0 (B)}\leq C\, \bmo^{\sfrac{1}{2m}}$ and
$\|Dg_H\|_{C^{2, \kappa} (B)} \leq C \bmo^{\sfrac{1}{2}}$, for $B = B_{4r_H} (x_H, \pi_0)$;
\item[(ii)] if $H\cap L\neq \emptyset$,
then $\|g_H-g_L\|_{C^i (B_{r_L} (x_L))} \leq C \bmo^{\sfrac{1}{2}} \ell (H)^{3+\kappa-i}$ 
for every $i\in \{0, \ldots, 3\}$;
\item[(iii)] $|D^3 g_H (x_H) - D^3 g_L (x_L)| \leq C \bmo^{\sfrac{1}{2}} |x_H-x_L|^\kappa$;
\item[(iv)] $\|g_H-y_H\|_{C^0} \leq C \bmo^{\sfrac{1}{2m}} \ell (H)$ and 
$|\pi_H - T_{(x, g_H (x))} \bG_{g_H}| \leq C \bmo^{\sfrac{1}{2}} \ell (H)^{1-\delta_2}$
$\forall x\in H$;
\item[(v)] if $L'$ is the cube concentric to $L\in \sW^j$ with $\ell (L')=\frac{9}{8} \ell (L)$, 
then
\[
\|\varphi_i - g_L\|_{L^1 (L')} \leq C\, \bmo\, \ell (L)^{m+3+\beta_2/3} \quad \text{for all }\; i\geq j.
\]
\end{itemize}
\end{proposition}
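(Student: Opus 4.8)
The plan is to prove the five items essentially in the order listed, since each builds on the previous ones, and then to trace carefully how the local $C^{3,\kappa}$ bounds for the individual interpolating functions $g_H$ propagate to estimates that are uniform over the Whitney decomposition. First I would establish (i): the smoothed average $\hat h_{HL}$ is a convolution of $\etaa\circ f_{HL}$ with $\varrho_{\ell(L)}$, so standard mollification estimates give $\|D^k \hat h_{HL}\|_{C^0} \lesssim \ell(L)^{-k}\,\|\etaa\circ f_{HL}\|_{C^0} + \text{(lower order)}$, and more precisely each derivative beyond the first costs a power $\ell(L)^{-1}$ while the first derivative is already controlled by $\bmo^{\gamma_1}\ell(L)^{\gamma_1}$ from the $\pi_H$-approximation bound in \cite[Theorem 1.4]{DS3}. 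Combining this with the $C^{2,\eps_0}$ bound on $\Psi_{p_H}$ from Lemma \ref{l:tecnico3} (so $\mathbf c(\Sigma)\le \bmo^{1/2}$) and then passing from the $\pi_H$-coordinates to the fixed $\pi_0$-coordinates via Lemma \ref{l:rotazioni_semplici} (the change of coordinates is a rotation of size $|\pi_H-\pi_0|\le \bar C\bmo^{1/2}$, hence harmless for the scaling), one gets $\|g_H\|_{C^0}\le C\bmo^{1/2m}$ and $\|Dg_H\|_{C^{2,\kappa}}\le C\bmo^{1/2}$ on $B_{4r_H}(x_H,\pi_0)$, where the Hölder exponent $\kappa=\min\{\beta_2/4,\eps_0/2\}$ appears as the minimum of what the mollification and what the regularity of $\Sigma$ allow.

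The heart of the matter is (ii), the comparison between $g_H$ and $g_L$ on overlapping cubes; items (iii), (iv) are then consequences. I would first reduce everything to comparing the two smoothed averages $\hat h_{HH}$ and $\hat h_{LL}$, or rather the tilted interpolating functions $h_H,h_L$, on the common domain. The key input is that both $f_H$ (the $\pi_H$-approximation around $p_H$) and $f_L$ (the $\pi_L$-approximation around $p_L$) approximate the \emph{same} current $T$ with errors controlled by \cite[Theorem 1.4]{DS3}, together with the tilting estimate $|\pi_H-\pi_L|\le \bar C\bmo^{1/2}\ell(L)^{1-\delta_2}$ from Proposition \ref{p:tilting opt}(iii) and the height bounds from Proposition \ref{p:whitney}. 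On the overlap one should be able to bound $\|\etaa\circ f_H - \etaa\circ f_L\|$ in an averaged ($L^2$ or $L^1$) sense by the excess and the tilt — morally the two averages of the sheets differ only because the reference planes differ by $|\pi_H-\pi_L|$ and because of the approximation errors — and then the mollification at scale $\ell(L)$ converts this averaged estimate into a $C^0$ estimate, with each further derivative costing $\ell(L)^{-1}$, which gives the $\ell(H)^{3+\kappa-i}$ scaling in $C^i$ for $i=0,\dots,3$. Item (iii) is the $i=3$ case of (ii) specialized to the cube centers together with interpolation of the $C^{3,\kappa}$ norm; item (iv) follows by comparing $g_H$ on $H$ with the constant $y_H$ (using $\|g_H-y_H\|_{C^0}$ controlled by the height bound $C\bmo^{1/2m}\ell(H)^{1+\beta_2}$ plus the $C^0$ bound, and the tangent-plane estimate by differentiating and using that $Dg_H$ is small and that $\pi_H$ was chosen optimal in $\bB_L$).

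Finally, for (v) I would use that $\varphi_i$ (the glued interpolation) is, on a neighborhood of $L'$, a convex combination $\hat\varphi_i = (\sum_{J}\vartheta_J g_J)/(\sum_J\vartheta_J)$ over cubes $J\in\sP^i$ that meet $L'$; by property (w3) of the Whitney decomposition and the stopping conditions all such $J$ have sidelength comparable to $\ell(L)$ and intersect $L$ (up to constants), so $\|g_J - g_L\|_{C^0}\le C\bmo^{1/2}\ell(L)^{3+\kappa}$ by (ii). Hence $\|\hat\varphi_i - g_L\|_{C^0(L')}\le C\bmo^{1/2}\ell(L)^{3+\kappa}$, and integrating over $L'$ (whose measure is $\le C\ell(L)^m$) gives the $L^1$ bound $C\bmo\,\ell(L)^{m+3+\beta_2/3}$ once one checks $\bmo^{1/2}\le \bmo$ (true since $\bmo\le 1$, though here one really uses $\kappa\ge\beta_2/4 > \beta_2/3 - $ wait, rather $3+\kappa \ge 3+\beta_2/3$ since $\kappa = \min\{\beta_2/4,\eps_0/2\}$, so one takes the exponent $\beta_2/3 < \kappa$ with room to spare, and the extra half-power of $\bmo$ is absorbed using $\bmo^{1/2}\le \bmo^{\text{const}}$ — more carefully, one combines $\bmo^{1/2}$ with a power of $\bmo$ hidden in $\ell(L)$ via the stopping condition, or simply notes the statement only claims the weaker exponent $m+3+\beta_2/3$). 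The step I expect to be the main obstacle is the averaged comparison $\|\etaa\circ f_H - \etaa\circ f_L\|$ in (ii): one must juggle two different approximating maps built in two different tilted coordinate systems around two different base points, reconcile them on the overlap using the tilting and height estimates, and keep careful track of all the error terms from \cite[Theorem 1.4]{DS3} so that, after mollification, the right power $\ell(H)^{3+\kappa}$ comes out rather than something weaker — this is exactly where the choice $\beta_2=4\delta_2$ and the hierarchy of parameters must be used.
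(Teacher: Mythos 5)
Your outline misses the central mechanism of this proof, and two of your steps fail quantitatively as a consequence. The missing ingredient is the first variation: since $T$ is minimizing, testing $\delta T(\chi)=0$ with vector fields tangent to $\Sigma$ yields an elliptic system for the tangential part of $\etaa\circ f_{HL}$ (Proposition \ref{p:pde}), namely $\big|\int \big(D(\etaa\circ \bar f_{HL}):D\zeta + x^t\cdot\bL\cdot\zeta\big)\big|\leq C\bmo\, r^{m+1+\beta_2}(r\|\zeta\|_{C^1}+\|\zeta\|_{C^0})$, together with the displacement bound $\|\bar h_{HL}-\etaa\circ\bar f_{HL}\|_{L^1}\leq C\bmo\, r^{m+3+\beta_2}$ (which crucially uses $\int |x|^2\varrho=0$). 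Without this, your route to (i) collapses: pure mollification starting from $\Lip(\etaa\circ f_{HL})\leq C\bmo^{\gamma_1}\ell^{\gamma_1}$ gives at best $\|D^3\hat h_{HL}\|_{C^0}\leq C\bmo^{\gamma_1}\ell^{\gamma_1-2}$, which blows up as $\ell\to 0$ and is nowhere near the required uniform $C\bmo^{\sfrac{1}{2}}$. The actual proof obtains the $C^{3,\kappa}$ bound only at the coarsest scale $N_0$ (where $\ell\sim 1$ and convolution estimates do suffice) and then telescopes down the chain of ancestors: the PDE gives $\|\Delta D^k(\bar h_{HL}-\bar h_{HJ})\|_\infty\leq C\bmo\, r^{1+\beta_2-k}$ for $J$ the father of $L$ (the linear term $x^t\cdot\bL\cdot\zeta$ cancels in the difference because radial convolution preserves affine functions), which combined with an $L^1$ bound and the interpolation Lemma \ref{l.interpolation} yields $\|\bar h_{HL}-\bar h_{HJ}\|_{C^k}\leq C\bmo\, r^{3+\beta_2-k}$, summable as a geometric series over the scales.

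Second, your derivation of (v) from the $C^0$ case of (ii) is wrong in both exponents: integrating $\|g_J-g_L\|_{C^0}\leq C\bmo^{\sfrac{1}{2}}\ell^{3+\kappa}$ over $L'$ gives only $C\bmo^{\sfrac{1}{2}}\ell^{m+3+\kappa}$, and since $\bmo^{\sfrac{1}{2}}\geq\bmo$ and $\kappa=\beta_2/4<\beta_2/3$, this is strictly weaker than the claimed $C\bmo\,\ell^{m+3+\sfrac{\beta_2}{3}}$ — your own parenthetical hesitation is pointing at a real failure, not a notational one. The paper obtains (v) from a genuine $L^1$ comparison $\|g_H-g_L\|_{L^1}\leq C\bmo\,\ell^{m+3+\sfrac{\beta_2}{2}}$ carrying the full power of $\bmo$ and the larger exponent; this rests on the tilted $L^1$ estimate of Lemma \ref{l:tilted_L1}, which reconciles the two coordinate systems via Lemma \ref{l:cambio_tre_piani} and again uses the PDE-based displacement bound above. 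Item (ii) is then recovered from this $L^1$ bound together with the uniform $C^{3,\kappa}$ bounds of (i) via the interpolation Lemma \ref{l:interpolation_bis}; your proposal to ``mollify the averaged comparison'' does not literally apply to $g_H-g_L$, which is a difference of functions built by convolutions performed in two different tilted coordinate systems and is not itself a convolution of anything.
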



\begin{proof}[Proof of Theorem \ref{t:cm}] As in all the proofs so far, we will use $C_0$ for geometric constants and $C$ for constants which depend upon $\beta_2, \delta_2, M_0, N_0, C_e$ and $C_h$. Define $\chi_H := \vartheta_H/ (\sum_{L\in \sP^j} \vartheta_L)$ for each $H\in \sP^j$
and observe that 
\begin{align}
\sum_{H\in\sP^j} \chi_H = 1 \;\; \mbox{on $[-4,4]^m$}\qquad \text{and}\qquad
\|\chi_H\|_{C^i} &\leq C_0 \,\ell (H)^{-i} \quad \forall i\in \{0,1,2,3,4\}\label{e:p_unita}\, .
\end{align}
Set $\sP^j (H):=\{L\in \sP^j : L\cap H\neq\emptyset\}\setminus \{H\}$ for each $H\in \sP^j$. 
By construction $\frac{1}{2} \ell (L) \leq \ell (H) \leq 2\, \ell (L)$ for every $L\in \sP^j (H)$ and the cardinality of $\sP^j(H)$
is bounded by a geometric constant $C_0$. 
The estimate $|\hat\varphi_j| \leq C \bmo^{\sfrac{1}{2m}}$ follows then immediately
from Proposition~\ref{p:stime_chiave}(i). 
For $x\in H$ we write 
\begin{align}
\hat\varphi_j (x) &=\Big(g_H \chi_H  + \sum_{L\in \sP^j (H)} 
g_L \chi_L\Big) (x) = g_H (x) + \sum_{L\in \sP^j (H)} (g_L - g_H) \chi_L\,  (x)\, ,
\end{align}
because $H$ does not meet the support of $\vartheta_L$ for any $L\in \sP^j$ which does not meet $H$.
Using the Leibniz rule, \eqref{e:p_unita} and the estimates of 
Proposition~ \ref{p:stime_chiave}(i) - (ii), for $i\in \{1,2,3\}$ we get
\[
\|D^i \hat\varphi_j\|_{C^0 (H)} \leq \|D^ig_H\|_{C^0} + C_0 \sum_{0\leq l \leq i} 
\sum_{L\in \sP^j (H)} \|g_L-g_H\|_{C^l (H)} \ell (L)^{l-i} \leq C \bmo^{\frac{1}{2}} 
\big(1+\ell (H)^{3+\kappa-i}\big),
\]
(assuming $M_0$ is larger than the geometric constant $2\sqrt{m}$, we have $H \subset B_{r_L} (x_L)$ and the
estimate of Proposition \ref{p:stime_chiave}(ii) can be applied).
Next, using also $\|D^3 g_H - D^3 g_L\|_{C^\kappa (B_{r_L} (x_L)} \leq C \bmo^{\sfrac{1}{2}}$, we obtain 
\begin{align*}
[D^3 \hat \varphi_j]_{\kappa, H} \leq& C_0 \sum_{0\leq l \leq 3} \sum_{L\in \sP^j (H)} \ell (H)^{l-3} \big(
\ell (H)^{-\kappa} \|D^l (g_L-g_H)\|_{C^0 (H)} + [D^l (g_L - g_H)]_{\kappa, H}\big)\\
& + [D^3 g_H]_{\kappa, H}  \leq C \bmo^{\sfrac{1}{2}}\, ,
\end{align*}
where $[a]_{\kappa, D}$ is the usual H\"older seminorm $\sup \{|x-y|^{-\varkappa} |a (x)-a(y)|: x\neq y, x,y\in D\}$.
Fix now $x, y\in [-4,4]^m$, let $H, L\in \sP^j$ be such that $x\in H$ and $y\in L$.
If $H\cap L\neq \emptyset$, then
\begin{equation}\label{e:primo caso}
|D^3 \hat\varphi_j (x) - D^3 \hat\varphi_j (y)| \leq C \big([D^3 \hat\varphi_j]_{\kappa, H}
+ [D^3 \hat\varphi_j]_{\kappa, L}\big) |x-y|^{\kappa}.
\end{equation}
If $H\cap L= \emptyset$, we assume w.l.o.g. $\ell (H) \leq \ell (L)$ and observe that 
\[
\max \big\{|x-x_H|, |y-x_L|\big\} \leq \sqrt{m} \ell(L) \leq 2 \sqrt{m} |x-y|\, .
\] 
Moreover, by construction $\hat{\varphi}_j$ is identically equal to $g_H$ in a neighborhood of its center $x_H$. Thus, we can estimate 
\begin{align}
|D^3 \hat\varphi_j (x) - &D^3 \hat\varphi_j (y)|
\nonumber\\ 
\leq \;& |D^3 \hat\varphi_j (x) - D^3 \hat\varphi_j (x_H)|  
+ |D^3 g_H (x_H) - D^3 g_L (x_L)|
+ |D^3 \hat\varphi_j (x_L) - D^3 \hat\varphi_j (y)|\nonumber\\
\leq \;&C \bmo^{\sfrac{1}{2}} \left(|x-x_H|^\kappa + |x_H-x_L|^\kappa+ |y-x_L|^\kappa \right) \leq C \bmo^{\sfrac{1}{2}} |x-y|^\kappa\, ,
\end{align}
where we used
\eqref{e:primo caso} and Proposition~\ref{p:stime_chiave}(iii).
We have thus shown $\|D \hat\varphi_j\|_{C^{2,\kappa}} \leq C \bmo^{\sfrac{1}{2}}$. 
Since $\varphi_j (x) = (\bar\varphi_j (x), \Psi (x, \bar\varphi_j (x)))$, where $\bar\varphi_j (x)$ 
denote the first $\bar{n}$ components of $\hat\varphi_j (x)$, 
Theorem \ref{t:cm}(i) follows easily from the chain rule.

Let $L\in \sW^i$ and fix $j\geq i+2$. 
Observe that, by the inductive procedure defining $\sS^j\cup \sW^j$, 
we have $\sP^j (L) = \sP^{i+2} (L)\subset \sW$. Let $H$ be the cube concentric to $L$ with $\ell (H) = \frac{9}{8} \ell (L)$. Then, by Assumption \ref{mollificatore}, 
$\supp (\vartheta_M)\cap H=\emptyset$ $\forall M\not\in \sP^j (L)$. Thus, 
Theorem~\ref{t:cm}(ii) follows.

We now show below that $\|\varphi_j - \varphi_{j+1}\|_{C^0 (]-4,4[^m)} \leq C 2^{-j}$. This immediately implies the existence of a continuous $\varphi$ to which $\varphi_j$ converges uniformly. The bounds of Theorem \ref{t:cm}(i) immediately implies Theorem \ref{t:cm}(iii). Fix therefore $x\in [-4,4]^m$ and assume that $x \in L\cap H$ with $L \in \sP^j$
and $H\in \sP^{j+1}$.
Without loss of generality, we can make the choice of $H$
and $L$ in such a way that either $H=L$ or $H$ is a son of $L$.
Now, if $\ell(L) \geq 2^{-j+2}$, then by (ii) we have $\ph_j(x) = \ph_{j+1}(x)$.
Otherwise, from (i) and Proposition \ref{p:stime_chiave}(iv), we can conclude that:
\begin{align}
 |\hat\varphi_j (x) - \hat\varphi_{j+1} (x)| &\leq |\hat\varphi_j (x) - \hat\varphi_j (x_H)| + |g_H (x_H) - g_L (x_L)| + |\hat\varphi_{j+1} (x_L) - \hat\varphi_{j+1} (x)|\nonumber\\
&\leq C\big( \|\hat\varphi_j\|_{C^1} + \|\hat\varphi_{j+1}\|_{C^1}\big) 2^{-j} + \|g_H-y_H\|_{C^0} + \|g_L - y_L\|_{C^0} + |y_H-y_L|\nonumber\\
&\leq C \bmo^{\sfrac{1}{2m}} 2^{-j} + |p_H-p_L|\, .
\end{align}
Since $\B_H\subset \B_L$, we conclude 
$|\hat\varphi_j (x) - \hat\varphi_{j+1} (x)|\leq C 2^{-j}$.
Given that $\Psi$ is Lipschitz, we get 
$\|\ph_j - \ph_{j+1}\|_{C^0} \leq C\, 2^{-j}$ and conclude.
\end{proof}

\section{Proof of the three key construction estimates}\label{s:stima_C3alpha}

\subsection{Elliptic PDE for the average}
This section contains the most important computation, namely
the derivation via a first variation argument of a suitable elliptic system for
the average of the $\pi$-approximations. 
In order to simplify the notation we introduce the following definition.

\begin{definition}[Tangential parts]\label{d:tangential}
Having fixed $H\in \sP^j$ and $\pi:= \pi_H\subset T_{p_H} \Sigma$,
we let $\varkappa$ be the
orthogonal complement of $\pi$ in $T_{p_H} \Sigma$. 
For any given point $q\in \R^{m+n}$, any set $\Omega\subset \pi$ and any map 
$\xi: q+\Omega \to \pi^\perp$, 
 the map $\p_{\varkappa} \circ \xi$ will be called the {\em tangential part of $\xi$} and usually denoted by $\bar{\xi}$.
Analogous notation and terminology will be used for multiple-valued maps.
\end{definition}

\begin{proposition}[Elliptic system]\label{p:pde}
Assume the conclusions of Proposition \ref{p:tilting opt} and \ref{p:gira_e_rigira}.
Let $H\in \sW^j\cup \sS^j$ and $L$ be either an ancestor or a cube of
$\sW^j\cap \sS^j$ with $H\cap L\neq \emptyset$
(possibly also $H$ itself). Let $f_{HL}: B_{8r_L} (p_L, \pi_H) \to \Iq (\pi_H^\perp)$ be the 
$\pi_H$-approximation of $T$ in $\bC_{8r_L} (p_L, \pi_H)$, $h_{HL}$ the tilted $(H,L)$-interpolating functions
and $\bar{f}_{HL}$ and $\bar{h}_{HL}$ their tangential parts, according to Definition \ref{d:tangential}.
Then, there is a matrix $\bL$, which depends on $\Sigma$ and $H$ but not on $L$, such that
$|\bL|\leq C_0 \bA^2 \leq C_0 \bmo$ for a geometric constant $C_0$ and
(for $C= C(\beta_2, \delta_2, M_0, N_0, C_e, C_h)$) 
\begin{equation}\label{e:pde}
\left| \int \big( D (\etaa \circ \bar f_{HL}) : D \zeta + (\p_\pi (x-p_H))^t \cdot \bL \cdot\zeta\big)\right|
\leq C \bmo \, r_L^{m+1+\beta_2} \big(r_L\,\|\zeta\|_{C^1} + \|\zeta\|_{C^0}\big)\, 
\end{equation}
for every $\zeta\in C^\infty_c (B_{8r_L} (p_L, \pi_L), \varkappa)$.
Moreover (for $C= C(\beta_2, \delta_2, M_0, N_0, C_e, C_h)$)
\begin{equation}\label{e:L1_est}
\|\bar h_{HL} - \etaa \circ \bar f_{HL}\|_{L^1 (B_{7r_L} (p_L, \pi_L))} \leq C \bmo \, r_L^{m+3+\beta_2}\, .
\end{equation}
\end{proposition}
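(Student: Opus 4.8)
The plan is to obtain \eqref{e:pde} from the stationarity of $T$ in $\Sigma$ and then to deduce \eqref{e:L1_est} from \eqref{e:pde} by an elementary potential–theoretic argument exploiting the vanishing second moment of $\varrho$. Write $\pi:=\pi_H$, $\varkappa:=T_{p_H}\Sigma\cap\pi^\perp$ and $u:=\etaa\circ\bar f_{HL}$. By Propositions \ref{p:tilting opt} and \ref{p:gira_e_rigira}, $T\res\bC_{32r_L}(p_L,\pi)$ is a $Q$-sheeted graph to which \cite[Theorem 1.4]{DS3} applies, $\supp(T)\cap\bC_{32r_L}(p_L,\pi)\subset\B_L\subset\B_{5\sqrt m}$ (so $\partial T$ vanishes there), and the approximation is adapted to $\Sigma$, i.e. $f_{HL}(x)=(\bar f_{HL}(x),\Psi_{p_H}(x,\bar f_{HL}(x)))$ with $\bar f_{HL}=\p_\varkappa\circ f_{HL}$. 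Given $\zeta\in C^\infty_c(B_{8r_L}(p_L,\pi),\varkappa)$, I would introduce the vector field $V$ on $\Sigma$ obtained by pushing forward $\zeta(x)\,\partial_w$ through the chart $(x,w)\mapsto(x,w,\Psi_{p_H}(x,w))$ of $\Sigma$ over $T_{p_H}\Sigma=\pi\oplus\varkappa$, so $V(x,w,\Psi_{p_H}(x,w))=(0,\zeta(x),D_w\Psi_{p_H}(x,w)[\zeta(x)])$; then $V$ is tangent to $\Sigma$, compactly supported in $\bC_{32r_L}(p_L,\pi)$, satisfies $\|D_{\mathrm{tan}}V\|_{C^0}\le C(\|D\zeta\|_{C^0}+\bA\|\zeta\|_{C^0})$, and its flow $\Phi_t$ on $\Sigma$ is exactly $w\mapsto w+t\zeta(x)$ lifted to $\Sigma$. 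Since $T$ is area minimizing in $\Sigma$ and $\partial T=0$ near $\supp(V)$, $\delta T(V)=0$; moreover $|\delta T(V)-\delta\bG_{f_{HL}}(V)|$ is bounded by the mass of the set where $T$ and $\bG_{f_{HL}}$ differ, which by the strong approximation of \cite[Theorem 1.4]{DS3} and $\bE(T,\bC_{32r_L}(p_L,\pi))\le C\bmo\,r_L^{2-2\delta_2}$ is $\le C\bmo\,r_L^{m+1+\beta_2}(r_L\|\zeta\|_{C^1}+\|\zeta\|_{C^0})$ (using $\gamma_2>\beta_2$).

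Next I would expand $\delta\bG_{f_{HL}}(V)=\frac{d}{dt}\big|_{t=0}\mass\big((\Phi_t)_\sharp\bG_{f_{HL}}\big)$. Since $(\Phi_t)_\sharp\bG_{f_{HL}}$ is the graph over $B_{8r_L}(p_L,\pi)$ of $x\mapsto\sum_i\a{(\bar f_{HL,i}(x)+t\zeta(x),\,\Psi_{p_H}(x,\bar f_{HL,i}(x)+t\zeta(x)))}$, the induced metric of the $i$-th sheet is $G_i(t)=\mathrm{Id}+M_i(t)^tM_i(t)+N_i(t)^tN_i(t)$ with $M_i(t)=D\bar f_{HL,i}+tD\zeta$ and $N_i(t)=D_x\Psi_{p_H}+D_w\Psi_{p_H}(D\bar f_{HL,i}+tD\zeta)$ (the derivatives of $\Psi_{p_H}$ taken at $(x,\bar f_{HL,i}(x)+t\zeta(x))$), and $\sqrt{\det G_i(t)}=1+\tfrac12|M_i(t)|^2+\tfrac12|N_i(t)|^2+\mathcal{R}_i(t)$ with $\mathcal{R}_i$ of order $\ge4$ in $(M_i,N_i)$. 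Differentiating in $t$, summing over $i$ and dividing by $Q$: the quadratic term in $M_i$ produces the leading contribution $\int Du:D\zeta$; the quadratic term in $N_i$ produces $\frac1Q\sum_i\int\langle N_i(0),\dot N_i(0)\rangle$, whose principal part, after Taylor-expanding $D_x\Psi_{p_H}$ and $D_w\Psi_{p_H}$ about $p_H$ (where $\Psi_{p_H}$ vanishes with its first derivative), equals $\int(\p_\pi(x-p_H))^t\cdot\bL\cdot\zeta$ for a fixed matrix $\bL$ assembled from $D^2\Psi_{p_H}(p_H)$, with $|\bL|\le C_0\bA^2\le C_0\bmo$ and manifestly independent of $L$. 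All remaining terms — $\frac{d}{dt}\mathcal{R}_i$, the pieces carrying a factor $\|\bar f_{HL}\|_{C^0}\le C\bmo^{1/2m}r_L^{1+\beta_2}$ or $\|Df_{HL}\|_{C^0}$ (controlled via $\int|Df_{HL}|^2\le C\bmo\,r_L^{m+2-2\delta_2}$ and the Lipschitz bound of \cite[Theorem 1.4]{DS3}), and the Hölder remainders coming from the bound $\|D\Psi_{p_H}\|_{C^{2,\eps_0}}\le C\bmo^{1/2}$ — are, for the admissible parameters, absorbed into $C\bmo\,r_L^{m+1+\beta_2}(r_L\|\zeta\|_{C^1}+\|\zeta\|_{C^0})$; collecting everything gives \eqref{e:pde}. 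I expect this last bookkeeping — isolating exactly the affine-in-$x$ curvature term $\bL$ and checking that every remainder (most delicately the replacement error and the $\Psi$-expansion remainders) decays at least like $r_L^{m+1+\beta_2}$ — to be the main obstacle.

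For \eqref{e:L1_est}, set $\lambda:=\ell(L)<r_L$; since $\p_\varkappa$ commutes with convolution, $\bar h_{HL}=(\etaa\circ\bar f_{HL})*\varrho_\lambda=u*\varrho_\lambda$. Let $v_0$ be the homogeneous cubic polynomial centered at $p_H$ (vanishing together with its first two derivatives at $p_H$) with $\Delta v_0=\bL^t\,\p_\pi(\cdot-p_H)$, so $|v_0|\le C\bmo\,r_L^3$ on $B_{8r_L}(p_L,\pi)$; because $\varrho$ is radial with $\int|x|^2\varrho(x)\,dx=0$, all surviving moments of $\varrho$ of order $\le3$ vanish, whence $v_0*\varrho_\lambda=v_0$ pointwise. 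Therefore $u*\varrho_\lambda-u=w*\varrho_\lambda-w$ with $w:=u-v_0$, and by \eqref{e:pde} (valid for $\varkappa$-valued test functions) $w$ is almost harmonic: $\big|\int Dw:D\zeta\big|\le C\bmo\,r_L^{m+1+\beta_2}(r_L\|\zeta\|_{C^1}+\|\zeta\|_{C^0})$ for every $\zeta\in C^\infty_c(B_{8r_L}(p_L,\pi))$. Let $\tilde w$ be the harmonic function on $B_{8r_L}(p_L,\pi)$ agreeing with $w$ on the boundary and $e:=w-\tilde w\in H^1_0$; since convolution with the radial kernel $\varrho_\lambda$ reproduces harmonic functions (mean value property, applicable on $B_{7r_L}$ because $\lambda<r_L$), $\tilde w*\varrho_\lambda=\tilde w$ there, and hence $w*\varrho_\lambda-w=e*\varrho_\lambda-e$ on $B_{7r_L}$.

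Finally I would bound $\|e\|_{L^1(B_{8r_L})}$ by duality. Fixing $\phi$ with $\|\phi\|_{L^\infty}\le1$, let $\psi$ solve $\Delta\psi=\phi$ in $B_{8r_L}(p_L,\pi)$ with $\psi=0$ on the boundary; by the standard $W^{2,p}$ estimates and scaling $\|\psi\|_{C^0}\le Cr_L^2$ and $\|\psi\|_{C^1}\le Cr_L$. Integrating by parts (using $e\in H^1_0$, $\psi\in H^2\cap H^1_0$, and weak harmonicity of $\tilde w$) gives $\int e\,\phi=-\int Dw:D\psi$, and testing the almost-harmonicity bound against $\psi$ (after a routine cutoff approximation of $\psi$ by functions in $C^\infty_c$) yields $\big|\int e\,\phi\big|\le C\bmo\,r_L^{m+1+\beta_2}(r_L\|\psi\|_{C^1}+\|\psi\|_{C^0})\le C\bmo\,r_L^{m+3+\beta_2}$. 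Taking the supremum over such $\phi$ gives $\|e\|_{L^1(B_{8r_L})}\le C\bmo\,r_L^{m+3+\beta_2}$, and therefore
\[
\|\bar h_{HL}-\etaa\circ\bar f_{HL}\|_{L^1(B_{7r_L}(p_L,\pi))}=\|e*\varrho_\lambda-e\|_{L^1(B_{7r_L})}\le 2\|e\|_{L^1(B_{8r_L})}\le C\bmo\,r_L^{m+3+\beta_2},
\]
which is \eqref{e:L1_est}.
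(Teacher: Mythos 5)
Your proof of \eqref{e:pde} is essentially the paper's: the vector field $V(x,w,\Psi_{p_H}(x,w))=(0,\zeta(x),D_w\Psi_{p_H}(x,w)\cdot\zeta(x))$ is exactly the field $\chi$ used there, the comparison $|\delta\bG_{f_{HL}}(V)|=|\delta\bG_{f_{HL}}(V)-\delta T(V)|\le C\int|DV|\,d\|\bG_{f_{HL}}-T\|$ is the same, and your hand expansion of $\frac{d}{dt}\mass((\Phi_t)_\sharp\bG_{f_{HL}})$ reproduces the Taylor expansion of \cite[Theorem~4.1]{DS2} that the paper invokes; the matrix $\bL$ arises in both cases as a quadratic expression in $D^2\Psi_{p_H}(p_H)$ (so $|\bL|\le C_0\bA^2$, independent of $L$), with the caveat that the term carrying $D\zeta$ against the affine part of $D_x\Psi_{p_H}$ must be integrated by parts to contribute to $\bL$ rather than to the $r_L\|\zeta\|_{C^1}$ error — a step the paper makes explicit and you leave implicit. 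The bookkeeping you flag as the main obstacle is precisely what the paper carries out, and your identification of the admissible error sizes (in particular $\gamma_1-2\delta_2\ge\beta_2$) is the right one.

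For \eqref{e:L1_est} your route is genuinely different and, as far as I can check, correct. The paper writes $\bar h_{HL}-\etaa\circ\bar f_{HL}=(D(\etaa\circ\bar f_{HL}))*D\varsigma$ for an explicitly computed $W^{1,1}$ potential $\varsigma$ supported in $B_{\ell}$ with $\int\varsigma=0$ (this is where $\int|x|^2\varrho=0$ enters), $\|\varsigma\|_{L^1}\le Cr^2$, and then tests \eqref{e:tutto_insieme} against the translates $\varsigma(z-\cdot)e_i$. You instead subtract the cubic corrector $v_0$ with $\Delta v_0=\bL^t\p_\pi(\cdot-p_H)$ (reproduced exactly by $*\varrho_\lambda$ because all moments of $\varrho$ of order $\le 3$ vanish — again the role of $\int|x|^2\varrho=0$, since odd moments vanish by radiality), perform a harmonic replacement to reduce to $e\in H^1_0$, and estimate $\|e\|_{L^1}$ by duality against the Green potential, using $W^{2,p}$ scaling for $\|\psi\|_{C^0}\le Cr_L^2$, $\|\psi\|_{C^1}\le Cr_L$. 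Your approach trades the paper's explicit kernel computation for standard elliptic machinery; it is arguably more transparent, at the price of the cutoff approximation needed to test the weak inequality against the non-compactly-supported $\psi$ (which works, since $\psi\chi_k$ has uniformly bounded $C^1$ norm thanks to $\psi$ vanishing linearly at $\partial B_{8r_L}$). Two pedantic points: since $\int|x|^2\varrho=0$ forces $\varrho$ to change sign, $\|\varrho_\lambda\|_{L^1}$ need not equal $1$, so your final Young inequality gives $(1+\|\varrho\|_{L^1})\|e\|_{L^1}$ rather than $2\|e\|_{L^1}$ — harmless, as $\varrho$ is fixed; and the mean-value identity $\tilde w*\varrho_\lambda=\tilde w$ on $B_{7r_L}$ likewise survives the sign change of $\varrho$ because it only uses that spherical averages of $\tilde w$ equal its center value and $\int\varrho_\lambda=1$.
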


Before coming to the proof we introduce the oscillation of a multivalued function $f$, which will play an important role
also later:
\begin{equation}\label{e:oscillation}
{\rm osc}\, (f) := \sup \{|P-P'| : P\in \supp (f (x)), P' \in\supp f(y))\}\, .
\end{equation}
Observe that the oscillation is comparable to $\sup_{x,y} \cG (f(x), f(y))$.

\begin{proof} We use the convention that geometric constants are denoted by $C_0$, whereas $C$ denotes constants
depending upon the parameters $\beta_2, \delta_2, M_0, N_0, C_e$ and  $C_h$. Set $\pi = \pi_H$.
We fix  a system of coordinates $(x,y,z)\in \pi\times \varkappa \times (T_{p_H} \Sigma)^\perp$
so that $p_H = (0,0,0)$. Also, in order to simplify the notation, although the domains 
of the various maps are subsets $\Omega$
of $p_L + \pi$, we will from now on consider them as functions of $x$ 
(i.e.~we shift their domains to $\p_{\pi} (\Omega)$). 
We also use $\Psi_H$ for the map $\Psi_{p_H}$ 
of Assumption \ref{ipotesi}.
Recall that $\Psi_H (0,0)=0$, $D\Psi_H (0,0)=0$ and
$\|D \Psi_H\|_{C^{2, \eps_0}}\leq \bmo^{\sfrac{1}{2}}$. Finally, to simplify the notation we also drop the
subscripts $HL$ from the functions $f_{HL}$, $\bar{f}_{HL}$ and $\bar{h}_{HL}$ (this notation might
generate some confusion since $h$ is used in Proposition \ref{p:gira_e_rigira} for the smoothed average of
$f_{HL}$; observe however that the tangential part of such smoothed average does coincide with the tangential
part of the tilted $(H,L)$-approximation).

Given a test function $\zeta$ and any point $q = (x,y,z)\in \Sigma$,
we consider the vector field
$\chi (q) = (0, \zeta (x), D_y \Psi (x,y) \cdot  \zeta(x))$.
$\chi$ is tangent to $\Sigma$ and therefore $\delta T (\chi) =0$. Thus, 
\begin{align}
|\delta \bG_f (\chi)| = |\delta \bG_f (\chi) - \delta T (\chi)| 
\leq C_0 \int_{\bC_{8r_L} (p_L, \pi)} |D\chi|\, d \|\bG_f - T\|\, .\label{e:var_prima}
\end{align}
Let $r = r_L$ and $B= B_{8r_L} (p_L, \pi)$.
Since $\|D\Psi_H\|_0 \leq \bmo^{\sfrac{1}{2}}$, for $\eps_2$ sufficiently small we achieve
$|\chi| \leq 2 |\zeta|$ and $|D\chi| \leq 2 |\zeta| + 2 |D\zeta|$.
Set now $E:= \bE \big(T, \bC_{32 r} (p_L, \pi)\big)$ and recall \cite[Theorem~1.4]{DS3} to derive
\begin{gather}
|Df| \leq C_0 E^{\gamma_1} + C_0 r \bA
\leq C \bmo^{\gamma_1} r^{\gamma_1},\label{e:lip f}\\
|f| \leq C_0 \bh (T, \bC_{32 r} (p_L, \pi)) + C_0 (E^{\sfrac{1}{2}} + r\bA) r \leq C \bmo^{\sfrac{1}{2m}} r^{1+\beta_2},\label{e:Linf}\\
\int_B |Df|^2 \leq C_0\, r^m E \leq C \bmo \, r^{m+2-2\delta_2}\, ,\label{e:Dir f}
\end{gather}
and 
\begin{align}
|B\setminus K| &\leq C_0 E^{\gamma_1} (E + r^2 \bA^2) \leq C \bmo^{1+\gamma_1} r^{m + 2 - 2\delta_2 + \gamma_1}\, ,\label{e:aggiuntiva_1}\\
\left| \|T\| (\bC_{8r} (p_L, \pi)) - |B| - \frac{1}{2} \int_B |Df|^2\right| &\leq C_0 E^{\gamma_1} (E + r^2 \bA^2) \leq  C \bmo^{1+\gamma_1} r^{m + 2 - 2\delta_2 + \gamma_1}\, , \label{e:aggiuntiva_2}
\end{align}
where $K\subset B$ is the set
\begin{equation}\label{e:recall_K}
B\setminus K = \p_\pi \left(\left(\supp (T)\Delta \gr (f)\right)\cap \bC_{8r_L} (p_L, \pi)\right)\, .
\end{equation}
Concerning \eqref{e:Linf} observe that the statement of \cite[Theorem~1.4]{DS3} bounds
indeed ${\rm osc}\, (f)$. However, in our case we have $p_H = (0,0,0)\in \supp (T)$ and 
$\supp (T)\cap \gr (f)\neq\emptyset$. Thus we conclude $|f|\leq C_0 {\rm osc}\, (f) + C_0
\bh (T, \bC_{32 r} (p_L, \pi))$.

Writing  $f= \sum_i \a{f_i}$ and $\bar{f} = \sum_i \a{\bar{f}_i}$,
$\gr (f) \subset \Sigma$ implies
$f = \sum_i \a{(\bar{f}_i, \Psi_H (x, \bar{f}_i))}$.
From \cite[Theorem~4.1]{DS2} we can infer that
\begin{align}
\delta \bG_f (\chi) &= 
\int_B \sum_i \Big( \underbrace{D_{xy}\Psi_H (x, \bar{f}_i)\cdot \zeta}_{(A)} + 
\underbrace{(D_{yy} \Psi_H (x, \bar{f}_i)\cdot D_x \bar{f}_i) \cdot \zeta}_{(B)} + \underbrace{D_y \Psi_H (x, \bar{f}_i) \cdot D_x \zeta}_{(C)}\Big)\nonumber\\
&\qquad\quad
: \Big(\underbrace{D_x \Psi_H (x, \bar{f}_i)}_{(D)} + \underbrace{D_y \Psi_H (x, \bar{f}_i)\cdot D_x \bar{f}_i}_{(E)}\Big)
+ \int_B \sum_i D_x \zeta : D_x \bar f_i + {\rm Err}\, .\label{e:tayloraccio}
\end{align}
To avoid cumbersome notation we use $\|\cdot\|_0$ for $\|\cdot\|_{C^0}$ and
$\|\cdot\|_1$ for $\|\cdot\|_{C^1}$. Recalling \cite[Theorem 4.1]{DS2},
the error term ${\rm Err}$ in 
\eqref{e:tayloraccio} satisfies the inequality
\begin{equation}\label{e:tayloraccio2}
|{\rm Err}|\leq C \int |D\chi| |Df|^3 \leq \|\zeta\|_1 \int |Df|^3 \leq C \|\zeta\|_1 \bmo^{1+\gamma_1} r^{m+2-2\delta_2+\gamma_1}\, .
\end{equation}
The second integral in \eqref{e:tayloraccio} is obviously 
$Q \int_B D\zeta : D (\etaa \circ \bar f)$.
We therefore expand the product in the first integral and estimate
all terms separately. 
We will greatly profit from the Taylor expansion $D \Psi_H (x,y) = D_x D \Psi_H (0,0) \cdot x + D_y D\Psi_H (0,0) \cdot y + O \big(\bmo^{\sfrac{1}{2}} (|x|^2 + |y|^2)\big)$. In particular we gather the following estimates:
\begin{align}
&|D \Psi_H (x, \bar{f}_i)| \leq C \bmo^{\sfrac{1}{2}} r
\quad\text{and}\quad
D\Psi_H (x, \bar{f}_i) = D_x D\Psi_H (0,0)\cdot x + O \big(\bmo^{\sfrac{1}{2}} r^{1+\beta_2}\big),\notag\\\
&|D^2 \Psi_H (x, \bar{f}_i)| \leq \bmo^{\sfrac{1}{2}}\quad\mbox{and}\quad D^2 \Psi_H (x, \bar{f}_i) = D^2 \Psi_H (0,0) + O \big(\bmo^{\sfrac{1}{2}} r\big)\, .\nonumber
\end{align}
We are now ready to compute
\begin{align}
\int \sum_i (A):(D) &= \int \sum_i (D_{xy} \Psi_H (0,0) \cdot \zeta) : D_x \Psi_H (x, \bar{f}_i) + O\Big( \bmo\, r^2 \int |\zeta|\Big) \nonumber\\
&= \int \sum_i (D_{xy} \Psi_H (0,0)\cdot \zeta : D_{xx} \Psi_H (0,0)\cdot x + O \Big( \bmo\, r^{1+\beta_2} \int |\zeta|\Big)\, .\label{e:(AD)}
\end{align}
Obviously the first integral in \eqref{e:(AD)} has the form $\int x^t \cdot \bL_{AD} \cdot \zeta$, where the matrix $\bL_{AD}$ is a quadratic function of $D^2 \Psi_H (0,0)$. Next, we estimate
\begin{gather}
\int \sum_i (A):(E)  = O \Big(\bmo^{1+\gamma_1} r^{1+\gamma_1} \int |\zeta|\Big),\label{e:(AE)}\\
\int \sum_i (B):\left((D)+(E)\right) = O\Big(\bmo^{1+\gamma_1}
 r^{1+\gamma_1} \int |\zeta|\Big),\label{e:(B(D+E))}\\
\int \sum_i (C):(E) = O \Big(\bmo^{1+\gamma_1} r^{2+\gamma_1} \int |D\zeta|\Big).\label{e:(CE)}
\end{gather}
Finally we compute 
\begin{align*}
& \int \sum_i (C):(D) = \int \sum_i ((D_{xy}\Psi_H (0,0) \cdot x) \cdot D_x \zeta) : D_x \Psi_H (x, \bar{f}_i) 
+ O \Big(\bmo\,r^{2+\beta_2} \int |D\zeta|\Big)\\
&= \int \sum_i (D_{xy}\Psi_H (0,0) \cdot x) \cdot D_x \zeta) : (D_{xx} \Psi_H (0,0) \cdot x) 
+ O \Big(\bmo \, r^{2+\beta_2} \int |D\zeta|\Big)\, .
\end{align*}
Integrating by parts the first integral in the last line we reach
\begin{equation}\label{e:(CD)}
\int \sum_i (C):(D) = \int x^t \cdot \bL_{CD} \cdot \zeta + O \Big(\bmo\, r^{2+\beta_2} \int |D\zeta|\Big)\, ,
\end{equation}
where the matrix $\bL_{CD}$ is a quadratic function of $D^2 \Psi_H (0,0)$. Set $\bL :=\bL_{AD} + \bL_{CD}$. 
Since $D \Psi_H (0,0) =0$, $\bL$ is in fact a 
quadratic function of the tensor $A_\Sigma$ at the point $p_H$. 
In order to summarize all our estimates we introduce some simpler notation.
We define $\bef = \etaa \circ \bar{f}$,
$\ell := \ell (L)$ and (recalling  the set $K$ of \eqref{e:recall_K}) the measure $\mu$ on $B$ as
\[
\mu (E):= |E\setminus K| + \|T\| ((E\setminus K)\times \R^n) \qquad \qquad \mbox{for every Borel $E\subset B$}.
\]
{ Since $\|T - \bG_f\| (E\times \R^n) \leq C_0 \mu (E)$ for every Borel $E\subset B$}, we can summarize \eqref{e:var_prima} and \eqref{e:tayloraccio} - \eqref{e:(CD)} into the following estimate:
\begin{align}
\Big|
\int \left( D\bef : D \zeta + x^t \cdot \bL \cdot\zeta\right)\Big| \leq &C \bmo \,r^{1+\beta_2}
\int \big(r |D\zeta (x)| + |\zeta (x)|\big)\, dx\, \nonumber\\
&+ C\int \big(r |D\zeta (x)| + |\zeta (x)|\big) \big(|Df (x)|^3 dx + d\mu (x)\big)\, .\label{e:tutto_insieme}
\end{align}
From \eqref{e:lip f} and \eqref{e:Dir f} we infer that
\begin{equation}\label{e:Dir+ f}
\int |Df|^3 \leq C r^m \Lip (f) E \leq
C\, \bmo^{1+ \gamma_1} r^{m+2-2\delta_2+\gamma_1} .
\end{equation}
Next, observe that 
\begin{align*}
\mu (B) &= |B\setminus K| + \|T\| ((B\setminus K)\times \pi^\perp)\\
 &\leq |B\setminus K| + \left|\|T\| (\bC_{32 r_L} (p_L, \pi)) - \mass (\bG_f)\right| + \|\bG_f\| ((B\setminus K)\times \pi^\perp)\\
&\leq C_0 |B\setminus K| (1+\Lip (f)) + \left|  \|T\| (\bC_{32 r_L} (p_L, \pi)) - |B| - \frac{1}{2} \int_B |Df|^2\right|
+ C_0 \int_B |Df|^3\, ,
\end{align*}
where in the last line we have used the Taylor expansion of the mass of $\bG_f$, cf. \cite[Corollary 3.3]{DS2}.
Using next \eqref{e:aggiuntiva_1}, \eqref{e:aggiuntiva_2} and \eqref{e:Dir+ f} we conclude
\begin{equation}\label{e:mu}
\mu (B) \leq C \, \bmo \,r^{m+2-2\delta_2 +\gamma_1}\, .
\end{equation}
Therefore \eqref{e:pde} follows from \eqref{e:tutto_insieme} 
and our choice of the parameters in Assumption \ref{parametri} (recall, in particular, $\gamma_1 - 2\delta_2 > \beta_2$).

We next come to \eqref{e:L1_est}. Fix a smooth radial test function $\varsigma \in C_c(B_\ell)$ with $\ell = \ell(L)$,
and set
$\zeta(\cdot) := \varsigma (z - \cdot) e_i$, where $e_{m+1}, \ldots, e_{m+\bar{n}}$ 
is on orthonormal base of $\varkappa$. 
Observe that, if in addition we assume $\int \varsigma =0$, then
$\int x_i \varsigma (z-x) dx = 0$. Under these assumptions, $\int x^t \cdot \bL \cdot \varsigma (z-x) dx =0$ and from
\eqref{e:tutto_insieme} we get for $z \in B_{7r_L}(p_L, \pi_L)$
\begin{align}
&\left|\int_{B_\ell(z)} \langle D\bef^i (x), D \varsigma (z-x)\rangle\, dx\right|\leq
C \int_{B_\ell(z)} |Df|^3 (x) (|D \varsigma| + |\varsigma|) (z-x)\, dx\nonumber\\
&\qquad\qquad + C \int_{B_\ell(z)} (r |D \varsigma| + |\varsigma|) (z-x)\, d\mu (x) + C \bmo r^{1+\beta_2} \int_{B_\ell} (r|D\varsigma| + |\varsigma|).\label{e:convoluzione}
\end{align}
Recall the standard estimate on convolutions $\|a*\mu\|_{L^1} \leq \|a\|_{L^1} \mu (B)$,
and integrate \eqref{e:convoluzione} in $z\in B_{7r_L}(p_L, \pi_L)$: by \eqref{e:Dir+ f} and \eqref{e:mu} (and recalling that $\gamma_1 - 2 \delta_2\geq \beta_2$) we reach
\begin{align}
\|D\bef^i*D\varsigma\|_{L^1(B_{7r_L}(p_L, \pi_L))} \leq & C \bmo \,r^{m+1+\beta_2} \int_{B_\ell} (r|D\varsigma| + |\varsigma|)
\quad \forall \varsigma\in C^\infty_c (B_\ell) \; \;\mbox{with}\;\int_{B_\ell} \varsigma =0\, .\label{e:convolve}
\end{align}
By a simple density argument, \eqref{e:convolve} holds also when $\varsigma \in W^{1,1}$ 
is supported in $B_\ell$ and $\int \varsigma =0$.
Observe next
\begin{align}
&\bar h (x) - \bef (x) = \int \varrho_\ell (y) (\bef (x-y) - \bef (x))\, dy
= \int \varrho_\ell (y) \int_0^1 D \bef (x-\sigma y) \cdot (-y)\, d\sigma\,dy\nonumber\\
=\;& \int \int_0^1 \varrho_\ell \left(\textstyle{\frac{w}{\sigma}}\right) D \bef (x-w) \cdot 
\frac{-w}{\sigma^{m+1}}\, dw
= \int D \bef (x-w) \cdot 
\underbrace{(-w) \int_0^1 \varrho_\ell \left(\textstyle{\frac{w}{\sigma}}\right) \sigma^{-m-1}\, d\sigma}_{=: \Upsilon (w)}\,
dw\, .\nonumber
\end{align}
Note that $\Upsilon$ is smooth on $\R^m\setminus \{0\}$ and unbounded in a neighborhood of $0$.
However, 
\begin{equation}
 \|\Upsilon\|_{L^1} = \int \int_0^1 |w| \left|\varrho \left(\textstyle{\frac{w}{\ell\sigma}}\right)\right| 
\ell^{-m} \sigma^{-m-1}\, d\sigma\, dw = \ell \int \int_0^1 |u| |\varrho (u)|\, d\sigma\, du \leq C r\, .
\end{equation}
Observe also that $\Upsilon (w)= w \,\psi (|w|)$.
Therefore $\Upsilon$ is a gradient. Since $\Upsilon (w)$ vanishes
outside a compact set, integrating along rays from $\infty$, we can compute a potential for it:
\begin{align}
\varsigma (w) &= \int_{|w|}^\infty \tau \int_0^1 \varrho_\ell \left(\textstyle{\frac{w \tau}{|w|\sigma}}\right) \sigma^{-m-1}\, d\sigma\, 
d\tau
= |w|^2 \int_1^\infty t \int_0^1 \varrho_\ell \left(\textstyle{\frac{w t}{\sigma}}\right) \sigma^{-m-1}\, d\sigma\, 
dt\, .
\end{align}
Then, $\varsigma$ is a $W^{1,1}$ function, supported in $B_\ell (0)$,
$\int \varsigma =0$ by Assumption~\ref{mollificatore}.
Summarizing, $\bar h^i - \bef^i = (D \bef^i) * D \varsigma$ for a convolution kernel for
which \eqref{e:convolve}  holds. Since
\begin{align}
\|\varsigma\|_{L^1} &\leq \int \int_1^\infty \int_0^1 t |w|^2 \left|\varrho \left(\textstyle{\frac{wt}{\ell \sigma}}\right)\right|
\ell^{-m}\sigma^{-m-1}d\sigma\, dt\, dw\nonumber\\
&= \ell^2 \int_1^\infty \int_0^1 \int |u|^2 |\rho (u)|du\, \sigma d\sigma\, t^{-m-1} dt \leq C r^2\, ,\label{e:L1_ancora}
\end{align}
we then conclude from \eqref{e:convolve} that
\[
\int_{B_{7r_L}(p_L, \pi_L)} |\bar h - \bef| \leq C \bmo\, r^{m+ 1+\beta_2} \int_{B_\ell} (r |D\varsigma| + |\varsigma|) \leq C \bmo\,
r^{m+ 3+\beta_2}.\qedhere
\]
\end{proof}

\subsection{$C^k$ estimates for $h_{HL}$ and $g_{HL}$}\label{ss:iterata}
Recall the tilted $(H,L)$-interpolating
function $h_{HL}$ and the interpolating function $g_{HL}$ of Definition~\ref{d:mappe_h_HL}.

\begin{lemma}\label{l:stime_iterative}
Assume that $H$ and $L$ are as in Proposition \ref{p:pde} and the hypotheses
of Proposition \ref{p:stime_chiave} hold. Set $B':= B_{5r_H} (p_H, \pi_H)$ and $B:= B_{4r_H} (p_H, \pi_0)$. Then,
for $C = C( \beta_2, \delta_2, M_0, N_0, C_e, C_h)$,
\begin{gather}
\|h_{HL} -h_H\|_{C^j (B')} + \|g_{HL} - g_H\|_{C^j (B)} \leq C \bmo^{\sfrac{1}{2}}\, \ell (L)^{3+2\kappa-j} \qquad
\forall j\in \{0, \ldots, 3\}\, , \label{e:stime_ricorsive_1}\\
\|h_{HL} -h_H\|_{C^{3,\kappa} (B')} + \|g_{HL} - g_H\|_{C^{3,\kappa} (B)} \leq C \bmo^{\sfrac{1}{2}} \,\ell (L)^{\kappa}\, .\label{e:stime_ricorsive_2}
\end{gather}
As a consequence Proposition \ref{p:stime_chiave}(i) and (iv) hold.
\end{lemma}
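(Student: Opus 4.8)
The plan is to deduce everything from the elliptic system of Proposition~\ref{p:pde}, exploiting crucially that the matrix $\bL$ there depends only on $\Sigma$ and $H$, \emph{not} on $L$. First a reduction: both $h_{HL}$ and $h_H=h_{HH}$ have the form $x\mapsto(\bar\zeta(x),\Psi_{p_H}(x,\bar\zeta(x)))$ with $\bar\zeta$ equal to the tangential part of the smoothed average $(\etaa\circ f_{HL})*\varrho_{\ell(L)}$, resp. $(\etaa\circ f_{HH})*\varrho_{\ell(H)}$, and $g_{HL},g_H$ are obtained from $h_{HL},h_H$ by the (affine, $C_0\bmo^{\sfrac12}$-close to the identity) reparametrization of graphs from $T_{p_H}\Sigma$-coordinates $\pi_H\times\varkappa_H\times T_{p_H}\Sigma^\perp$ to $\pi_0$. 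Since $\|D\Psi_{p_H}\|_{C^{2,\eps_0}}\le C_0\bmo^{\sfrac12}$, the chain rule together with the graph calculus of \cite{DS2} reduces \eqref{e:stime_ricorsive_1}--\eqref{e:stime_ricorsive_2} to the corresponding $C^j$ and $C^{3,\kappa}$ estimates on $B_{5r_H}(p_H,\pi_H)$ for $w:=\bar h_{HL}-\bar h_{HH}$, together with a priori bounds on $\|\bar h_{HH}\|_{C^{3,\kappa}(B_{5r_H})}$ and $\|\bar h_{HH}\|_{C^0}$ — which are precisely Proposition~\ref{p:stime_chiave}(i), and (iv), the latter involving only $C^0$, $C^1$ norms and the relation between $\pi_H$ and $T\bG_{g_H}$.

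Test functions supported in $B_{8r_H}(p_H,\pi_H)$ are admissible in \eqref{e:pde} both for $f_{HL}$ and for $f_{HH}$; subtracting the two inequalities, the affine term $(\p_\pi(x-p_H))^t\!\cdot\!\bL$ cancels (same $\bL$, same base point, same support of $\zeta$), leaving
\[
\Big|\int D\big(\etaa\circ\bar f_{HL}-\etaa\circ\bar f_{HH}\big):D\zeta\Big|\le C\bmo\,r_L^{m+1+\beta_2}\big(r_L\|\zeta\|_{C^1}+\|\zeta\|_{C^0}\big)\,,
\]
i.e.\ $\etaa\circ\bar f_{HL}-\etaa\circ\bar f_{HH}$ solves $\Delta(\cdot)=\mathrm{div}\,G$ on $B_{8r_H}(p_H,\pi_H)$ with $\|G\|_{L^1}$ of order $\bmo\,r_L^{m+2+\beta_2}$. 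Convolving with $\varrho_{\ell(L)}$, resp.\ $\varrho_{\ell(H)}$ — and using $\int\varrho=1$ to preserve, hence cancel in the difference, any constant and (by radiality of $\varrho$, together with $\int|x|^2\varrho=0$) any affine contribution under mollification — one obtains the same type of equation for $w=\bar h_{HL}-\bar h_{HH}$, now with a \emph{smooth} right hand side bounded in $C^k$ by $C(M_0)\,\bmo\,r_L^{1+\beta_2-k}$. Thus $w$ is an approximately harmonic function, with an error that is small thanks to the relations of Assumption~\ref{parametri} ($\gamma_1-2\delta_2>\beta_2$ in particular).

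Next one needs an a priori $L^1$ (or $L^2$) smallness of $w$. Since $\gr(f_{HL})$ and $\gr(f_{HH})$ both coincide with $\supp(T)$ outside sets whose projections have measure $\le C\bmo^{1+\gamma_1}r_L^{m+2-2\delta_2+\gamma_1}$ (apply \eqref{e:aggiuntiva_1} in the two cylinders), the averages $\etaa\circ\bar f_{HL}$ and $\etaa\circ\bar f_{HH}$ agree off a set of that measure, on which the difference is controlled by the oscillations, $\le C\bmo^{\sfrac1{2m}}r_L^{1+\beta_2}$; this gives a genuinely small bound on $\|w\|_{L^1(B_{5r_H})}$ \emph{provided} $r_H$ and $r_L$ are comparable, so that the bad set is a controlled fraction of $B_{5r_H}(p_H,\pi_H)$. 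For a general ancestor $L\supset H$ one therefore argues along the chain $H=L_0\subsetneq L_1\subsetneq\cdots\subsetneq L_N=L$ of consecutive fathers: each consecutive pair has comparable sidelength, so the single-step estimate $\|h_{HL_k}-h_{HL_{k-1}}\|_{C^j(B_{5r_H})}\le C\bmo^{\sfrac12}\ell(L_{k-1})^{3+2\kappa-j}$ applies, and since $3+2\kappa-j>0$ for $j\le3$ the geometric sum $\sum_k\ell(L_k)^{3+2\kappa-j}$ is comparable to its top term $\ell(L)^{3+2\kappa-j}$, which yields \eqref{e:stime_ricorsive_1}; the neighbouring case $H\cap L\neq\emptyset$, $\tfrac{\ell(L)}2\le\ell(H)\le\ell(L)$, is a single comparable step. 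In each comparable step, interior elliptic estimates for $\Delta$ (Calderón--Zygmund plus Schauder, at scale $\rho\simeq r_H\simeq r_L$ on a ball compactly containing $B_{5r_H}(p_H,\pi_H)$) promote the a priori $L^1$/$L^2$ smallness of the approximately harmonic $w$ to $\|w\|_{C^j(B_{5r_H})}\le C\bmo^{\sfrac12}\ell(L)^{3+2\kappa-j}$ and $\|w\|_{C^{3,\kappa}(B_{5r_H})}\le C\bmo^{\sfrac12}\ell(L)^\kappa$: the exponent of $\bmo$ improves past $\sfrac12$ since $\gamma_1,\sfrac1{2m}>0$, and the exponent of $\ell(L)$ stays nonnegative because $\gamma_1\gg\beta_2=4\delta_2\ge4\kappa$. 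Finally, Proposition~\ref{p:stime_chiave}(i) and (iv) follow by running the same machinery on $\etaa\circ\bar f_{HH}$ itself, which solves $\Delta(\cdot)=(\p_\pi(x-p_H))^t\bL+\mathrm{div}\,G$ with $|\bL|\le C_0\bmo$: combining the resulting elliptic estimate with the Dirichlet bound $\int_{B_{8r_H}}|D(\etaa\circ\bar f_{HH})|^2\le C\bmo\,r_H^{m+2-2\delta_2}$ (from \cite[Theorem~1.4]{DS3}) to handle the low-order terms and with the height bound for $C^0$ gives $\|Dg_H\|_{C^{2,\kappa}(B)}\le C\bmo^{\sfrac12}$, $\|g_H\|_{C^0(B)}\le C\bmo^{\sfrac1{2m}}$, and the tilt estimate $|\pi_H-T_{(x,g_H(x))}\bG_{g_H}|\le C\bmo^{\sfrac12}\ell(H)^{1-\delta_2}$.

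The conceptual key is the cancellation of $\bL$, which makes $w$ harmonic up to a tiny error; the delicate part is the exponent bookkeeping in the Schauder/Calderón--Zygmund step — one must arrange the estimates so that no negative power of the small scale survives, which is why the Dirichlet energy and the height bound must be used in combination rather than either alone — and, for deep descendants, the chain argument, since the "graphs agree off a small set" input is quantitatively useful only when the two cubes being compared have comparable sidelength.
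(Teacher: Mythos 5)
Your overall strategy coincides with the paper's: exploit that the matrix $\bL$ of Proposition \ref{p:pde} depends on $H$ but not on $L$; use that mollification by a radial kernel of integral $1$ reproduces affine functions, so that the affine term contributes identically when tested against $\zeta*\varrho_{\ell(L)}$ and $\zeta*\varrho_{\ell(J)}$ and therefore drops out of the equation for the difference; conclude that consecutive differences along the chain of ancestors are approximately harmonic with an $L^\infty$-small right-hand side; combine this with an a priori $L^1$ bound and interior elliptic estimates to bootstrap up to $C^{3,\kappa}$; and sum the geometric series along the chain. Items (i) and (iv) of Proposition \ref{p:stime_chiave} are also handled as in the paper, via the Dirichlet bound for the topmost ancestor together with the height bound.

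There is, however, one genuine gap: the $L^1$ bound on $w=\bar h_{HL}-\bar h_{HJ}$. The ``graphs agree off a set of measure $C\bmo^{1+\gamma_1}r^{m+2-2\delta_2+\gamma_1}$'' argument controls $\|\etaa\circ\bar f_{HL}-\etaa\circ\bar f_{HJ}\|_{L^1}$, but $w$ is the difference of the \emph{mollified} averages, taken moreover at two \emph{different} scales $\ell(L)$ and $\ell(J)$. To pass from one to the other you need $\|\bar h_{HL}-\etaa\circ\bar f_{HL}\|_{L^1}\le C\bmo\, r^{m+3+\beta_2}$, i.e.\ precisely \eqref{e:L1_est} of Proposition \ref{p:pde}, which your write-up never invokes. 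This is not a formality: the naive estimate $\|u*\varrho_\ell-u\|_{L^1(B_r)}\le C\,\ell\,\Lip(u)\,r^m\le C\bmo^{\gamma_1}r^{m+1+\gamma_1}$ loses two full powers of $r$ relative to the required $r^{m+3+\beta_2}$, and recovering them forces one to write $\bar h-\etaa\circ\bar f=(D(\etaa\circ\bar f))*D\varsigma$ for a compactly supported $W^{1,1}$ potential $\varsigma$ with $\int\varsigma=0$ and to feed $\varsigma$ back into the weak form of the elliptic system. (Incidentally, this is where the hypothesis $\int|x|^2\varrho=0$ of Assumption \ref{mollificatore} is actually used; the affine cancellation you attribute it to needs only radiality and $\int\varrho=1$.) Since \eqref{e:L1_est} is part of the proposition you are already citing, the gap is repairable by invoking it explicitly, but as written the $L^1$ step does not close.
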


\begin{proof} All the constants $C$ will depend only upon the parameters $\beta_2, \delta_2, M_0, N_0, C_e$ and $C_h$, unless otherwise specified. 

Consider a triple of cubes $H$, $J$ and $L$ where $H\in \sS^j\cup \sW^j$ and
\begin{itemize}
\item[(a)] either $L$ is an ancestor of $H$ (possibly $H$ itself) and $J$ is  father of $L$;
\item[(b)] or $J$ is the father of $H$, and $L\in \sS^j\cup \sW^j$ is adjacent to $H$.
\end{itemize} 
In order to simplify the notation let $\pi:=\pi_H$ and $r:=r_L$.
By Proposition~\ref{p:tilting opt}(i), up
to taking the geometric constant in the first inequality of \eqref{e:N0} sufficiently large, we can assume that
$B^\flat:= B_{6r} (p_L, \pi) \subset
B^\sharp=B_{13r/2} (p_L, \pi) \subset \bar B:= B_{7r_J} (p_J, \pi)$. Consider the 
$\pi$-approximations $f_{HL}$ and $f_{HJ}$, respectively
in $\bC_{8r} (p_L, \pi)$ and $\bC_{8r_J} (p_J, \pi)$, and introduce the corresponding maps 
\begin{gather*}
\bar \bef_L := \p_\varkappa (\etaa\circ f_{HL})\quad \text{and}\quad \bar \bef_J:= \p_\varkappa (\etaa \circ f_{HJ}),\\
\bar h_{HL} := \bar \bef_L * \varrho_{\ell (L)} \quad\text{and}\quad
\bar h_{HJ} = \bar \bef_J * \varrho_{\ell (J)}\, ,
\end{gather*}
which are the tangential parts of the corresponding maps according to Definition \ref{d:tangential}.

If $\mathbf{l}$ is an affine function on $\R^m$ and $\varsigma$ a radial convolution kernel,
then $\varsigma * \mathbf{l} = \left(\int \varsigma\right) \mathbf{l}$ because 
$\mathbf{l}$ is an harmonic function. This means that  
$\int \langle (\zeta * \varrho), \mathbf{l}\rangle = \int \langle\zeta, \mathbf{l}\rangle$ for any test function $\zeta$ and any
radial convolution kernel $\varrho$ with integral $1$. Similarly 
$\int \langle (\zeta * \partial^I \varrho), \mathbf{l}\rangle = \int \langle\zeta, \partial^I \mathbf{l}\rangle$
for any partial derivative $\partial^I$ of any order.
Consider now a ball $\hat{B}$ concentric to 
$B^\flat$ and contained in $B^\sharp$ in such a way that, if $\zeta\in C^\infty_c (\hat{B})$, then $\zeta* \varrho_{\ell (L)}$
and $\zeta*\varrho_{\ell (J)}$ are both supported in $B^\sharp$. Set $\xi:= \bar{h}_{HL}- \bar{h}_{HJ}$ and 
(assuming $\p_{\pi} (x_H)$ is the origin of our system of coordinates)
compute:
\begin{align}
&\int \langle\zeta, \Delta \xi\rangle = - \int D (\bar{h}_{HL} - \bar{h}_{HJ}) : D \zeta = 
\int D \bar\bef_J : D(\zeta * \varrho_{\ell (J)}) - \int D \bar\bef_L : D(\zeta*\varrho_{\ell (L)})\nonumber\\
=& \int \left(D \bar\bef_J : D(\zeta * \varrho_{\ell (J)}) + x^t \cdot \bL \cdot (\zeta*\varrho_{\ell (J)})\right)
- \int \left(D \bar\bef_L : D(\zeta * \varrho_{\ell (L)}) + x^t \cdot \bL \cdot (\zeta*\varrho_{\ell (L)})\right)\, ,\nonumber
\end{align}
where the last line holds for any matrix $\bL$ (with constant coefficients) because $x\mapsto x^t\cdot \bL$ is a linear
function. In particular, we can use the matrix of Proposition \ref{p:pde} to achieve
\begin{align}
\int \langle\zeta, \Delta \xi \rangle &\leq C \bmo\, r^{m+1+\beta_2} \Big(r \|\zeta * \varrho_{\ell(L)}\|_1 + 
r\|\zeta*\varrho_{\ell (J)}\|_1 + \|\zeta*\varrho_{\ell (J)}\|_0 + \|\zeta*\varrho_{\ell (L)}\|_0\Big)\, ,\nonumber
\end{align}
where $\|\cdot\|_0$ and $\|\cdot\|_1$ denote the $C^0$ and $C^1$ norms respectively.
Recalling the inequality $\|\psi*\zeta\|_0 \leq \|\psi\|_{\infty}\|\zeta\|_{L^1}$ and taking into account that $\ell (L)$ and
$\ell (J)$ are both comparable to $r$ (up to a constant depending only on $M_0$ and $m$), we achieve
$\int \langle\zeta , \Delta \xi\rangle \leq C \bmo\, r^{1+\beta_2} \|\zeta\|_{L^1}$.
Taking the supremum over all possible test functions with $\|\zeta\|_{L^1}\leq 1$, we obviously conclude 
$\|\Delta \xi\|_{L^\infty (\hat{B})} \leq C\bmo \,r^{1+\beta_2}$. Observe that a similar estimate could be achieved
for any partial derivative $D^k \xi$ simply using the identity 
\[
\int D(D^k (a*\varsigma)) : Db = - \int Da : (Db * D^k\varsigma)\, .
\]
Summarizing we conclude
\begin{equation}\label{e:laplaciano_derivato}
\|\Delta D^k (\bar{h}_{HL} - \bar{h}_{HJ})\|_{C^0 (\hat{B})} \leq \|\Delta D^k \xi\|_\infty \leq C \bmo r^{1+\beta_2-k}\, ,
\end{equation} 
where the constant $C$
depends upon all the parameters and on $k\in \N$, but not on $\eps_2$, $\bmo$, $H$, $J$ or $L$.
By \cite[Theorem~1.4]{DS3} (cf.~also the proof of Proposition~\ref{p:gira_e_rigira}),
we have $\osc(f_{HL}) + \osc(f_{HJ}) \leq C\, \bmo^{\sfrac{1}{2m}} r$ and, setting $\bE := \bE(T, \bC_{32r_L}(p_L,\pi_H))$
and $\bE' := \bE (T, \bC_{32r_J} (p_J, \pi_H))$, 
\[
\cH^m(\{f_{HL} \neq f_{HJ}\}\cap \hat{B}) \leq C\, [(\bE + \bA^2 r^2) \bE^{\gamma_1} + (\bE' + \bA^2 r^2) \bE'^{\gamma_1}]\, r^m
\leq C\, \bmo^{1+\gamma_1} r^{m+2+\sfrac{\gamma_1}{2}} .
\]
Therefore, taking into account \eqref{e:L1_est}, we conclude $\|\bar h_{HL} - \bar h_{HJ}\|_{L^1 (\hat{B})} \leq C\, \bmo\,r^{m+3+\beta_2}$.
Thus, we appeal to Lemma \ref{l.interpolation} and use the latter estimate together with \eqref{e:laplaciano_derivato} (in the case $k=0$) to get $\|\bar{h}_{HL} - \bar{h}_{HJ}\|_{C^k (B')}\leq 
C \bmo r^{3+\beta_2-k}$ for $k=\{0,1\}$ and for every concentric smaller ball $B'\subset \hat{B}$ 
(where the constant depends also on the ratio between the corresponding radii). 
This implies $\|D (\bar{h}_{HL} - \bar{h}_{HJ})\|_{L^1 (B')} \leq C \bmo r^{m+2+\beta_2}$ and hence
we can use again Lemma \ref{l.interpolation} (based on the case $k=1$ of \eqref{e:laplaciano_derivato})
to conclude $\|\bar{h}_{HL} - \bar{h}_{HJ}\|_{C^2 (B'')}\leq C\bmo r^{1+\beta_2}$. Iterating another two times
we can then conclude $\|\bar{h}_{HL} - \bar{h}_{HJ}\|_{C^k (B^\sharp)} \leq C \bmo r^{3+\beta_2-k}$
for $k\in \{0,1,2,3,4\}$.
By interpolation, since $\kappa \leq \beta_2/4$,  
$\|\bar{h}_{HL} - \bar{h}_{HJ}\|_{C^{3, 2\kappa} (B^\sharp)}\leq C \bmo\, \ell (L)^{2\kappa}$.

Observe now that, since $h_{HL} = (\bar{h}_{HL}, \Psi (x, \bar{h}_{HL}))$ and $h_{HJ} = (\bar{h}_{HJ}, \Psi_H (x, \bar{h}_{HJ}))$, we
deduce the corresponding estimates for $h_{HL}$ and $h_{HJ}$ from the chain rule, namely:
\begin{align}
&\|h_{HL} -h_{HJ}\|_{C^j (B^\sharp)}\leq C \bmo \ell (L)^{3+2\kappa -j} \quad \forall j\in \{0, \ldots , 3\}\\
&\|h_{HL} -h_{HJ}\|_{C^{3,2\kappa} (B^\sharp)}\leq C \bmo\, \ell (L)^{2\kappa}\, .\nonumber
\end{align}
We next want to prove the first estimate of \eqref{e:stime_ricorsive_1} and the first estimate of \eqref{e:stime_ricorsive_2}.
We distinguish two cases. In the first $L$ is adjacent to $H$ and has the same side-length. Let then $J$ be the father of $H$. From the argument above
we then know how to bound $h_H - h_{HJ} = h_{HH} - h_{HJ}$ and $h_{HJ} - h_{HL}$. Both estimates follow then
from the triangle inequality. In the second case $L$ is an ancestor of $H$.
Let then $H=:L_j\subset L_{j-1} \subset \ldots
\subset L = L_i$. We then know how to bound $h_{HL_l} - h_{HL_{l-1}}$ on the ball $B^l := B_{13/2 r_{L_l}} (p_{L_l}, \pi)$.
On the other hand, if the constant in the first inequality of \eqref{e:N0} is large enough (independently of $l$), then $B' \subset B^l$. Summing the corresponding estimates, we get
\begin{align}
\|h_H - h_{HL}\|_{C^{3, 2\kappa} (B')} &\leq C \sum_{l=i}^{j-1}
\|h_{HL_{l}}- h_{HL_{l+1}}\|_{C^{3, 2\kappa} (B^l)}\nonumber\\ 
&\leq C \bmo \ell (L)^{2\kappa} \sum_{l=0}^{j-i-1} 2^{-2\kappa l} \leq C \bmo \ell^{2\kappa}\, , \label{e:geometrica_10}
\end{align}
with a constant $C$ independent of $i$ and $j$. Obviously a similar estimate holds for $\|h_H - h_{HL}\|_{C^j (B')}$.

We still need to prove the second estimate of \eqref{e:stime_ricorsive_1} and the second estimate of \eqref{e:stime_ricorsive_2}. If $H$ is a fixed cube in the Whitney decomposition and $L_{N_0}\in \sS^{N_0}$ its biggest ancestor, we then have $\|h_H - h_{HL_{N_0}}\|_{C^{3,2\kappa} (B')} \leq C \bmo$. On the other hand
\[
\|D \bef_{HL_{N_0}}\|^2_{L^2 (B^{N_0})} 
\leq \D (f_{HL_{N_0}}) \leq C \bE (T, \bC_{32 r_{L_{N_0}}} (p_{L_{N_0}}, \pi_H)) \leq C \bmo + C |\pi_H-\pi_0|^2 \leq C \bmo\, .
\]
Thus, by standard convolution estimates, $\|D \bar{h}_{H L_{N_0}}\|_{C^k (B^{N_0})} \leq C \bmo^{\sfrac{1}{2}}$ (where
the constant $C$ depends on $k\in \N$ and on he various parameters). Using 
\eqref{e:geometrica_10} we then get $\|D\bar{h}_H\|_{C^{2, 2\kappa} (B')} \leq \|D \bar h_H - D \bar h_{HL_{N_0}}\|_{C^{2, 2\kappa} (B')} + \|D\bar h_{HL_{N_0}}\|_{C^{2, 2\kappa} (B^{N_0})}\leq C \bmo^{\sfrac{1}{2}}$. By the chain rule and the regularity of $\Psi$ we then conclude the general bound $\|Dh_H\|_{C^{3, 2\kappa} (B')} \leq C \bmo^{\sfrac{1}{2}}$. This implies the existence of a constant $\xi$ such that $\|h_H - \xi\|_{C^{3, 2\kappa} (B')}\leq C \bmo^{\sfrac{1}{2}}$. Applying Lemma \ref{l:rotazioni_semplici} we achieve the bound $\|g_H - \zeta\|_{C^{3, 2\kappa} (B)} \leq C \bmo^{\sfrac{1}{2}}$ for some other constant $\zeta$. With a similar argument using the bound $\|\bar h_{H{L_{N_0}}}\|_{C^0 (B^{N_0})}\leq C \bmo^{\sfrac{1}{2m}}$, we achieve
$\|\bar h_H\|_{C^0 (B')} \leq  C \bmo^{\sfrac{1}{2m}}$. Hence again by Lemma \ref{l:rotazioni_semplici} $\|g_H\|_{C^0 (B)} \leq C \bmo^{\sfrac{1}{2m}}$. This shows obviously Proposition \ref{p:stime_chiave}(i). 

Next, observe that we have, by the very same arguments, $\|g_{HL} - \zeta\|_{C^{3, 2\kappa} (B)} \leq C \bmo^{\sfrac{1}{2}}$, thus concluding
$\|g_{HL}-g_H\|_{C^{3,2\kappa} (B)} \leq C \bmo^{\sfrac{1}{2}}$. On the other hand, it also follows from the same arguments above that $\|h_{HL}- h_H\|_{L^1 (B')} \leq C \bmo \ell^{m+3+\beta_2} \leq C \bmo \ell^{m+3+4\kappa}$. Applying Lemma \ref{l:rotazioni_semplici}(b) we then conclude $\|g_{HL}- g_H\|_{L^1 (B)} \leq C \bmo \ell^{m+3+4\kappa}$. We can now apply Lemma \ref{l:interpolation_bis} to conclude that $\|D^i (g_{HL} - g_H)\|_{C^0 (B)} \leq C \bmo^{\sfrac{1}{2}} \ell^{3-i + 4\kappa}$ for every $i\in \{0,1,2,3\}$, reaching the second estimate of \eqref{e:stime_ricorsive_1}. Interpolating between the latter estimates and $\|g_{HL}-g_H\|_{C^{3,2\kappa} (B)} \leq C \bmo^{\sfrac{1}{2}}$ we reach as well the second conclusion of \eqref{e:stime_ricorsive_2}.

Coming to (iv) in Proposition \ref{p:stime_chiave}, the estimate on
$g_H-y_H$ is a straightforward consequence
of the height bound, {\cite[Theorem~1.4]{DS3}} 
and Lemma~\ref{l:rotazioni_semplici}
(applied to $h_H$).
Next, observe that 
\[
\|D h_H\|^2_{L^2 (B')} \leq C (1+ \Lip (\Psi_H)) \|D \bar{h}_H\|_{L^2 (B')}^2 + C\|D_x \Psi_H (x, \bar{h})\|_{L^2 (B')}^2\, 
\]
and
\[
\|D \bar{h}_H\|_{L^2 (B')}^2 \leq C\|D (\etaa\circ f_H)\|_{L^2 (B')}^2 \leq C \D (f_H, B')) \leq C \bmo \ell (H)^{m + 2-2\delta_2}\, .
\]
On the other hand recall that the plane $\pi_H$ is contained in the plane $T_{p_H} \Sigma$ and thus $D_x \Psi_H (p_H, 0) = 0$. Since $\|D^2 \Psi_H\|_0 \leq \bmo^{\sfrac{1}{2}}$ we obviously conclude that $\|D_x \Psi_H (x, \bar{h})\|_{L^2}^2 \leq C \bmo \ell^{m+2}$. Therefore $\|D h_H\|_{L^2 (B')}^2 \leq C \bmo \ell^{m+2-2\delta_2}$.

Thus, there is at least one point $q\in\gr (h_H|_{B'})$ such that $|T_q \bG_{h_H} - \pi_H| \leq C \bmo^{\sfrac{1}{2}}
\ell (H)^{1-\delta_2}$. 
Since $\|D^2 h_H\|_0 \leq C \bmo^{\sfrac{1}{2}}$, we then conclude
that $|T_{q'} \bG_{h_H} - \pi_H|\leq C \bmo^{\sfrac{1}{2}} \ell (H)^{1-\delta_2}$ holds indeed for any point $q'\in\gr (h_H|_{B'})$.
Since $\gr (g_H|_B)$ is a subset of $\gr (h_H|_{B'})$ (with the same orientation!), the second inequality of Proposition
\ref{p:stime_chiave}(iv) follows.
\end{proof}

\subsection{Tilted $L^1$ estimate}
In order to achieve Proposition~\ref{p:stime_chiave}(ii) and (iii), we need to
compare tilted interpolating functions coming from different coordinates. To this aim, we set the following terminology.

\begin{definition}[Distant relation]\label{d:condizione_(Q)}
Four cubes $H, J, L, M$ make a distant relation between $H$ and $L$ if
$J, M\in \sS^j\cup \sW^j$ have nonempty intersection, $H$ is a descendant of $J$ (or $J$
itself) and $L$ a descendant of $M$ (or $M$ itself).
%
\end{definition}

\begin{lemma}[Tilted $L^1$ estimate]\label{l:tilted_L1}
Assume the hypotheses of Proposition \ref{p:stime_chiave} hold and $\eps_2$ is sufficiently small.
Let $H, J, L$ and $M$ be a distant relation between $H$ and $L$,
and let $h_{HJ}$, $h_{LM}$ be the maps given in Definition \ref{d:mappe_h_HL}.
Then there is a map $\hat{h}_{LM}: B_{4r_J} (p_J, \pi_H)\to \pi_H^\perp$ such that
$\bG_{\hat{h}_{LM}} = \bG_{h_{LM}} \res \bC_{4r_J} (p_J, \pi_H)$ and, for $C= C(\beta_2, \delta_2, M_0, N_0, C_e, C_h)$,
\begin{equation}\label{e:tilted_L1}
\|h_{HJ} - \hat{h}_{LM}\|_{L^1 (B_{2r_J} (p_J, \pi_H))} \leq C \bmo\, \ell (J)^{m+3+\sfrac{\beta_2}{2}}\, .
\end{equation}
\end{lemma}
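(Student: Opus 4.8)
The plan is as follows. Set $r:=r_J=r_M=M_0\sqrt m\,\ell$ with $\ell:=\ell(J)=\ell(M)$, and write $\pi:=\pi_H$, $\varkappa:=\varkappa_H$. Proposition~\ref{p:tilting opt}(iii),(v), applied to the pairs $H\subset J$, $L\subset M$ and to the adjacent cubes $J,M$, gives $|p_H-p_L|\le C\ell$ and
\[
|\pi_H-\pi_L|\le C\bmo^{\sfrac{1}{2}}\,\ell^{1-\delta_2}\,,
\]
both of which can be made arbitrarily small by shrinking $\eps_2$. First I would construct $\hat h_{LM}$: by Lemma~\ref{l:stime_iterative} and Proposition~\ref{p:stime_chiave}(i) the current $\bG_{h_{LM}}$ is, through the map $h_{LM}$, a single small-Lipschitz $C^{3,\kappa}$ sheet with derivatives bounded by $C\bmo^{\sfrac{1}{2}}$; since $r_M=r_J$, $|p_J-p_M|\le C\ell$ and $M_0$ is large, the cylinder $\bC_{4r_J}(p_J,\pi_H)$ lies well inside the region where this sheet is defined, and since $|\pi_H-\pi_L|$ is tiny the sheet stays graphical over $\pi_H$ there. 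This gives $\hat h_{LM}\colon B_{4r_J}(p_J,\pi_H)\to\pi_H^\perp$ with $\bG_{\hat h_{LM}}=\bG_{h_{LM}}\res\bC_{4r_J}(p_J,\pi_H)$, inheriting the same $C^{3,\kappa}$-type bounds via Lemma~\ref{l:rotazioni_semplici}. Because $\gr(h_{HJ})$ and $\gr(\hat h_{LM})$ both lie in $\Sigma$, and near $p_H$ the manifold $\Sigma$ is the graph of the Lipschitz map $\Psi_{p_H}$ over $T_{p_H}\Sigma=\pi_H\oplus\varkappa_H$, each of $h_{HJ},\hat h_{LM}$ is determined by its $\varkappa_H$-component $\bar h_{HJ},\bar{\hat h}_{LM}$ through $w\mapsto(w,\Psi_{p_H}(\cdot,w))$; hence $\|h_{HJ}-\hat h_{LM}\|_{L^1(B_{2r_J}(p_J,\pi_H))}\le C\,\|\bar h_{HJ}-\bar{\hat h}_{LM}\|_{L^1(B_{2r_J}(p_J,\pi_H))}$, and it suffices to bound the latter by $C\bmo\,\ell^{m+3+\sfrac{\beta_2}{2}}$.

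Now $\bar h_{HJ}$ is the tangential part of the smoothed average of the $\pi_H$-approximation $f_{HJ}$ of $T$ in $\bC_{8r_J}(p_J,\pi_H)$, and, after the reprojection above, $\bar{\hat h}_{LM}$ is the tangential part of the smoothed average of the $\pi_L$-approximation $f_{LM}$ of $T$ in $\bC_{8r_M}(p_M,\pi_L)$. I would estimate $w:=\bar h_{HJ}-\bar{\hat h}_{LM}$ by combining two ingredients. First, since $f_{HJ}$ and $f_{LM}$ approximate the \emph{same} current $T$, by \cite[Theorem~1.4]{DS3} (cf.\ \eqref{e:aggiuntiva_1} and the proof of Lemma~\ref{l:stime_iterative}) each of their graphs coincides with $\supp(T)$ off a set of $\cH^m$-measure $\le C\bmo^{1+\gamma_1}\ell^{m+2+\sfrac{\gamma_1}{2}}$; reprojecting $\bG_{f_{LM}}$ onto $\pi_H$, the two $Q$-valued graphs agree there, so the $\varkappa_H$-parts of their averages coincide on that set and differ by at most the oscillation bound $C\bmo^{\sfrac{1}{2m}}\ell^{1+\beta_2}$ of \cite[Theorem~1.4]{DS3} elsewhere; together with the mollification estimate \eqref{e:L1_est}, which bounds each ``smoothed minus unmollified average'' in $L^1$ by $C\bmo\,\ell^{m+3+\beta_2}$, this yields an $L^1$ bound for $w$ on $B_{6r_J}(p_J,\pi_H)$ of order $C\bmo\,\ell^{m+3+\sfrac{\beta_2}{2}}$. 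Second, testing \eqref{e:pde} against $\zeta\ast\varrho_\ell$ and using that the radial kernel leaves affine functions unchanged---the computation already made in the proof of Lemma~\ref{l:stime_iterative}---shows $\bar h_{HJ}$ solves $\Delta\bar h_{HJ}=b+E_H$ with $b$ affine, $|b|\le C\bmo\ell$, $\|E_H\|_{C^0}\le C\bmo\,\ell^{1+\beta_2}$; the \emph{same} matrix $\bL=\bL_H$ occurs for $\bar{\hat h}_{LM}$, since it depends only on $\Sigma$ and $H$, so running the first-variation argument of Proposition~\ref{p:pde} with $\pi_H$ as reference plane but the reprojected $\pi_L$-approximation shows $w$ is harmonic up to an error that is small when tested against smooth functions. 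Interior estimates for (nearly) harmonic maps, as in Lemma~\ref{l.interpolation}, then upgrade the $L^1$ bound on $B_{6r_J}(p_J,\pi_H)$ to the asserted bound on $B_{2r_J}(p_J,\pi_H)$.

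The hard part will be the bookkeeping for the change of reference plane $\pi_L\rightsquigarrow\pi_H$: the reprojection used to pass from $\bG_{h_{LM}}$ and from the sheets of $f_{LM}$ to objects over $\pi_H$ is \emph{nonlinear}, so ``reproject, then smooth-and-average'' is not the same as ``smooth-and-average, then reproject'', and the discrepancy must be shown negligible (pointwise where possible, otherwise when tested against smooth functions) at order $\bmo\,\ell^{m+3+\sfrac{\beta_2}{2}}$. This uses crucially that (i) the reprojection is $C^\infty$-close, at distance $\le C\bmo^{\sfrac{1}{2}}\ell^{1-\delta_2}$, to the linear isometry carrying $\pi_L$ onto $\pi_H$; (ii) a radial convolution kernel commutes with linear maps, so the leading part of the reprojection commutes with the mollification; (iii) by \eqref{e:L1_est} the mollification errors are $L^1$-small of the ``PDE order'' $\bmo\,\ell^{m+3+\beta_2}$, rather than of the much coarser order $\Lip(\etaa\circ f)\le C\bmo^{\gamma_1}\ell^{\gamma_1}$; and (iv) the relations $\gamma_1\ge100\beta_2$ and $\beta_2=4\delta_2$ of Assumption~\ref{parametri} leave enough slack in the exponents to absorb the remaining errors.
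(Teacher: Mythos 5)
Your strategy matches the paper's proof: reduce to tangential parts via $\Psi_{p_H}$, control the mollification errors with \eqref{e:L1_est}, compare the unsmoothed averages of $f_{HJ}$ and the reprojected $f_{LM}$ using the smallness of the set where their graphs fail to coincide with $\supp (T)$, and handle the non-commutation of reprojection with averaging exactly as the paper does via Lemma \ref{l:cambio_tre_piani} --- the appendix-length computation you correctly flag as the hard part, with the right mechanisms (closeness of the reprojection to a linear isometry, radial kernels commuting with linear maps, the slack from $\beta_2 = 4\delta_2$) and the right output order $\bmo\, \ell^{m+3+\sfrac{\beta_2}{2}}$. One superfluous step: the closing ``near-harmonicity plus Lemma \ref{l.interpolation}'' upgrade is not needed, since an $L^1$ bound of the correct order on $B_{6r_J}$ restricts trivially to $B_{2r_J}$; the elliptic system is only exploited later, in Lemma \ref{l:stime_iterative}, to convert such $L^1$ bounds into $C^k$ bounds.
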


\begin{proof} As in the previous proofs we follow the convention that $C_0$ denotes geometric constants whereas
$C$ denotes constants which depend upon $\beta_2, \delta_2, M_0, N_0, C_e$ and $C_h$. First observe that Lemma \ref{l:rotazioni_semplici} can be applied because, by Proposition \ref{p:tilting opt},
\[
|\pi_H-\pi_L|\leq |\pi_H- \pi_J| + |\pi_J-\pi_M|+ |\pi_M-\pi_L| \leq C \bmo^{\sfrac{1}{2}} \ell (J)^{1-\delta_2}. 
\]
Set $\pi:= \pi_H$ and let $\varkappa$ be its orthogonal complement
in $T_{p_H} \Sigma$, and similarly
$\bar{\pi}= \pi_L$ and $\bar{\varkappa}$ its orthogonal in $T_{p_L} \Sigma$.
After a translation we also assume $p_J=0$, and write $r=r_J=r_M$, 
$\ell=\ell (J)=\ell (M)$ and 
$E:= \bE (T, \bC_{32r} (0, \pi))$, $\bar{E}:= \bE (T, \bC_{32r} (p_M, \bar\pi))$.
Recall that $\max\{E, \bar{E}\}\leq C \bmo \,\ell^{2-2\delta_2}$. 
We fix also the maps $\Psi_H : T_{p_H} \Sigma \to T_{p_H} \Sigma^\perp$ and 
$\Psi_L: T_{p_L} \Sigma \to T_{p_L} \Sigma^\perp$ whose
graphs coincide with the submanifold $\Sigma$. Observe that 
$|\pi-\bar\pi| + |\varkappa- \bar\varkappa| \leq C \bmo^{\sfrac{1}{2}} \ell^{1-\delta_2}$,
$\|D \Psi_H\|_{C^{2,\eps_0}}
+\|D \Psi_L\|_{C^{2,\eps_0}} \leq C \bmo^{\sfrac{1}{2}}$ and
\[
\ell^{-1} \left(\|\Psi_H\|_{C^0 (B_{8r})} + \|\Psi_L\|_{C^0 (B_{8r})}\right) + \|D\Psi_H\|_{C^{0}(B_{8r})}
+\|D\Psi_L\|_{C^{0}(B_{8r})} \leq C \bmo^{\sfrac{1}{2}}\ell.
\]

Consider the map $\hat{f}_{LM}: B_{4r} (0, \pi) \to \Iq (\pi^\perp)$ such that $\bG_{\hat{f}_{LM}} =
\bG_{f_{LM}} \res \bC_{4r} (0, \pi)$, which exists by \cite[Proposition~5.2]{DS2}. Recalling the estimates therein and those of
\cite[Theorem~1.4]{DS3},
\begin{gather}
\Lip (f_{HJ})+ \Lip (\hat{f}_{LM}) \leq C \bmo^{\gamma_1} \ell^{\gamma_1}
\quad \text{and} \quad |f_{HJ}| + |\hat{f}_{LM}| \leq C \bmo^{\sfrac{1}{2m}} \ell^{1+\beta_2},\label{e:Lip+C0_10}\\
\D (f_{HJ})+ \D (\hat{f}_{LM}) \leq C \, \bmo \, \ell^{m+2-2\delta_2}\, .\label{e:Dir_10}
\end{gather}
Consider next the projections $A$ and $\hat{A}$ onto $\pi$
of the Borel sets $\gr (f_{HJ})\setminus \supp (T)$ and
$\gr (\hat{f}_{LM})\setminus \supp (T)$. We know from \cite[Theorem~1.4]{DS3} that 
\begin{equation}\label{e:differs}
|A\cup \hat{A}| \leq C \left[\|\bG_{f_{HJ}} -T\| (\bC_{32} (0, \pi)) + \|\bG_{\hat{f}_{LM}} -T\|(\bC_{32} (p_M, \bar{\pi}))\right]
\leq C \bmo^{1+\gamma_1}\, \ell^{m+2+\gamma_1}\, .
\end{equation}
Recall that
\begin{align*}
h_{HJ} &=(\p_{\varkappa} ((\etaa\circ f_{HJ}) * \varrho_\ell), \Psi_H (x, \p_{\varkappa} ((\etaa\circ f_{HJ}) * \varrho_\ell)))\\
h_{LM} &= ( \p_{\bar\varkappa} ((\etaa\circ f_{LM}) * \varrho_\ell), \Psi_L (x, \p_{\bar\varkappa} ((\etaa\circ f_{LM}) * \varrho_\ell)))\, 
\end{align*}
and define in addition the maps
\begin{align*}
&\bef_{HJ} = (\p_{\varkappa} (\etaa\circ f_{HJ}) , \Psi_H (x, \p_{\varkappa} (\etaa\circ f_{HJ})))\, \\
&\bef_{LM} = (\p_{\bar\varkappa} (\etaa\circ f_{LM}), \Psi_L (x, \p_{\bar\varkappa} (\etaa\circ f_{LM})))\, .
\end{align*} 
Recall that $\hat{h}_{LM}: B_{4r} (0, \pi)\to
\pi^\perp$ satisfies $\bG_{\hat{h}_{LM}}  = \bG_{h_{LM}} \res \bC_{4r} (0, \pi)$ and let
$\hat{\bef}_{LM}$ be such that 
$\bG_{\hat{\bef}_{LM}} = \bG_{\bef_{LM}} \res \bC_{4r} (0, \pi)$. We use Proposition \ref{p:pde},
the Lipschitz regularity of $\Psi_H$ and
Lemma \ref{l:rotazioni_semplici} to conclude
\[
\|\hat{h}_{LM} - \hat{\bef}_{LM}\|_{L^1} \leq C \|h_{LM} - \bef_{LM}\|_{L^1}\leq C \bmo\, r^{m+3+\beta_2}.
\]
Likewise $\|h_{HJ}-\bef_{HJ}\|_{L^1} \leq C \bmo r^{m+3+\beta_2}$.
We therefore need to estimate $\|\bef_{HJ} - \hat{\bef}_{LM}\|_{L^1}$. Define next the map 
$\beg_{LM} = (\p_{\varkappa} (\etaa \circ \hat{f}_{LM}), \Psi_H (x, \p_{\varkappa} (\etaa \circ \hat{f}_{LM})))$ and observe that
$\|\beg_{LM}-\bef_{HJ}\|_{L^1} \leq C \|\etaa \circ \hat{f}_{LM} - \etaa \circ f_{HJ}\|_{L^1}$. On the other hand,
since the two maps $\hat{f}_{LM}$ and $f_{HJ}$ differ only on $A\cup \bar{A}$, we can estimate 
\[
\|\etaa \circ \hat{f}_{LM} - \etaa \circ f_{HJ}\|_{L^1} \leq C|A\cup \bar{A}| (\|f_{LM}\|_\infty+\|\hat{f}_{HJ}\|_\infty) \leq C \bmo^{1+\sfrac{1}{2m}} \ell^{3+m +\gamma_1 + \beta_2}\, .
\]
It thus
suffices to estimate $\|\beg_{LM} - \hat\bef_{LM}\|_{L^1}$. This estimate is independent of the rest 
and it is an easy consequence of \eqref{e:che_fatica} in Lemma \ref{l:cambio_tre_piani} below.
\end{proof}

\begin{lemma}\label{l:cambio_tre_piani}
Fix $m,n,l$ and $Q$. There are geometric constants $c_0, C_0$ with the following property.
Consider two triples of planes $(\pi, \varkappa, \varpi)$ and $(\bar\pi, \bar\varkappa, \bar\varpi)$, where
\begin{itemize}
\item $\pi$ and $\bar\pi$ are $m$-dimensional;
\item $\varkappa$ and $\bar\varkappa$ are
$\bar{n}$-dimensional and orthogonal, respectively, 
to $\pi$ and $\bar\pi$;
\item $\varpi$ and $\bar\varpi$ $l$-dimensional and orthogonal, respectively, to $\pi\times \varkappa$ and $\bar\pi\times
\bar\varkappa$.
\end{itemize}
Assume ${\rm An} := |\pi-\bar\pi| + |\varkappa-\bar\varkappa|\leq c_0$ and
let $\Psi: \pi\times \varkappa \to \varpi$,
$\bar\Psi: \bar\pi\times \bar\varkappa \to \bar\varpi$ be two maps
whose graphs coincide and such that $|\bar \Psi (0)| \leq c_0 r$ and $\|D\bar \Psi\|_{C^{0}} \leq c_0$.
Let $u: B_{8r} (0, \bar{\pi}) \to \Iq (\bar\varkappa)$ be a map with
$\Lip (u) \leq c_0$ and $\|u\|_{C^0} \leq c_0 r$
and set $f (x)= \sum_i \llbracket(u_i (x), \bar\Psi (x, u_i (x)))\rrbracket$ and $\bef (x) = 
(\etaa \circ u (x), \bar\Psi (x, \etaa \circ u (x)))$. Then there are
\begin{itemize}
\item a map $\hat{u}: B_{4r} (0, \pi) \to \Iq (\varkappa)$ such
that the map $\hat{f} (x) := \sum_i \a{(\hat{u}_i (x), \Psi (x, \hat{u}_i (x)))}$ satisfies $\bG_{\hat{f}}
= \bG_{f} \res \bC_{4r} (0, \pi)$
\item and a map $\hat\bef: B_{4r} (0, \pi) \to \varkappa \times \varpi$ such that
$\bG_{\hat\bef} = \bG_{\bef} \res \bC_{4r} (0, \pi)$. 
\end{itemize} 
Finally, if $\beg (x) := (\etaa \circ \hat{u} (x),
\Psi (x, \etaa \circ \hat{u} (x)))$, then
\begin{align}
\|\hat\bef-\beg\|_{L^1} &\leq C_0 \left(\| f \|_{C^0}+ r {\rm An}\right) \big(\D (f) + r^m \big(\|D\bar \Psi\|^2_{C^0} + {\rm An}^2\big)\big)\, .\label{e:che_fatica}
\end{align}
\end{lemma}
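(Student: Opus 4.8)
The plan is to reparametrize every graph in sight over the plane $\pi$, reduce \eqref{e:che_fatica} to a scalar $L^1$-comparison, and then extract the quadratic smallness from the pointwise identity $\sum_i\bigl(u_i-\etaa\circ u\bigr)\equiv 0$. Since $\Lip(u)\le c_0$, $\|D\bar\Psi\|_{C^0}\le c_0$ and ${\rm An}\le c_0$, the map $f=\sum_i\a{(u_i,\bar\Psi(\cdot,u_i))}$ and the single-valued map $\bef=(\etaa\circ u,\bar\Psi(\cdot,\etaa\circ u))$ have Lipschitz constant $\le C_0 c_0$, so all their approximate tangent planes lie within a small angle of $\bar\pi$, hence of $\pi$; applying \cite[Proposition~5.2]{DS2} in $\bC_{4r}(0,\pi)$ produces $\hat u\colon B_{4r}(0,\pi)\to\Iq(\varkappa)$ with $\Lip(\hat u)\le C_0 c_0$, $\|\hat u\|_{C^0}\le C_0 c_0 r$ and $\bG_{\hat f}=\bG_f\res\bC_{4r}(0,\pi)$ for $\hat f=\sum_i\a{(\hat u_i,\Psi(\cdot,\hat u_i))}$, together with $\hat\bef$ satisfying $\|\hat\bef\|_{C^0}\le C_0 c_0 r$ and $\bG_{\hat\bef}=\bG_\bef\res\bC_{4r}(0,\pi)$. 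Since $\gr(\hat\bef)\subset\Sigma=\gr(\Psi)$, one has $\hat\bef=(\hat b,\Psi(\cdot,\hat b))$ with $\hat b:=\p_\varkappa\circ\hat\bef$; as $\beg=(\etaa\circ\hat u,\Psi(\cdot,\etaa\circ\hat u))$ and $\Lip(\Psi)\le C_0 c_0$, we get $|\hat\bef-\beg|\le C_0\,|\hat b-\etaa\circ\hat u|$, so it suffices to bound $\|\hat b-\etaa\circ\hat u\|_{L^1(B_{4r}(0,\pi))}$.

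For the comparison, write $\Phi(a,b):=a+b+\bar\Psi(a,b)$ for the parametrization of $\Sigma$ over $\bar\pi\times\bar\varkappa$, and set $\alpha:=\p_\pi\circ\Phi$, $\beta:=\p_\varkappa\circ\Phi$; because $\varkappa\perp\pi$ while $\bar\varkappa,\bar\varpi$ are within ${\rm An}$ of $\varkappa,\varpi$, the maps $(\alpha,\beta)$ and $x\mapsto\alpha(x,v(x))$, with $v:=\etaa\circ u$, are diffeomorphisms onto their images (which contain the relevant balls over $\pi$) with Jacobian $1+O({\rm An}+\|D\bar\Psi\|_{C^0})$, and all their ``off-block'' differential entries carry a factor ${\rm An}$ or $\|D\bar\Psi\|_{C^0}$. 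Putting $u_i=v+w_i$ with $\sum_i w_i\equiv 0$, for $\xi=\alpha(x,v(x))\in B_{4r}(0,\pi)$ one has $\hat b(\xi)=\beta(x,v(x))$ and $\hat u_i(\xi)=\beta(x+\delta_i,u_i(x+\delta_i))$, where $\delta_i=\delta_i(x)\in\bar\pi$ is the unique small solution of $\alpha(x+\delta_i,u_i(x+\delta_i))=\xi$; expanding that equation (its terms involve $\p_\pi$ applied to vectors of $\bar\varkappa$ and $\bar\varpi$) yields $|\delta_i|\le C_0{\rm An}\,|w_i(x)|$. Subtracting and using linearity of $\p_\varkappa$,
\[
Q\bigl(\etaa\circ\hat u(\xi)-\hat b(\xi)\bigr)=\p_\varkappa\Bigl(\sum_i\delta_i\Bigr)+\p_\varkappa\Bigl(\sum_i\bigl(u_i(x+\delta_i)-u_i(x)\bigr)\Bigr)+\p_\varkappa\Bigl(\sum_i\bigl(\bar\Psi(x+\delta_i,u_i(x+\delta_i))-\bar\Psi(x,v(x))\bigr)\Bigr),
\]
where the middle summand has already absorbed $\sum_i w_i=0$. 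In the first summand $\p_\varkappa$ acts on vectors of $\bar\pi$ (factor ${\rm An}$) applied to $\delta_i=O({\rm An}|w_i|)$; in the third, $\p_\varkappa$ acts on differences of vectors of $\bar\varpi$ (factor ${\rm An}$) each of size $\le\|D\bar\Psi\|_{C^0}(|\delta_i|+|w_i|)$; in the second, writing $u_i(x+\delta_i)-u_i(x)=\int_0^1 Du_i(x+t\delta_i)\cdot\delta_i\,dt$ and $\delta_i=-\p_\pi(w_i)+O\bigl({\rm An}({\rm An}+\|D\bar\Psi\|_{C^0})|w_i|\bigr)$, the leading contribution is $-\sum_i Du_i(x)\cdot\p_\pi(w_i)=-\sum_i Dw_i(x)\cdot\p_\pi(w_i)$ by $\sum_i Du_i\otimes w_i=\sum_i Dw_i\otimes w_i$, which again carries a factor ${\rm An}$ from $\p_\pi(w_i)$, the remaining pieces being of the same or higher order. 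Altogether, at $\xi=\alpha(x,v(x))$,
\[
\bigl|\hat b(\xi)-\etaa\circ\hat u(\xi)\bigr|\le C_0\bigl({\rm An}+\|D\bar\Psi\|_{C^0}\bigr)\,\frac1Q\sum_i\Bigl({\rm An}\,|w_i(x)|+\bigl(|Dw_i(x)|+|Du_i(x)|\bigr)|w_i(x)|\Bigr).
\]

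It then remains to integrate: changing variables $\xi\leftrightarrow x$ (Jacobian $\approx1$) and integrating over $B_{8r}(0,\bar\pi)$, one uses $\int\frac1Q\sum_i|w_i|\le C_0\|w\|_{C^0}r^m\le C_0\|f\|_{C^0}r^m$, then Cauchy--Schwarz and $\cG(u,Q\a{\etaa\circ u})^2=\sum_i|w_i|^2$, $\int\sum_i(|Dw_i|^2+|Du_i|^2)\le C_0\D(f)$, $\bigl(\int\sum_i|w_i|^2\bigr)^{1/2}\le C_0\|f\|_{C^0}r^{m/2}$, and Young's inequality, to get
\[
\|\hat b-\etaa\circ\hat u\|_{L^1}\le C_0\,\|f\|_{C^0}\bigl(\D(f)+r^m\|D\bar\Psi\|_{C^0}^2+r^m{\rm An}^2\bigr),
\]
which, by the reduction of the first paragraph and $\|f\|_{C^0}\le\|f\|_{C^0}+r\,{\rm An}$, is stronger than \eqref{e:che_fatica}. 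The main obstacle is precisely the bookkeeping just described: after cancelling $\sum_i w_i$, every surviving term must be shown to carry an extra factor ${\rm An}$ or $\|D\bar\Psi\|_{C^0}$, so that the final bound is quadratically small in the small quantities; the naive Lipschitz estimates for $\delta_i$ and for the increments of $u$ and $\bar\Psi$ lose exactly one such power, and recovering it forces a systematic use of the orthogonality $\varkappa\perp\pi$ together with the mean-zero property of $w$ (and, for the discrepancy between $Du_i$ at $x$ and at the shifted points $x+t\delta_i$, an elementary change of variables in $x$ inside the final integration).
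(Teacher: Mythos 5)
Your route is genuinely different from the paper's and, at the level of the scheme of estimates, correct: you treat a general pair of triples in one shot over the $m$-dimensional base, whereas the paper first factors the change of frame into a bounded chain of elementary $2$d-rotations of types A, B, C (Lemma \ref{l:linear_algebra}) and iterates the estimate along the chain. Your third summand (the Jensen gap of $\bar\Psi$, killed by the extra ${\rm An}$ gained from $\p_\varkappa|_{\bar\varpi}$) reproduces the paper's Type A computation, your first two summands are the analogue of Type B, and the final integration via Cauchy--Schwarz and Young does deliver \eqref{e:che_fatica} provided the pointwise identity holds.

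The genuine gap is the implicit use of a coherent Lipschitz selection $u=\sum_i\a{u_i}$ and of the possibility of ``following the $i$-th sheet'' from $x$ to a shifted base point $x+\delta_i$. For $Q$-valued Lipschitz maps with $m\geq 2$ and $Q\geq 2$ no such global selection exists (branching), so neither the equation $\alpha(x+\delta_i,u_i(x+\delta_i))=\xi$ defining $\delta_i$ nor the expansion $u_i(x+\delta_i)-u_i(x)=\int_0^1 Du_i(x+t\delta_i)\cdot\delta_i\,dt$ is meaningful as stated. More importantly, the step where $\sum_i w_i\equiv 0$ is ``absorbed'' requires matching the $Q$ points of the fiber of $\bG_f$ over $\xi$ with the $Q$ values $u_i(x)$. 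Without a selection one only knows that each fiber point lies over some base point $y$ with $|y-x|\leq C\,{\rm An}\,\max_i|w_i(x)|$ and within $\cG$-distance $c_0|y-x|$ of the multiset $\supp(u(x))$; this does not exclude that two fiber points pair with the same $u_i(x)$, in which case the middle term is of size $\max_i|w_i(x)|$ with no factor ${\rm An}$ and the quadratic smallness is lost. This is precisely the obstruction the paper's architecture is built to avoid: a Type B rotation moves base points only along a single direction $e_m$, and along each line in that direction \cite[Proposition~1.2]{DS1} does provide a Lipschitz selection (the functions $\theta_i$, $\vartheta_i$), which is what legitimizes the maps $\Phi_i$, $\Phi$ and the cancellation in $\Phi_i-\Phi=b(\vartheta_i^1-\hat v^1)$. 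To repair your argument you would either run your computation slice by slice along lines (essentially reconstructing the paper's proof) or supply a selection-free matching of the fibers of the reprojection; as written, the key pointwise identity is not justified.
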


The proof of the lemma is quite long and we defer it to Appendix \ref{a:cambio_tre_piani}.

\subsection{Proof of Proposition \ref{p:stime_chiave}}
We are finally ready to complete the proof of
Proposition~\ref{p:stime_chiave}. Recall that (i) and (iv) have already been shown in Lemma \ref{l:stime_iterative}.
In order to show (ii) fix two cubes $H, L\in \sP^j$ with nonempty intersection. 
If $\ell(H) = \ell(L)$, then we can apply Lemma \ref{l:tilted_L1} to conclude 
\begin{equation}\label{e:L1_tilt_10}
\|h_{HH}-\hat{h}_{LL}\|_{L^1 (B_{2r_H} (p_H, \pi_H))} \leq C \bmo\, \ell (H)^{m+3+\sfrac{\beta_2}{2}}
\leq C \bmo\, \ell (H)^{m+3+2\kappa}\, .
\end{equation} 
If $\ell (H) = \frac{1}{2} \ell (L)$, then let $J$ be the father of $H$.
Obviously,  $J \cap L\neq \emptyset$.
We can therefore apply Lemma \ref{l:tilted_L1} above to infer
$\|h_{HJ} - \hat{h}_{LL}\|_{L^1 (B_{2r_J} (p_J, \pi_H)}\leq C \bmo \,\ell (J)^{m+3+\sfrac{\beta_2}{2}}$. On the other hand, 
by Lemma \ref{l:stime_iterative}, 
$\|h_{HH}-h_{HJ}\|_{L^1 (B_{2r_H} (p_H, \pi_H))} \leq C r^m \|h_{H}-h_{HJ}\|_0 
\leq C \bmo\, \ell (J)^{m+3+2\kappa}$. Thus we conclude \eqref{e:L1_tilt_10} as well. Note
that $\bG_{g_L} \res \bC_{r_H} (x_H, \pi_0) = \bG_{\hat{h}_{LL}} \res \bC_{r_H} (x_H, \pi_0)$ and that the same
property holds with $g_H$ and $h_{HH}$. We can thus appeal to Lemma \ref{l:rotazioni_semplici} to conclude
\begin{equation}\label{e:real_L1}
\|g_H-g_L\|_{L^1 (B_{r_H} (p_H, \pi_0))} \leq C \bmo\, \ell (H)^{m+3+2\kappa}\, .
\end{equation} 
However, recall also that $\|D^3 (g_H-g_L)\|_{C^\kappa (B_{r_H} (p_H, \pi_0))} \leq C \bmo^{\sfrac{1}{2}}$. We can then
apply Lemma \ref{l:interpolation_bis} to conclude (ii). 

Now, if $L\in \sW^j$ and $i\geq j$,
consider the subset $\sP^i (L)$ of all cubes in $\sP^i$ which intersect $L$. If $L'$ is the cube concentric to $L$
with $\ell (L')=\frac{9}{8} \ell (L)$,
we then have by definition of $\varphi_j$:
\begin{equation}\label{e:L1_importante}
\|\varphi_i - g_L\|_{L^1 (L')} \leq C \sum_{H\in \sP^i (L)} \|g_H - g_L\|_{L^1(B_{r_L} (p_L, \pi_0))}
\leq C \bmo\, \ell (H)^{m+3+2\kappa}\, ,
\end{equation}
which is the claim of (v).

As for (iii), observe first that the argument above applies also when $L$ is the father of $H$. Iterating
then the corresponding estimates, it is easy to see that
\begin{equation}
|D^3 g_H (x_H) - D^3 g_J (x_J)|\leq C \bmo^{\sfrac{1}{2}} \ell (J)^\kappa\quad \mbox{for any ancestor $J$ of $H$}\, .  
\end{equation}
Fix now any pair $H, L\in \sP^j$. Let $H_i$, $L_i$ be the ``first ancestors'' of $H$ and $L$ which are adjacent,
i.e. among all pairs $H', L'$ of ancestors of $H$ and $L$ with same side-length and nonempty intersection, 
we assume that the side-length $\ell$ of $H_i, L_i$ is the smallest possible. We can therefore use the estimates obtained so far to conclude
\begin{align*}
|D^3 g_H (x_H) - D^3 g_L (x_L)| \leq &\; |D^3 g_H (x_H) - D^3 g_{H_i} (x_{H_i})| +
|D^3 g_{H_i} (x_{H_i}) - D^3 g_{L_i} (x_{L_i})|\\
& + |D^3 g_{L_i} (x_{L_i}) - D^3 g_L (x_L)| \leq C \bmo^{\sfrac{1}{2}} \ell^\kappa.
\end{align*}
A simple geometric consideration shows that $|x_L - x_H|\geq c_0 \ell$,
where $c_0$ is a dimensional constant, thus completing the proof.

\section{Existence and estimates for the $\cM$-normal approximation}

In this section we continue using the convention that $C$ denotes constants which depend
upon $\beta_2, \delta_2, M_0, N_0, C_e$ and $C_h$, whereas $C_0$ denotes geometric constants.

\subsection{Proof of Corollary \ref{c:cover}} The first two statements of (i) follow immediately from
Theorem \ref{t:cm}(i) and Proposition~\ref{p:tilting opt}(v).
Coming to the third claim of (i), we extend the function
$\phii$ to the entire plane $\pi_0$ by increasing its $C^{3, \kappa}$ norm
by a constant geometric factor. Let $\phii_t (x):= t \phii (x)$ for $t\in [0,1]$,
$\cM_t := \gr (\phii_t|_{]-4,4[^m})$ and set
\[
\bU_t := \{x+y : x\in \cM_t, y\perp T_x \cM_t, |y|<1\}\, .
\]
For $\eps_2$ sufficiently small the orthogonal projection $\p_t: \bU_t \to \cM_t$ is
a well-defined $C^{2, \kappa}$ map for every $t\in [0,1]$, which depends smoothly on $t$. It is also easy
to see that $\partial T \res \bU_t = 0$.
Thus, $(\p_t)_\sharp (T\res \bU_t) = Q(t) \a{\cM_t}$
for some integer $Q(t)$. On the other hand these currents depend continuously on $t$ and therefore
$Q(t)$ must be a constant. Since $\cM_0 = ]-4,4[^m \times \{0\} \subset \pi_0$ and $\p_0 = \p_{\pi_0}$,
we conclude $Q(0) =Q$.

With regard to (ii), consider 
$q\in L\in \sW$, set $p:= \Phii (q)$ and $\pi := T_p \cM$, whereas $\pi_L$
is as in Definition \ref{d:glued}. Let $J$ be the cube concentric to $L$ and
with side-length $\frac{17}{16} \ell (L)$. By the definition of $\phii$, Theorem \ref{t:cm}(ii) and Proposition~\ref{p:stime_chiave}, we have that, denoting by $\bar\phii$ and $\bar{g}_L$ the first $\bar{n}$ components
of the corresponding maps,
\[
\|\bar\phii - \bar{g}_L\|_{C^0 (J)} \leq \sum_{H\in \sW, H\cap L\neq\emptyset}
\|g_L-g_H\|_{C^0 (J)} \leq C \bmo^{\sfrac{1}{2}} \ell (L)^{3+\kappa}\, .
\]
So, since $\phii = (\bar\phii, \Psi (x, \bar\phii))$ and $g_H = (\bar g_H, \Psi (x, \bar g_H))$,
we conclude $\|g_L-\phii\|_{C^0 (J)} \leq C \bmo^{\sfrac{1}{2}} \ell (L)^{3+\kappa}$. 
On the other the graph of $g_L$ coincides with the graph of the tilted interpolating function $h_L$. 
Consider in $\bC:= \bC_{8 r_L} (p_L, \pi_L)$ the $\pi_L$-approximation $f_L$ used in the construction algorithm
and recall that, by \cite[Theorem 1.4]{DS3}.
\begin{align*}
{\rm osc}\, (f_L) &\leq C_0 \left(\bh (T, \bC_{32 r_L} (p_L, \pi_L), \pi_L) +
((\bE (T, \bC_{32r_L} (p_L, \pi_L))^{\sfrac{1}{2}} + r_L \bA) r_L\right)\\
&\leq C \bmo^{\sfrac{1}{2m}} \ell (L)^{1+\beta_2}\, .
\end{align*}
Recall that $p_L = (z_L, w_L) \in \pi_L\times \pi_L^\perp$ belongs to $\supp (T)$, so we easily conclude that $\|\etaa\circ f_L - w_L\|_{C^0}\leq
C \bmo^{\sfrac{1}{2m}} \ell (L)^{1+\beta_2}$. This implies $\|h_L - w_L\|_{C^0} \leq C \bmo^{\sfrac{1}{2m}} \ell (L)^{1+\beta_2}$.
Putting all these estimates together, we easily conclude that, for any point $p$ in $\supp (T)\cap \bC_{7r_L} (p_L, \pi_L)$
the distance to the graph of $h_L$ is at most $C \bmo^{\sfrac{1}{2m}} \ell (L)^{1+\beta_2}$.
{ This shows the claim if we can prove that $\supp (\langle T, \p, p)\subset \bB_{r_L} (p) \subset \bC_{7r_L} (p_L, \pi_L)$, for which we argue by contradiction. Assuming the opposite, there is a $p'\in \supp (\langle T, \p, p)$ and an ancestor $J$ with largest sidelength among those for which $|p'-p|\geq r_J$. Let $\pi$ be the tangent to $\cM$ at $p$ and observe that we have the estimates $|\pi-\pi_J| \leq C \bmo^{\sfrac{1}{2}}$ and $|\pi-\pi_0|\leq C \bmo^{\sfrac{1}{2}}$. If $J$ were an element of $\sS^{N_0}$, the height bound \eqref{e:pre_height} would imply $|p'-p|\leq C\bmo^{\sfrac{1}{2m}}$. If $J\not\in \sS^{N_0}$ and we let $H$ be the father of $J$, we then conclude that $q\in \bB_H$ and thus we have 
$|p'-p|\leq C \bh (T, \bB_H)\leq C \bmo^{\sfrac{1}{2m}} \ell (H)^{1+\beta_2}$. In both cases this would be incompatible with $|p'-p|\geq r_J$, provided $\eps_2 \leq c (\beta_2, \delta_2, M_0, N_0, C_e, C_h)$}

Finally, we show (iii). Fix a point $p\in \bGam$. By construction, there is
an infinite chain $L_{N_0} \supset L_{N_0+1}\supset \ldots \supset L_j \supset \ldots$ of
cubes $L_j\in\sS^j$ such that $\{p\}= \bigcap_j L_j$.
Set $\pi_j:=\pi_{L_j}$. From Proposition~\ref{p:tilting opt} we infer that
the planes $\pi_j$ converge to a plane $\pi$ with a rate $|\pi_j-\pi|\leq C \bmo^{\sfrac{1}{2}} 2^{-j (1-\delta_2)}$.
Moreover, the rescaled currents $(\iota_{p_{L_j}, 2^{-j}})_\sharp T$ (where the map $\iota_{q,r}$ 
is given by $\iota_{q,r} (z) = \frac{z-q}{r}$)
converge to $Q\a{\pi}$.
Since $|\Phii (p)-p_{L_j}| \leq C \sqrt{m} \,2^{-j}$ for some constant $C$ independent of $j$, we
easily conclude that $\Theta (T, \Phii (p)) = Q$ and $Q\a{\pi}$ is the unique tangent cone to
$T$ at $\Phii (p)$.
We next show that $\p^{-1} (\Phii (p)) \cap \supp (T) = \{\Phii (p)\}$. 
Indeed, assume there were $q\neq \Phii(p)$ which belongs to $\supp (T)$ and such that $\p(q) = \Phii(p)$.
Let $j$ be such that $2^{-j-1}\leq 
|\Phii (p)-q|\leq 2^{-j}$. Provided $\eps_2$ is sufficiently small, Proposition~\ref{p:tilting opt}(v) guarantees
that $j\geq N_0$. Consider the cube $L_j$ in the chain above 
and recall that $\bh (T, \bC_{32r_{L_j}} (p_{L_j}, \pi_j))
\leq C \bmo^{\sfrac{1}{2m}} 2^{-j (1+\beta_2)}$. Hence,
\begin{align*}
2^{-j-1} &\leq |q-\Phii (p)| = |\p_{\pi^\perp} (q-\Phii (p))| \leq C_0 |q-\Phii (p)| |\pi-\pi_j| + 
\bh (T, \bC_{32r_{L_j}} (p_{L_j}, \pi_j))\nonumber\\
&\leq C \bmo^{\sfrac{1}{2}} 2^{-j (1-\delta_2)} 2^{-j} + C \bmo^{\sfrac{1}{2m}} 2^{-j (1+\beta_2)}
\leq C \eps_2^{\sfrac{1}{2m}} 2^{-j}\, ,
\end{align*}
which, for an appropriate choice of $\eps_2$ (depending only on the various other parameters $\beta_2, \delta_2, C_e, C_h, M_0, N_0$) is a contradiction.

\subsection{Construction of the $\cM$-normal approximation and first estimates} 
We set $F(p) = Q\a{p}$ for $p\in \Phii (\bGam)$. 
For every $L\in \sW^j$ consider the $\pi_L$-approximating function $f_L: \bC_{8r_L} (p_L, \pi_L)\to
\Iq (\pi_L^\perp)$ of Definition~\ref{d:pi-approximations}
and $K_L\subset B_{8r_L} (p_L, \pi_L)$ the projection on $\pi_L$ of $\supp (T)\cap \gr (f_L)$.
In particular we have $\bG_{f_L\vert_{K_L}}= T\res(K_L \times \pi_L^\perp)$.
We then denote by $\mathscr{D} (L)$ the portions of the supports of $T$ and $\gr (f_L)$ which differ:
\[
\mathscr{D} (L) := (\supp (T) \cup \gr (f_L))\cap \big[(B_{8r_L} (p_L, \pi_L) \setminus K_L)\times \pi_L^\perp\big]\, . 
\]
Observe that, by \cite[Theorem 1.4]{DS3} and Assumption \ref{parametri}, 
we have 
\begin{equation}\label{e:poco_scazzo}
\cH^m (\mathscr{D} (L)) + \|T\| (\mathscr{D} (L)) \leq C_0 E^{\gamma_1} (E + \ell (L)^2 \bA^2) \ell(L)^m
\leq C \bmo^{1+\gamma_2} \ell (L)^{m+2+\gamma_2}\, ,
\end{equation}
where $E=\bE (T, \bC_{32 r_L} (p_L, \pi_L))$ (cf. with \eqref{e:mu}).
Let $\cL$ be the Whitney region in Definition~\ref{d:cm} and
set $\cL':= \Phii (J)$ where $J$ is the cube concentric to $L$ with
$\ell (J) = \frac{9}{8} \ell (L)$.
Observe that our choice of the constants is done in such a way that, 
\begin{align}
&L\cap H = \emptyset \quad \iff \quad
\mathcal{L}' \cap \mathcal{H}' = \emptyset\qquad \forall H, L \in \sW\, ,\label{e:non_si_toccano}\\
&\Phii (\bGam) \cap \cL' = \emptyset \qquad \forall L\in \sW\, .\label{e:nonsitoccano_2}
\end{align}
We then apply \cite[Theorem 5.1]{DS2} to obtain maps $F_L: \cL' \to \Iq (\bU)$, $N_L : \cL'\to \Iq (\R^{m+n})$ with the following
poperties:
\begin{itemize}
\item $F_L (p) = \sum_i \a{p+(N_L)_i (p)}$,
\item $(N_L)_i (p) \perp T_p \cM$ for every $p\in \cL'$ 
\item and $\bG_{f_L}\res (\p^{-1} (\cL')) = \bT_{F_L} \res (\p^{-1} (\cL'))$.
\end{itemize}
For each $L$ consider the set $\sW (L)$ of elements in $\sW$ which have a nonempty intersection with $L$.
We then define the set $\cK$ in the following way:
\begin{equation}\label{e:def_cK}
\cK = \cM \setminus \Big(\bigcup_{L\in \sW} \Big(\cL' \cap \bigcup_{M\in \sW (L)} \p (\mathscr{D} (M))\Big)\Big)\, . 
\end{equation}
In other words $\cK$ is obtained from $\cM$ by removing in each $\cL'$ those points $x$ for which there is
a neighboring cube $M$ such that the
slice of $\bT_{F_M}$ at $x$ (relative to the projection $\p$) does
not coincide with the slice of $T$.
Observe that, by \eqref{e:nonsitoccano_2}, $\cK$ contains necessarily $\Phii (\bGam)$.
Moreover, recall that $\Lip(\p)\leq C$, that the cardinality $\sW (L)$ is 
bounded by a geometric constant and that each element of $\sW (L)$ has side-length at most twice that
of $L$. Thus \eqref{e:poco_scazzo} implies
\begin{equation}\label{e:bound_cK}
|\cL\setminus \cK|\leq |\cL'\setminus \cK|\leq \sum_{M\in \sW (L)} \sum_{H\in \sW (M)} 
\p (\mathscr{D} (H)) \leq C \bmo^{1+\gamma_2} \ell (L)^{m+2+\gamma_2}\, .
\end{equation}
On $\Phii (\bGam)$ we define $F (p) = Q\a{p}$. By \eqref{e:non_si_toccano}, if $J$ and $L$ are such that $\cJ'\cap \cL'\neq \emptyset$, then $J\in \sW (L)$ and therefore $F_L= 
F_J$ on $\cK\cap (\cJ'\cap \cL')$. We can therefore define a unique map on $\cK$ by simply setting
$F (p) = F_L (p)$ if $p\in \cK\cap \cL'$. Our resulting map 
has the Lipschitz bound of \eqref{e:Lip_regional} in each $\cL\cap \cK$.
{ Indeed, notice that, by the $C^2$ estimate on $\phii$ and Proposition~\ref{p:stime_chiave}(iv), $\cM$ is given on $\bC_{r_L} (p_L, \pi_L)$ as the graph of a map $\phii': B_{r_L} (p_L, \pi_L)\to \pi_L^\perp$ with $\|D\phii'\|_{C^0} \leq C \bmo^{\sfrac{1}{2}} \ell (H)^{1-\delta_1}$ and $\|D^2 \phii'\|_{C^0} \leq C \bmo^{\sfrac{1}{2}}$. Hence, the Lipschitz constant of $N_L$ can be 
estimated using \cite[Theorem 5.1]{DS2} as
\begin{equation}\label{e:Lip_aggiunto_luca}
 \Lip(N_L)\leq C\, \left(\|D^2\phii'\|_{C^0}\,\|N\|_{C^0}+\|D\phii'\|_{C^0}+\Lip(f_L)\right)
\leq C\, \bmo^{\gamma_2}\,\ell(L)^{\gamma_2}\,,
\end{equation}}

Moreover, $\bT_F = T\res \p^{-1} (\cK)$, which implies two facts. First, by Corollary \ref{c:cover}(ii) 
we also have that $N (p) := \sum_i \a{F_i (p) -p}$ enjoys
the bound $\|N|_{\cL\cap \cK}\|_{C^0} \leq C \bmo^{\sfrac{1}{2m}} \ell (L)^{1+\beta_2}$. Secondly,
\begin{equation}\label{e:bound_scazzo_corrente}
\|T\| (\p^{-1} (\cL\setminus \cK)) \leq \sum_{M\in \sW (L)} \sum_{H\in \sW (M)} \| T\| (\mathscr{D} (H))
\leq C \bmo^{1+\gamma_2} \ell (L)^{m+2+\gamma_2}\, .
\end{equation}
Hence, $F$ and $N$ satisfy the bounds \eqref{e:Lip_regional} on $\cK$. We next 
extend them to  the whole center manifold and conclude \eqref{e:err_regional}
from \eqref{e:bound_scazzo_corrente} and \eqref{e:bound_cK}.
The extension is achieved in three steps:
\begin{itemize}
\item we first extend the map $F$ to a map $\bar{F}$ taking values
in $\Iq (\bU)$;
\item we then modify $\bar{F}$ to achieve the form $
\hat{F} (x) = \sum_i \llbracket x+\hat{N}_i (x)\rrbracket$
with $\hat{N}_i (x) \perp T_x \cM$ for every $x$;
\item we finally modify $\hat{F}$ to reach the desired extension $F (x) =
\sum_i \a{x+ N_i (x)}$, with $N_i (x) \perp T_x \cM$ and
$x+ N_i (x) \in \Sigma$ for every $x$.
\end{itemize}

\medskip

{\bf First extension}. We use on $\cM$ the coordinates
induced by its graphical structure,
i.e.~we work with variables in flat domains.
Note that the domain parameterizing the
Whitney region for $L\in \sW$ is then the cube concentric to $L$ and
with side-length $\frac{17}{16} \ell (L)$.  
The multivalued map $N$ is extended to a multivalued $\bar{N}$ inductively to appropriate
neighborhoods of the skeleta of the Whitney decomposition (a similar argument has been
used in 
\cite[Section 1.2.2]{DS1}). The extension of $F$ will obviously be 
$\bar{F} (x) = \sum_i \llbracket\bar{N}_i (x)+x\rrbracket$.
The neighborhoods of the skeleta are defined in this way:
\begin{enumerate}
\item if $p$ belongs to the $0$-skeleton, we let $L\in \sW$ be (one of) the smallest cubes 
containing it and define $U^p := B_{\ell(L)/16} (p)$;
\item if $\sigma= [p,q]\subset L$ is the edge of a cube and $L\in \sW$ is (one of) the smallest cube intersecting $\sigma$, we then define $U^\sigma$ to be the neighborhood of size $\frac{1}{4}\frac{\ell(L)}{16}$ of $\sigma$ minus the
closure of the unions of the $U^r$'s, where $r$ runs in the $0$-skeleton; 
\item we proceed inductively till the $m-1$-skeleton: given a $k$-dimensional facet $\sigma$ and (one of) the smallest cube $L\in \sW$ which intersects it,
$U^\sigma$ is its neighborhood of size $4^{-k} \frac{\ell (L)}{16}$ minus the closure of the union of all 
$U^\tau$'s, where $\tau$ runs among all facets of dimension at most $k-1$.
\end{enumerate} 
Denote by $\bar{U}$ the closure of the union of all
these neighborhoods and let $\{V_i\}$ be the connected components of the complement.
For each $V_i$ there is a $L_i\in \sW$ such that $V_i\subset L_i$.
Moreover, $V_i$ has distance $c_0 \ell(L)$ from $\partial L_i$, where $c_0$ is a geometric constant. 
It is also clear that if $\tau$ and $\sigma$ are two distinct facets of the same cube $L$ with the same
dimension, then the distance between any pair of points $x,y$ with $x\in U^\tau$ and $y\in U^\sigma$ is at least $c_0 \ell (L)$.
In Figure \ref{f:whitney} the various domains are shown in a piece of a $2$-dimensional decomposition.

\begin{figure}[htbp]
\begin{center}
\input{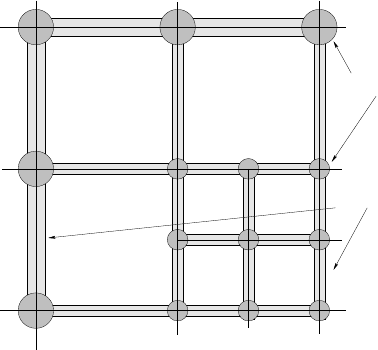_t}
\caption{The sets $U^p$, $U^\sigma$ and $V_i$.}
\label{f:whitney}
\end{center}
\end{figure}

At a first step we extend $N$ to a new map $\bar{N}$ separately on each $U^p$, where $p$ are 
the points in the $0$-skeleton.
Fix $p\in L$ and let ${\rm St} (p)$ be the union of all cubes which contain $p$. Observe that the Lipschitz constant of $N|_{\cK\cap {\rm St} (p)}$ is smaller than $C \bmo^{\gamma_2} \ell (L)^{\gamma_2}$ and that $|N| \leq C \bmo^{\sfrac{1}{2m}}
\ell (L)^{1+\beta_2}$ on ${\rm St} (p)$. We can therefore extend the map $N$ to $U^p$ at the price
of slightly enlarging this Lipschitz constant and this height bound, using \cite[Theorem 1.7]{DS1}. 
Being the $U^p$ disjoint, the resulting map, for which we use the symbol $\bar{N}$, is well-defined.

It is obvious that this map has the desired height bound in each Whitney region. We therefore 
want to estimate its
Lipschitz constant.
Consider $L\in \sW$ and $H$ concentric to $L$ with side-length $\ell (H) = \frac{17}{16} \ell (L)$.
Let $x,y\in H$. If $x,y\in \cK$, then there is nothing to check. If $y\in U^p$ for some $p$ and $x\not\in \bigcup_q U^q$,
then $x\in {\rm St} (p)$ and $\cG (\bar{N}(x), \bar{N}(y)) \leq C 
\bmo^{\gamma_2} \ell (L)^{\gamma_2} |x-y|$. The same holds when $x,y\in U^p$. The remaining case is $x\in U^p$
and $y\in U^q$ with $p\neq q$. Observe however that this would imply that $p,q$ are both vertices of $L$. Given
that $L\setminus \cK$ has much smaller measure than $L$ there is at least one point $z\in L\cap \cK$. It
is then obvious that 
\[
\cG (\bar{N} (x), \bar{N} (y)) \leq \cG (\bar{N} (x), \bar{N} (z)) +
\cG (\bar{N} (z), \bar{N} (y)) \leq C \bmo^{\gamma_2} \ell (L)^{\gamma_2} \ell (L),
\]
and, since $|x-y|\geq c_0 \ell (L)$, the desired bound readily follows. 
Observe moreover that, if $x$ is in the closure of some $U^q$, then we can extend the map continuously to it.
By the properties of the Whitney decomposition it follows that the union of the closures of the $U^q$ and of $\mathcal{K}$ is closed and thus, w.l.o.g., we can assume that the domain of this new $\bar{N}$ is in fact closed.

This procedure can now be iterated over all skeleta inductively on the dimension $k$ of the corresponding
skeleton, up to
$k=m-1$: in the argument above we simply replace 
points $p$ with $k$-dimensional faces $\sigma$, defining ${\rm St} ( \sigma)$ as the union of the cubes which
contain $\sigma$. In the final step we then extend over the domains $V_i$'s:
this time ${\rm St} (V_i)$
will be defined as the union of the cubes which intersect the cube $L_i\supset V_i$. 
The correct height and Lipschitz bounds follow from  the same arguments. Since the algorithm is applied
$m+1$ times, the original constants have been enlarged by a geometric factor.

\medskip

{\bf Second extension: orthogonality.} For each $x\in \cM$ let $\p^\perp (x, \cdot) : \R^{m+n}\to \R^{m+n}$ 
be the orthogonal projection on $(T_x\cM)^\perp$ and set
$\hat{N} (x) = \sum_i \llbracket \p^\perp (x,\bar{N}_i (x))\rrbracket$. Obviously $|\hat{N} (x)| \leq |\bar{N} (x)|$,
so the $L^\infty$ bound is trivial. We now want
to show the estimate on the Lipschitz constant. 
To this aim, fix two points $p,q$ in the same Whitney region associated to $L$ and parameterize the corresponding
geodesic segment $\sigma\subset \cM$ by arc-length $\gamma :[0, d (p,q)] \to\sigma$, where $d(p,q)$ 
denotes the geodesic distance on $\cM$. Use \cite[Proposition~1.2]{DS1}
to select $Q$ Lipschitz functions $N'_i: \sigma \to \bU$ such that
$\bar{N}|_{\gamma} = \sum \a{N'_i}$ and 
$\Lip (N'_i) \leq \Lip (\bar{N})$. Fix a frame $\nu_1, \ldots, \nu_n$
on the normal bundle of $\cM$ with the property that $\|D\nu_i\|_{C^0}\leq C\bmo^{\sfrac{1}{2m}}$ (which is possible
since $\cM$ is the graph of a $C^{3,\kappa}$ function, cf.~\cite[Appendix A]{DS2}). We have
$\hat{N} (\gamma (t)) = \sum_i \llbracket\hat{N}_i (t)\rrbracket$, where
\[
\hat{N}_i (t) = \sum [\nu_j (\gamma (t)) \cdot N'_i (\gamma (t))] \, \nu_j (t).
\]
Hence we can estimate
\[
\left|\frac{d \hat{N}_i}{dt}\right| \leq C_0 \Lip (N'_i) + C_0 \sum_j \|D\nu_j\| \|N'_i\|_{C^0}
\leq C \bmo^{\gamma_2} \ell(L)^{\gamma_2} + C \bmo^{\sfrac{1}{2m}} \ell(L)^{1+\beta_2}
\leq C \bmo^{\gamma_2} \ell (L)^{\gamma_2}.
\]
Integrating this inequality we find
\[
\cG (\hat{N} (p), \hat{N} (q)) \leq C_0
\sum_{i=1}^Q |\hat{N}_i (d (p,q))-\hat{N}_i(0)|\leq C \bmo^{\gamma_2} \ell (L)^{\gamma_2} d (p,q) \, .
\]
Since $d(p,q)$ is comparable to $|p-q|$, we achieve the desired
Lipschitz bound.

\medskip

{\bf Third extension and conclusion.} For each $x\in \cM\subset \Sigma$ consider the orthogonal
complement $\varkappa_x$ of $T_x \cM$ in $T_x \Sigma$. Let $\cT$ be the fiber bundle 
$\bigcup_{x\in \cM} \varkappa_x$ and observe that, by the regularity of both $\cM$ and $\Sigma$
there is a global $C^{2,\kappa}$ trivialization (argue as in \cite[Appendix A]{DS2}).
It is then obvious that there is a $C^{2,\kappa}$ map $\Xi: \cT \to \R^{m+n}$
with the following
property: for each $(x,v)$, $q:= x+\Xi (x,v)$ is the only point in $\Sigma$ which is
orthogonal to $T_x \cM$ and such that $\p_{\varkappa_x} (q-x) = v$.
We then set $N (x) = \sum_i \llbracket\Xi (x, \p_{\varkappa_x} (\hat{N}_i (x)))\rrbracket$. 
Obviously, $N(x) = \hat{N} (x)$ for $x\in \cK$, simply because in this case $x+N_i (x)$ belongs
to $\Sigma$. 

In order to show the Lipschitz bound, denote by $\Omega (x, q)$ the map $\Xi (x, \p_{\varkappa_x} (q))$.
$\Omega$ is a $C^{2, \kappa}$ map.  Thus
\begin{equation}\label{e:Lip_1}
|\Omega (x, q) - \Omega (x, p)|\leq C_0 |q-p|\, .
\end{equation}
Moreover, since $\Omega (x, 0) = 0$ for every $x$, we have $D_x \Omega (x, 0) =0$. We therefore conclude that
$|D_x \Omega (x, q)|\leq C_0 |q|$ and hence that
\begin{equation}\label{e:Lip_2}
|\Omega (x, q) - \Omega (y, q)|\leq C_0 |q| |y-x|\, .
\end{equation}
Thus, fix two points $x,y\in \cL$ and let us assume that $\cG (\hat{N} (x), \hat{N} (y))^2 =
\sum_i |\hat{N}_i (x) - \hat{N}_i (y)|^2$ (which can be achieved by a simple relabeling).
We then conclude
\begin{align}
\cG (N (x), N(y))^2 &\leq 2 \sum_i |\Omega (x, \hat{N}_i (x)) - \Omega (x, \hat{N}_i (y))|^2 
+ 2 \sum_i |\Omega (x, \hat{N}_i (y)) - \Omega (y, \hat{N}_i (y))|^2\nonumber\\
&\leq C_0 \cG (\hat{N} (x), \hat{N} (y))^2 + C \sum_i |\hat{N}_i (y)|^2 |x-y|^2\nonumber\\
&\leq C \bmo^{2\gamma_2} \ell (L)^{2\gamma_2} |x-y|^2 + C \bmo^{\sfrac{1}{2m}} \ell (L)^{1+\beta_2} |x-y|^2\, .
\end{align}
This proves the desired Lipschitz bound.
Finally, using the fact that
$\Omega (x,0) =0$, we have $|\Omega (x,v)|\leq C_0 |v|$ and the $L^\infty$ bound readily follows. 

\subsection{Estimates \eqref{e:Dir_regional} and \eqref{e:av_region}.} First consider
the cylinder $\bC:= \bC_{8r_L} (p_L, \pi_L)$. Denote by $\vec{\cM}$ the unit $m$-vector 
orienting $T\cM$ and by $\vec\tau$ the one orienting $T \bG_{h_L} = T\bG_{g_L}$. 
Recalling that $g_L$ and $\phii$ coincide in a neighborhood of $x_L$, we have
\[
\sup_{p\in \cM\cap \bC} |\vec{\tau} (x_L, g_L (x_L)) -\vec{\cM} (p)|\leq C \|D^2 \phii\|_{C^0} \,\ell (L)
\leq C \bmo^{\sfrac{1}{2}}\ell (L).
\]
Since $\|D^2 h_L\|\leq C \bmo^{\sfrac{1}{2}}$ we have $|\vec{\tau} (x_L, g_L (x_L)) -
\vec\tau (q)|\leq C \bmo^{\sfrac{1}{2}} \ell (L)$ $\forall q\in \cM\cap \bC$. Combining the last two inequalities with
Proposition~\ref{p:stime_chiave}(iv) we infer $\sup_{\bC\cap \cM} |\vec{\cM}-\pi_L| \leq C \bmo^{\sfrac{1}{2}} \ell (L)^{1-\delta_2}$.
Thus, since $\p^{-1} (\cL) \cap \supp (T) \subset \bC$, we can estimate
\begin{align}
\int_{\p^{-1} (\cL)} |\vec{\bT}_F (x) -& \vec{\cM} (\p (x))|^2 d\|\bT_F\| (x)\nonumber\\ 
\leq\; &\int_{\p^{-1} (\cL)} |\vec{T} (x) - \vec{\cM} (\p (x))|^2 d\|T\| (x) + C \bmo^{1+\gamma_2}
\ell (L)^{m+2+\gamma_2}\nonumber\\
\leq\; &\int_{\p^{-1} (\cL)} |\vec{T} (x) - \vec{\pi}_L|^2 d \|T\| (x) + C \bmo \ell (L)^{m+2-2\delta_2}\label{e:eccesso_storto}
\end{align}
In turn the integral in \eqref{e:eccesso_storto} 
is smaller than $C \ell (L)^m \bE (T, \bC, \pi_L)$.
By \cite[Proposition 3.4]{DS2} we then conclude
\begin{align*}
\int_{\cL} |DN|^2 \leq &C_0 \int_{\p^{-1} (\cL)} |\vec{\bT}_F (x) - \vec{\cM} (\p (x))|^2 d \|\bT_F\| (x) + 
C_0 \|A_{\cM}\|_{C^0}^2 \int_{\cL} |N|^2\\
& + C_0 \Lip (N)^2 \int_{\mathcal{L}} |DN|^2 \\
&\leq C \bmo \,\ell(L)^{m+2-2\delta_2} + C \bmo\, \ell (L)^{m+2+2\beta_2} + C \bmo^{2\gamma_2} \int_{\mathcal{L}} |DN|^2\, ,
\end{align*}
where we have used $\|A_\cM\|_{C^0}  \leq C\, \|D^2 \phii\|_{C^0}\leq C \bmo^{\sfrac{1}{2}}$. Thus \eqref{e:Dir_regional} follows
provided $\eps_2$ is sufficiently small

We finally come to \eqref{e:av_region}. First observe that, by \eqref{e:Lip_regional} and \eqref{e:err_regional},
\begin{equation}\label{e:sfava1}
\int_{\cL\setminus \cK} |\etaa \,\circ N| 
\leq C \, \bmo^{\sfrac{1}{2m}} \ell(L)^{1+\beta_2} |\cL\setminus \cK|
\leq C \bmo^{1+\gamma_2 + \sfrac{1}{2m}} \ell(L)^{m+3+\beta_2+\gamma_2}\, .
\end{equation}
Fix now $p\in \cK$. 
Recalling that $F_L (x) = \sum_i \a{p+ (N_L)_i (p)}$ is given
by \cite[Theorem 5.1]{DS2} applied to the map $f_L$, we can use \cite[Theorem 5.1(5.4)]{DS2}
to conclude
\begin{align}
|\etaa \circ N_L (p)| \leq {}& C\, |\etaa \circ f_L (\p_{\pi_L}(p)) - \p_{\pi_L}^\perp (p)| + 
C \, \Lip (N_L|_{\cL}) \, |T_p \cM - \pi_L| \, |N_L| (p)\nonumber\\
\leq {}& C |\etaa \circ f_L (\p_{\pi_L}(p)) - \p_{\pi_L}^\perp (p)| \notag\\
&+ C \bmo^{\sfrac{1}{2} + \gamma_2} \ell (L)^{1+\gamma_2 -\delta_2}
\left(\cG (N_L(p), Q \a{\etaa \circ N_L (p)}) + Q|\etaa \circ N_L| (p)\right)\, .\nonumber
\end{align}
For $\eps_2$ sufficiently small (depending only on $\beta_2, \gamma_2, M_0, N_0, C_e, C_h$), we then conclude that
\begin{align}
|\etaa \circ N_L (p)| \leq {}& C\, |\etaa \circ f_L (\p_{\pi_L}(p)) - \p_{\pi_L^\perp} (p)| + C  \bmo^{\sfrac{1}{2} + \gamma_2} \ell (L)^{1+\gamma_2 -\delta_2}
\cG (N_L(p), Q \a{\etaa \circ N_L (p)}) \nonumber\\
\leq {}& C \, |\etaa \circ f_L (\p_{\pi_L} (p)) -\p_{\pi_L}^\perp (p)|+ C\,a\, \bmo^{1+\gamma_2} \, 
\ell(L)^{(1+\gamma_2-\delta_2) \frac{2+\gamma_2}{1+\gamma_2}}\notag\\
&+ \frac{C}{a}\, \cG (N_L(p), Q\a{\etaa \circ N_L (p)})^{2+\gamma_2}\, .\label{e:da_integrare_10}
\end{align}
Our choice of $\delta_2$ makes the exponent $(1+\gamma_2-\delta_2) \frac{2+\gamma_2}{1+\gamma_2}$ larger than $2+\gamma_2/2$.
Let next $\ph': \pi_L\to \pi_L^\perp$ be such that $\bG_{\ph'} = \cM$. 
Applying Lemma \ref{l:rotazioni_semplici} we conclude that
\begin{align*}
\int_{\cK \cap \cV} |\etaa\circ f_L (\p_{\pi_L}(p)) - \p_{\pi_L^\perp} (p))| \leq &
C \int_{\p_{\pi_L} (\cK \cap \cV)} |\etaa \circ f_L (x) - \ph' (x)|\\
\leq & C\|g_L (x) - \phii (x)\|_{C^0 (H)} \ell (L)^m\, ,
\end{align*}
where $H$ is a cube concentric to $L$ with side-length $\ell (H) = \frac{9}{8} \ell (L)$.
From Proposition \ref{p:stime_chiave}(v) we get 
$\|\phii- g_L\|_{C^0 (H)} \leq C \bmo \ell (L)^{m+3+\sfrac{\beta_2}{3}}$ and
\eqref{e:av_region} follows integrating \eqref{e:da_integrare_10} over $\cV\cap \cK$ and using \eqref{e:sfava1}.

\subsection{Proof of Corollary \ref{c:globali}}
Observe that $N \equiv 0$ over $\Phii (\bGam)$ and thus the second inequality in
\eqref{e:global_Lip} follows easily from the second inequality of \eqref{e:Lip_regional}, recalling that
$\ell (L) \leq 1$ for any cube $L\in \sW$. For the same reasons, from \eqref{e:Dir_regional} we conclude
\[
\int_{\cM'} |DN|^2 \leq C \bmo \sum_{L\in \sW} \ell (L)^{m+2-2\delta_2} \leq C
\bmo \sum_{L\in\sW} \ell (L)^m \leq C \bmo\, .
\]
\eqref{e:global_masserr} follows from \eqref{e:err_regional} with similar considerations.
Coming to the first inequality in \eqref{e:global_Lip} fix any two points $p = \Phii (x),q = \Phii (y)\in \cM'$. Observe that the length of the geodesic segment joining $p$ and $q$ is comparable, up to constants, to $|x-y|$. If $x,y\in \bGam$, then $N(p)=N(q)=Q\a{0}$ and so $\cG( N(p), N(q)) = 0$. If $x\in \bGam$ and $y\not \in \bGam$, then $y$
belongs to some $L\in \sW$ and, by the properties of the Whitney decomposition, $\ell (L) \leq \frac{1}{2} |x-y|$. Thus, using the second inequality in \eqref{e:Lip_regional} we conclude 
$\cG (N(q), N(p)) = \cG (N (q), Q\a{0}) \leq \|N|_{\mathcal{L}}\|_{C^0}\leq C \bmo^{\sfrac{1}{2m}} \ell (L)^{1+\beta_2} \leq C \bmo^{\sfrac{1}{2m}} |x-y|$. Finally, if $x, y\not \in \bGam$ we analyze two cases. If the geodesic segment
$[x,y]$ intersects $\bGam$, then we conclude the same inequality as above. Otherwise there are points $x = z_0, z_1, \ldots, z_N = y$ in $[x,y]$ such that each segment $[z_{i-1}, z_i]$ is contained in some single $L_i\in \sW$ and $\sum_i |z_i-z_{i-1}| = |x-y|$. It then follows from the first bound in \eqref{e:Lip_regional} that
\[
\cG (N(p), N(q)) \leq \sum_i \cG (N (\Phii (z_i), N (\Phii (z_{i-1}))) \leq C \bmo^{\gamma_2} \sum_i |z_i - z_{i-1}|
= C \bmo^{\gamma_2} |x-y|\, .
\]
Recalling that $\gamma_2 \leq \frac{1}{2m}$, all the cases examined prove the first inequality in \eqref{e:global_Lip}.

\section{Separation and splitting before tilting}

As in the previous sections, $C_0$ will be used for geometric constants, $\bar{C}$ for constants depending on $\beta_2, \delta_2, M_0, N_0$ and $C_e$, whereas
$C$ will be used for constants depending on all the latter parameters and also $C_h$. 

\subsection{Vertical separation} 
In this section we prove Proposition \ref{p:separ} and Corollary \ref{c:domains}.

\begin{proof}[Proof of Proposition \ref{p:separ}]
Let $J$ be the father of $L$. 
By Proposition~\ref{p:tilting opt}, Theorem \ref{t:height_bound} can
be applied to the cylinder $\bC := \bC_{36r_J} (p_J, \pi_J)$. 
Moreover, 
$|p_J-p_L|\leq 3\sqrt{m} \ell (J)$. 
Thus, if $M_0$ is
larger than a geometric constant, 
we have
$\bB_L \subset \bC_{34 r_J} (p_J, \pi_J)$. 
Denote by $\q_L$, $\q_J$ the projections $\p_{\hat\pi_L^\perp}$
and $\p_{\pi_J^\perp}$ respectively. Since $L \in \sW_h$, 
there are two points $p_1, p_2\in
\supp (T) \cap \bB_L$ such that $|\q_L (p_1-p_2)| \geq C_h \bmo^{\sfrac{1}{2m}} \ell (L)^{1+\beta_2}$. 
On the other hand, recalling Proposition \ref{p:tilting opt}, $|\pi_J- \hat{\pi}_L|\leq \bar{C}\bmo^{\sfrac{1}{2}}
\ell (L)^{1-\delta_2}$, where $\bar{C}$ depends upon $\beta_2, \delta_2, M_0, N_0, C_e$ but not $C_h$. 
Thus,
\begin{equation*}
|\q_J (p_1-p_2)| \geq |\q_L (p_1-p_2)| - C_0 |\hat{\pi}_L-\pi_J| |p_1-p_2|
\geq C_h \bmo^{\sfrac{1}{2m}} \ell (L)^{1+\beta_2} - \bar{C} \bmo^{\sfrac{1}{2}}\ell (L)^{2-\delta_2}\, .
\end{equation*}
Hence, if $\eps_2$ is sufficiently small, we actually conclude
\begin{equation}
|\q_J (p_1-p_2)| \geq \frac{15}{16} C_h \bmo^{\sfrac{1}{2m}} \ell (L)^{1+\beta_2}\, .
\end{equation}
Set $E:= \bE (T, \bC)$ and apply Theorem \ref{t:height_bound} to $\bC$: the union of the corresponding
``stripes'' $\bS_i$  contains the set $\supp (T) \cap \bC _{36 r_J (1 - C_0 E^{\sfrac{1}{2m}} |\log E|)} (p_J, \pi_J))$. We can therefore assume that they contain
$\supp (T) \cap \bC_{34r_J} (p_J, \pi_J)$. The width of these stripes is bounded as follows:
\[
\sup \big\{|\q_J (x-y)| :x,y\in \bS_i\big\}
\leq C_0 \,E^{\sfrac{1}{2m}} r_J \leq C_0\, C_e^{\sfrac{1}{2m}} M_0 \bmo^{\sfrac{1}{2m}} \ell (L)^{1+ (2-2\delta_2)/2m}\, .
\]
So, if $C^\sharp$ is chosen large enough (depending only upon $M_0$ $m$, $n$ and $Q$), we actually conclude that $p_1$
and $p_2$ must belong to two different stripes, say $\bS_1$ and $\bS_2$. 
By Theorem \ref{t:height_bound}(iii) we conclude that all points in $\bC_{34 r_J} (p_J, \pi_J)$ have density $\Theta$ strictly
smaller than $Q-\frac{1}{2}$, thereby implying (S1).
Moreover, by choosing $C^\sharp$ appropriately, we achieve that
\begin{equation}\label{e:vert_separ}
|\q_J (x-y)| \geq  \frac{7}{8} C_h \bmo^{\sfrac{1}{2m}} \ell (L)^{1+\beta_2} \, .
\quad \forall x\in \bS_1, y\in \bS_2\, .
\end{equation}
Assume next there is $H\in \sW$ with $\ell (H) \leq \frac{1}{2} \ell (L)$ and
 $H\cap L\neq\emptyset$. 
From our construction it follows that $\ell (H) = \frac{1}{2} \ell (L)$, $\bB_H\subset
\bC_{34 r_J} (p_J, \pi_J)$ and $|\pi_H -\pi_J|\leq \bar{C} \bmo^{\sfrac{1}{2}} \ell (H)^{1-\delta_2}$
(see again Proposition \ref{p:tilting opt}). Arguing as above (and possibly choosing $\eps_2$ smaller, but
only depending upon $\beta_2, \delta_2, M_0, N_0, C_e$ and $C_h$)
we then conclude
\begin{equation}\label{e:vert_separ_10}
|\p_{\pi_H^\perp} (x-y)| \geq  \frac{3}{4} C_h \bmo^{\sfrac{1}{2m}} \ell (L)^{1+\beta_2} 
\geq \frac{3}{2} C_h \bmo^{\sfrac{1}{2m}} \ell (H)^{1+\beta_2} \qquad \forall x\in \bS_1, y\in \bS_2\, .
\end{equation}
Now, recalling Proposition~\ref{p:tilting opt}, if $\eps_2$ is sufficiently small, $\bC_{32 r_H} (p_H, \pi_H) \cap
\supp (T) \subset \bB_H$. Moreover, by Theorem \ref{t:height_bound}(ii) ,
\[
(\p_{\pi_J})_\sharp (T\res (\bS_i\cap \bC_{32 r_H} (p_H, \pi_J))) = Q_i \a{B_{32 r_H} (p_H, \pi_J)}
\quad \text{for } \, i=1,2,\;\; Q_i\geq 1.
\] 
A simple argument already used several other times allows to conclude that indeed
 \[
(\p_{\pi_H})_\sharp (T\res (\bS_i\cap \bC_{32 r_H} (p_H, \pi_H))) = Q_i \a{B_{32 r_H} (p_H, \pi_H)}
\quad \text{for } \, i=1,2,\;\; Q_i\geq 1.
\] 
Thus, $\bB_H$ must necessarily contain two points $x,y$ with $|\p_{\pi_H^\perp} (x-y)|\geq \frac{3}{2} 
C_h \bmo^{\sfrac{1}{2m}} \ell (H)^{1+\beta_2}$. Given that $|\hat{\pi}_H-\pi_H| \leq \bar{C}
\bmo^{\sfrac{1}{2}} \ell (H)^{1-\delta_2}$,
we conclude (again imposing that $\eps_2$ is sufficiently small) that $|\p_{\pi^\perp_{\hat{H}}} (x-y)| \geq \frac{5}{4}
C_h \bmo^{\sfrac{1}{2m}} \ell (H)^{1+\beta_2}$, i.e.~the cube $H$
satisfies the stopping condition (HT), which has ``priority over the condition (NN)'' and thus it cannot belong to
$\sW_n$. This shows (S2).

{ Coming to (S3), set $\Omega:= \Phii (B_{2\sqrt{m} \ell (L)} (x_L, \pi_0)$ and observe that $\p_\sharp (T\res (\Omega\cap \bS_i)) = Q_i \a{\Omega}$. Thus, for each $p\in \cK\cap \Omega$,}
the support of $p + N(p)$ must contain at least
one point $p+N_1 (p) \in \bS_1$ and at least one point $p+N_2 (p)\in \bS_2$. 
Now, by \eqref{e:vert_separ}
\begin{equation}\label{e:separazione parametrica}
|N_1 (p)- N_2 (p)| \geq  \frac{7}{8} C_h \bmo^{\sfrac{1}{2m}} \ell (L)^{1+\beta_2} - C_0 \ell (L) \,
|T_p \cM - \pi_J|\, .
\end{equation}
Recalling, however, Proposition~\ref{p:stime_chiave} and
that $\cM$ and $\gr (g_J)$ coincide on a nonempty open set, we easily conclude that
$|T_p \cM - \pi_J|\leq C \bmo^{\sfrac{1}{2}} \ell (L)^{1-\delta_2}$
and, via \eqref{e:separazione parametrica},
\[
\cG \big(N (p), Q \a{\etaa \circ N (p)}\big) \geq \frac{1}{2} |N_1 (p)- N_2 (p)|\geq \frac{3}{8} C_h \bmo^{\sfrac{1}{2m}} \ell (L)^{1+\beta_2}\, .
\]
{ Next observe that, by the property of the Whitney decomposition, any cube touching $B_{2\sqrt{m} \ell (L)} (x_L, \pi)$ has sidelength at most $4 \ell (L)$. Thus
$|\Omega\setminus \cK| \leq C \bmo^{1+\gamma_2} \ell (L)^{m+2+\gamma_2}$} and for every point 
$p\in \bOmega$ there exists $q\in \cK\cap \Omega$ which has geodesic distance to $p$ at most 
$C \bmo^{\sfrac{1}{m}+\sfrac{\gamma_2}{m}} \ell(L)^{1+ \sfrac{2}{m}+\sfrac{\gamma_2}{m}}$. Given the Lipschitz bound for $N$ and the choice $\beta_2 \leq \frac{1}{2m}$,
we then easily conclude (S3):
\[
\cG (N (q), Q \a{\etaa \circ N (q)}) \geq \frac{3}{8} C_h \bmo^{\sfrac{1}{2m}} \ell (L)^{1+\beta_2}
- C \bmo^{\sfrac{1}{m}} \ell (L)^{1+\sfrac{2}{m}} \geq \frac{1}{4} \,C_h \bmo^{\sfrac{1}{2m}} \ell (L)^{1+\beta_2}\, ,
\]
where again we need $\eps_2 < c (\beta_2, \delta_2, M_0, N_0, C_e, C_h)$ for a sufficiently small $c$. 
\end{proof}

\begin{proof}[Proof of Corollary \ref{c:domains}]
The proof is straightforward. Consider any $H\in \sW^j_n$. By definition it has
a nonempty intersection with some cube $J\in\sW^{j-1}$: this cube cannot belong to $\sW_h$ by
Proposition \ref{p:separ}. It is then either an element of $\sW_e$ or an element
$H_{j-1}\in \sW^{j-1}_n$. Proceeding inductively, we then find a chain $H = H_j, H_{j-1}, \ldots,
H_i =: L$, where $H_{\bar{l}}\cap H_{\bar{l}-1} \neq\emptyset$ for every $\bar{l}$, $H_{\bar l} \in \sW^{\bar{l}}_n$ for every $\bar{l}>i$
and $L= H_i\in \sW^i_e$. 
Observe also that
\[
|x_H-x_L| \leq \sum_{\bar{l}=i}^{j-1} |x_{H_{\bar{l}}} - x_{H_{\bar{l}+1}}| \leq \sqrt{m}\, \ell (L) 
\sum_{\bar{l}=0}^\infty 2^{-\bar{l}}
\leq 2\sqrt{m} \,\ell (L)\, . 
\]
It then follows easily that $H\subset B_{3\sqrt{m} \ell (L)} (L)$.
\end{proof}

\subsection{Unique continuation for Dir-minimizers}
Proposition \ref{p:splitting} is based on a De Giorgi-type decay estimate for $\D$-minimizing $Q$-valued maps which are 
close to a classical harmonic function with multiplicity $Q$.
The argument involves a unique continuation-type result for Dir-minimizers. 

\begin{lemma}[Unique continuation for $\D$-minimizers]\label{l:UC}
For every $\eta \in (0,1)$ and $c>0$, there exists $\gamma>0$ with the following property.
If $w: \R^m\supset B_{2\,r} \to \Iq(\R^n)$ is Dir-minimizing,
$\D(w, B_r)\geq c$ and  $\D(w, B_{2r}) =1$,
then
\[
\D (w, B_s (q)) \geq \gamma \quad \text{for every 
$B_s(q)\subset B_{2r}$ with $s \geq \eta\,r$}.
\]
\end{lemma}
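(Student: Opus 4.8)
\emph{Reduction and set-up of the contradiction.} The plan is a compactness argument that reduces the statement to a unique continuation property for $\D$-minimizers. Since $x\mapsto r^{(m-2)/2}w(rx)$ is again $\D$-minimizing and the Dirichlet energy scales so that all the hypotheses and the conclusion are unchanged, I may assume $r=1$; thus $w\colon B_2\to\Iq(\R^n)$ is $\D$-minimizing, $\D(w,B_2)=1$, $\D(w,B_1)\ge c$, and I must produce $\gamma=\gamma(\eta,c)>0$ with $\D(w,B_s(q))\ge\gamma$ whenever $B_s(q)\subset B_2$ and $s\ge\eta$. If no such $\gamma$ existed, there would be $\D$-minimizers $w_k\colon B_2\to\Iq(\R^n)$ with $\D(w_k,B_2)=1$, $\D(w_k,B_1)\ge c$, and balls $B_{s_k}(q_k)\subset B_2$, $s_k\ge\eta$, with $\D(w_k,B_{s_k}(q_k))\to 0$. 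Passing to a subsequence, $s_k\to s_\infty\in[\eta,2]$ and $q_k\to q_\infty$ with $|q_\infty|+s_\infty\le 2$, so that the open ball $B_{s_\infty}(q_\infty)$ is contained in $B_2$. By the compactness theorem for $\D$-minimizers of \cite{DS1} (equiboundedness of $\D(w_k,B_2)$ yields, along a further subsequence, local uniform and \emph{strong} $W^{1,2}_{\mathrm{loc}}(B_2)$ convergence to a $\D$-minimizer $w_\infty$) I may assume $w_k\to w_\infty$ in this sense. Strong convergence gives $\D(w_\infty,B_1)=\lim_k\D(w_k,B_1)\ge c>0$; and for each $\rho<s_\infty$ one has $B_\rho(q_\infty)\subset B_{s_k}(q_k)$ for $k$ large, whence $\D(w_\infty,B_\rho(q_\infty))=\lim_k\D(w_k,B_\rho(q_\infty))=0$ and, letting $\rho\uparrow s_\infty$, $\D(w_\infty,B_{s_\infty}(q_\infty))=0$. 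Hence $w_\infty$ is a $\D$-minimizer on the connected open set $B_2$ that is \emph{non-constant} (because $\D(w_\infty,B_1)\ge c$) yet constant on the ball $B_{s_\infty}(q_\infty)$ of radius $\ge\eta$.

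\emph{Unique continuation.} It then remains to show this is impossible, i.e.\ that a $\D$-minimizer on a connected open set which is constant on a non-empty open subset is constant everywhere. I would prove this by induction on $Q$, the case $Q=1$ being classical unique continuation for harmonic functions. Let $T$ denote the constant value of $w_\infty$ on the ball where it is constant. The average $\etaa\circ w_\infty$ is a single-valued harmonic function, constant on that ball, hence constant on all of $B_2$; subtracting it (which preserves $\D$-minimality) I may assume $\etaa\circ w_\infty\equiv 0$ and $\etaa(T)=0$. If $T=Q\a{0}$, put $Z:=\{|w_\infty|=0\}$, a closed set with non-empty interior; were $Z\ne B_2$ there would be, by connectedness of $B_2$, a ball $B_{s_0}(q_0)\Subset B_2$ maximal among those contained in $Z$, so that $w_\infty(q_0)=Q\a{0}$, while $H(\rho):=\int_{\partial B_\rho(q_0)}|w_\infty|^2$ vanishes for $\rho\le s_0$ and is strictly positive for $\rho\in(s_0,s_0+\varepsilon)$ (continuity of $w_\infty$ and the presence of a point of $\partial B_{s_0}(q_0)\cap B_2$ off $Z$). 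Then the monotonicity of Almgren's frequency $I(\rho)=\rho\int_{B_\rho(q_0)}|Dw_\infty|^2/H(\rho)$ together with the identity $\tfrac{d}{d\rho}\log(\rho^{1-m}H(\rho))=2I(\rho)/\rho$ (both from \cite{DS1}) forces $\int_{s_0}^{\rho_1}I(\rho)\,\rho^{-1}\,d\rho=+\infty$ for any $\rho_1\in(s_0,s_0+\varepsilon)$, contradicting $I(\rho)\le I(\rho_1)<\infty$ on $(s_0,\rho_1)$. Hence $Z=B_2$ and $w_\infty\equiv Q\a{0}$, against $\D(w_\infty,B_1)\ge c$. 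If instead $T=\sum_j k_j\a{P_j}$ has at least two distinct points, let $\delta<\tfrac14\min_{j\ne j'}|P_j-P_{j'}|$ and let $V$ be the connected component, containing the ball where $w_\infty\equiv T$, of the open set $\{x:\supp w_\infty(x)\subset\bigcup_jB_\delta(P_j)\}$; on $V$ the map decomposes as $w_\infty|_V=\sum_j\a{w^{(j)}}$ with each $w^{(j)}\colon V\to\mathcal{A}_{k_j}(\R^n)$ a $\D$-minimizer constant on a ball, so by the inductive hypothesis (since $k_j<Q$ and $V$ is connected) each $w^{(j)}$, hence $w_\infty$, is constant on $V$; a short argument (any $y\in\partial V\cap B_2$ has $w_\infty(y)=T$ by continuity, so $\supp w_\infty(y)=\{P_j\}$ lies well inside $\bigcup_jB_\delta(P_j)$, forcing $y\in V$) shows $V$ is open and closed in $B_2$, hence $V=B_2$, again a contradiction.

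\emph{Main obstacle.} The compactness step, the strong-convergence argument and the passage to the limit in the Dirichlet energies are routine; the crux is the unique continuation statement, and inside it the case $T=Q\a{0}$, which relies on the fine properties of Almgren's frequency function (monotonicity and the logarithmic-derivative identity) from \cite{DS1}; the case of a value with several distinct points is reduced to smaller $Q$ by the local decomposition of $\D$-minimizers near points of maximal multiplicity, also established in \cite{DS1}.
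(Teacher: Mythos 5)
Your argument is correct and follows the same skeleton as the paper's proof: a compactness/contradiction argument reducing the lemma to a unique continuation property for Dir-minimizers, which is then proved by induction on $Q$ with the two cases ``the constant value $T$ has positive diameter'' (handled, as in the paper, by the local decomposition into lower-multiplicity minimizers plus the inductive hypothesis and a connectedness argument) and ``$T$ is a single point of multiplicity $Q$''. The genuine difference is in the second case: the paper passes to the set $J''=\{w=Q\a{\etaa\circ w}\}^\circ$, observes that $\partial J''\cap\Omega$ lies in the singular set, and combines the bound $\cH^{m-2+\eps}(\sing)=0$ from \cite[Theorem 0.11]{DS1} with the relative isoperimetric inequality; you instead subtract the (harmonic, hence constant) average and run Almgren's frequency function at a point of the vanishing set, using the identity $\frac{d}{d\rho}\log(\rho^{1-m}H(\rho))=2I(\rho)/\rho$ together with the monotonicity of $I$ to show that $H$ cannot vanish at one radius and be positive at a larger one. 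Your route is more self-contained (it needs only the first-variation identities and frequency monotonicity of \cite{DS1}, not the structure of the singular set), at the price of some care in the frequency step: a ``maximal'' ball in $Z$ need not exist as stated, and $H$ need not be positive on a full interval $(s_0,s_0+\eps)$; the clean fix is to note that $H(\rho_1)>0$ for \emph{some} $\rho_1$ close to $s_0$ (otherwise a full annulus would lie in $Z^\circ$), set $s_1:=\sup\{t<\rho_1: H(t)=0\}$, and run the log-derivative/monotonicity contradiction on $(s_1,\rho_1]$, which shows more generally that $Z^\circ$ is both open and closed in $B_2$. With that patch the argument is complete and yields the same conclusion.
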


\begin{proof} 
We start showing the following claim:
\begin{itemize}
\item[(UC)] if $\Omega$ is a connected open set and $w\in W^{1,2} (\Omega, \Iqs)$ 
is Dir-minimizing in any open
$\Omega'\subset\subset \Omega$, then either $w$ is 
constant or $\int_J |Dw|^2 >0$ on any open $J\subset \Omega$. 
\end{itemize}
We prove (UC) by induction on $Q$.
If $Q=1$, this is the classical unique continuation for harmonic functions.
Assume now it holds for all
$Q^*<Q$ and we prove it for $Q$-valued maps. 
Assume $w\in W^{1,2} (\Omega, \Iqs)$ and $J\subset \Omega$ is an open set
on which $|Dw|\equiv 0$.
Without loss of generality, we can assume $J$ connected and 
$w|_J \equiv T$ for some $T\in \Iq$. 
Let $J'$ be the interior of $\{w=T\}$ and $K:= \overline{J'}\cap \Omega$.
We prove now that $K$ is open, which in turn by connectedness of $\Omega$
concludes (UC).
We distinguish two cases.

{\bf Case (a): the diameter of $T$ is positive.}
Since $w$ is continuous, for every $x\in K$ there is 
$B_\rho (x)$ where $w$ separates into $\a{w_1}+\a{w_2}$ 
and each $w_i$ is a $Q_i$-valued Dir-minimizer. Since 
$J'\cap B_\rho (x)\neq\emptyset$, each $w_i$ is constant in a (nontrivial)
open subset of $B_\rho (x)$. 
By inductive hypothesis each $w_i$ is constant in $B_\rho (x)$ and
therefore $w=T$ in $B_\rho (x)$, that is 
$B_\rho (x)\subset J'\subset K$. 

{\bf Case (b): $T= Q\a{p}$ for some $p$.} In this case let $J''$ be the interior of 
$\{w=Q\a{\etaa\circ w}\}$.
By \cite[Definition 0.10]{DS1}, 
$\partial J'' \cap \Omega$ is contained in the singular set of $w$. 
By \cite[Theorem 0.11]{DS1}, $\cH^{m-2+\varepsilon} (\Omega\cap \partial J'') =0$ for every 
$\varepsilon >0$. Consider now a point $p\in \partial J''\cap \Omega$ and a small ball $B_\rho (x)\subset \Omega$.
Since $\cH^{m-1} (\partial J'' \cap B_\rho (x)) =0$, by the isoperimetric inequality, either $|B_\rho (x)\setminus J''|=0$
or $|J''|=0$. The latter alternative is impossible because $J''$ is open and has nonempty intersection with
$B_\rho (x)$. It then turns out that $|B_\rho (x)\setminus J''|=0$ and thus the closure of $J''$ contains $B_\rho (x)$.
But then $w = Q\a{\etaa\circ w}$ on $B_\rho (x)$ and thus $x$ cannot belong to $\partial J''$. So $\partial J''\cap \Omega$
is empty and thus $w= Q \a{\etaa \circ w}$ on $\Omega$. On the other hand
$\etaa\circ w$ is an harmonic function (cf.~\cite[Lemma 3.23]{DS1}).
Being $\etaa \circ w|_{J'} \equiv p$,
by the classical unique continuation $\etaa\circ w\equiv p$ on $\Omega$.

\medskip

We now come to the proof of the lemma. Without loss of generality, we can assume
$r=1$. Arguing by contradiction, 
there exists sequences $\{w_k\}_{k\in\N}\subset W^{1,2} (B_2, \Iqs)$ and $\{B_{s_k}(q_k)\}_{k\in\N}$ 
with $s_k \geq \eta$ and such that $\D (w_k, B_{s_k} (q_k)) \leq \frac{1}{k}$. { Without loss of generality,
after applying a translation, we can assume that $\etaa\circ w_k (0) =0$. Next, passing to a subsequence,
we can either assume that $\sup_k \cG (w_k (0), Q\a{0})<\infty$ or that $\lim_k \cG (w_k, Q \a{0})=\infty$.} In the first case, by \cite[Proposition~3.20]{DS1}, a subsequence 
(not relabeled) converges to $w\in W^{1,2} (B_{2}, \Iqs)$ 
Dir-minimizing in every open $\Omega'\subset\subset B_{2}$.
Up to subsequences, we can also assume that 
$q_k\to q$ and $s_k\to s \geq \eta >0$. 
Thus, $B_s(q) \subset B_{2}$ and $\D (w, B_s (q)) = 0$. By (UC) this implies that 
$w$ is constant. On the other hand, by \cite[Proposition~3.20]{DS1} $\D (w, B_1) =
\lim_k \D (w_k, B_1) \geq c>0$ gives the desired contradiction. { In the second case, by the H\"older continuity of
$\D$-minimizers, each $w_k$ splits in $B_{3/2}$ as $w_k = w_k^1+w_k^2$ where $w_k^i$ is $\D$-minimizing and $Q_i$-valued.
After extracting a subsequence we can assume that $Q_1$ is independent of $k$ and that $\D (w_k^1, B_1)\geq \frac{c}{2}$. We can then repeat the argument above and either reach a contradiction or split further the sequence in the ball $B_{5/4}$. The splitting procedure must stop after at most $Q$ iterations.}
\end{proof}

Next we show that if the energy of a Dir-minimizer $w$ does not decay appropriately, 
then $w$ must split. In order to simplify the exposition, in the sequel we fix $\lambda>0$ such that 
\begin{equation}\label{e:lambda}
(1+\lambda)^{(m+2)}< 2^{\delta_2}\, .
\end{equation}

\begin{proposition}[Decay estimate for $\D$-minimizers]\label{p:harmonic split}
For every $\eta>0$, there is $\gamma >0 $ with the following property.
Let $w: \R^m \supset B_{2r} \to \Iq(\R^n)$ be Dir-minimizing 
in every $\Omega'\subset\subset B_{2r}$ such that
\begin{equation}\label{e.no decay}
\int_{B_{(1+\lambda) r}} \cG \big(Dw, Q \a{D (\etaa \circ w) (0)}\big)^2 \geq 2^{\delta_2-m-2} \D (w, B_{2r})\, .
\end{equation}
Then, if we set $\bar{w} = \sum_i \a{w_i - \etaa\circ w}$, the following holds:
\begin{equation}\label{e.harm split 2}
\gamma \, \D (w, B_{(1+\lambda) r}) \leq \D (\bar{w}, B_{(1+\lambda) r}) 
\leq \frac{1}{\gamma \, r^2} \int_{B_{s}(q)} |\bar{w}|^2 \quad\forall \;B_s(q) \subset B_{2\,r} \;\text{with }\, s\geq \eta \,r\, .
\end{equation}
\end{proposition}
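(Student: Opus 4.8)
\emph{Reduction and structure.} By the rescaling $w\mapsto w(r\,\cdot\,)$ we may assume $r=1$: the hypothesis \eqref{e.no decay}, the conclusion \eqref{e.harm split 2} and the Dir-minimality are all covariant under it, the factor $r^{-2}$ in \eqref{e.harm split 2} being precisely the one needed. We prove the two inequalities in \eqref{e.harm split 2} separately --- a constant $\gamma_1>0$ with $\gamma_1\,\D(w,B_{1+\lambda})\le\D(\bar w,B_{1+\lambda})$, and a constant $\gamma_2>0$ with $\D(\bar w,B_{1+\lambda})\le\gamma_2^{-1}\int_{B_s(q)}|\bar w|^2$ for all admissible $B_s(q)$ --- and take $\gamma:=\min\{\gamma_1,\gamma_2\}$. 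Both are proved by contradiction. Given a would-be counterexample sequence $\{w_k\}$ (with $\D(w_k,B_{1+\lambda})>0$, the degenerate case being trivial), I would normalise: multiply $w_k$ by a constant so that $\D(w_k,B_{1+\lambda})=1$, and subtract a constant so that $\etaa\circ w_k(0)=0$; neither operation affects $\bar w_k$, the minimality, or \eqref{e.no decay}. With $u_k:=\etaa\circ w_k$ (harmonic), $A_k:=Du_k(0)$ and $\bar w_k:=\sum_i\a{w_{k,i}-u_k}$, the identity $\sum_iD(w_{k,i}-u_k)=0$ gives the orthogonal splitting $\D(w_k,\Omega)=Q\int_\Omega|Du_k|^2+\D(\bar w_k,\Omega)$; interior estimates for $u_k$ yield $|A_k|\le C_\lambda$, hence $\int_{B_{1+\lambda}}\cG(Dw_k,Q\a{A_k})^2\le C$, and then \eqref{e.no decay} forces $\D(w_k,B_2)\le C$. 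Using a Poincaré inequality for $Q$-valued maps and the $C^0$-bound on $u_k$ one gets $\sup_k\|w_k\|_{L^2(B_{1+\lambda})}<\infty$, so by the compactness of Dir-minimizers \cite[Proposition~3.20]{DS1} a subsequence converges to $w$ in $W^{1,2}_{loc}\cap L^2_{loc}(B_2)$ with $\D(w_k,\Omega')\to\D(w,\Omega')$, $w$ Dir-minimizing on every $\Omega'\subset\subset B_2$, $u_k\to u:=\etaa\circ w$ in $C^\infty_{loc}(B_2)$, $A_k\to A:=Du(0)$ and $\bar w_k\to\bar w:=\sum_i\a{w_i-u}$ in $L^2_{loc}$; in particular $\D(w,B_{1+\lambda})=1$ and $\D(\bar w_k,B_{1+\lambda})\to\D(\bar w,B_{1+\lambda})$.

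\emph{The left inequality and the role of $\lambda$.} Here the failure of the bound gives $\D(\bar w_k,B_{1+\lambda})<\tfrac1k\to0$, hence $\D(\bar w,B_{1+\lambda})=0$. Passing \eqref{e.no decay} to the limit --- strong convergence of the integrands on $B_{1+\lambda}\subset\subset B_2$, $A_k\to A$, and $\liminf_k\D(w_k,B_2)\ge\D(w,B_2)$ --- and using the pointwise identity $\cG(Dw,Q\a A)^2=Q|Du-A|^2+\sum_i|D\bar w_i|^2$, we obtain, after dividing by $Q$, the inequality $\int_{B_{1+\lambda}}|Du-Du(0)|^2\ge 2^{\delta_2-m-2}\int_{B_2}|Du|^2$ for the harmonic function $u$ on $B_2$. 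Now I would expand $u$ into homogeneous harmonic pieces $u=\sum_{j\ge1}p_j$ (with $p_0=u(0)=0$), so $Du-Du(0)=\sum_{j\ge2}Dp_j$, the $Dp_j$ are $L^2$-orthogonal on every ball, and $\int_{B_\rho}|Dp_j|^2=\rho^{2j+m-2}a_j$ with $a_j:=\int_{B_1}|Dp_j|^2\ge0$ and $\sum_j2^{2j+m-2}a_j=\int_{B_2}|Du|^2<\infty$. The inequality becomes $\sum_{j\ge2}[(1+\lambda)^{2j+m-2}-2^{\delta_2+2j-4}]\,a_j\ge 2^{\delta_2-2}a_1\ge0$; since $(1+\lambda)^{m+2}<2^{\delta_2}$ by \eqref{e:lambda} and $1+\lambda<2$, for every $j\ge2$ one has $(1+\lambda)^{2j+m-2}=(1+\lambda)^{m+2}(1+\lambda)^{2j-4}<2^{\delta_2}2^{2j-4}=2^{\delta_2+2j-4}$, so every bracket is strictly negative; hence $a_j=0$ for all $j\ge1$, i.e.\ $u\equiv0$ and $\D(w,B_{1+\lambda})=0$, contradicting $\D(w,B_{1+\lambda})=1$. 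This decay dichotomy for harmonic functions is the core of the whole statement and the only place where the precise value of $\lambda$ in \eqref{e:lambda} is used.

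\emph{The right inequality.} Assuming failure, there are balls $B_{s_k}(q_k)\subset B_2$, $s_k\ge\eta$, with $\D(\bar w_k,B_{1+\lambda})>k\int_{B_{s_k}(q_k)}|\bar w_k|^2$; normalise and pass to the compactness limit $w$ exactly as above. Applying the left inequality (already proved) to $w_k$ gives $\D(\bar w_k,B_{1+\lambda})\ge\gamma_1>0$, hence $\int_{B_{s_k}(q_k)}|\bar w_k|^2\to0$; after passing to $s_k\to s\ge\eta$, $q_k\to q$, we have $B_{s/2}(q)\subset B_{s_k}(q_k)$ for $k$ large and $B_{s/2}(q)\subset\subset B_2$, so $\int_{B_{s/2}(q)}|\bar w|^2=0$, i.e.\ $\bar w\equiv Q\a0$ and $w=Q\a{\etaa\circ w}$ on the open set $B_{s/2}(q)$. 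By the unique-continuation property (UC) established in the proof of Lemma \ref{l:UC} --- Case (b) there treats exactly the situation $w=Q\a{\etaa\circ w}$ on an open subset --- a Dir-minimizer on the connected set $B_2$ coinciding with $Q\a{\etaa\circ w}$ on a non-empty open subset coincides with $Q\a{\etaa\circ w}$ on all of $B_2$; hence $\bar w\equiv Q\a0$ on $B_2$ and $\D(\bar w,B_{1+\lambda})=0$, contradicting $\D(\bar w,B_{1+\lambda})=\lim_k\D(\bar w_k,B_{1+\lambda})\ge\gamma_1>0$. The points needing care are the invariance under the two normalisations, the handling of balls $B_s(q)$ that may touch $\partial B_2$ (dealt with by shrinking to $B_{s/2}(q)$), and extracting from Lemma \ref{l:UC} the structural form of unique continuation used here; the genuinely delicate step, however, is the harmonic decay dichotomy of the previous paragraph, for which the choice of $\lambda$ is engineered.
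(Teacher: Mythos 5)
Your proof is correct and follows essentially the same route as the paper's: a compactness argument that reduces matters, on one hand, to the decay dichotomy for harmonic gradients (your ``every bracket is strictly negative'' computation is exactly the paper's \eqref{e:decadimento_armonico}, with \eqref{e:lambda} entering in the same way) and, on the other, to the unique‑continuation claim (UC) extracted from Case~(b) of the proof of Lemma~\ref{l:UC}; the paper deploys the same two ingredients in the opposite order, first reducing the $L^2$ alternative to the Dirichlet alternative via (UC) and then contradicting the latter, whereas you settle the Dirichlet inequality first and feed it into the $L^2$ one. The only imprecise point is your justification of the uniform $L^2$ bound: the inequality $\int\cG(w_k,Q\a{\etaa\circ w_k})^2\le C\,\D(w_k)$ is false in general (take $Q=2$ and $w_k$ constant with widely separated sheets), so that bound should instead be obtained from the interior H\"older estimate for Dir-minimizers combined with the smallness of $\bar w_k$ on $B_{s_k}(q_k)$ (for the right inequality), or sidestepped by observing that the left inequality only requires convergence of the harmonic averages $u_k$ — a technicality which the paper glosses over as well.
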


Before coming to the proof of the Proposition we point out an elementary fact which will be used repeatedly in this section. 

{ \begin{lemma}\label{l:medie}
Let $B\subset \R^m$ be a ball centered at $0$, $w\in W^{1,2} (B , \Iqs)$ $\D$-minimizing and $\bar{w} = \sum_i \a{w_i - \etaa\circ w}$.
We then have
\begin{align}
Q \int_B |D (\etaa\circ w) - D (\etaa \circ w) (0)|^2
&=\int_B \cG (Dw, Q \a{ D (\etaa \circ w) (0)})^2 - \D (\bar w, B)\, .
\end{align}
\end{lemma}
\begin{proof}
Let $u := \etaa \circ w$ and observe that it is harmonic. Thus, using the mean value property of harmonic functions and a straightforward computation we get
\[
Q \int_B |Du - Du (0)|^2 = Q \int_B |Du|^2 - Q |B| |Du (0)|^2\, .
\]
On the other hand, using again the mean value property of harmonic functions, it is easy to see that
\[
\int_B \cG (Dw, Q \a{Du (0)})^2  + Q |B| |Du (0)|^2 = \int_B |Dw|^2 = \int_B |D \bar w|^2 + \int_B |Du|^2\, .
\]
Combining the last two inequalities we prove the lemma.
\end{proof}}

\begin{proof}[Proof of Proposition \ref{p:harmonic split}]
By a simple scaling argument we can assume $r=1$ and we argue by contradiction. 
Let $w_k$ be a sequence of local $\D$-minimizers 
which satisfy \eqref{e.no decay}, $\D (w_k, B_2) =1$ and 
\begin{itemize}
\item[(a)] either $\int_{B_{s_k} (q_k)} |\bar{w}_k|^2 \leq \frac{1}{k}$ for some ball $B_{s_k} (q_k) \subset B_{2r}$ with $s_k \geq \eta$;
\item[(b)] or $\D  (\bar{w}_k, B_{1+\lambda}) \leq \frac{1}{k}$.
\end{itemize}
{ Passing to a subsequence, if necessary, we can assume that $s_k \to  s$ and $q_k \to q$. Moreover, we can normalize the sequence so that $\mint_{B_2} D (\etaa \circ w_k) =0$ and in particular, passing to a subsequence, assume that $\etaa\circ w_k$ converges strongly in $L^2$. Assume now that (a) holds for an infinite sequence of indices. In that case we can extract a subsequence, not relabeled, which converges locally in $W^{1,2}$ to a $\D$-minimizer $w$: in fact the H\"older bound for $\D$-minimizers and (a) imply necessarily that $\sup_k \cG (w_k (q_k), Q \a{\etaa \circ w_k (q_k)}) < \infty$ and we can argue as in the proof of Lemma \ref{l:UC}}. We then conclude that $\bar{w} = \sum_i \a{w_i-\etaa\circ w}$ vanishes identically on $B_s (q)$ and we can appeal to Lemma \ref{l:UC} to infer that $\bar{w}$ vanishes on $B_2$.
This means in particular that 
$\D (\bar{w}_k, B_{1+\lambda}) \to \D (\bar{w}, B_{1+\lambda}) = 0$. Summarizing we conclude that $\D (\bar{w}_k, B_{1+\lambda})$ converges to $0$ in any case.

Let next $u_k:= \etaa\circ w_k$ and recall that we are assuming that $u_k$ converges to an harmonic function $u$.
Thus from \eqref{e.no decay} and Lemma \ref{l:medie} we get
\begin{align}
\int_{B_{1+\lambda}} Q |Du_k - Du_k (0)|^2 &=  \int_{B_{1+\lambda}} \left(\cG (D w_k, Q \a{Du_k (0)})^2 -
|D\bar{w}_k|^2\right)\nonumber\\
&\geq 2^{\delta_2-m-2} \int_{B_2} |Dw_k|^2 -
\int_{B_{1+\lambda}} |D\bar{w}_k|^2
\, .\label{e:viola}
\end{align}
Letting $k\uparrow \infty$, since $\D (w_k, B_2)\leq 1$ and $\D (\bar w_k, B_{1+\lambda}) \to 0$, 
we conclude
\begin{equation}\label{e:viola2}
\int_{B_{1+\lambda}} |Du - Du (0)|^2 \geq 2^{\delta_2-m-2}\geq 2^{\delta_2-m-2} \int_{B_2} |Du|^2\, .
\end{equation} 
Since $(1+\lambda)^{m+2} < 2^{\delta_2}$, \eqref{e:viola2}
violates the decay estimate for classical harmonic functions: 
\begin{equation}\label{e:decadimento_armonico}
\int_{B_{1+\lambda}} |Du - Du (0)|^2 \leq 2^{-m-2} (1+\lambda)^{m+2} \int_{B_2} |Du|^2\, ,
\end{equation}
thus concluding the proof. In order to show \eqref{e:decadimento_armonico} it suffices to decompose
$Du$ in series of homogeneous harmonic polynomials $Du (x) = \sum_{i =0}^\infty P_i (x)$, where $i$ is the degree. 
In particular the restriction of this decomposition
on any sphere $S := \partial B_\rho$
gives the decomposition of $Du|_S$ in spherical harmonics, see \cite[Chapter 5, Section 2]{SW}. It turns out, therefore, that the $P_i$ are $L^2 (B_\rho)$ orthogonal.  Since the constant polynomial $P_0$ is $Du (0)$ and $\int_{B_{1+\lambda}} |P_i|^2 \leq 2^{-m-2i}\int_{B_2} |P_i|^2$, \eqref{e:decadimento_armonico} follows at once.
\end{proof}


\subsection{Splitting before tilting I: Proof of Proposition \ref{p:splitting}}\label{ss:splitting}
As customary we use the convention that constants denoted by $C$ depend upon all the parameters but $\eps_2$, whereas constants denoted by $C_0$ depend only upon $m,n,\bar{n}$ and $Q$.

Given $L\in \sW^j_e$, let us consider
its ancestors $H\in \sS^{j-1}$ and $J\in \sS^{j-6}$. 
Set $\ell = \ell(L)$,$\pi = \hat{\pi}_H$ and $\bC := \bC_{8r_J} (p_J, \pi)$,  and
let $f:B_{8r_J} (p_J, \pi)\to \Iq (\pi^\perp)$ be the $\pi$-approximation of Definition \ref{d:pi-approximations},
which is the result of \cite[Theorem 1.4]{DS3} applied to $\bC_{32r_J} (p_J, \pi)$
(recall that Proposition~\ref{p:gira_e_rigira}(i) ensures the applicability of \cite[Theorem 1.4]{DS3} in the latter cylinder).
We let $K\subset B_{8r_J} (p_J, \pi)$ denote the set of \cite[Theorem 1.4]{DS3} and recall that
$\bG_{f\vert_K} = T\res K\times \pi^\perp$. Observe that $\bB_L\subset \bB_H \subset \bC$  (this requires, as usual, $\eps_2 \leq c( \beta_2, \delta_2, M_0, N_0, C_e, C_h)$).
The following are simple consequences of Proposition~\ref{p:tilting opt}:
\begin{gather}
E := \bE (T, \bC_{32 r_J} (p_J, \pi)) \leq
C \bmo \,\ell^{2-2\delta_2}\, ,\label{e:eccesso J}\\
\bh (T, \bC, \pi) 
\leq C\,\bmo^{\sfrac{1}{2m}} \ell^{1+\beta_2} .\label{e:altezza J}
\end{gather}
In particular the positive constant $C$ does not depend on $\eps_2$.
Moreover, since $\bB_L \subset \bC$, $L\in \sW_e$ and $r_L / r_J = 2^{-6}$, we have
\begin{equation}\label{e:eccesso alto}
c C_e \,\bmo\,r_L^{2-2\delta} \leq E\, ,
\end{equation}
where $c$ is only a geometric constant.
We divide the proof of Proposition~\ref{p:splitting} in three steps.

\medskip

{\bf Step 1: decay estimate for $f$.}
Let $2\rho:= 64 r_H - C^\sharp \bmo^{\sfrac{1}{2m}} \ell^{1+\beta_2}$:
since $p_H \in \supp (T)$, it follows from \eqref{e:altezza J} that, upon chosing $C^\sharp$ appropriately,
$\supp (T) \cap \bC_{2\rho} (p_H, \pi)\subset \B_H\subset \bC$ (observe that $C^\sharp$ depends
upon the parameters $\beta_2, \delta_2, M_0, N_0, C_e$ and $C_h$, but not on $\eps_2$).
Setting $B=B_{2\rho}(x, \pi)$ with $x = \p_{\pi} (p_H)$,
using the Taylor expansion in \cite[Corollary 3.3]{DS2} and the estimates
in \cite[Theorem 1.4]{DS3}, we then get
\begin{align}
\D (B, f) &\leq 2 |B|\, \bE (T, \bC_{2\rho} (x_H, \pi)) +
C \bmo^{1+\gamma_1} \ell^{m+2+\sfrac{\gamma_1}{2}}\nonumber\\
& \leq 2 \omega_m (2\rho)^m \bE (T, \bB_H) + C\bmo^{1+\gamma_1} \ell^{m+2+\sfrac{\gamma_1}{2}}\, .\label{e:Dir-Ex}
\end{align}
Consider next the cylinder $\bC_{64 r_L} (p_L, \pi)$ and
set $x':= \p_{\pi}(p_L)$. Recall that $|x-x'|\leq |p_H-p_L| \leq C \ell (H)$, where $C$ is a geometric constant (cf.~Proposition~\ref{p:tilting opt}) and set
$\sigma:= 64 r_L+ C \ell (H) =  32 r_H + C \ell (H)$.
If $\lambda$ is the constant in \eqref{e:lambda} and $M_0$ is chosen sufficiently
large (thus fixing a lower bound for 
$M_0$ which depends only on $\delta_2$) we reach
\[
\sigma \leq \left(\frac{1}{2} +\frac{\lambda}{4}\right) \,64\, r_H
\leq \left(1+\frac{\lambda}{2}\right) \rho + C^\sharp \bmo^{\sfrac{1}{2m}} \ell^{1+\beta_2}\, .
\]
In particular, choosing $\eps_2$ sufficiently small we conclude $\sigma \leq (1+\lambda) \rho$ and thus also $\bB_L\subset \bC_{64 r_L} (x', \pi)\subset \bC_{(1+\lambda)\rho} (x, \pi) =: \bC'$. Define $B':= B_{(1+\lambda)\rho} (x, \pi)$, set
$A := \mint_{B'} D (\etaa \circ f)$, let
$\mathcal{A}: \pi\to \pi^\perp$ be the linear map $x\mapsto A\cdot x$
and let $\tau$ be the plane corresponding to $\bG_{\mathcal{A}}$.
Using \cite[Theorem 3.5]{DS2}, we can  estimate
\begin{align}
{\textstyle{\frac{1}{2}}} \int_{B'} \cG (Df, Q \a{A})^2 &\geq |B'| \, \bE (T, \bC', \tau) - C \bmo^{1+\gamma_1} \ell^{m+2+\sfrac{\gamma_1}{2}}\nonumber\\
&\geq |B'| \bE (T, \bB_L, \tau) - C \bmo^{1+\gamma_1} \ell^{m+2+\sfrac{\gamma_1}{2}}\nonumber\\
&\geq \omega_m ((1+\lambda)\rho)^m \bE (T, \bB_L) - C \bmo^{1+\gamma_1} \ell^{m+2+\sfrac{\gamma_1}{2}}\, . \label{e:da_sotto}
\end{align}
Now let $\varpi$ be the $(m+\bar{n})$-dimensional plane containing $\pi = \hat{\pi}_H$ so that $\pi\times\varkappa$ has the least distance to the plane $T_{p_H} \Sigma$. From the bound $|\pi_H - \hat{\pi}_H|\leq C \bmo^{\sfrac{1}{2}} \ell^{1-\delta}$ we conclude that $|\varpi - T_{p_H} \Sigma| \leq C \bmo^{\sfrac{1}{2}}\ell^{1-\delta_2}$. In particular we can apply Lemma \ref{l:rotazioni_semplici} to infer the existence of a $C^{3, \eps_0}$ map $\Psi: \varpi \to \varpi^\perp$ whose graph coincides with $\Sigma$ and satisfies the bounds $\|D\Psi\|_0 \leq C_0 \|D\Psi_H\|_0 + C \bmo^{\sfrac{1}{2}}\leq C \bmo^{\sfrac{1}{2}}\ell^{1-\delta_2} \leq 1$ and $\|D^2 \Psi\|_0 \leq C_0 \bA \leq C_0 \bmo^{\sfrac{1}{2}}$ (recall that $\bA$ denotes the $C^0$ norm
of the second fundamental form of $\Sigma$). 

Let $\varkappa$ be the orthogonal complement of $\pi$ in $\varpi$ and establish the notation $\pi\times \varkappa \ni (y,v) \to \Psi (y,v)$ and $(v,z)\in \varkappa \times \varpi^\perp$. Since the approximation $f$ takes values in $\Sigma$, we infer the existence of a $Q$-valued map $g = \sum_i \a{g_i}$ so that $f (y) = \sum_i \a{g_i (y), \Psi (y, g_i (y)))}$.  By the chain rule we have $D (\Psi (y, g (y))) = \sum_i \a{D_y \Psi (y, g_i (y)) + D_v \Psi (y, g_i (y)) \cdot Dg_i (y)}$.  Recalling that ${\rm osc}\, f \leq C \bmo^{\sfrac{1}{2m}} \ell^{1+\beta_2}$ we obtain the same bound for the oscillation of $g$ and
thus conclude the existence of a constant vector $\bar{v}\in \varkappa$ such that $|g_i (y) - \bar{v}|\leq C \bmo^{\sfrac{1}{2m}} \ell^{1+\beta_2}$
for every $i$ and every $y\in B$. We thus achieve 
\[
\cG (D (\Psi (y, g(y))), Q \a{D\Psi (y,\bar{v})}) \leq C \bmo^{\sfrac{1}{2}+\sfrac{1}{2m}} \ell^{1+\beta_2} + C \bmo^{\sfrac{1}{2}} \ell^{1-\delta_2} |Dg| (y)\qquad \forall y\in B\, .
\]
Next, $|D\Psi (y, \bar{v}) - D\Psi (x, \bar{v})|\leq C_0 \bmo^{\sfrac{1}{2}} \rho$, where the latter constant $C_0$ is indeed independent of $\beta_2, \delta_2, M_0, N_0, C_e$ and $C_h$.
Therefore, if we set $\tilde{A} = \mint_{B'} \etaa (D (\Psi (y, g ))) = \mint_{B'} D (\etaa\circ \Psi (y, g))$, we infer
\[
\int_{B'} \cG (D (\Psi (y, g (y))), Q \llbracket \tilde{A}\rrbracket)^2\, dy \leq C_0 \bmo \rho^{m+2} + C \bmo \D (B, g) + C \bmo^{1+\sfrac{1}{m}} \rho^{m+2}\, .
\]
Observe next that $\cG (Df, Q \a{A})^2 = \cG (Dg, Q \a{\bar{A}})^2 + \cG (D (\Psi (y, g)), Q 
\llbracket\tilde A\rrbracket)^2$, where $\bar{A} = \mint_{B'} D (\etaa\circ g)$. We thus conclude
\begin{align}
\D (B, g) &\leq 2 \omega_m (2\rho)^m  \bE (T, \bB_H) +C  \bmo^{1+\gamma_1} \rho^{m+2}\, .\label{e:da_sopra_again}\\
\int_{B'} \cG (Dg, Q \llbracket \bar{A}\rrbracket)^2&\geq 2 \omega_m ((1+\lambda) \rho)^m \bE (T, \bB_L) - C \bmo \D (B, g)\nonumber\\ 
&\qquad - C_0\bmo \rho^{m+2}- C  \bmo^{1+\gamma_1} \rho^{m+2}\, .\label{e:da_sotto_again}
\end{align}

\medskip

%

{\bf Step 2: harmonic approximation.} From now on, to simplify our notation,
we use $B_s (y)$ in place of $B_s (y, \pi)$. Set $p:=\p_{\pi} (p_J)$. 
From \eqref{e:eccesso alto} we infer that $8 r_J\, \bA \leq 8 r_J \bmo^{\sfrac{1}{2}} \leq E^{\sfrac{3}{8}}$ for $\eps_2$
sufficiently small. 
Therefore, for every positive $\bar{\eta}$,
we can apply \cite[Theorem 1.6]{DS3} to the cylinder $\bC$
and achieve a map $w: B_{8r_J} (p, \pi)
\to \Iq (\pi^\perp)$ of the form
$w = (u, \Psi (y, u))$ for a $\D$-minimizer $u$ and such that
\begin{gather}
(8\,r_J)^{-2} \int_{B_{8r_J} (p)} \cG (f,w)^2 + \int_{B_{8r_J} (p)} (|Df|-|Dw|)^2 \leq \bar{\eta} \,E \,(8\,r_J)^m,\label{e:armonica_vicina}\\
\int_{B_{8r_J} (p)} |D (\etaa \circ f) - D (\etaa \circ w)|^2 \leq \bar{\eta}\, E \, (8\,r_J)^m\, .
\label{e:armonica_vicina_2}
\end{gather}
Now, since $D (\etaa\circ u) = \etaa\circ Du$ is harmonic we have $D (\etaa \circ u) (x) = \mint_{B'} (\etaa \circ Du)$. So we can combine  \eqref{e:armonica_vicina}
and \eqref{e:armonica_vicina_2} with \eqref{e:da_sotto_again} to infer
\begin{align}
\int_{B_{(1+\lambda) \rho} (x)} \cG \big(Du, Q\llbracket\etaa \circ Du (x)\rrbracket\big)^2 &\geq
2 \omega_m ((1+\lambda) \rho)^m \bE (T, \bB_L) - C \bmo \D (B, u)\nonumber\\ 
& - C_0\bmo \rho^{m+2} - C  \bmo^{1+\gamma_1} \rho^{m+2} - C_0 \bar\eta^{\sfrac{1}{2}} E \rho^m\, .\label{e:stima_intermedia_2}
\end{align}
Now, recall that $\bE (T, \bB_L) \geq C_e \bmo \ell (L)^{2-2\delta_2} \geq 2^{2\delta_2-2} \bE (T, \bB_H)$ and that $E \leq C \bmo \rho^{2-2\delta_2}$. We can therefore combine \eqref{e:stima_intermedia_2} with \eqref{e:eccesso J}, \eqref{e:da_sopra_again} and \eqref{e:armonica_vicina} to achieve
\begin{align}
\int_{B_{(1+\lambda) \rho} (x)} \cG \big(Du, Q\llbracket D(\etaa \circ u) (x)\rrbracket\big)^2
&\geq \big(2^{2\delta_2-2 - m} - \frac{C_0}{C_e} - C \bar{\eta}^{\sfrac{1}{2}} - C \bmo^{\gamma_1}\big)\int_{B_{2\rho} (x)} |Du|^2\, .
\nonumber
\end{align}
It is crucial that the constant $C$, although depending upon $\beta_2, \delta_2, M_0, N_0, C_e$ and $C_h$, does not depend on $\bar\eta$ and $\eps_2$, whereas $C_0$ depends only upon $Q, m, \bar{n}$ and $n$. 
So, if $C_e$ is chosen sufficiently large, depending only upon $\lambda$ (and hence upon $\delta_2$), we can require that
$2^{2\delta_2 -2-m} - \frac{C_0}{C_e} \geq 2^{3\delta_2/4 -2 -m}$. We then require $\bar{\eta}$ and $\eps_2$ to be sufficiently small
so that  $2^{3\delta_2/4 -2 -m} - C \bmo^{\sfrac{1}{2m}} - C \bar{\eta}^{\sfrac{1}{2}} \geq 2^{\delta_2-2-m}$.
We can now apply Lemma \ref{l:UC} and Proposition \ref{p:harmonic split} to $u$ and conclude
\begin{align*}
\hat{C}^{-1} \int_{B_{(1+\lambda) \rho} (x)} |Du|^2 \leq \int_{B_{\ell/8} (q)} \cG (Du, Q \a{D (\etaa\circ u})^2
\leq \hat{C} \ell^{-2} \int_{B_{\ell/8} (q)} \cG (u, Q \a{\etaa\circ u})^2\, ,
\end{align*}
for any ball $B_{\ell/8} (q) = B_{\ell/8} (q, \pi) \subset B_{8r_J} (p, \pi)$,
where $\hat{C}$ depends upon $\delta_2$ and $M_0$. In particular, being these constants independent of
$\eps_2$ and $C_e$, we can use the previous estimates and reabsorb error terms (possibly choosing $\eps_2$ even smaller and $C_e$ larger) to conclude
\begin{align}
\bmo \, \ell^{m+2-2\delta_2} &\leq \tilde C \ell^m \,\bE (T, \bB_L) \leq \bar{C} \int_{B_{\ell/8} (q)} 
\cG (Df, Q\a{D (\etaa \circ f)})^2\notag\\
&\leq \check{C} \ell^{-2} \int_{B_{\ell/8} (q)} \cG (f, Q \a{\etaa \circ f})^2,\label{e:stima dritta}
\end{align}
where $\tilde C$, $\bar{C}$ and $\check{C}$ are constants which depend upon $\delta_2$, $M_0$ and $C_e$, but not
on $\eps_2$.

\medskip

{\bf Step 3: Estimate for the $\cM$-normal approximation.}
Now, consider any ball $B_{\ell/4} (q, \pi_0)$ with $\dist (L, q)\leq 4\sqrt{m} \,\ell$
and  let $\Omega:= \Phii (B_{\ell/4} (q, \pi_0))$.
Observe that $\p_{\pi} (\Omega)$ must contain a ball
$B_{\ell/8} (q', \pi)$, because of the estimates on $\phii$ and $|\pi_0-\hat{\pi}_H|$, and in turn
it must be contained in $B_{8r_J} (p, \pi)$.
Moreover, $\p^{-1} (\Omega) \cap \supp (T) \supset \bC_{\ell/8} (q', \pi)\cap \supp (T)$
and, for an appropriate geometric constant $C_0$, 
$\Omega$ cannot intersect a Whitney region $\cL'$
corresponding to an $L'$ with $\ell (L') \geq C_0 \ell (L)$.
In particular, Theorem~\ref{t:approx} implies that
\begin{equation}\label{e:masses}
\|\bT_F - T\| (\p^{-1} (\Omega)) + \|\bT_F - \bG_f\| (\p^{-1} (\Omega)) \leq C \bmo^{1+\gamma_2} 
\ell^{m+2+\gamma_2}.
\end{equation}
Let now $F'$ be the map such that 
$\bT_{F'} \res (\p^{-1} (\Omega)) = \bG_f \res (\p^{-1} (\Omega))$ and let $N'$ be the corresponding
normal part, i.e. $F' (x) = \sum_i \a{x+N'_i (x)}$.
The region over which $F$ and $F'$ differ is contained in the projection
onto $\Omega$ of $(\im (F) \setminus \supp (T))
\cup (\im(F') \setminus \supp (T))$ and
therefore its $\cH^m$ measure is bounded as in \eqref{e:masses}.
Recalling the height bound on $N$ and $f$, we easily conclude $|N|+|N'| \leq C \bmo^{\sfrac{1}{2m}}
\ell^{1+\beta_2}$, which in turn implies
\begin{equation}
\int_{\Omega} |N|^2 \geq \int_{\Omega} |N'|^2 - C \bmo^{1+\sfrac{1}{m}+\gamma_2} \ell^{m+4+2\beta_2+\gamma_2}\, .
\end{equation}
On the other hand, let $\phii': B_{8r_J} (p, \pi)\to\pi^\perp$
be such that $\bG_{\phii'} =\a{\cM}$ and $\Phii'(z) = (z, \phii'(z))$; then,
applying \cite[Theorem 5.1 (5.3)]{DS2}, we conclude
\[
|N' (\Phii' (z))| \geq \frac{1}{2\sqrt{Q}}\, \cG (f(z), Q \a{\phii' (z)}) \geq 
\frac{1}{4\sqrt{Q}}\, \cG (f(z), Q\a{\etaa\circ f (z)})\,,
\]
which in turn implies
\begin{align}
\bmo\, \ell^{m+2-2\delta_2} &\stackrel{\eqref{e:stima dritta}}{\leq} C \ell^{-2} \int_{B_{\ell/8} (q', \pi)}  \cG (f, Q \a{\etaa \circ f})^2
\leq C \ell^{-2} \int_\Omega |N'|^2\nonumber\\ 
&\leq C \ell^{-2} \int_\Omega |N|^2 + C \bmo^{1+\gamma_2+\sfrac{1}{2m}} \ell^{m+2+2\beta_2+\gamma_2}\, .
\end{align}
For $\eps_2$ sufficiently small, this leads to the second inequality of \eqref{e:split_2}, while
the first one comes from Theorem~\ref{t:approx} and $\bE (T, \bB_L) \geq C_e \bmo \,\ell^{2-2\delta_2}$.

We next complete the proof showing \eqref{e:split_1}. Since $D (\etaa \circ f) (z) =
\etaa \circ Df (z)$ for a.e. $z$, we obviously have
\begin{equation}
\int_{B_{\ell/8} (q', \pi)} \cG (Df, Q \a{D(\etaa \circ f)})^2 \leq  
\int_{B_{\ell/8} (q', \pi)} \cG (Df, Q \a{D\phii'})^2\, .
\end{equation}
Let now $\vec{\bG}_f$ be the orienting tangent $m$-vector to $\bG_f$ and $\tau$
 the one to $\cM$. For a.e. $z$ we have the inequality
\[
C_0 \sum_i |\vec{\bG}_f (f_i (z)) - \vec{\tau} (\phii' (z))|^2 \geq  \cG (Df (z), Q \a{D\phii' (z)})^2\, ,  
\]
for some geometric constant $C_0$, because $|\vec{\bG}_f (f_i (z)) - \vec{\tau} (\phii' (z))| \leq C \bmo^{\gamma_2}$
(thus it suffices to have $\eps_2$ sufficiently small). Hence
\begin{align}
\int_{B_{\ell/8} (q', \pi)} &\cG (Df, Q \a{D\phii'})^2 \leq
C \int_{\bC_{\ell/8} (q', \pi)} |\vec{\bG}_f (z) - \vec\tau (\phii' (\p_{\pi} (z))|^2 d\|\bG_f\| (z)\nonumber\\
&\leq C \int_{\bC_{\ell/8} (q', \pi)} |\vec{T} (z) - \vec\tau (\phii' (\p_{\pi} (z))|^2 d\|T\| (z)
+ C \bmo^{1+\gamma_1} \ell^{m+2+ \gamma_1}\, .\label{e:quasi finale}
\end{align}
Now, thanks to the height bound and to the fact that $|\vec{\tau} - \pi|\leq |\vec{\tau} - \pi_H| + |\pi_H - \pi| \leq C \bmo^{\sfrac{1}{2}} \ell^{1-\delta_2}$
in the cylinder $\hat\bC = \bC_{\ell/8} (q', \pi)$, we have the inequality
\[
|\p (z) - \phii' (\p_{\pi} (z))|
\leq C \bmo^{\sfrac{1}{2m} + \sfrac{1}{2}} \ell^{2+\beta_2-\delta_2} 
\leq C \bmo^{\sfrac{1}{2m} + \sfrac{1}{2}} \ell^{2+\sfrac{\beta_2}{2}} \qquad
\forall z\in \supp (T)\cap \hat\bC\, .
\]
Using $\|\phii'\|_{C^2} \leq C \bmo^{\sfrac{1}{2}}$
we then easily conclude from \eqref{e:quasi finale} that
\begin{align*}
 \int_{B_{\ell/8} (p, \pi)} \cG (Df, Q \a{D\phii'})^2 &\leq C_0 \int_{\hat\bC} 
|\vec{T} (z) - \vec\tau (\p (z))|^2 d\|T\| (z)
+ C \bmo^{1+\gamma_1} \ell^{m+2+ \sfrac{\beta_2}{2}}\, \\
& \leq C_0 \int_{\p^{-1} (\Omega)} |\vec{\bT}_F (z) - \vec{\tau} (\p (z))|^2 d\|\bT_F\| (z) + 
C \bmo^{1+\gamma_2} \ell^{m+2+\gamma_2},
\end{align*}
where we used \eqref{e:masses}.

Since $|DN| \leq C \bmo^{\gamma_2} \ell^{\gamma_2}$,
$|N|\leq C \bmo^{\sfrac{1}{2m}} \ell^{1+\beta_2}$
on $\Omega$ and $\|A_{\cM}\|^2 \leq C \bmo$,
applying now  \cite[Proposition 3.4]{DS2} we conclude
\begin{equation*}
\int_{\p^{-1} (\Omega)}
|\vec{\bT}_F (x) - \tau (\p (x))|^2 d\|\bT_F\| (x)
\leq (1+ C \bmo^{2\gamma_2} \ell^{2\gamma_2}) \int_{\Omega} |DN|^2 + C \bmo^{1+\sfrac{1}{m}} \ell^{m+2+2\beta_2}\, .
\end{equation*}
Thus, putting all these estimates together we achieve
\begin{equation}
\bmo \, \ell^{m+2-2\delta_2} \leq C (1 + C \bmo^{2\gamma_2} \ell^{2\gamma_2}) \int_{\Omega} |DN|^2
+ C \bmo^{1+\gamma_2} \ell^{m+2+\gamma_2}\, .
\end{equation}
Since the constant $C$ might depend on the various other parameters but not on $\eps_2$, we conclude that
for a sufficiently small $\eps_2$ we have
\begin{equation}
\bmo \ell^{m+2-2\delta_2} \leq C \int_\Omega |DN|^2\, .
\end{equation}
But $\bE (T, \B_L) \leq C \bmo\, \ell^{2-2\delta_2}$ and thus \eqref{e:split_1}  follows.

\section{Persistence of $Q$-points}

\subsection{Proof of Proposition \ref{p:splitting_II}}
We argue by contradiction. Assuming the proposition
does not hold, there are sequences $T_k$'s and $\Sigma_k$'s satisfying the Assumption~\ref{ipotesi} and radii $s_k$ for which
\begin{itemize}
\item[(a)] either $\bmo (k) := \max \{\bE (T_k, \bB_{6\sqrt{m}}), \mathbf{c} (\Sigma_k)^2\} \to 0$
and $1\geq \bar{s} = \lim_k s_k >0$;  or $s_k \downarrow 0$;
\item[(b)] the sets $\Lambda_k := \{\Theta (x, T_k) = Q\}\cap \bB_{s_k}$ satisfy
$\cH^{m-2+\alpha}_\infty (\Lambda_k) \geq \bar{\alpha} s_k^{m-2+\alpha}$;
\item[(c)] denoting by $\sW (k)$ and $\sS (k)$ the families of 
cubes in the Whitney decompositions related
to $T_k$ with respect to $\pi_0$,
$\sup \big\{\ell (L): L\in \sW(k), L\cap B_{3s} (0, \pi_0) \neq \emptyset\big\} \leq s_k$;
\item[(d)] there exists $L_k \in \sW_e (k)$ with
$L_k \cap B_{19 s/16} (0, \pi_0) \neq \emptyset$ and $\hat\alpha s_k < \ell (L_k) \leq s_k$.
\end{itemize}
It is not difficult to see that $\bE (T_k, \bB_{6\sqrt{m}s_k}) \leq C \bmo (k) s_k^{2-2\delta_2}$,
where the constant $C$ depends only on $\beta_2, \delta_2, M_0, N_0, C_e, C_h$. 
Indeed this follows obviously if $s_k \geq c (M_0, N_0)>0$. Otherwise there is some ancestor $H'_k$ of $L_k$ with $s_k \leq \ell (H'_k) \leq C_0 s_k$
for which $\bB_{6\sqrt{m}s_k}\subset \bB_{H'_k}$.

Consider now the ancestors $H_k$ and $J_k$ of $L_k$ as in Section~\ref{ss:splitting}, and the corresponding Lipschitz approximations $f_k$. 
Consider next the radius $\rho_k := 5/4 s_k  + 2 r_{L_k}$ and observe that 
\cite[Theorem 1.4]{DS3} can be applied to the cylinder $\hat{\bC}_k:= \bC_{5\rho_k } (0, \hat{\pi}_{H_k})$: again as above, either $s_k \geq c (M_0, N_0)$, and the theorem
can be applied using the estimates on the height of $T$ in $\bC_{5\sqrt{m}} (0, \pi_0)$ and of its excess in $\bB_{6\sqrt{m}}$, or $s_k$ is smaller and then we can use the ancestor $H'_k$ of the argument above. 
We thus have
\begin{equation}\label{e:decays}
\bE (T_k, \hat{\bC}_k, \hat{\pi}_{H_k})
\leq C \bmo (k)\, s_k^{2-2\delta_2}
\quad\text{and}\quad \bh (T_k, \hat{\bC}_k (0, \hat{\pi}_{H_k}), \hat{\pi}_{H_k}) \leq C \bmo (k)^{\sfrac{1}{2m}} s_k^{1+\beta_2}.
\end{equation}
We denote by $g_k$ the $\hat{\pi}_{H_k}$ approximation in the cylinder $\bC_k := \bC_{\rho_k} (0, \hat{\pi}_{H_k})$.
Observe that $f_k$ and $g_k$ are defined on the same plane and we also denote by
$B_k$ the ball on which $f_k$ is defined. On $B_k$, which is contained in the domain of definition of $g_k$, the two maps $g_k$ and $f_k$ coincide outside of a set of measure at most $C \bmo (k)^{1+\gamma_1} s_k^{m + 2-2\delta_2 + \gamma_1}$  and their oscillation is estimated with $C \bmo^{\sfrac{1}{2m}} s_k^{1+\beta_2}$. We can therefore conclude that
\[
\int_{B_k} \cG (f_k, g_k)^2 \leq C \bmo (k)^{1+\gamma_1 + \sfrac{1}{2m}} s_k^{m+4 + 2\beta_2-2\delta_2 + \gamma_1}
\]
From Proposition~\ref{p:splitting} \eqref{e:split_1} we easily conclude
\begin{equation}\label{e:dal basso}
E_k := \bE (T_k, \bC_k, \hat{\pi}_{H_k})\geq c_0 \bE (T_k, \bB_{L_k})\geq c_0 C_e \bmo (k) \ell (L_k)^{2-2\delta_2}
\geq c_0 (\hat{\alpha}) \bmo (k) s_k^{2-2\delta_2}.
\end{equation}
Moreover, applying Proposition~\ref{p:splitting} and arguing as in Step 1 and Step 2 in Section~\ref{ss:splitting}, we find a ball 
$B'_k\subset \hat{\pi}_{H_k}$ contained in $B_{5s_k/4}$ and with radius at least $\ell (L_k)/8$ such that
\begin{equation}
\int_{B'_k} \cG (f_k, Q\a{\etaa\circ f_k})^2 \geq \bar{c}\,\bmo (k)\, \ell (L_k)^{m+4-2\delta_2}
\geq c_1 (\hat\alpha) \,\bmo (k) \, s_k^{m+4-2\delta_2}\, 
\end{equation}
(cf.~\eqref{e:stima dritta}). Since either $\bmo (k)\downarrow 0$ or $s_k\downarrow 0$, we obviously conclude from \eqref{e:decays} that
\begin{equation}
\int_{B'_k} \cG (g_k, Q\a{\etaa\circ g_k})^2 \geq c (\hat\alpha) \, s_k^{m+2} E_k\, ,
\end{equation}
where the constant $c (\hat{\alpha})$ is positive and depends also upon $\beta_2, \delta_2, M_0, N_0, C_e$ and $C_h$.

Define next $\bA_k^2 := \|A_{\Sigma_k\cap \bC_k}\|^2 \leq C_0 \bmo (k)$.
Note that by \eqref{e:dal basso}, we have that $\bA_k^2\,s_k^2\leq C^\star\, E_k$, for some
$C^\star$ independent of $k$. In particular, 
since either  $s_k\downarrow 0$ or $\bmo (k)\downarrow 0$, it turns out that,
for $k$ large enough, $\bA_k s_k \leq E_k^{\sfrac{3}{8}}$.
For any given $\eta>0$ we can then apply \cite[Theorem 1.6]{DS3} whenever $k$ is large enough. We thus find a sequence of multivalued 
maps $w_k = (u_k, \Psi_k (x, u_k))$ on 
$B_{5s_k/4} (0, \hat{\pi}_{H_k})$ so that each $u_k$ is $\D$-minimizing and 
\begin{equation}
s_k^{-2} \int_{B_{5s_k/4} (0, \hat{\pi}_{H_k})} \cG (g_k, w_k)^2 + \int_{B_{5s_k/4} (0, \hat{\pi}_{H_k})} (|Dg_k| - |Dw_k|)^2 = 
o (E_k) s_k^m\, ,
\end{equation}
{ where the domain of $\Psi_k$ is an $m+\bar{n}$-dimensional plane which includes $\hat{\pi}_{H_k}$ but might change with $k$, cf.~\cite[Remark 1.5]{DS3}; observe also that $\Lip (\Psi_k) \leq C E_k^{\sfrac{1}{2}}$, again cf.~\cite[Remark 1.5]{DS3}}.

Up to rotations (so to get $\hat{\pi}_{H_k} = \pi_0 = \R^m \times \{0\}$ { and ${\rm Dom}\, (\Psi_k)= \R^{m+\bar n}\times \{0\}$}) and dilations (of a factor $s_k$) of the system
of coordinates,  we then end up with
a sequence of $C^{3, \eps_0}$ $(m+\bar{n})$-dimensional submanifolds $\Gamma_k$
of $\R^{m+n}$, area-minimizing currents $S_k$ in $\Gamma_k$,
functions $h_k $ and $\bar{w}_k$
with the following properties:
\begin{enumerate}
\item the excess $E_k := \bE (S_k, \bC_5 (0, \pi_0))$ and the height $\bh (S_k, \bC_5 (0, \pi_0), \pi_0)$ converge to $0$
{ (note that the constant $E_k$ defined here equals the one in \eqref{e:dal basso})};
\item $\bA_k^2 := \|A_{\Gamma_k}\|^2 \leq C^\star E_k$ and hence it also converges to $0$;
\item $\Lip (h_k) \leq C E_k^{\gamma_1}$;
\item $\|\mathbf{G}_{h_k} - S_k \| (\bC_{5/4} (0, \pi_0)) \leq C E_k^{1+\gamma_1}$;
\item $\bar{w}_k = (\bar{u}_k, \Psi_k (x, \bar{u}_k))$ for some $\D$-minimizing $\bar{u}_k$ in $B_{5/4} (0, \pi_0)$ and
\begin{equation}\label{e:vicinanza}
\int_{B_{5/4}} \left((|Dh_k| - |D\bar{w}_k|)^2 + \cG (h_k, \bar w_k)^2 \right) = o (E_k)\, ,
\end{equation}
{ (where with abuse of notation we keep the symbol $\Psi_k$ for the map whose graph coincides with $\Gamma_k$});
\item for some positive constant $c (\hat{\alpha})$ (depending also upon $\beta_2, \delta_2, M_0, N_0, C_e$ and $C_h$),
\begin{equation}\label{e:dal_basso_20}
\int_{B_{5/4}} \cG (h_k, \etaa \circ h_k)^2 \geq c E_k\, ;
\end{equation}
\item $\Xi_k := \{\Theta (S_k, y) = Q\} \cap \bB_1$ has the property that
$\cH^{m-2+\alpha}_\infty (\Xi_k) \geq \bar\alpha>0$
and $0 \in \Xi_k$.
\end{enumerate}
Consider the projections $\bar{\Xi}_k := \p_{\pi_0} (\Xi_k)$. 
We are therefore in the position of applying \cite[Theorem 1.7]{DS3} to conclude that, for every $\varpi >0$ there is a $\bar{s} (\varpi)>0$
(which depends also upon the various parameters $\alpha, \bar{\alpha}, \hat{\alpha}, \beta_2, \delta_2, M_0, N_0, C_e$ and $C_h$) such that
\begin{equation}\label{e:Qpunto}
\limsup_{k\to \infty} \max_{x\in \bar{\Xi}_k} \mint_{B_\rho (x)} \cG (h_k, Q\a{\etaa\circ h_k})^2 \leq \varpi E_k \qquad \forall \rho < \bar{s} (\varpi)\, .
\end{equation}
Up to subsequences we can assume that $\bar{\Xi}_k$ (and hence also $\Xi_k$) converges,
in the Hausdorff sense, to 
a compact set $\Xi$, which is nonempty. 
Moreover, { consider the Dir-minimizing maps 
$x\mapsto \hat{u}_k (x) = E_k^{-\sfrac{1}{2}} \sum_i \a{(\bar{u}_k)_i (x) - \etaa\circ \bar u_k (x)}$. Note that, by \eqref{e:vicinanza} and \eqref{e:Qpunto} we have
\[
\limsup_k \int_{B_{\hat{s}} (x_k)} |\hat{u}_k|^2 < \infty
\]
for some fixed $\hat{s} = \bar{s} (1) >0$ and some sequence $\{x_k\}\subset B_1$. In particular, since 
\[
\limsup_k \int_{B_{5/4}} |D |\hat{u}_k||^2 \leq \limsup_k \D (\hat{u}_k, B_{5/4}) < \infty\, ,
\]
we easily conclude that $\int_{B_{5/4}} |\hat{u}_k|^2$ is bounded independently of $k$.
Thus, by \cite[Proposition 3.20]{DS1}, $\hat{u}_k$
converges, strongly in $L^2 (B_{5/4})$ and up to subsequences,}
to a $\D$-minimizing function $u$ with $\etaa\circ u =0$. Observe that 
\[
\int_{B_{5/4}} \cG(\Psi_k (x, \bar{u}_k), Q\a{\etaa \circ \Psi_k (x, \bar{u}_k)})^2\leq C \Lip (\Psi_k)^2 \int_{B_{5/4}} \cG (\bar{u}_k,
Q \a{\etaa\circ \bar{u}_k})^2 \leq C E_k^2\, .
\] 
Thus \eqref{e:vicinanza} and \eqref{e:dal_basso_20} easily imply that  
\begin{equation}\label{e:dal_basso_30}
\liminf_k \int_{B_{5/4}} \cG (\hat{u}_k, Q\a{0})^2 \geq \liminf E_k^{-1} \int_{B_{5/4}} \cG (\bar{u}_k, \etaa \circ \bar{u}_k)^2 \geq c > 0\, .
\end{equation}
From the strong $L^2$ convergence of $\hat{u}_k$ we then conclude that $u$ does not vanish identically.
On the other hand, by \eqref{e:Qpunto}, \eqref{e:vicinanza} and the strong convergence of $\hat{u}_k$ we conclude that, for any given $\delta>0$ there is a $\bar{s}>0$ such that
\[
\mint_{B_\rho (x)} \cG (u, Q\a{0})^2 \leq \varpi \qquad \forall x\in \Xi \quad \mbox{and}\quad \forall \rho<\bar{s} (\varpi)\, .
\]
Since $u$ is $\D$-minimizing and hence continuous, the arbitrariness of $\varpi$ implies $u \equiv Q \a{0}$ on $\Xi$. On the other hand,
$\cH^{m-2+\alpha}_\infty (\Xi) \geq \limsup_k \cH^{m-2+\alpha}_\infty (\Xi_k) \geq \bar\alpha >0$.
Then, by \cite[Theorem 0.11]{DS1} and
Lemma~\ref{l:UC} we conclude $\bar\Xi=B_{5/4}$, which contradicts $u\not\equiv 0$.

\subsection{Proof of Proposition \ref{p:persistence}}
We fix the notation as in Section~\ref{ss:splitting}
and notice that
\[
E:= \bE (T, \bC_{32r_J} (p_J, \hat{\pi}_H))
\leq C \bmo \ell (L)^{2-2\delta_2} \leq C \bmo \bar\ell^{2-2\delta_2}.
\]
By Proposition \ref{p:splitting} we have
\begin{equation}\label{e:Dir da sotto}
\int_{\cB_{\ell (L)} (\p (p))} |DN|^2 \geq \bar{c}_1 \,\bmo \,\ell(L)^{m+2-2\delta_2}\, . 
\end{equation}
Next, let $p:= (x,y)\in \hat{\pi}_H\times \hat{\pi}_H^\perp$, 
fix a $\bar{\eta}>0$, to be chosen later, and
note that \eqref{e:eccesso alto} allows us to
apply \cite[Theorem 1.7]{DS3}: there exists then $\bar{s}>0$ such that
\begin{equation}
\int_{B_{2\bar{s} \ell (L)} (x, \hat{\pi}_H)} \cG (f, Q\a{\etaa\circ f})^2 \leq \bar\eta\, \bar{s}^m \,\ell (L)^{m+2} E\, .
\end{equation}
Observe that, no matter how small $\bar\eta$ is chosen, such estimate holds
when $\bar{s}$ and $E$ are appropriately small:
the smallness of $E$ is then achieved choosing $\bar\ell$ as small as needed.
 
Now consider the graph $\gr (\etaa\circ f) \res \bC_{2\bar{s}\ell (L)} (x, \hat{\pi}_H)$ and project it down
onto $\cM$. Since $\cM$ is a graph over $\hat{\pi}_H$ of a function $\hat\phii$ with $\|D\hat\phii\|_{C^{2+\kappa}} \leq 
C \bmo^{\sfrac{1}{2}}$ and since the Lipschitz constant of $\etaa\circ f$ is controlled by $C \bmo^{\gamma_1}$,
provided $\eps_2$ is smaller than a geometric constant we have that $\Omega:= \p \left(\gr (\etaa\circ f) \res 
\bC_{2\bar{s}\ell (L)} (x, \hat{\pi}_H)\right)$ contains a ball $\cB_{\bar{s} \ell (L)} (\p (p))$. 

Consider now the map $F' (q) = \sum_i \a{q+N'_i (q)}$ such that $\bT_{F'} \res \p^{-1} (\Omega)= \bG_f \res \p^{-1} (\Omega)$
given by \cite[Theorem 5.1]{DS2}. 
Consider also the map
$\xi: B_{2\bar{s} \ell (L)} (x, \hat{\pi}_H) \ni z\mapsto \p ((z, \etaa\circ f (z)))\in \Omega$. This map is bilipschitz with controlled
constant, again assuming that $\eps_2$ is sufficiently small. Consider now $\hat{n}: \Omega \to \R^{m+n}$
with the property that $\hat{n} (q) \perp T_q \cM$ and $\xi (x)+\hat{n} (\xi (x)) = (x, \etaa \circ f (x))$.
Applying the estimate of \cite[Theorem 5.1 (5.5)]{DS2} we then get
\[
\cG (N' (\xi (x)), Q \a{\etaa\circ N' (\xi(x))}) \leq \cG (N' (\xi (x)), Q\a{\hat{n} (\xi (x))}) 
\leq 2 \sqrt{Q} \,\cG (f(x), Q\a{\etaa\circ f (x)})\, .
\]
Integrating the latter inequality, changing variable and using $\cB_{\bar{s} \ell (L)} (\p (p)) \subset \Omega$, we then
obtain
\begin{equation*}
\int_{\cB_{\bar{s} \ell (L)} (\p(p))} \cG (N', Q\a{\etaa\circ N'})^2 \leq C \,\bar\eta \,\bar{s}^m \,\ell (L)^{m+2} \,E
\leq C \,\bar\eta \,\bmo \,\bar{s}^m \ell (L)^{m+4-2\delta_2}\, .
\end{equation*}
Next, recalling the height bound and the fact that $N$ and $N'$ coincide outside a set of measure
$\bmo^{1+\gamma_1} \ell (L)^{m+2+\gamma_2}$, we infer
\begin{equation}\label{e:L2 da sopra}
\int_{\cB_{\bar{s} \ell (L)} (\p(p))} \cG (N, Q\a{\etaa\circ N})^2 \leq 
C_1\, \bar\eta \,\bmo\, \bar{s}^m \ell (L)^{m+4-2\delta_2} + C_2 \bmo^{1+\gamma_1} \ell (L)^{m+4+\gamma_2 +2\beta_2}\, .
\end{equation}
Since the constants $\bar{c}_1$, $C_1$ and $C_2$
in \eqref{e:Dir da sotto} and \eqref{e:L2 da sopra} are independent of $\ell (L)$
 and $\bar{\eta}$, we fix
$\bar\eta$ (and consequently $\bar{s}$) so small that
$C_1 \bar\eta \leq \bar{c}_1 \frac{\eta_2}{2}$. We therefore achieve from \eqref{e:L2 da sopra}
\begin{equation}
\mint_{\cB_{\bar{s} \ell (L)} (\p(p))} \cG (N, Q\a{\etaa\circ N})^2 \leq \frac{\bar{c}_1}{2}\, \eta_2\, 
\bmo\, \ell (L)^{4-2\delta_2} + C_2 \,\bmo^{1+\gamma_1} \bar{s}^{-m} \ell (L)^{4+\gamma_2 +2\beta_2}\, .
\end{equation}
Having now fixed $\bar{s}$ we choose $\bar{\ell}$ so small that $C_2 \bar{s}^{-m} \bar{\ell}^{2\delta_2+\gamma_2+2\beta_2}
\leq \bar{c}_1 \eta_2/2$. For these choices of the parameters, under the assumptions of the proposition we then infer
\begin{equation}
\mint_{\cB_{\bar{s} \ell (L)} (\p(p))} \cG (N, Q\a{\etaa\circ N})^2 \leq \eta_2 \,\bar{c}_1 \,\bmo \,\ell (L)^{4-2\delta_2}\, .
\end{equation}
The latter estimate combined with \eqref{e:Dir da sotto} gives the desired conclusion.

\section{Comparison between different center manifolds}

\begin{proof}[Proof of Proposition \ref{p:compara}] 
We first verify (i). Observe that
\[
\bE (T', \bB_{6\sqrt{m}}) = \bE (T, \bB_{6\sqrt{m} r})\leq \liminf_{\rho\downarrow r} \bE( T, \bB_{6\sqrt{m} \rho}) \leq \eps_2.
\]
Moreover, since $\Sigma'$ is a rescaling of $\Sigma$, 
$\mathbf{c} (\Sigma') \leq \mathbf{c} (\Sigma)\leq \bmo^{\sfrac{1}{2}}$. 
Therefore, \eqref{e:small ex} is fulfilled by $\Sigma'$ and $T'$
as well;
\eqref{e:pi0_ottimale} follows trivially upon substituting $\pi_0$ with an optimal $\pi$ 
for $T'$ in $\bB_{6\sqrt{m}}$ (which is an optimal plane for $T$ in $\bB_{6\sqrt{m} r}$ by a trivial scaling argument);
\eqref{e:basic} is scaling invariant; whereas $\partial T' \res \bB_{6\sqrt{m}}
= (\iota_{0, r})_\sharp (\partial T \res \bB_{6\sqrt{m} r}) = 0$.

We now come to (ii).
From now on we assume $N_0$ to be so large that $2^{-N_0}$ is much smaller than $c_s$. In this way we know that
$r$ must be much smaller than $1$. 
We have that $\ell (L) = c_s r$, otherwise condition
(a) would be violated.
Moreover, we can exclude that $L\in \sW_n$. Indeed, in this case there
must be a cube $J\in \sW$ with $\ell (J) = 2 \ell (L)$ and nonempty intersection with $L$. It then follows
that, for $\rho:= r + 2\sqrt{m} \,\ell (L) = (1+2\sqrt{m} \, c_s) r$, $B_\rho (0, \pi_0)$ intersects $J$. Again upon assuming $N_0$ sufficiently
large, such $\rho$ is necessarily smaller than $1$. On the other hand, since $2\sqrt{m} c_s <1$ we then have
$c_s \rho<2 \, c_s\, r\leq 2 \, \ell (L) = \ell (J)$.

Next observe that $\bE (T, \bB_{6\sqrt{m} \rho}) \leq C\bmo\rho^{2-2\delta_2}$ for some constant $C$
and for every $\rho\geq r$.
Indeed, if $\rho$ is smaller than a threshold $r_0$ but larger than $r$, then
$\bB_{6\sqrt{m}\rho}$ is contained in the ball $\bB_J$ for some ancestor $J$ of $L$ with $\ell (J)\leq C \rho$,
where the constant $C$ and the threshold $r_0$ depend upon the various parameters, but not upon $\eps_2$.
Then, 
$\bE (T, \bB_{6\sqrt{m} r}) \leq C \bE (T, \bB_J) \leq C\, \bmo\, \rho^{2 - 2\delta_2}$.
If instead $\rho\geq r_0$, we then use simply 
$\bE (T, \bB_{6\sqrt{m} \rho}) \leq C( r_0) \bE (T, \bB_{6\sqrt{m}}) \leq C (r_0) \,\bmo$. This estimate also has the consequence
that, if $\pi (\rho)$ is an optimal $m$-plane in $\bB_{6\sqrt{m} \rho}$,
then $|\hat{\pi}_L - \pi (\rho)| \leq C \bmo^{\sfrac{1}{2}} \rho^{1-\delta_2}$. 

We next consider the notation introduced in Section~\ref{ss:splitting}, the corresponding cubes
$L\subset H \subset J$ and the $\hat{\pi}_H$-approximation $f$ introduced there.
If $L\in \sW_e$, then by \eqref{e:stima dritta} we get
\begin{equation}\label{e:ancora_dal_basso}
\int_{B_{\ell/8} (x, \hat{\pi}_H)} \cG (f, Q \a{\etaa\circ f})^2 \geq c \,\bmo \,\ell^{m+4-2\delta_2}
\geq c\, \bmo\, r^{m+4-2\delta_2}\, ,
\end{equation}
where $x= \p_{\hat{\pi}_H} (x_H)$ and $c (\beta_2, \delta_2, M_0, N_0, C_e, C_h)>0$.
On the other hand, if $L\in \sW_h$, we can argue as in the proof of Proposition
\ref{p:separ} and use Theorem \ref{t:height_bound} to
conclude the existence of at least two stripes $\bS_1$ and $\bS_2$, at distance $\bar{c} \,\bmo^{\sfrac{1}{2m}} \ell^{1+\beta_2}$ 
with the property that any slice $\langle T, \p_{\hat{\pi}_H}, z\rangle$ with $z\in B_{\ell/8} (x, \hat{\pi}_H)$
must intersect both of them. Since for $x\in K$ such slice coincides with $f (x)$, we then have
\begin{align}
 \int_{B_{\ell/8} (x, \hat{\pi}_H)} \cG (f, Q \a{\etaa\circ f})^2 &\geq c \,
 \bmo^{\sfrac{1}{m}} \ell^{m+2+2\beta_2}
- C  \bmo^{\sfrac{1}{m}} \ell^{2+2\beta_2} |K|\nonumber\\
&\geq c\, \bmo^{\sfrac{1}{m}} \ell^{m+2+2\beta_2} - C \bmo^{1+\gamma_1+\sfrac{1}{m}}
\ell^{(m+2-2\delta_2)(1+\gamma_1) +2\beta_2}\notag\\
&\geq c\, \bmo r^{m+4-2\delta_2}\, .\label{e:ancora_dal_basso 2}
\end{align}

Rescale next through $\iota_{0,r}$ and consider $T':= (\iota_{0,r})_\sharp T$. We also rescale the graph of the corresponding
$\hat{\pi}_H$-approximation $f$ to the graph of a map $g$, which then has the following properties.
If $B\subset \hat{\pi}_H$ is the rescaling of the ball $B_{\ell/8} (x, \hat{\pi}_H)$, then $B\subset B_{3/2}$ and 
the radius of $B$ is larger than $c_s/8$. On $B$ we have the estimate
\begin{equation}\label{e:ancora_da_sotto_10}
\int_B \cG (g, Q \a{\etaa\circ g})^2 = r^{-m-2}  \int_{B_{\ell/8} (x, \hat{\pi}_H)} \cG (f, Q \a{\etaa\circ f})^2
\geq \bar{c} \, \bmo r^{2-2\delta_2}\, .
\end{equation}
The Lipschitz constant of $g$ is the same of that of $f$ and hence controlled by $C \bmo^{\gamma_1} r^{\gamma_1}$.
On the other hand, we have 
\begin{equation}\label{e:nuovo_m0}
\hat{{\bm m}}_0 := \max \big\{\bE (T', \bB_{6 \sqrt{m}}), \mathbf{c} (\Sigma')^2\big\}
 \leq \max \{ C \bmo r^{2-2\delta_2},
\mathbf{c} (\Sigma)^2 r^2\} \leq C \bmo r^{2-2\delta_2}\, .
\end{equation}
Moreover, denoting by $\hat{\bC}$ the rescaling of the cylinder $\bC_{8r_J} (p_J, \hat{\pi}_H)$, we have that
\begin{equation}
\|\bG_g - T'\| (\hat\bC) \leq C \bmo^{1+\gamma_1} r^{2 + \sfrac{\gamma_1}{2}}\, . 
\end{equation}
Finally, since $|\pi-\hat{\pi}_H|\leq C \bmo^{\sfrac{1}{2}} r^{1-\delta_2}$ and because $\cM'$ is
the graph of a function $\phii'$ over $\pi$ with $\|D\phii'\|_{C^{2,\kappa}} \leq C \hat{{\bm m}}_0^{\sfrac{1}{2}}$
and $\|\phii'\|_{C^0} \leq C \hat{{\bm m}}_0^{\sfrac{1}{2m}}$, by \eqref{e:nuovo_m0} we can actually conclude that
$\cM'$ is the graph over $\hat{\pi}_H$ of a map $\hat{\phii}: \hat{\pi}_H\to \hat{\pi}_H^\perp$ with 
$\|D\hat\phii\|_{C^{2,\alpha}} \leq C \bmo^{\sfrac{1}{2}} r^{1-\delta_2}$. 
Similarly, the $\cM'$-approximating map $x\mapsto F' (x) := \sum_i \a{x+N'_i (x)}$ 
coincides
with $T'$ over a subset $\cK'\subset \cM'$ with $|\cM'\setminus \cK'| \leq \hat{{\bm m}}_0^{1+\gamma_2}
\leq C \bmo^{1+\gamma_2} r^{(2-2\delta_2) (1+\gamma_2)}$.

Consider now the projection $\p'$ over $\cM'$ and hence define the sets
\begin{align}
\cH := &\p' (\gr (g))\\
\cJ := & \{q\in \cH : \langle \bT_{F'}, \p', q\rangle = \langle \bG_g, \p', q\rangle \}
\end{align}
Since $\cJ \subset \p' (\im (F')\setminus \supp (T)) \cup \p' (\gr (g)\setminus \supp (T))$, 
we have $|\cH \setminus \cJ| \leq \bmo^{1+\gamma_2} r^{(2-2\delta_2) (1+\gamma_2)}$.
On the other hand, by \cite[Theorem 5.1]{DS2} there is a Lipschitz map $G$ defined on a subset ${\rm Dom}\, (G)$ of $\cH\subset \cM'$ such that
$\im (G) \supset \gr (g|_B)$. We then have $G\equiv F'$ on any point of $\cJ\cap {\rm Dom}\, (G)$, which in turn is contained in $\bB_2\cap \cM'$
(at least provided $\bmo$ is small enough). Consider next
a point $p\in {\rm Dom}\, (G)$ with $p= (x, \hat\phii (x))$ and consider that for this point we have,
by \cite[Theorem 5.1 (5.3)]{DS2},
\[
\cG (g(x), Q\a{\etaa\circ g (x)}) \leq \cG (g (x), Q\a{\hat\phii (x)}) 
\leq C_0 \cG (G(p), Q\a{p}) \, . 
\]
Therefore, using \eqref{e:ancora_da_sotto_10} we can easily estimate 
\begin{align}
\int_{\bB_2\cap\cM'} |N'|^2 &\geq \int_{\cJ\cap {\rm Dom}\, (G)} \cG (G(p), Q\a{p})^2\geq \int_{{\rm Dom}\, (G)} \cG (G(p), Q \a{P})^2 - C \hat{{\bm m}}_0^{\sfrac{1}{m}} |\cH\setminus \cJ|\nonumber\\
&\geq c \, \bmo r^{2-2\delta_2} - C \bmo^{\sfrac{1}{m} +1+\gamma_2} r^{(2-2\delta_2) (1+\gamma_2)}
\geq c \, \bmo r^{2-2\delta_2}\, ,
\end{align}
where all the constants are independent of $\eps_2$ (but depend upon the other parameters) and as usual $\eps_2$ is assumed to be sufficiently small.
Thus finally, by \eqref{e:nuovo_m0} we conclude
\[
 \int_{\bB_2\cap \cM'} |N'|^2 \geq \bar{c}_s \hat{{\bm m}}_0 = \bar{c}_s \max \{\bE (T', \bB_{6\sqrt{m}}), \mathbf{c} (\Sigma')^2\}\, .
\qquad\qquad\qquad\qedhere
\]
\end{proof}

\appendix

\section{Height bound revisited}

In this section we prove a strengthened version of
the so-called ``height bound'' (see \cite[Lemma 5.3.4]{Fed}),
which appeared first in \cite{Alm3}. 
Our proof follows closely that of \cite{SS}.

\begin{theorem}\label{t:height_bound} Let $Q$, $m$, $\bar{n}$ and $n$ be positive integers. Then there are
$\eps (Q,m,\bar{n}, n)>0$ and $C_0 (Q, m, \bar{n}, n)$ with the following property. For $r>0$ and $\bC = \bC_r (x_0)$ assume:
\begin{itemize}
\item[(h1)] $\Sigma \subset \R^{m+n}$ is an $(m+\bar{n})$-dimensional $C^2$ submanifold with $\bA:= \|A_\Sigma\|_0 \leq \eps$;
\item[(h2)] $R$ is an integer rectifiable $m$-current with $\supp (R)\subset \Sigma$ and area-minimizing in
$\Sigma$;
\item[(h3)] $\partial R\res \bC = 0$, $(\p_{\pi_0})_\sharp R\res \bC = Q\a{B_r (\p_{\pi_0} (x_0), \pi_0)}$ 
and $E:= \bE (R, \bC) < \eps$.
\end{itemize}
Then there are $k\in \N\setminus\{0\}$, points $\{y_1, \ldots, y_k\}\subset \R^{n}$ and positive integers $Q_1, \ldots, Q_k$ such that:
\begin{itemize}
\item[(i)] having set $\sigma:= C_0 E^{\sfrac{1}{2m}}$, the open sets $\bS_i := \R^m \times (y_i +\, ]-r \sigma, r \sigma[^n)$
are pairwise disjoint and $\supp (R)\cap \bC_{r (1- \sigma |\log E|)} (x_0) \subset \cup_i \bS_i$;
\item[(ii)] $(\p_{\pi_0})_\sharp [R \res (\bC_{r(1-\sigma |\log E|)} (x_0) \cap \bS_i)] =Q_i  \a{B_{r (1-\sigma |\log E|)}(\p_{\pi_0} (x_0), \pi_0)}$ $\forall i\in \{1, \ldots , k\}$.
\item[(iii)] for every $p\in \supp (R)\cap \bC_{r(1-\sigma |\log E|)} (x_0)$ we have $\Theta (R, p) < \max_i  Q_i + \frac{1}{2}$.
\end{itemize}
\end{theorem}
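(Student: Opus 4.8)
The plan is to deduce the first two conclusions from the classical ``height bound'' of \cite{Alm3}, in the refined form with several tubes, following the proof of \cite{SS}, and then to obtain the density estimate (iii) by a short comparison--plus--monotonicity argument. Throughout we abbreviate $\p:=\p_{\pi_0}$. \textbf{Step 1: normalization and the tube decomposition.} After a translation and a dilation we may assume $x_0=0$ and $r=1$, so $\bC=\bC_1$; set $E:=\bE(R,\bC)$ and $\bA:=\|A_\Sigma\|_0$, both small. We apply to $R$ the argument of \cite{SS}. Internally it uses an elementary multiple valued Lipschitz approximation $f$ of $R$ over $B_{1-\delta}$ (valid under smallness of $E$ and $\bA$ alone, hence not relying on the present theorem): since $\Lip(f)\leq C E^{\gamma}$ for a fixed $\gamma>0$, the $Q$ sheets of $f$ organize into $k\geq 1$ clusters, the selections within a cluster staying within $CE^{\gamma}$ of one another while distinct clusters remain $\geq 10\,C E^{\gamma}$ apart; this yields positive integers $Q_1,\dots,Q_k$ with $\sum_i Q_i=Q$. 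The iteration of \cite{SS} then sharpens the confinement of each cluster --- and controls \emph{all} of $\supp R$, not merely its graphical part --- passing from the crude width $C E^{\gamma}$ down to $2\sigma$ with $\sigma=C_0 E^{\sfrac{1}{2m}}$: at each of the $N\sim|\log E|$ steps the confining tube shrinks by a fixed geometric factor at the cost of a fixed multiple of $\sigma$ in radius, whence the cumulative radius loss $\sigma|\log E|$. After relabelling, and merging any two clusters whose tubes lie within $C_0\sigma$ of each other (so that the $\bS_i=\R^m\times(y_i+\,]-\sigma,\sigma[^n)$ become pairwise disjoint, in fact mutually $\gtrsim\sigma$-separated, with $\sum_i Q_i=Q$ unchanged), we obtain conclusion (i). Conclusion (ii) then follows from the Constancy Theorem applied to $R\res(\bC_{1-\sigma|\log E|}\cap\bS_i)$: by (i) its boundary vanishes in the open cylinder, so $\p_\sharp$ of it is a constant integer multiple of $\a{B_{1-\sigma|\log E|}}$, namely $Q_i$ by the cluster construction.

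\textbf{Step 2: the density estimate (iii).} Fix $p\in\supp R\cap\bC_{1-\sigma|\log E|}$ and let $\bS_{i_0}$ be the unique tube containing it. Enlarging an ambient open tube $V_{i_0}\supset\bS_{i_0}$ strictly inside the gap to the neighbouring tubes, one checks from (i) and the minimality of $R$ --- exactly as in the decomposition of a minimizer into pieces supported in disjoint regions --- that $R':=R\res(\bC_{1-\sigma|\log E|}\cap(\R^m\times V_{i_0}))$ agrees with $R\res(\bC_{1-\sigma|\log E|}\cap\bS_{i_0})$, is boundaryless and area minimizing in $\Sigma\cap(\R^m\times V_{i_0})$ inside $\bC_{1-\sigma|\log E|}$, and satisfies $\p_\sharp R'=Q_{i_0}\a{B_{1-\sigma|\log E|}}$. (A minor point: to accommodate points $p$ close to $\partial\bC_{1-\sigma|\log E|}$, one runs Step 1 in a marginally larger cylinder, which only enlarges $C_0$.) Pick $s$ comparable to $\sigma$ with $\bB_s(p)\subset\bC_{1-\sigma|\log E|}$ and $\bB_s(p)\cap\supp R\subset\bS_{i_0}$. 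The monotonicity formula for $R'$ at $p$ gives $\Theta(R,p)=\Theta(R',p)\leq e^{Cs}\,\omega_m^{-1}s^{-m}\,\|R'\|(\bB_s(p))$. Writing $S:=R'\res\bC_s(\p(p))$, the identity $\mass(S)=Q_{i_0}\omega_m s^m+\tfrac12\int_{\bC_s(\p(p))\cap\bS_{i_0}}|\vec R-\vec\pi_0|^2\,d\|R\|$ (which holds because $\p_\sharp S=Q_{i_0}\a{B_s(\p(p))}$) and the bound $\int_{\bC_1}|\vec R-\vec\pi_0|^2\,d\|R\|=2\omega_m E$ yield $\|R'\|(\bB_s(p))\leq Q_{i_0}\omega_m s^m+\omega_m E$. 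Since $s\gtrsim\sigma=C_0 E^{\sfrac{1}{2m}}$, we get $E/s^m\lesssim C_0^{-m}E^{\sfrac{1}{2}}$, hence $\Theta(R,p)\leq e^{Cs}\big(Q_{i_0}+C_0^{-m}E^{\sfrac{1}{2}}\big)<Q_{i_0}+\tfrac12\leq\max_i Q_i+\tfrac12$ once $\eps$ is small enough, which is (iii).

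\textbf{Main obstacle.} The substantial step is the cluster-wise height bound of Step 1: confining \emph{every} point of $\supp R$ --- not just the graphical part captured by the Lipschitz approximation --- to tubes of the optimal width $C_0 E^{\sfrac{1}{2m}}$. This is precisely the content of the classical estimate and rests on the full De Giorgi--type iteration of \cite{SS}; the new bookkeeping is modest: tracking the cluster structure through the iteration, and the merging that makes the final tubes simultaneously disjoint and mutually $\gtrsim\sigma$-separated, which is exactly what the monotonicity argument of Step 2 requires.
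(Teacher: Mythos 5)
Your Step 2 is correct and is essentially the paper's own derivation of (iii) from (i)--(ii): restrict $R$ to the tube containing $p$, bound the mass of the restriction in a ball of radius comparable to $E^{\sfrac{1}{2m}}$ from below via the monotonicity formula and from above by $Q_{i_0}\omega_m s^m$ plus the excess. The global bookkeeping of your Step 1 (a logarithmic number of refinement steps, each costing a fixed multiple of $E^{\sfrac{1}{2m}}$ in radius) is also the right shape. The genuine gap is that the single refinement step --- the only place the exponent $\sfrac{1}{2m}$ and the $|\log E|$ collar can come from --- is never argued; it is attributed to ``the iteration of [SS]'', but the multi-sheet statement with $k$ tubes and multiplicities $Q_i$ is precisely what is being proved here (a single-slab height bound is false for $Q\geq 2$: two parallel planes have zero excess and arbitrary separation). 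Concretely, what is missing is the lemma: if a slab $W_r(a,b)=B_r\times\R^{n-1}\times{]a,b[}$ carries mass at most $\frac{2Q-1}{2Q}\omega_m r^m$, then $\supp(R)$ avoids the slab shrunk by $\eta=C^\flat E^{\sfrac{1}{2m}}$ in all directions. The paper proves this by projecting $R\res W_r(\tau,b-\tau)$ to integer-valued BV functions $f_\tau$ on $B_r$, applying the relative Poincar\'e inequality (each $f_\tau$ vanishes on a fixed fraction of $B_r$), integrating the resulting slice estimate in $\tau$ and using Cauchy--Schwarz against the excess to show the projected mass drops below $\sqrt E$ within a $C E^{\sfrac{1}{2m}}$ collar, and finally invoking the monotonicity formula to convert mass $\leq 2\sqrt{E}$ into empty support. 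None of this appears in your proposal, and your substitute mechanism does not supply it.

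That substitute mechanism is itself shaky. A $Q$-valued Lipschitz approximation with $\Lip(f)\leq CE^{\gamma}$ controls $\cG(f(x),f(y))$, not the initial spread $\diam(\supp f(0))$, which can be of order $1$; so the sheets do not automatically ``organize into clusters'' of width $CE^{\gamma}$ separated by $10\,CE^{\gamma}$ (three sheets at mutual distance $5CE^{\gamma}$ already defeat this), and in any case the approximation says nothing about the non-graphical part of $\supp R$, which you acknowledge must be controlled but never explain how. The paper avoids all of this: it uses no Lipschitz approximation, but instead pigeonholes a slab among $Q+1$ sub-slabs to find one of small projected mass, applies the lemma above to open a gap in the support, uses the constancy theorem to split $R$ into two pieces of degrees $k_1+k_2=Q$, and runs an induction on $Q$ when both degrees are positive, iterating the shrinking otherwise. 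That induction is what produces the $Q_i$ and conclusion (ii) cleanly; ``tracking the cluster structure through the iteration'' is not modest bookkeeping but precisely the part of the argument you would still have to supply.
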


\begin{remark} Obviously, $\sum_i Q_i = Q$ and hence $1\leq k\leq Q$. Most likely the bound
on the radius of the inner cylinder could be improved to $1-\sigma$. However this would not give us any advantage
in the rest of the paper and hence we do not pursue the issue here. 
\end{remark}

\begin{proof} We first observe that, without loss of generality,
we can assume $x_0=0$ and $r=1$. Moreover, (iii) follows from (i) and (ii) through the monotonicity formula.
Indeed,
let $p\in \supp (R)$ be such
that $\B_\rho (p):=\B_{E^{\sfrac{1}{2m}}} (p) \subset \bC_{1-\sigma |\log E|} (x_0)=: \bC'$. $p$ must be contained in one of the $\bS_i$, say $\bS_1$. Consider
the current $R_1 = R\res (\bS_1 \cap \bC')$. Observe that $R_1$ must be area-minimizing in $\Sigma$, $\Theta (R_1, p) = \Theta (R, p)$  and that $\bE (R_1, \bC') \leq E$. On the other hand,
if $\|A_\Sigma\|$ is smaller than a geometric constant, the monotonicity formula implies 
\[
\mass (R_1\res \bC_\rho (p)) \geq \mass (R_1\res \B_\rho (p))\geq \omega_m (\Theta (R, p)-\textstyle{\frac{1}{4}}) \rho^m = 
\omega_m (\Theta (R, p)-\textstyle{\frac{1}{4}}) E^{\sfrac{1}{2}}\, .
\]
On the other hand, $\mass (R_1\res \bC_\rho (p)) \leq \omega_m Q_1 \rho^m + E = \omega_m Q_1 E^{\sfrac{1}{2}} + E$. Thus, if $E$ is smaller than a geometric constant, we ensure $\Theta (R, p) \leq Q_1 +\frac{1}{2}$. 
This means that, having proved (i) and (ii) for $\sigma = C_0 E^{\sfrac{1}{2m}}$, (iii) would hold if we redefine $\sigma$ as $(C_0 +1)E^{\sfrac{1}{2m}}$.

\medskip

The proof of (i) and (ii) is by induction on $Q$. The starting step $Q=1$ 
is Federer's classical statement (cf. with \cite[Lemma 5.3.4]{Fed} and \cite[Lemma 2]{SS}) and though its proof 
can be easily concluded from what we describe next, our only concern will be to prove the inductive step. Hence,
from now on we assume that the theorem holds for all multiplicities up to $Q-1\geq 1$ and we prove it for $Q$.
Indeed, we will show a slightly weaker assertion, i.e. the existence of numbers $a_1, \ldots, a_k\in \R$ such that the conclusions (i)
and (ii) apply when we replace $\bS_i$ with $\bSig_i = \R^{m+n-1}\times\,  ]a_i-\sigma, a_i+\sigma[$. The general statement
is obviously a simple corollary. To simplify the notation we use $\bar\p$ in place of $\p_{\pi_0}$.

\medskip

{\bf Step 1.} Let $r\geq \frac{1}{2}$ and $a - b > 2\eta = 2 C^\flat E^{\sfrac{1}{2m}}$, where $C^\flat$ is a constant depending only
on $m$ and $n$, which will be determined later. We denote by $W_r (a,b)$ the open set $B_r\times \R^{n-1}\times ]a,b[$.
In this step we show
\begin{equation}\label{e.not_much}
\|R\| (W_r (a,b)) \leq \textstyle{\frac{2Q-1}{2Q}} \omega_m r^m\quad \Longrightarrow\quad \supp (R) \cap W_{r-\eta} (a+\eta, b -\eta) = \emptyset \, .
\end{equation}
Without loss of generality,
we assume $a=0$. For each $\tau\in ]0, \frac{b}{2}[$ consider the currents
$R_\tau := R \res W_r (\tau, b-\tau)$ and $S_\tau := \bar\p_\sharp R_\tau$. It follows from the slicing theory that $S_\tau$ is 
a locally integral current for a.e. $\tau$. There are then functions 
$f_\tau\in BV_{loc} (B_r)$ which take integer values and such that $S_\tau = f_\tau \a{B_r}$.
Since $\|f_\tau\|_1 = \mass (S_\tau)\leq \|R\| (W_r (0,b))\leq \frac{2Q-1}{2Q} \omega_m r^m$, $f_\tau$ must vanish on a set of measure
at least $\frac{\omega_m}{2Q} r^m$. By the relative Poincar\'e inequality,
\begin{equation}\nonumber
\mass (S_\tau)^{1-\sfrac{1}{m}} = \|f_\tau\|_{L^1}^{1-\sfrac{1}{m}}\leq C \|Df_\tau\| (B_r) = C \|\partial (\bar\p_\sharp R_\tau)\| (B_r)
\leq C\|\partial R_\tau\| (\bC_r)\, .
\end{equation}
We introduce the slice $\langle R, \tau\rangle$ relative to the map $x_{m+n}: \R^{m+n}\to \R$ which is the projection on
the last coordinate. Then the usual slicing theory gives that
\begin{equation}\label{e.poinc2}
(\mass (S_\tau))^{1- \sfrac{1}{m}} \leq C\|\partial R_\tau\| (\bC_r)
= C \mass \big(\langle R, \tau\rangle - \langle R, b - \tau\rangle\big)\quad \mbox{for a.e. $\tau$}\, .
\end{equation}  
Let now $\bar{\tau}$ be the supremum of $\tau$'s such that $\mass (S_t) \geq \sqrt{E}$ $\forall t<\tau$. If $\mass (S_0) < \sqrt{E}$, we then set 
$\bar{\tau}:= 0$. If $\bar{\tau}> 0$, observe that, for a.e. $\tau\in [0, \bar{\tau}[$ we have
\begin{align}\label{e:da_integrare}
E^{\frac{m-1}{2m}} &\leq C (\mass (S_\tau))^{1-\sfrac{1}{m}}
\leq
C \left(\mass \big(\langle R, \tau\rangle - \langle R, b - \tau\rangle\big)\right)\, .
\end{align}
Integrate \eqref{e:da_integrare} between $0$ and $\bar{\tau}$ to conclude
\begin{align}
&\bar{\tau} E^{\frac{m-1}{2m}} 
\leq C \int_0^{\bar{\tau}}
 \mass \big(\langle R, \tau\rangle - \langle R, b - \tau\rangle\big)\, d\tau
= C \int_{W_r (0, \bar{\tau})\cup W_r (b-\bar\tau, b)} |\vec{R} \res d x_{m+n}|\, d\|R\|\, .
\end{align}
We then apply Cauchy-Schwartz and recall
\[
\int_{\bC_1} |\vec{R}\res dx_{m+n}|^2\, d\|R\| \leq \bE (R, \bC_1) = E\, 
\]
We then conclude $\bar{\tau} E^{\frac{m-1}{2m}} \leq \bar{C} \sqrt{E}$ for some constant $\bar{C}$ depending only on $m$ and $n$, i.e.~$\bar{\tau} \leq \bar{C} E^{\sfrac{1}{2m}}$,
Set $C^\flat := \bar{C}+2$ and recall that $\eta = C^\flat E^{\sfrac{1}{2m}}$.
Observe also that there must be a sequence of $\tau_k\downarrow \bar{\tau}$ with $\mass (S_{\tau_k}) < \sqrt{E}$. Therefore,
\begin{equation}\label{e:poca_massa}
\|R\| (W_r (\bar{\tau}, b-\bar{\tau}))\leq \liminf_{k\to\infty} \|R\| (W_r (\tau_k, b - \tau_k)) \leq \liminf_{k\to\infty} \mass (S_{\tau_k}) + E
\leq 2 \sqrt{E}\, .
\end{equation}
Assume now the existence of $p\in \supp (T) \cap W_{r-\eta} (\eta, b-\eta)$. By the properties of area-minimizing currents, 
$\Theta (T, p)\geq 1$. Set $\rho:= 2 E^{\sfrac{1}{2m}}$ and $B':= \bB_\rho(p)  \subset W_r (\bar{\tau}, \ell - \bar{\tau})$. By the monotonicity formula, 
$\|R\| (B') \geq  c  \,2^m \omega_m \sqrt{E}$, where $c$ depends only on $\bA$ (recall that $\rho\leq 1$) and approaches $1$ as $\bA$ approaches
$0$. Thus, for $\eps_2$ sufficiently small, this would contradict \eqref{e:poca_massa}. We have therefore proved \eqref{e.not_much}.

\medskip

{\bf Step 2.} We are now ready to conclude the proof of (i) and (ii). Assume
\begin{equation}\label{e:spezza}
\max \big\{\|R\| (W_1 (0, \infty)), \|R\| (W_1 (-\infty, 0))\big\}\leq \textstyle{\frac{1}{2}} \mass (R)\, . 
\end{equation}
Divide the interval
$[0,1[$ into $Q+1$ intervals $[a_i, a_{i+1}[$ and let $W^i :=  W_1 (a_i, a_{i+1})$. For each $i$ consider
$S^i := \bar\p_\sharp (T\res W^i)$. Observe that there must be one $i$ for which $\mass (S^i)\leq (1-\frac{1}{2Q}) \omega_m$. Otherwise
we would have
\[
\omega_m Q + E\geq \mass (R) \geq 2 \sum_i \mass (S^i) \geq 2 (Q+1) \omega_m \textstyle{\frac{2Q-1}{2Q}}\, ,
\]
which is obviously a contradiction if $E$ is sufficiently small. 

It follows from Step 1 that there must be an $i$ so that $\supp (T)$ does not intersect $W_{1-\eta} (a_i+\eta, a_{i+1}-\eta)$.
Consider $R_1 := R \res  W_{1-\eta} (-\infty , a_i+\eta)$ and $R_2 := R\res W_{1-\eta}\times (a_{i+1} -\eta, \infty)$.
 By the constancy theorem $\bar\p_\sharp R_i = k_i \a{B_{1-\eta}}$, where both $k_i$'s
are integers. Indeed, having assumed that $E$ is sufficiently small, each $k_i$ must be nonnegative and their sum
is $Q$. There are now two possibilities.
\begin{itemize}
\item[(a)] Both $k_i$'s are positive. In this case $R_1$ and $R_2$ satisfy again the assumptions of the Theorem with $\bC_{1-\eta} (0)$
in place of $\bC_1 (0)$. After a suitable rescaling we can apply the inductive hypothesis to both currents and hence get the
desired conclusion.
\item[(b)] One $k_i$ is zero. In this case $\mass (R_i)\leq E$ and it cannot be $R_1$, since $\mass (R_1)\geq \frac{1}{2} \mass (R)$ by \eqref{e:spezza}.
Thus it is $R_2$ and, arguing as at the end of Step 1, we
conclude $R\res W_{1-2\eta} (]a_{i+1}+2\eta, \infty[) = 0$.
\end{itemize}

In case (b) we repeat the argument splitting $]-1, 0]$ into $Q+1$ intervals. Once again, either we can ``separate''
the current into two pieces and apply the inductive hypothesis, or we conclude that $\supp (R \res \bC_{1-4\eta})
\subset W_{1-4\eta} (-1-\eta,1+\eta) =: W_{1-4\eta} (a_0,b_0)$. If this is the case, we apply once again the argument above
and either we ``separate'' $R^1 := R \res \bC_{1-6\eta} \times \R^n$ into two pieces, or we conclude that
$\supp (R^1)\subset W_{1-6\eta} (a_1, b_1)$, where  
\[
b_1-a_1 \leq (b_0-a_0) \left(1-\textstyle{\frac{1}{Q+1}} +
\eta\right) \leq 2 \left(1-\textstyle{\frac{1}{Q+2}}\right)
\] 
(provided $\eps_2$ is smaller than a geometric constant). We now iterate this argument a finite number of times, stopping if at any step we ``separate'' the current and can apply the inductive hypothesis, or if the resulting current is contained
in $W_{1- (4+2k) \eta} (a_k, b_k)$ for some $a_k, b_k$ with $b_k-a_k\leq c_1 E^{\sfrac{1}{2m}}$. The constant $c_1$ is chosen larger than
1 and in such a way
that, if $\ell > c_1 E^{\sfrac{1}{2m}}$, then $\ell \frac{Q}{Q+1} + \eta \leq \frac{Q+1}{Q+2} \ell$. Observe that, since $\eta= C^\flat E^{\sfrac{1}{2m}}$, $c_1$ depends only upon $Q$, $m$ and $n$. We now want to estimate from above the maximal number of times $k$ that the procedure above gets iterated. Observe that we must have
\[
c_1 E^{\sfrac{1}{2m}} \leq b_{k-1} - a_{k-1} \leq (b_0 - a_0) \left(\frac{Q+1}{Q+2}\right)^{k-1}\, .
\]
Since $b_0 - a_0 = 2+2\eta$, we get the estimate
\[
- (k-1) \log \left(\frac{Q+1}{Q+2}\right) \leq - \log (2+2\eta) - \log c_1  - \frac{1}{2m} \log E\, .
\]
Since $E$ is assumed to be small we get the bound $k\leq - C \log E$. This completes the proof.
\end{proof}

\section{Changing coordinates for classical functions}

\begin{lemma}\label{l:rotazioni_semplici}
For any $m, n \in \mathbb N \setminus \{0\}$ there are constants $c_0,C_0>0$ with the following properties.
Assume that
\begin{itemize}
\item[(i)] $\varkappa, \varkappa_0\subset \mathbb R^{m+n}$ are $m$-dim. planes with $|\varkappa-\varkappa_0|\leq c_0$ and $0<r\leq 1$;
\item[(ii)] $p=(q,u)\in\varkappa\times \varkappa^\perp$ and $f,g: B^m_{7r} (q, \varkappa)\to \varkappa^\perp$ are Lipschitz functions such that
\begin{equation*}
\Lip (f), \Lip (g) \leq c_0\quad\text{and}\quad |f(q)-u|+|g(q)-u|\leq c_0\, r.
\end{equation*}
\end{itemize}
Then there are two maps $f', g': B_{5r} (p, \varkappa_0)\to \varkappa_0^\perp$ such that
\begin{itemize}
\item[(a)] $\bG_{f'} = \bG_f \res \bC_{5r} (p, \varkappa_0)$ and $\bG_{g'} = \bG_g \res \bC_{5r} (p, \varkappa_0)$;
\item[(b)] $\|f'-g'\|_{L^1 (B_{5r} (p, \varkappa_0))}\leq C_0\,\|f-g\|_{L^1 (B_{7r} (p, \varkappa))}$;
\item[(c)] { if $f\in C^{3, \kappa} (B_{7r} (p, \varkappa))$
then $f'\in C^{3, \kappa} (B_{5r} (p, \varkappa_0))$ with the estimates
\begin{align}
&\|f'- u'\|_{C^0}\leq C \|f-u\|_{C^0} + C |\varkappa-\varkappa_0| r\label{e:ruota_uno}\\
&\|Df'\|_{C^0} \leq C \|Df\|_{C^0} + C |\varkappa-\varkappa_0|\label{e:uota_due}\\
&\|D^2 f'\|_{C^{1, \kappa}} \leq \Phi (|\varkappa-\varkappa_0|, \|D^2 f\|_{C^{1, \kappa}}\label{e:ruota_tre}
\end{align}
where $(q',u')\in \varkappa_0 \times \varkappa_0^\perp$ coincides with the point $(q,u)\in \varkappa\times \varkappa^\perp$ and $\Phi$ is a smooth functions with $\Phi (\cdot, 0) \equiv 0$;}
\item[(d)] $\|f'-g'\|_{W^{1,2} (B_{5r} (p, \varkappa_0))} 
\leq C_0 (1+ \|D^2 f\|_{C^0})\|f-g\|_{W^{1,2} (B_{7r} (p, \varkappa))}$.
\end{itemize}
All the conclusions of the Lemma still hold if we replace the exterior radius $7r$ and interior radius $5r$ with
$\rho$ and $s$: the corresponding constants $c_0$ and $C_0$ (and functions $\Phi$, $\Lambda$ and $\Lambda_\kappa$) will then depend also on the ratio
$\frac{\rho}{s}$.
\end{lemma}
\begin{proof} The case of two general radii $s$ and $\rho$ follows easily from that of $\rho=7r$ and $s=5r$ and a simple
covering argument. 
In what follows, given a pair of points $x\in \varkappa, y\in \varkappa^\perp$ we use the notation $(x,y)$ for the vector $x+y$. 
By translation invariance we can assume that $(q,u)= (0,0)$ (and hence $(q', u') = (0,0)$).
Consider then the maps $F, G: B_{7r} (0, \varkappa)\to \varkappa_0^\perp$ and $I, J: B_{7r} (0, \varkappa)\to \varkappa_0$
given by
\[
F (x) = \p_{\varkappa_0^\perp} ((x,f(x)))\quad\text{and}\quad G(x) = \p_{\varkappa_0^\perp} ((x, g(x))),
\]
\[
I(x)= \p_{\varkappa_0} ((x, f(x)))\quad\text{and}\quad J (x) = \p_{\varkappa_0} ((x, g(x))).
\]
With $c_0\leq 1$ we can easily estimate
\[
|I (x) - I(y)| \geq |x-y| (1- C_0 |\varkappa - \varkappa_0| - C_0 \Lip (f))\, ,
\]
for some geometric constant $C_0$. Thus, if $c_0$ is small enough $I$ and (for the same reason) $J$ are injective Lipschitz maps. Therefore, the graphs $\gr_{\varkappa_0} (f)$ and $\gr_{\varkappa_0} (g)$ of $f$ and $g$ in the ``original'' coordinates system $\varkappa_0\times \varkappa_0$ coincide, in the new coordinate system $\varkappa\times \varkappa$, with the graphs $\gr_\varkappa (f')$ and $\gr_{\varkappa} (g')$ of the functions $f' = F \circ I^{-1}$ and $g'= G \circ J^{-1}$
defined respectively in $D:= I (B_{7r} (0, \varkappa))$ and $\bar{D}:= J (B_{7r} (0, \varkappa))$.
If $c_0$ is chosen sufficiently small, then we also conclude
\begin{equation}\label{e:Lip_bound}
\Lip (I), \; \Lip (J),\; \Lip (I^{-1}),\; \Lip (J^{-1}) \leq 1+C\,c_0,
\end{equation}
and
\begin{equation}\label{e:Linfty_bound}
|I (0)|, |J (0)|\leq C\,c_0\, r,
\end{equation}
where the constant $C$ is only geometric.
Clearly, \eqref{e:Lip_bound} and \eqref{e:Linfty_bound} easily imply that $B_{5r} (0, \varkappa_0) \subset D \cap \bar{D}$ when $c_0$ is smaller than
a geometric constant, thereby implying (a) if we restrict the domain of definition of $f'$ and $g'$  to $B_{5r} (0, \varkappa_0)$.
We claim next that, for a sufficiently small $c_0$,
\begin{equation}\label{e:claim}
|f'(x')-g'(x')|\leq 2 \,|f (I^{-1} (x')) - g (I^{-1} (x'))|
\quad\forall \;x'\in B_{5r} (0, \varkappa_0),
\end{equation}
from which,
using the change of variables formula for bilipschitz homeomorphisms
and \eqref{e:Lip_bound}, (b) follows.

In order to prove \eqref{e:claim}, consider 
any $x'\in B_r (q')$, set $x:= I^{-1} (x')$ and
\[
p_1 := (x, f(x))\in \varkappa\times \varkappa^\perp,\quad
p_2 := (x, g(x))\in \varkappa\times \varkappa^\perp\quad
\text{and}\quad
p_3 := (x', g'(x'))\in \varkappa_0\times \varkappa_0^\perp.
\]
Obviously $|f'(x')- g'(x')|= |p_1-p_3|$ and $|f(x)- g(x)|=|p_1-p_2|$.
Note that, $g(x)= f (x)$ if and only if $g'(x')= f' (x')$, and in this case \eqref{e:claim} follows trivially.
If this is not the case, the triangle with vertices $p_1$, $p_2$ and $p_3$ is non-degenerate.
Let $\theta_i$ be the angle at $p_i$.
Note that, $\Lip (g)\leq c_0$ implies $|\frac{\pi}{2}-\theta_2|\leq C c_0$ and $|\varkappa-\varkappa_0|\leq c_0$ implies
$|\theta_1|\leq C c_0$, for some dimensional constant $C$.
Since $\theta_3 = \pi - \theta_1 - \theta_2$, we conclude
as well $|\frac{\pi}{2} - \theta_3|\leq C c_0$.
Therefore, if $c_0$ is small enough, we have $1 \leq 2\sin \theta_3$, so that,
by the Sinus Theorem,
\begin{equation*}
|f'(x')-g'(x')|= |p_1-p_3| = \frac{\sin \theta_2}{\sin \theta_3}\, |p_1-p_2|
\leq 2 \, |p_1-p_2| = 2 \,|f(x)-g(x)|,
\end{equation*}
thus concluding the claim.

As for (c), observe that $I, F$ and $J, G$ are obviously as regular as $f$ and $g$. So, when the latter are $C^1$, $I$ and $J$ are also $C^1$. In the latter case, if we put suitable coordinates on both $\varkappa$ and $\varkappa_0$ (identifying them with $\mathbb R^m$) we can easily estimate $|d I - {\rm Id}| \leq C_0 (\|Df\|_0 + |\varkappa- \varkappa_0|)$, where $C_0$ is a geometric constant, $d I$ the differential of $I$ and ${\rm Id}$ the identity. Thus for $c_0$ sufficiently small we can apply the inverse function theorem: so $I^{-1}$ is as regular as $I$ and hence as $f$. Since $f' = F\circ I^{-1}$, also $f'$ is as regular as $f$. Recall next that we are assuming $q=0$ and $u=0$. Define the map $\tilde{I} (x) = I^{-1} (x) -x$. Since $f' = F\circ I^{-1}$, the bounds claimed in (c) follows easily if we can prove the very same bounds for the map $\tilde{I} (x)$. If we set $\bar{I} (x) = I (x)-x$, the inverse function theorem gives $\|\tilde{I}\|_{C^1} \leq 2\|\bar{I}\|_{C^1}$ provided $c_0$ is sufficiently small. The bounds on the higher derivatives can then be easily concluded differentiating the identity $d I^{-1} (x) = [d I]^{-1} (I^{-1} (x))$.

We finally come to (d). The estimate $\|f'-g'\|_{L^2}\leq C \|f-g\|_{L^2}$ is an obvious consequence of \eqref{e:claim}. Given next a point $p$ in the graph of $f$, 
resp. in the graph of $g$, we denote by $\sigma (p)$, resp. $\tau (p)$, the oriented tangent plane to the corresponding graphs. Observe that the points
are described by the pairs $(x', f(x'))$ and $(x', g (x'))$, in the coordinates $\kappa\times \kappa^\perp$, and by $(I^{-1} (x'), f (I^{-1} (x)'))$ and
$(J^{-1} (x'), g (J^{-1} (x')))$, in the coordinates $\varkappa_0\times \varkappa_0^\perp$. Thus 
\begin{align}
&|\nabla f' (x') - \nabla g' (x')| \leq C |\sigma (p) - \tau (q)|
\leq  C |\nabla f (I^{-1} (x')) - \nabla g (J^{-1} (x')|\nonumber\\
\leq& C |\nabla f (I^{-1} (x'))-\nabla f  (J^{-1} (x'))| + C |\nabla f (J^{-1} (x')) - \nabla g (J^{-1} (x'))|\nonumber\\
\leq &C \|D^2 f\|_{C^0} |I^{-1} (x') - J^{-1} (x')| + C |\nabla f (J^{-1} (x')) - \nabla g (J^{-1} (x'))|\nonumber\\
\leq & C \|D^2 f\|_{C^0} |f' (x')- g'(x')| + C |\nabla f (J^{-1} (x')) - \nabla g (J^{-1} (x'))|\, .
\end{align}
Integrating this last inequality in $x'$ and changing variables we then conclude 
\[
\|\nabla f' - \nabla g'\|^2 \leq C \|\nabla f- \nabla g\|_{L^2} + C \|D^2f\|_{C^0} \|f'-g'\|_{L^2}\, ,
\] 
which, together with the $L^2$ estimate, gives (d).
\end{proof}

\section{Two interpolation inequalities}
\begin{lemma}\label{l.interpolation}
Let $A>0$ and $\psi\in C^2(B_\rho,\R^n)$ satisfy $\|\psi\|_{L^1}\leq A\,\rho^{m+1}$ and 
$\|\Delta \psi\|_{L^{\infty}}\leq \rho^{-1}\,A$.
Then, for every $r<\rho$ there is a constant $C>0$ (depending only on $m$ and $\frac{\rho}{r}$) such that
\begin{equation}\label{e.interpolation}
\rho^{-1}\,\|\psi\|_{L^\infty(B_r)}+\|D\psi\|_{L^\infty(B_r)}\leq C\, A.
\end{equation}
\end{lemma}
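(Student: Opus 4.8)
\textbf{Proof strategy for Lemma~\ref{l.interpolation}.}
The plan is to split $\psi$ into a harmonic part and a Newtonian potential correction and estimate each separately. First I would fix the two concentric balls, say $B_r \subset B_{r'} \subset B_\rho$ with $r < r' < \rho$, and by the scaling $x \mapsto \rho x$ reduce to the case $\rho = 1$ (this turns the hypotheses into $\|\psi\|_{L^1(B_1)} \leq A$ and $\|\Delta\psi\|_{L^\infty(B_1)} \leq A$, and the conclusion into $\|\psi\|_{L^\infty(B_r)} + \|D\psi\|_{L^\infty(B_r)} \leq CA$). Then I would write $\psi = \psi_1 + \psi_2$ on $B_{r'}$, where $\psi_2$ is the Newtonian potential of $\Delta\psi$ restricted to $B_{r'}$, i.e.\ $\psi_2(x) := \int_{B_{r'}} G(x-y)\,\Delta\psi(y)\,dy$ with $G$ the fundamental solution of the Laplacian on $\R^m$, and $\psi_1 := \psi - \psi_2$, which is harmonic in $B_{r'}$.

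The estimate on $\psi_2$ is classical: since $\|\Delta\psi\|_{L^\infty(B_{r'})} \leq A$, standard potential estimates (or Calderón--Zygmund / Schauder-type bounds, but here only the crude $L^\infty$ bound on the potential and its gradient is needed) give $\|\psi_2\|_{C^1(B_{r'})} \leq C(m)\,A$. For $\psi_1$ I would use interior estimates for harmonic functions: on a slightly smaller ball $B_{r''}$ with $r < r'' < r'$, both $\|\psi_1\|_{L^\infty(B_{r''})}$ and $\|D\psi_1\|_{L^\infty(B_{r''})}$ are bounded by $C(m, r'/r'')\,\|\psi_1\|_{L^1(B_{r'})}$ (mean-value property plus the gradient estimate for harmonic functions). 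Finally $\|\psi_1\|_{L^1(B_{r'})} \leq \|\psi\|_{L^1(B_{r'})} + \|\psi_2\|_{L^1(B_{r'})} \leq A + C(m)\,A$, using the bound on $\psi_2$ just obtained. Combining, $\|\psi\|_{C^1(B_r)} \leq \|\psi_1\|_{C^1(B_r)} + \|\psi_2\|_{C^1(B_r)} \leq C(m, \rho/r)\,A$, which is the claim after undoing the rescaling (the $\rho^{-1}$ weight on the $L^\infty$ term reappears precisely because $\psi$ and $D\psi$ scale differently).

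There is no serious obstacle here; the only point requiring a little care is bookkeeping the several intermediate radii $r < r'' < r' < \rho$ so that each interior estimate is applied on a genuinely smaller ball, and tracking that all constants depend only on $m$ and the ratio $\rho/r$ (not on $A$, $\psi$, or the radii individually). One could alternatively avoid the explicit potential decomposition by invoking interior $W^{2,p}$ or $C^{1,\alpha}$ estimates for the Poisson equation directly, but the split into harmonic part plus potential is the most elementary route and keeps the dependence of the constants transparent.
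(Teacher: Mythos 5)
Your argument is correct, and it takes a genuinely different route from the paper in the key step. The paper also splits $\psi$ into a harmonic part plus a particular solution of the Poisson equation, but it chooses the particular solution $u$ with \emph{zero Dirichlet data} on the intermediate ball; since the only a priori information on $u$ is then $\|\Delta u\|_{L^\infty}\leq A$ and $u|_{\partial B}=0$, the paper must first derive $\|u\|_{L^1}\leq CA$ via the Poincar\'e inequality and an integration by parts, and then upgrade to $\|Du\|_{L^\infty}\leq CA$ by combining a Gagliardo--Nirenberg interpolation inequality $\|Du\|_{L^\infty}\leq C\|D^2u\|_{L^s}^a\|u\|_{L^1}^{1-a}+C\|u\|_{L^1}$ with the Calder\'on--Zygmund $L^s$ estimate $\|D^2u\|_{L^s}\leq C\|\Delta u\|_{L^s}$. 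You instead take the particular solution to be the Newtonian potential of $\Delta\psi\,\mathbf{1}_{B_{r'}}$, for which the $C^1$ bound $\|\psi_2\|_{C^1}\leq C\|\Delta\psi\|_{L^\infty}$ follows directly from the local integrability of $G$ and $DG$; the harmonic remainder is then handled identically in both proofs (interior mean-value estimates against its $L^1$ norm). Your route is more elementary in that it bypasses both the interpolation inequality and the $L^s$ elliptic theory, at the cost of invoking the explicit fundamental solution (and Weyl's lemma to see that $\psi_1$ is genuinely harmonic). The only bookkeeping points, which you already flag, are that differentiation under the integral sign in the potential is justified by the integrability of $DG$, and that after normalizing $\rho=1$ the intermediate radii can be chosen as fixed functions of $r/\rho$, so all constants depend only on $m$ and that ratio.
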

\begin{proof} By a simple covering argument we can, w.l.o.g., assume $\rho=3r$. Moreover, if we apply the scaling
$\psi_r (x) := r^{-1} \psi (rx)$ we see that $\|\psi_r\|_{L^1 (B_3)} = (\rho/3)^{-m-1} \|\psi\|_{L^1 (B_\rho)}$,
$\|\psi_r\|_\infty = (\rho/3)^{-1} \|\psi\|_\infty$, $\|D \psi_r\|_\infty = \|D \psi\|_\infty$ and $\|\Delta \psi_r\|_\infty
= (\rho/3) \|\Delta \psi\|_\infty$. We can therefore assume $r=1$.
Consider the harmonic function $\zeta:B_{2}\to\R$ with boundary data $\psi\vert_{\partial B_{2}}$,
\[
\begin{cases}
\Delta\zeta=0 & \text{in }\;B_{2},\\
\zeta=\psi &\text{on }\;\de B_{2}.
\end{cases}
\]
Set $u:=\psi-\zeta$ and note that $u=0$ on $\de B_{2}$, $\|\Delta u\|_{C^0(B_{2})}\leq A$.
Hence, using the Poincar\'e inequality, we can estimate the $L^1$-norm of $u$ in the following way:
\begin{align*}
\|u\|_{L^1}&\leq \|u\|_{L^2}\leq C\,\|Du\|_{L^2}\leq 
C\,\left(\int_{B_{2}}|\Delta u\,u|\right)^{\sfrac{1}{2}}
 \leq C\,\|\Delta u\|_{C^0}^{\sfrac{1}{2}}\,\|u\|_{L^1}^{\sfrac{1}{2}} \leq C A\, .
\end{align*}
Choose now $a\in ]0,1[$ and $s\in ]1, \infty[$ such that
$\frac{1}{m}+a\left(\frac{1}{s}-\frac{2}{m}\right)+1-a< 0$ (which exist because for $s\to\infty$ and $a\to 1$ 
the expression converges to $-\frac{1}{m}$). By a
classical interpolation inequality, (see \cite{Nir})
\[
\|D u\|_{L^\infty}\leq C\,\|D^2 u\|^a_{L^s}\|u\|_{L^1}^{1-a}+C\|u\|_{L^1}\, .
\]
Using the $L^s$-estimate for the Laplacian, we deduce
\begin{align}\label{e.Du}
\|D u\|_{L^\infty}&\leq C\,\|\Delta u\|^a_{L^s}\|u\|_{L^1}^{1-a}+C\|u\|_{L^1}\leq C\, \|\Delta u\|_\infty^a \|u\|_{L^1}^{1-a}
+ \|u\|_{L^1} \leq C\, A\, .
\end{align}
From \eqref{e.Du} and $u|_{\partial B_2}=0$ it follows trivially $\|u\|_{L^\infty} \leq A$. 
To infer \eqref{e.interpolation}, we observe that, by 
$\|\zeta\|_{L^{1}(B_{2})}\leq \|u\|_{L^{1}(B_{2})}+\|\psi\|_{L^{1}(B_{2})}\leq C\,A$ and the
harmonicity of $\zeta$,
\[
\|\zeta\|_{L^\infty(B_{1})} + \|D\zeta\|_{L^\infty (B_1)} \leq C \|\zeta\|_{L^{1}(B_{2})}\leq C\,A\, . \qedhere
\]
\end{proof}

\begin{lemma}\label{l:interpolation_bis}
For every $m$, $r<s$ and $\kappa$ there is a positive constant $C$ (depending on $m$, $\kappa$ and $\frac{s}{r}$)
with the following property. 
Let $f$ be a $C^{3, \kappa}$ function in the ball $B_s\subset \R^m$. Then
\begin{equation}\label{e:interpolation_bis}
\|D^j f\|_{C^0 (B_r)} \leq C r^{-m-j} \|f\|_{L^1 (B_s)} + C r^{3+\kappa-j} [D^3 f]_{\kappa, B_s}\qquad \forall j\in \{0,1,2,3\}\, .
\end{equation}
\end{lemma}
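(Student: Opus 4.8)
The plan is to reduce to a fixed geometry by scaling and then to split $f$ into its degree‑three Taylor polynomial plus a remainder that is controlled by $[D^3 f]_{\kappa}$. Working componentwise we may assume $f$ is scalar. First I would use the dilation $f_r(x):=f(rx)$, which maps $B_s$ to $B_{s/r}$ and satisfies $\|D^j f_r\|_{C^0(B_1)}=r^j\|D^j f\|_{C^0(B_r)}$, $\|f_r\|_{L^1(B_{s/r})}=r^{-m}\|f\|_{L^1(B_s)}$ and $[D^3 f_r]_{\kappa,B_{s/r}}=r^{3+\kappa}[D^3 f]_{\kappa,B_s}$. Hence it suffices to prove, for every $t>1$ and with a constant $C=C(m,\kappa,t)$,
\[
\|D^j g\|_{C^0(B_1)}\leq C\,\|g\|_{L^1(B_t)}+C\,[D^3 g]_{\kappa,B_t}\qquad \forall j\in\{0,1,2,3\}
\]
for every $g\in C^{3,\kappa}(B_t)$; undoing the scaling (dividing by $r^j$) then yields \eqref{e:interpolation_bis}.

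Next I would let $P$ be the Taylor polynomial of degree $3$ of $g$ at the origin, which exists since $g\in C^3(B_t)$ and $B_t$ is convex and contains $0$. For $j\in\{0,1,2,3\}$ the polynomial $D^j P$ is precisely the degree‑$(3-j)$ Taylor polynomial of $D^j g$, so the integral form of Taylor's theorem together with the definition of the Hölder seminorm gives, for $x\in B_t$,
\[
|D^j g(x)-D^j P(x)|\leq C(m)\,[D^3 g]_{\kappa,B_t}\,|x|^{3-j+\kappa}.
\]
In particular $\|D^j g-D^j P\|_{C^0(B_1)}\leq C(m)\,[D^3 g]_{\kappa,B_t}$, and, integrating the case $j=0$ over $B_t$, $\|g-P\|_{L^1(B_t)}\leq C(m,t)\,[D^3 g]_{\kappa,B_t}$.

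Finally I would invoke the equivalence of norms on the finite‑dimensional space $\mathcal{P}$ of polynomials of degree at most $3$ in $m$ variables: since no nonzero polynomial vanishes a.e., $Q\mapsto\|Q\|_{L^1(B_t)}$ is a genuine norm on $\mathcal{P}$, hence there is $C=C(m,t)$ with $\|D^j Q\|_{C^0(B_1)}\leq C\|Q\|_{L^1(B_t)}$ for all $Q\in\mathcal{P}$. Applying this to $P$ and using $\|P\|_{L^1(B_t)}\leq\|g\|_{L^1(B_t)}+\|g-P\|_{L^1(B_t)}\leq\|g\|_{L^1(B_t)}+C[D^3 g]_{\kappa,B_t}$, I would obtain
\[
\|D^j g\|_{C^0(B_1)}\leq\|D^j P\|_{C^0(B_1)}+\|D^j g-D^j P\|_{C^0(B_1)}\leq C\|g\|_{L^1(B_t)}+C[D^3 g]_{\kappa,B_t},
\]
which is the claimed rescaled inequality. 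The only mildly delicate points are tracking the powers of $r$ through the rescaling and making the dependence of $C$ on $t=s/r$ explicit; there is no genuine analytic obstacle here, the conceptual content being simply that $\|f\|_{L^1}$ determines $f$ modulo cubic polynomials while $[D^3 f]_\kappa$ bounds the deviation of $f$ from a cubic.
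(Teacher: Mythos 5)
Your proof is correct and follows essentially the same route as the paper's: subtract the degree-three Taylor polynomial $P$ at the origin, control $f-P$ (and $D^jf-D^jP$) by $[D^3f]_{\kappa}$ via the Taylor remainder, and control $P$ itself through the equivalence of the norms $Q\mapsto\|Q\|_{L^1}$ and $Q\mapsto\sum_j|D^jQ(0)|$ on the finite-dimensional space of polynomials of degree at most $3$. The only (harmless) difference is that you bound the supremum over $B_1$ directly from the pointwise remainder estimate, whereas the paper first reduces to $s=2r$ by a covering argument and then translates to evaluate at a single point.
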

\begin{proof} A simple covering argument reduces the lemma to the case $s=2r$. Moreover,
define $f_r (x):= f (rx)$ to see that we can assume $r=1$. 
So our goal is to show
\begin{equation}\label{e:ridotta}
\sum_{j=0}^3 |D^j f (y)| \leq C \|f-g\|_{L^1} + C [D^3 f]_\kappa\, \qquad \forall y \in B_1, \forall
f\in C^{3,\kappa} (B_2)\, .
\end{equation}
By translating it suffices then to prove the estimate
\begin{equation}\label{e:ridotta_2}
\sum_{j=0}^3 |D^j f (0)| \leq C \|f\|_{L^1 (B_1)} + C [D^3 f]_{\kappa, B_1} \qquad \forall f\in C^{3,\kappa} (B_1)\, .
\end{equation}
Consider now the space of polynomials $R$ in $m$ variables of degree at most 3, which we write as
$R = \sum_{j=0}^3 A_j x^j$. This is a finite dimensional vector space, on which we can define the norms
$|R| := \sum_{j=0}^3 |A_j|$ and $\|R\| := \int_{B_1} |R(x)|\, dx$.
These two norms must then be equivalent, so there is a constant $C$ (depending only on $m$), such 
that $|R|\leq C\|R\|$ for any such polynomial. In particular,
if $P$ is the Taylor polynomial of third order for $f$ at the point $0$, we conclude
\begin{align*}
\sum_{j=0}^3 |D^j f (0)| &= |P| \leq C \|P\| = C \int_{B_1} |P (x)|\, dx
\leq C\|f\|_{L^1 (B_1)} + C\|f-P\|_{L^1 (B_1)}\\
&\leq C \|f\|_{L^1} + C [D^3 f]_\kappa\, .\qquad\qquad \qedhere
\end{align*}
\end{proof}

\section{Proof of Lemma \ref{l:cambio_tre_piani}}\label{a:cambio_tre_piani}

\subsection{Reduction to special triples of planes} We first observe that, by a simple scaling, we can assume $r=1$.
The rescaling which we apply to any map $\varphi$ is the usual $x\mapsto r^{-1} \varphi (rx) =: \varphi_r$. It is easy to see that \eqref{e:che_fatica} is then scaling invariant.

We next fix the following terminology: we say that $R\in SO (m+\bar{n}+l)$
is a {\em 2d-rotation} if
there are two orthonormal vectors $e_1, e_2$ and an angle $\theta$
such that $R (e_1) = \cos \theta\, e_1 + \sin \theta\, e_2$, $R (e_2) = \cos \theta\, e_2 - \sin \theta\, e_1$ and $R (v)=v$
for every $v\perp {\rm span}\, (e_1, e_2)$. Given a triple $(\bar{\pi}, \bar{\varkappa}, \bar{\varpi})$ we then say that:
\begin{itemize}
\item $R$ is of type A with respect to $(\bar\pi, \bar\varkappa, \bar\varpi)$ if $e_1\in \bar\varkappa$ and $e_2\in \bar\varpi$;
\item $R$ is of type B with respect to $(\bar\pi, \bar\varkappa, \bar\varpi)$ if $e_1\in \bar\pi$ and $e_2\in \bar\varkappa$;
\item $R$ is of type C with respect to $(\bar\pi, \bar\varkappa, \bar\varpi)$ if $e_1\in \bar\pi$ and $e_2\in \bar\varpi$.
\end{itemize}
The following lemma will then allow us to reduce the general case of Lemma \ref{l:cambio_tre_piani} to the particular ones in which $(\bar{\pi}, \bar{\varkappa}, \bar{\varpi})$ is obtained from $(\pi, \varkappa, \varpi)$ through a (small) rotation of type A, B or C.

\begin{lemma}\label{l:linear_algebra} There are constants $C_0 (m, \bar{n}, l)$ and $\bar{N} (m, \bar{n}, l)$ with the following property. If $c_0$ in Lemma \ref{l:cambio_tre_piani} is sufficiently small, then there are $N \leq \bar{N}$ triples $(\pi_j, \varkappa_j, \varpi_j)$ ``joining'' 
$(\pi, \varkappa, \varpi) = (\pi_N, \varkappa_N, \varpi_N)$ with $(\bar\pi, \bar\varkappa,
\bar\varpi) = (\pi_{0}, \varkappa_{0}, \varpi_{0})$ such 
that each $(\pi_j, \varkappa_j, \varpi_j)$ is the
image of $(\pi_{j-1}, \varkappa_{j-1}, \varpi_{j-1})$
under a $2d$-rotation
of type A, B or C and angle $\theta_j$ with $|\theta_j| \leq C_0 (|\pi- \bar{\pi}| + |\varkappa - \bar\varkappa|)$.
\end{lemma}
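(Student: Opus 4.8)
The plan is to reduce this purely linear–algebraic statement to one application of the inverse function theorem on a flag manifold. The relevant space is the homogeneous space $\mathcal F$ of pairs $(\pi,\varkappa)$ of mutually orthogonal subspaces of $\R^{m+\bar{n}+l}$ of dimensions $m$ and $\bar{n}$ (the third plane being then determined as $\varpi=(\pi\oplus\varkappa)^\perp$). First I would record the preliminary reductions. Since $\varpi=(\pi\oplus\varkappa)^\perp$ and $\bar\varpi=(\bar\pi\oplus\bar\varkappa)^\perp$, one has $|\varpi-\bar\varpi|\le C_0\,\eps$ with $\eps:=|\pi-\bar\pi|+|\varkappa-\bar\varkappa|$, so for $c_0$ small the whole barred triple lies within $C_0\eps$ of the unbarred one and $\dist_{\mathcal F}(\mathcal T_0,\bar{\mathcal T})\le C_0\eps$, where $\mathcal T_0:=(\pi,\varkappa)$ and $\bar{\mathcal T}:=(\bar\pi,\bar\varkappa)$. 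Moreover, since $2$d-rotations preserve orientations and the chain will consist of boundedly many rotations each close to the identity, the final (unoriented) equalities $\pi_N=\bar\pi$, $\varkappa_N=\bar\varkappa$, $\varpi_N=(\bar\pi\oplus\bar\varkappa)^\perp$, together with closeness, automatically give equality as oriented planes. Thus it suffices to build a chain carrying $\mathcal T_0$ to $\bar{\mathcal T}$ inside $\mathcal F$.

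For the core of the argument I would identify $T_{\mathcal T_0}\mathcal F$ with the subspace $\mathfrak m:=\mathrm{Hom}(\pi,\varkappa)\oplus\mathrm{Hom}(\pi,\varpi)\oplus\mathrm{Hom}(\varkappa,\varpi)$ of $\mathfrak{so}(m+\bar{n}+l)$ (the three off-diagonal blocks relative to $\pi\oplus\varkappa\oplus\varpi$), of dimension $N:=m\bar{n}+ml+\bar{n}l$, and fix a basis $Y_1,\dots,Y_N$ of $\mathfrak m$ made of elementary rotation generators, those in $\mathrm{Hom}(\pi,\varkappa)$, $\mathrm{Hom}(\pi,\varpi)$, $\mathrm{Hom}(\varkappa,\varpi)$ generating respectively type B, type C and type A $2$d-rotations. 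Consider $\Phi(\theta):=\bigl(\prod_{j=1}^{N}\exp(\theta_jY_j)\bigr)\cdot\mathcal T_0$. Then $D\Phi(0)$ is just the identification $\R^N\to\mathfrak m=T_{\mathcal T_0}\mathcal F$, hence an isomorphism, so by the inverse function theorem $\Phi$ is a diffeomorphism of a neighbourhood of $0$ onto a neighbourhood $U$ of $\mathcal T_0$ with $\Phi^{-1}$ Lipschitz of constant $\le C_0$; if $c_0$ is small enough that $\bar{\mathcal T}\in U$, put $\theta:=\Phi^{-1}(\bar{\mathcal T})$, so that $|\theta_j|\le C_0\,\dist_{\mathcal F}(\mathcal T_0,\bar{\mathcal T})\le C_0\eps$. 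The triples $(\pi_j,\varkappa_j,\varpi_j):=\bigl(\prod_{i\le j}\exp(\theta_iY_i)\bigr)\cdot(\pi,\varkappa,\varpi)$ then form the required chain, with $N\le\bar{N}(m,\bar{n},l)$. The one point to check is that the $j$-th step is a $2$d-rotation of type A, B or C \emph{with respect to the current triple}: it equals $S\exp(\theta_jY_j)S^{-1}$ with $S:=\prod_{i<j}\exp(\theta_iY_i)$, and since $S$ is orthogonal it maps the coordinate $2$-plane of $\exp(\theta_jY_j)$ onto a $2$-plane whose two orthonormal spanning vectors lie in $\pi_{j-1},\varkappa_{j-1}$ or $\varpi_{j-1}$ exactly as the original ones lay in $\pi,\varkappa$ or $\varpi$.

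The only non-formal points here are the bound $\dist_{\mathcal F}(\mathcal T_0,\bar{\mathcal T})\le C_0\eps$ — routine once a smooth metric on $\mathcal F$ is fixed and compared with the one induced by the orthogonal projections — and the identification $D\Phi(0)=\mathrm{id}$, a one-line computation with left-invariant vector fields. A more hands-on alternative avoids $\mathcal F$ altogether: use the CS (principal-angle) decomposition to write the rotation sending $\pi$ to $\bar\pi$ as a product of commuting $2$d-rotations in mutually orthogonal planes $\mathrm{span}(u_i,w_i)$ with $u_i\in\pi$, $w_i\in\pi^\perp=\varkappa\oplus\varpi$ and angles $\phi_i\le C_0|\pi-\bar\pi|$; split each such rotation, via $w_i=\cos\psi_i\,a_i+\sin\psi_i\,b_i$ with $a_i\in\varkappa$ and $b_i\in\varpi$, into an explicit product of a type-B and a type-C $2$d-rotation of angles $\le C_0\phi_i$ (an elementary trigonometric identity); and finally align $\varkappa$ with $\bar\varkappa$ inside $\bar\pi^\perp\cong\R^{\bar{n}+l}$ by type-A rotations. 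In this second approach the main obstacle is exactly the bookkeeping: the decomposed elementary moves no longer commute, so one must control how each displaces the whole triple and argue that, after the controlled number of steps, one lands precisely on the barred triple with all angles still $\le C_0\eps$. I expect this bookkeeping (or, in the first approach, nothing serious) to be the only real difficulty.
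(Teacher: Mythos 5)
Your argument is correct, and it reaches the conclusion by a genuinely different route from the paper. The paper's proof is an explicit induction on the dimension of the relevant intersections: it first shows that when one of the three planes already agrees ($\varpi=\bar\varpi$, $\varkappa=\bar\varkappa$ or $\pi=\bar\pi$) one can increase $\dim(\pi_j\cap\bar\pi)$ (resp.\ the analogous intersection) by one at each step using a single $2$d-rotation of a fixed type that fixes the already-matched directions, and then interleaves the three types to first align $\pi\times\varkappa$ with $\bar\pi\times\bar\varkappa$ and finally $\varkappa$ with $\bar\varkappa$; the bound $N\leq\bar N$ comes from counting dimensions, and the angle bounds from projecting unit vectors. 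Your argument instead treats the set of orthogonal triples as the homogeneous space $SO(m+\bar n+l)/(SO(m)\times SO(\bar n)\times SO(l))$, observes that the elementary type A/B/C generators are exactly a basis of the off-diagonal complement $\mathfrak m$ of the isotropy algebra, and solves for the angles by inverting $\theta\mapsto\prod_j\exp(\theta_jY_j)\cdot\mathcal T_0$ via the inverse function theorem (canonical coordinates of the second kind); the conjugation remark $S\exp(\theta_jY_j)S^{-1}$ correctly certifies that each step is of the right type \emph{with respect to the current triple}, which is the convention the paper uses. Your route is cleaner on the one point where the paper is terse — why all angles remain $O(|\pi-\bar\pi|+|\varkappa-\bar\varkappa|)$ after several steps — since this falls out of the uniform Lipschitz bound on $\Phi^{-1}$ (uniform in the base triple by $SO(m+n)$-equivariance, a point worth stating explicitly); it also gives the explicit count $N=m\bar n+ml+\bar nl$. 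What it costs is the (mild) differential-geometric overhead: the comparability of the flag-manifold distance with $|\pi-\bar\pi|+|\varkappa-\bar\varkappa|$ and the orientation bookkeeping at the end, both of which you correctly flag and which are indeed routine. The second, CS-decomposition variant you sketch is closer in spirit to the paper's hands-on construction and, as you note, would require exactly the bookkeeping the paper carries out; since your first argument is complete, nothing further is needed.
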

\begin{proof}
We first show that, if $\varpi = \bar{\varpi}$, or $\varkappa = \bar{\varkappa}$ or $\pi=\bar{\pi}$, then
the claim can be achieved with small $2d$-rotations all of the same type, namely of type B, C and A, respectively.
Assume for instance that $\varpi = \bar{\varpi}$. Let $\omega$ be
the intersection of $\pi$ and $\bar{\pi}$ and
$\omega'$ be
the intersection of $\varkappa$ and $\bar{\varkappa}$.
Pick a vector $e\in \pi$ which is not
contained in $\bar{\pi}$ and is orthogonal to $\omega$.
Let $\bar{e}:= \frac{\p_{\bar{\pi}} (e)}{|\p_{\bar{\pi}} (e)|}$.
Then, $\bar{e}$ is necessarily orthogonal
to $\omega$ and the angle between $\bar{e}$ and $e$
is controlled by $|\pi - \bar{\pi}|$. There is therefore a $2d$-rotation
$R$ such that $R (e) = \bar{e}$ and obviously its angle is controlled by $|\pi-\bar\pi|$.
It turns out that $R$ keeps $\varpi$ and $\omega$ fixed. So the new triple $(R (\pi), R (\varkappa), R (\varpi))$
has the property that $R (\varpi)=\varpi = \bar{\varpi}$ and the dimension of $R(\pi) \cap \bar{\pi}$
is larger than that of $\pi\cap \bar{\pi}$. This procedure can be repeated and after $N\leq m$ times it leads
to a triple of planes $(\pi_N, \varkappa_N, \varpi_N)$ with $\varpi_N= \bar{\varpi}$ and $\pi_N = \bar{\pi}$.
This however implies necessarily $\bar{\varkappa} = \varkappa_N$.

Assume therefore that $\varpi$ and $\bar\varpi$ do not coincide.
Let $\omega := (\varkappa \times \pi)\cap (\bar{\varkappa}\times \bar{\pi})$.
There is then a unit vector $\bar e\in \bar{\varkappa}$ or a unit vector $\bar e\in
\bar{\pi}$ which does not belong to $\pi\times \varkappa$ and which is 
orthogonal to $\omega$. Assume for 
the moment that we are in the first case, and consider
the vector $e:= \frac{\p_{\pi\times \varkappa} (\bar e)}{|\p_{\pi\times \varkappa} (\bar e)|}$.
The vector $e$ forms an angle with the plane $\varkappa$ bounded
by $C_0|\varkappa-\bar\varkappa|$. Therefore there is a rotation
$R$ with angle smaller than $C_0 |\varkappa -
\bar\varkappa|$ of the plane $\pi \times \varkappa$ with the property that $R (\varkappa)$ 
contains $e$ and fixes
$\omega$, which is orthogonal to $e$. 
By the previous step, $R$ can be written
as composition $R_{N'}\circ \ldots \circ R_1$ of 
small $2d$-rotations of type B keeping $\varpi$ fixed. 
Since $e \perp R (\pi)$, we can then find a small $2d$-rotation $S$ of type A
with respect to $(R(\pi), R(\varkappa), \varpi)$
acting on the plane spanned by $e$ and $\bar{e}$
and such that $S (R(\varkappa))\ni \bar{e}$. $S$ keeps then $\omega$ fixed. An 
analogous argument works if the vector $e\in \bar\pi$. 
We therefore conclude that, after applying a finite number of rotations $R_1, \ldots, R_{N'}, 
R_{N'+1}$ of the
three types above, the dimension of 
$R_{N'+1} \circ R_{N'} \circ \ldots \circ R_1 (\pi\times \varkappa)\cap \bar{\pi}\times \bar
\varkappa$ is larger 
than that $\pi\times \varkappa  \cap \bar{\pi}\times \bar\varkappa$ (where the number $N'$ is 
smaller than a geometric constant depending only on $m$ and $\bar{n}$).
Obviously, after at most $m+\bar{n}$ iterations of this argument,
we are reduced to the situation $\pi \times \varkappa = \bar\pi\times
\bar\varkappa$.
\end{proof}

Assume now to have proved Lemma \ref{l:cambio_tre_piani} for some constants $c_0$ and $C_0$ and
for all 2d rotations which are of type A, B or C with respect to one of the two triples of planes. We next
claim that, at the price of possibly enlarging the constants, the Lemma holds for any pair of triples.
To this purpose we now fix two triples as in the statement of the lemma and choose a chain $(\pi_j, \varkappa_j, \varpi_j)$ as in Lemma \ref{l:linear_algebra}. As already observed it suffices to prove the statement when $r=1$, but we
assume of having proved it for {\em any} radius in the case of small $2d$ rotations of type A, B or C.
Lemma \ref{l:linear_algebra} implies that $|\pi_j - \pi_i| + |\varkappa_j - \varkappa_i| \leq \bar C_0 {\rm An}$
for some geometric constant $\bar C_0$. For each $i$ we therefore have Lipschitz maps $\Psi^i : \pi_i \times \varkappa_i \to \varpi_i$ and Lipschitz maps $f^i : B_{4} (0, \pi) \to \Iq (\varkappa\times \varpi)$ whose graph coincides with the ones of $\bar\Psi$
and $f$ (the latter restricted to $\bC_{4} (0, \pi)$): their existence is ensured by \cite[Proposition 5.2]{DS2} which also implies
\begin{align}
\|D\Psi_i\|_{C^0} &\leq \bar{C}_0 \big(\|D\Psi\|_{C^0} + {\rm An} \big)\label{e:ruoto_1}\\
|\Psi_i (0)|& \leq \bar{C}_0 \big(|\Psi (0)| + \|D\Psi\|_{C^0} + {\rm An}\big)\label{e:ruoto_2}\\
\Lip (f^i) &\leq \bar C_0 \big(\Lip (f) + {\rm An} \big)\label{e:ruoto_3}\\
\|f^i\|_{C^0} &\leq \bar{C}_0 \big(\|f\|_{C^0}+ {\rm An}\big)\label{e:ruoto_4}
\end{align}
Set now $r_i := 2^{2-i}$.
By assuming the constant $c_0$ sufficiently small we can therefore assume that the Lemma can be applied to the pairs
$(\pi_{i-1}, \varkappa_{i-1}, \varpi_{i-1})$ and $(\pi_i, \varkappa_i, \varpi_i)$, to the maps $\Psi_{i-1}, \Psi_i, f^{i-1}, f^i$
and to the radius $r_i/4$. In order to streamline the argument, for $j>i$ we use the notation $f^j = {\rm R}_{ij} f^i$ to underline that the graph of $f^j$ coincides, in the cylinder $\bC_{r_i} (0, \pi_i)$, with the graph of $f^i$. Likewise, if $u^i$ is the multivalued map into $\Iq (\varkappa_i)$ such that $f^i (x) = \sum_l \a{(u^i_l (x), \Psi_i (u^i_l (x)))}$, we then denote by ${\rm Av}\, (f^i)$ the map $(\etaa \circ u^i, \Psi_i (\etaa \circ u^i))$. With this notation we observe that $\hat\bef = {\rm R}_{N0} ({\rm Av}\, (f^0)) = {\rm R}_{N0} ({\rm Av}\, (f))$ and $\beg = {\rm Av}\, ({\rm R}_{N0}\, (f^0)) = {\rm Av}\, ({\rm R}_{N0}\, (f))$. We can then estimate 
\begin{align*}
& \|\hat\bef - \beg\|_{L^1 (B_{r_N} (\pi, 0))} \leq 
\underbrace{\|{\rm R}_{N(N-1)} ({\rm R}_{(N-1)0} ({\rm Av}\, (f))) - {\rm R}_{N(N-1)} ({\rm Av}\, ({\rm R}_{(N-1)0} (f)))\|_{L^1 (B_{r_N}(\pi_N, 0))}}_{(I)}\nonumber\\
&\qquad + \underbrace{\|{\rm R}_{N(N-1)} ({\rm Av}\, ({\rm R}_{(N-1)0} (f))) -
{\rm Av}\, ({\rm Rot}_{N(N-1)} ({\rm R}_{(N-1)0} (f)))\|_{L^1 (B_{r_N} (\pi_N, 0))}}_{(II)}\, .
\end{align*}
Now, to the first summand we apply Lemma \ref{l:rotazioni_semplici}(b) and we bound it with
\[
(I) \leq \bar{C}_0 \|{\rm R}_{(N-1)0} ({\rm Av}\, (f)) - {\rm Av}\, ({\rm R}_{(N-1)0} (f))\|_{L^1 (B_{r_{N-1}} (0, \pi_{N-1}))}\, .
\]
As for the second summand, observing that ${\rm R}_{(N-1)0} (f) = f^{N-1}$, we can apply Lemma \ref{l:cambio_tre_piani} for the special case of a 2d rotation of type A,B or C and conclude
\begin{align*}
(II) &\leq \bar{C}_0 \big(\|f^{N-1}\|_{C^0} + |\pi_N- \pi_{N-1}| + |\varkappa_N-\varkappa_{N-1}\big) \big(\D (f^{N-1})\\
&\qquad\qquad\qquad + \|D\Psi_{N-1}\|^2_{C^0} + (|\pi_N- \pi_{N-1}| + |\varkappa_N-\varkappa_{N-1}| )^2\big)\\
&\leq \bar{C}_0 \big(\|f^{N-1}\|_{C^0} + {\rm An}\big) \big(\D (f^{N-1}) + \|D\bar \Psi\|^2_{C^0} + {\rm An}^2\big)\, .
\end{align*}
On the other hand, by the Taylor expansion of the mass in \cite[Corollary 3.3]{DS2},
\begin{align}
\D (f^{N-1}) &\leq 4 \bE (\bG_{f^{N-1}}, \bC_{r_{N-1}} (0, \pi_{N-1}))\nonumber\\
&\leq 4 \bE (\bG_{f^{N-1}}, \bC_{r_{N-1}} (0, \pi_{N-1}), \pi_N) + \bar C_0 |\pi_{N-1} - \pi_N|^2\nonumber\\
&\leq 4 \bE (\bG_f, \bC_{8} (0, \bar\pi)) + \bar C_0 {\rm An}^2 \leq 8 \D (f) + \bar C_0{\rm An}^2\, .
\end{align}
Putting all these estimates together we then conclude
\begin{align*}
&\|\hat\bef - \beg\|_{L^1 (B_{r_N} (\pi, 0))} \leq \bar{C}_0 \|{\rm R}_{(N-1)0} ({\rm Av}\, (f)) - {\rm Av}\, ({\rm R}_{(N-1)0} (f))\|_{L^1 (B_{r_{N-1}} (0, \pi_{N-1}))}\nonumber\\
&\qquad + \bar{C}_0 \big(\|f\|_{C^0} + {\rm An}\big) \big(\D (f) + \|D\Psi\|^2_{C^0} + {\rm An}^2\big)\, .
\end{align*}
We can now iterate $N-1$ more times this argument to finally achieve
\[
\|\hat\bef - \beg\|_{L^1 (B_{r_N} (\pi, 0))} \leq
\bar{C}_0 \big(\|f\|_{C^0} + {\rm An}\big) \big(\D (f) + \|D\Psi\|^2_{C^0} + {\rm An}^2\big)\, .
\]
Of course this is not yet the estimate claimed in Lemma \ref{l:cambio_tre_piani} since the inner radius $r_N$ equals $2^{2-N}$ rather than $4$. However a simple covering argument allows to conclude the proof. 
In the remaining sections we focus our attention on 2D rotations of coordinates of type A, B and C. 

\subsection{Type A} As already observed it suffices to show the lemma in the case $r=1$.
We use the notation $(z,w)\in \varkappa\times \varpi$ and $(\bar z, \bar w)\in \bar\varkappa\times \bar\varpi$
for the same point. In what follows we will drop the $\cdot$ when writing the usual products between
matrices. We then have
$\bar z = U z + V w$ and $\bar w = W z + Z w$,
where the orthogonal matrix
\[
L := \left(\begin{array}{ll}
            U & V\\
W & Z
           \end{array}\right)\, 
\]
has the property that $|L-{\rm Id}|\leq C_0 {\rm An}$.
Clearly, $\Psi$ and $\bar \Psi$ are related by the
identity 
\begin{equation}\label{e:rotazione_A}
W z + Z \Psi (x,z) = \bar\Psi (x,U z + V \Psi (x,z))\, .
\end{equation}
Fix $x$ and $\hat f (x) = \sum_i \a{(\hat u_i (x),
\Psi (x,\hat u_i (x)))} =: \sum_i \a{(z_i, \Psi (x, z_i))}$. We then have
\[
\beg (x) = (a,b) := 
\Big(\frac{1}{Q} \sum z_i, \Psi \Big(x, \frac{1}{Q} \sum z_i\Big)\Big) \quad \text{in }\;\varkappa\times \varpi,
\] 
and 
\[
\hat\bef (x) = L^{-1} \left(U \textstyle{\frac{1}{Q}} \sum z_i + V \textstyle{\frac{1}{Q}} \sum \Psi (x, z_i),
\bar\Psi \left(x, U \textstyle{\frac{1}{Q}} \sum z_i + V \textstyle{\frac{1}{Q}} \sum \Psi (x, z_i)\right)\right)
=: L^{-1} (c,d)\, .
\]
Since $L$ is orthogonal, we have 
\begin{align*}
&|\hat\bef (x) - \beg (x)| = |L (a,b)-(c,d)|\nonumber\\
= &\left| \left(V \left(\Psi \left( x, \textstyle{\frac{1}{Q}} \sum z_i\right)
- \textstyle{\frac{1}{Q}} \sum_i \Psi (x, z_i)\right), W \textstyle{\frac{1}{Q}} \sum_i z_i + Z
\Psi \left(x, \textstyle{\frac{1}{Q}} \sum_i z_i\right)\right.\right.\nonumber\\
&\qquad\qquad\qquad\qquad\qquad\qquad\qquad\qquad\quad - \left.\left.\bar\Psi \left( x, U \textstyle{\frac{1}{Q}} \sum z_i + 
V \textstyle{\frac{1}{Q}} \sum \Psi (x, z_i)\right)\right)\right|\nonumber\\
\stackrel{\eqref{e:rotazione_A}}{=}& \left| \left(V \left(\Psi \left( x, \textstyle{\frac{1}{Q}} \sum z_i\right)
- \textstyle{\frac{1}{Q}} \sum_i \Psi (x,z_i)\right),\right.\right.\nonumber\\
&\qquad\qquad \left.\left. \bar\Psi \left(x, U \textstyle{\frac{1}{Q}} \sum z_i + 
V \Psi \left(\textstyle{\frac{1}{Q}} \sum z_i\right)\right) - \bar \Psi \left(x, 
U \textstyle{\frac{1}{Q}} \sum z_i + V \textstyle{\frac{1}{Q}}\sum \Psi (x, z_i)\right)\right)\right|\, .
\end{align*}
Thus,
\begin{align*}
|\hat\bef (x) - \beg (x)| \leq & \left(1+ \Lip (\bar\Psi)\right) |V| 
\left| \textstyle{\frac{1}{Q}} \sum \Psi (x,z_i) -\Psi \left(x, \textstyle{\frac{1}{Q}} \sum z_i\right)\right|.
\end{align*}
Observe that $|V|\leq |L-{\rm Id}| \leq C |\varkappa-\bar{\varkappa}|$. Moreover, with a simple Taylor
expansion around the point $(x,  \textstyle{\frac{1}{Q}} \sum z_i)$ we achieve
\begin{align*}
 & \left| \textstyle{\frac{1}{Q}} \sum \Psi (x,z_i) -\Psi \left(x, \textstyle{\frac{1}{Q}} \sum z_i\right)\right|
\leq C_0 \|D\Psi\|_0 \sum_i \left|z_i - \textstyle{\frac{1}{Q}} \sum z_i\right|
\leq C_0 \|D\Psi\|_0 \|\hat{u}\|_{C^0}\, .
\end{align*}
Since we have $\|D\Psi\|_0 \leq C_0 \|D\bar\Psi\|_0 + C_0 {\rm An}$ and 
$\|\hat{u}\|_{C^0}\leq \|\hat{f}\|_{C^0}
\leq C \|f\|_{C^0} + C_0 {\rm An}$ we conclude the pointwise estimate
\begin{align*}
|\hat\bef (x) - \beg (x)| \leq & C_0 {\rm An} (\|D\bar\Psi\|_0 + {\rm An}) (\|f\|_0 + {\rm An})\, ,
\end{align*}
which obviously implies \eqref{e:che_fatica}.

\subsection{Type B} In this case $\Psi = \bar\Psi$ and thus
\begin{equation}\label{e:Psi_via}
\|\hat\bef - \beg\|_{L^1} \leq C_0 (1+ \|D\bar\Psi\|_0) \|\etaa \circ \hat{u} - \p_\varkappa (\hat\bef)\|_{L^1}\, .
\end{equation}
Fix next an orthonormal base 
$e_1, \ldots, e_m, e_{m+1}, \ldots, e_{m+\bar{n}}$,
where the first $m$ vectors span $\pi$ and the remaining span $\varkappa$. We also assume that the rotation
$R$ acts on the plane spanned by $\{e_m, e_{m+1}\}$ and set $v= R(e_m) = a \,e_m + b\, e_{m+1}$
and $v_{m+1} = R (e_{m+1})$.  We then define two systems of coordinates: given $q\in\R^m\times R^{\bar{n}}$,
we write 
\begin{align*}
q =& \sum_{1\leq i \leq m-1} z_i (q) e_i + t(q) e_m + \tau (q) e_{m+1} + \sum_{2 \leq j\leq \bar{n}} y^j (q) e_{j+m}\\
=& \sum_i z_i (q) e_i + s(q) v_m + \sigma (q) v_{m+1} + \sum_j y^j (q) e_{j+m}\, .
\end{align*}
The first will be called $(t, \tau)$-coordinates and the second $(s, \sigma)$-coordinates.

We fix for the moment $x\in \R^{m-1}$ with $|x|\leq 4 $ and focus our attention on the interval 
$I_x = \{s: |(x, s)|\leq 6\}$.
We restrict the map $u$ to this interval and, by \cite[Proposition~1.2]{DS1}
we know that there
is a Lipschitz selection such that $u (x,s) = \sum_i \a{\theta_i (s)}$ in the $(s, \sigma)$-coordinates:
$\gr (\theta_i) = \{(x,s,\theta^1_i (s), \ldots, \theta^{\bar{n}}_i (s)):s\in I_x\}$. In the $(t,\tau)$ coordinates we can choose
functions $\vartheta_i$, also defined on an appropriate interval $J_x$, whose graphs coincide with the ones of the $\theta_i$.
We then obviously must have $\hat u (x,t) = \sum_i \a{\vartheta_i (t)}$ on the domain of definition of $\hat f$.
The coordinate functions $\theta^j_i$ and $\vartheta^j_i$ are linked by the following relations
\begin{equation}\label{e:relazione_cambio}
\begin{cases}
\Phi_i (t)=a\,t+b\,\vartheta^1_i(t), &\\
\theta^1_i(\Phi_i (t))=-b\,t+a\,\vartheta^1_i(t), &\\
\theta^l_i (\Phi_{i}(t))=\vartheta_i^l(t), & \text{for }\;l=2, \ldots, \bar n.
\end{cases}
\end{equation}
Observe that $\Lip(\Phi_i)\leq (1+C_0 |\pi - \bar\pi|) \leq 2$. Likewise we can assume that $\Lip(\Phi_i^{-1})\leq 2$.
Consider now $v (s) = \etaa\circ u (x,s) = \frac{1}{Q}
\sum_i \theta_i (s)$ and the corresponding $t\mapsto \hat{v} (t) = \p_\varkappa \circ \hat\bef (x,t)$, linked to 
$v = \etaa\circ u (x, \cdot)$ through a relation as in \eqref{e:relazione_cambio} with a 
corresponding map $\Phi$:
\begin{equation}\label{e:relazione_cambio_2}
\begin{cases}
\Phi (t)=a\,t+b\,\hat{v}^1(t), &\\
\frac{1}{Q} \sum_i \theta_i^1 (\Phi (t)) = v^1 (\Phi (t))=-b\,t+a\,\hat{v}^1(t),& \\
\frac{1}{Q} \sum_i \theta_i^l (\Phi (t)) = v^l (\Phi (t))=\hat{v}^l (t),& 
\text{for }\;l=2, \ldots, \bar n.
\end{cases}
\end{equation}
Moreover, write $\tilde{v} (t) = \frac{1}{Q} \sum_i \vartheta (t) = \etaa \circ \hat u (x,t) $.
We can then compute
 \begin{align}
&\etaa\circ \hat u(x,t) - \p_\varkappa (\hat\bef (x,t)) =  \tilde{v} (t)-\hat v (t)=
Q^{-1}\sum_i (\vartheta_i(t)- \hat v(t))\nonumber\\
= &Q^{-1} \sum_i
\Big(\underbrace{a^{-1}\theta_i^1(\Phi_i(t))-a^{-1}\theta_i^1(\Phi(t))
}_{1^{\text{st}}\;\text{component}},\ldots,\underbrace{
\theta_i^l(\Phi_i(t))-\theta_i^l(\Phi(t))}_{l^{\text{th}}\;\text{component}},
\ldots\Big).\label{e:componentwise}
\end{align}
This implies that
\begin{equation}\label{e.slice}
 |\etaa \circ \hat u (x, t)-\p_\varkappa (\hat\bef (x,t))| = |\tilde{v} (t) -\hat{v} (t)|
\leq C_0\sum_i \bigg|\int_{\Phi(t)}^{\Phi_i(t)} D\theta (\tau)\, d\tau\bigg|\, .
\end{equation}
Next we compute
\begin{equation}\label{e:Phi-Phi}
\Phi_i(t)-\Phi(t)=b\,\big(\vartheta_i^1(t)-\hat v^1 (t) \big)=
b\big(\vartheta_i^1 (t)-\tilde v^1(t)\big)+
b\big(\tilde v^1(t)-\hat{v}^1(t)\big).
\end{equation}
Since $|b|\leq C {\rm An}$, the terms in \eqref{e:Phi-Phi} can be estimated respectively as follows:
\begin{gather}
|b| |\vartheta_i^1 (t) - \tilde v^1 (t)| = |b| |\hat u_i^1(x, t)-(\etaa\circ \hat u)^1(t)|
\leq C_0 {\rm An}\,  \|\hat{u}\|_{C^0}, \nonumber\\
|\tilde v^1(t)-\hat{v}^1 (t)|\stackrel{\eqref{e.slice}}{\leq} \|D\theta\|_{L^\infty}\,\sum_{i=1}^Q |\Phi_i(t)-\Phi(t)|
 \leq C_0\,\Lip (u)\, \sum_{i=1}^Q |\Phi_i(t)-\Phi(t)|.\nonumber
\end{gather}
Recall that $\Lip (u)\leq \Lip (f) \leq C_0 \Lip (\hat{f}) + {\rm An}$.
Combining the last two inequalities with \eqref{e:Phi-Phi}, we therefore conclude, when $c_0$ is sufficiently small,
\begin{equation}\label{e.diff param2}
\sum_{i=1}^Q |\Phi_i(t)-\Phi(t)|\leq C_0 {\rm An} \|\hat{f}\|_{C^0} =: \rho\, .
\end{equation}
With this estimate at our disposal we can integrate \eqref{e.slice} in $t$ to conclude
\begin{equation*}\label{e.slice2}
\int_{J_x}| \tilde v (t) - \hat{v} (t)| \leq C_0 \int_{J_x}\int_{\Phi(t)-\rho}^{\Phi(t)+\rho}
|D\theta|(\tau)\,d\tau\,dt\leq
C_0 \int_{I_x} \int_{s-\rho}^{s+\rho}|D u| (x,\tau)\,d\tau\, ds\, ,
\end{equation*}
where in the latter inequality we have used the change of variables $s= \Phi (t)$ and the fact that
both the Lipschitz constants of $\Phi$ and its inverse are under control.
Integrating over $x$ and recalling that $\tilde v (t) - \hat{v} (t) = \etaa \circ \hat u (x,t) - \p_\varkappa (\hat\bef (x,t))$ we achieve
\begin{align}
&\int_{B_{4}} |\etaa\circ \hat u - \p_{\varkappa} \circ \hat\bef| \leq C_0 \int_{B_{4}} \int_{-\sqrt{36 -|x|^2}}^{\sqrt{36 
- |x|^2}} \int_{s-\rho}^{s+\rho} |Du | (x, \tau)\, d\tau\, ds\, dx\leq C_0 \rho \int_{B_{6 + 12\rho}} |D u| \nonumber\\
\leq & C_0 {\rm An} \|\hat f\|_{C^0} \left(\int_{B_{8}} |D u|^2\right)^{\sfrac{1}{2}}
\leq C_0 {\rm An} \big(\| f\|_{C^0} + {\rm An}\big) \D (f)^{\sfrac{1}{2}} \label{e:senza_Psi_OK}\, . 
\end{align}
Clearly \eqref{e:senza_Psi_OK} and \eqref{e:Psi_via} imply the desired estimate.

\subsection{Type C} Consider $\etaa\circ f$ and the $\xi: B_{4} (0, \pi)\to \pi^\perp$ such that
$\bG_\xi = \bG_{\etaa\circ f} \res \bC_{4} (0, \pi)$. We can then apply the argument
of the estimate for type B to conclude
\begin{equation}\label{e:tipo_C_1}
\|\etaa \circ\hat u - \p_\varkappa (\xi)\|_{L^1 (B_{4})} \leq
\|\etaa \circ \hat f - \xi\|_{L^1 (B_{4})} \leq C {\rm An} \big(\|f\|_{C^0} + {\rm An}\big) \D (f)^{\sfrac{1}{2}}\, .
\end{equation}
We need only to estimate
$\|\p_\varkappa (\xi) - \p_\varkappa (\hat\bef)\|_{L^1}$: since
$\beg (x) = (\etaa \circ \hat u (x), \Psi (x, \etaa\circ \hat u (x)))$ and 
$\hat\bef (x) = (\p_\varkappa (\hat \bef (x)), \Psi (x, \p_\varkappa (\hat \bef (x)))$, we can then estimate
\begin{align}
\|\hat\bef - \beg\|_{L^1} &\leq C_0 (1 + \|D\Psi\|_0) \left(\|\p_\varkappa (\hat\bef) - \p_\varkappa (\xi)\|_{L^1} +
\|\p_\varkappa (\xi) - \etaa \circ \hat{u}\|_{L^1}\right)\, . \label{e:tipo_C_2}
\end{align}
Define the maps $v,w$ and $w'$ as follows:
\begin{align*}
\bef (\bar x) &= 
\left(\etaa \circ u (\bar x) , \bar \Psi \left( \etaa \circ u (\bar x)\right)\right) =: (v (\bar x), w(\bar x)),\nonumber\\
\etaa \circ f (\bar x) &= 
\big(\etaa \circ u (\bar x), \textstyle{\frac{1}{Q}} \sum_i \bar\Psi (\bar x, u_i (\bar x))\big) =: (v (\bar x), w'
(\bar x))\, .
\end{align*}
Using the Lipschitz bound for $\bar{\Psi}$ we conclude 
\begin{equation}\label{e:taylorino}
\|\bef - \etaa \circ f\|_{C^0} = \|w-w'\|_0 \leq 
C \|D \bar \Psi\|_{C^0} \sum_i \left|u_i - \etaa\circ u\right|
\leq C \|D\bar\Psi\|_{C^0} \|f\|_{C^0}\, .
\end{equation} 
Consider
an orthogonal transformation 
\[
L = \left(
\begin{array}{ll}
U & V\\
W & Z
\end{array}
\right)\, 
\]
with the properties that $(\bar x,\bar z)\in \bar\pi \times \bar\varpi$ corresponds to $(U\bar x + V
\bar z, W\bar x + Z\bar z)\in \pi\times
\varpi$ and $|L-{\rm Id}|\leq C_0 {\rm An}$. We then have the following relations: 
$\p_\varkappa (\hat\bef (x)) = v (\Phi^{-1} (x))$ and
$\p_{\varkappa} (\xi(x)) = v ((\Phi')^{-1} (x))$, where $\Phi^{-1}$ and $(\Phi')^{-1}$ are the inverse,
respectively, of the maps $\Phi (\bar x) = U \bar x + V w (\bar x)$ and 
$\Phi' (\bar x) = U \bar x + V w' (\bar x)$.
Recalling that $|V|\leq |L-{\rm Id}|\leq C_0 {\rm An}$, we conclude that 
\[
|\Phi' (\bar x) - \Phi (\bar x)| \leq |V| \, |w (\bar x) - w' (\bar x)|\leq C_0 \|D\bar\Psi\|_{C^0} \|f\|_{C^0} {\rm An}
\quad\mbox{for every $\bar x$.}
\]
On the other hand we also know that $\Phi^{-1}$ has Lipschitz constant at most $2$ and so we achieve 
$|\Phi^{-1} (\Phi' (\bar x)) - \bar x| \leq C_0\|D\bar\Psi\|_{C^0} \|f\|_{C^0} {\rm An} $.
Being valid for any $\bar x$ we can apply
it to $\bar x = (\Phi')^{-1} (x)$ to conclude $|\Phi^{-1} (x) - (\Phi')^{-1} (x)| \leq C_0
\|D\bar\Psi\|_{C^0} \|f\|_{C^0} {\rm An}$.
Using then $\Lip (v) \leq \Lip (u) \leq c_0$,
we conclude the pointwise bound 
\[
|\p_\varkappa (\hat\bef (x)) - \p_\varkappa (\xi (x))| = |v (\Phi^{-1} (x))- v ((\Phi')^{-1} (x))|
\leq C_0\|D\bar\Psi\|_{C^0} \|f\|_{C^0} {\rm An} \, .
\]
After integrating in $x$, the latter bound combined with \eqref{e:tipo_C_1} and \eqref{e:tipo_C_2} gives the desired
estimate.

\bibliographystyle{plain}

\bibliography{references-CM}


\end{document}